\documentclass[aap,preprint]{imsart}

\usepackage{pdfsync}
\usepackage{amsmath, amsthm,amssymb,bbm,multirow,bm,graphicx}
\usepackage{hyperref}
\usepackage{paralist}

\newtheorem{theorem}{Theorem}[section]
\newtheorem*{theorem*}{Theorem}
\newtheorem{remark}[theorem]{Remark}
\newtheorem{lemma}[theorem]{Lemma}
\newtheorem{proposition}[theorem]{Proposition}
\newtheorem{corollary}[theorem]{Corollary}
\theoremstyle{definition}
\newtheorem{definition}[theorem]{Definition}

\numberwithin{equation}{section}

\usepackage[top=1.6in, bottom=1.75in, left=1.75in, right=1.75in]{geometry}
\RequirePackage[OT1]{fontenc}

\newtheorem{assumption}{Assumption}[part]

\begin{document}

\newcommand{\R}[0] {\mathbb{R}}
\newcommand{\Z}[0] {\mathbb{Z}}
\newcommand{\N}[0] {\mathbb{N}}

\newcommand{\hc}[0] {[0,\infty)}
\newcommand{\ho}[0] {(0,\infty)}

\newcommand{\Lone}[0] {\mathbb{L}^1}
\newcommand{\Ltwo}[0] {\mathbb{L}^2}
\newcommand{\Hone}[0] {\mathbb{H}^{1}}
\newcommand{\Cone}[0] {\mathbb{C}^1}

\def\bcpl{\bar{\Upsilon}}
\def\cpl{\Upsilon}
\def\cplgen{\gamma}
\def\proj{\Pi}

\newcommand{\W}[0] {\mathbb{W}^{1,1}}
\newcommand{\C}[0] {\mathbb{C}[0,\infty) }
\newcommand{\Md}[0] {{\cal M}_D[0,\infty)}

\newcommand{\Y}[0] {\mathbb{Y}}
\newcommand{\F}[0] {\mathcal{F}}
\newcommand{\M}[0] {\mathcal{M}}

\newcommand{\cx}[0] {\mathcal{X}}

\newcommand{\cxr}[0] {\cx^{\R_+}}

\newcommand{\cb}[0] {\mathcal{B}}

\newcommand{\cbr}[0] {\cb(\cx)^{\R_+}}

\newcommand{\Ept}[1]{\mathbb{E}\left[#1\right]}
\newcommand{\Probil}[1]{\mathbb{P}\{#1\}}
\newcommand{\Prob}[1]{\mathbb{P}\left\{#1\right\}}
\newcommand{\deq}[0]{\overset{d}{=}}
\def\f1{\mathbf{1}}

\newcommand{\indic}[2]{\mathbbm{1}_{#1}\left(#2\right)}
\newcommand{\indicw}[1]{\mathbbm{1}_{#1}}
\newcommand{\indicil}[2]{\mathbbm{1}_{#1}(#2)}

\def\filt{\mathcal{F}}

\def\bF{\bar{F}}
\def\intbF{{\mathcal I}_{\bar{F}}}
\def\intF{{\mathcal I}_F}
\def\intf{{\mathcal I}_f}

\newcommand{\bs}{\mathbf{S}}
\newcommand{\bp}{\mathbf{p}}
\newcommand{\bz}{\mathbf{0}}
\newcommand{\ba}{\bm{\alpha}}
\newcommand{\bmu}{\bm{\mu}}
\newcommand{\bnu}{\bm{\nu}}

\def\xn{X^{(N)}}
\def\yn{Y^{(N)}}
\def\zn{Z^{(N)}}
\def\agen{a^{(N)}}
\def\xnhat{\hat X^{(N)}}
\def\ynhat{\hat Y^{(N)}}
\def\znhat{\hat Z^{(N)}}
\def\nun{\nu^{(N)}}
\def\nunhat{\hat\nu^{(N)}}
\def\en{E^{(N)}}
\def\enhat{\hat E^{(N)}}
\def\dn{D^{(N)}}
\def\dnhat{\hat D^{(N)}}
\def\Qn{Q^{(N)}}
\def\kn{K^{(N)}}
\def\knhat{\hat K^{(N)}}

\def\lambdan{\lambda^{(N)}}
\def\pinhat{\hat\pi^{(N)}}

\begin{frontmatter}
\title{Ergodicity of an SPDE Associated with a Many-Server Queue}

\begin{aug}
\author{\fnms{Reza} \snm{Aghajani}\thanksref{t1}\ead[label=e1]{reza@brown.edu}}
\and
\author{\fnms{Kavita} \snm{Ramanan}\thanksref{t2}\ead[label=e2]{kavita\_ramanan@brown.edu}}

\thankstext{t1}{The first author was supported by NSF grants CMMI-1234100 and DMS-1407504.}
\thankstext{t2}{The second author was supported by AFOSR grant  FA9550-12-1-0399.}

\affiliation{Department of Mathematics, UC San Diego,
and Division of Applied Mathematics, Brown University}

\end{aug}

\begin{abstract}
    We consider the so-called GI/GI/N queueing network in which a stream of jobs with independent and identically distributed service times arrive according to a renewal process  to a common queue served by $N$ identical servers in a First-Come-First-Serve manner. We introduce a two-component infinite-dimensional Markov process that serves as a diffusion model for this network, in the regime where the number of servers goes to infinity  and the load on the network scales as $1  - \beta N^{-1/2}+ o(N^{-1/2})$ for some $\beta  > 0$.  Under suitable assumptions, we characterize this process as the unique solution to a pair of stochastic evolution equations comprised of a real-valued  It\^{o} equation and a stochastic partial differential equation on the positive half line, which are coupled together by a nonlinear boundary condition. We construct an asymptotic (equivalent) coupling to show that this Markov process has a unique invariant distribution. This invariant distribution is  shown in a companion paper \cite{AghRam16interchange} to be the limit of the sequence of suitably scaled and centered stationary distributions of the GI/GI/N network,   thus resolving  (for a large class service distributions) an open problem raised by Halfin and Whitt in \cite{HalWhi81}. The methods introduced here are more generally applicable for the analysis of a  broader class of networks.
\end{abstract}

\end{frontmatter}

\setcounter{tocdepth}{1}
\tableofcontents

\section{Introduction}\label{SECintro}

\subsection{Overview}\label{SUBSECintro_desc}

Most stochastic networks are typically too complex to be amenable to exact analysis. Instead, a common approach is to develop approximations that can be rigorously justified via limit theorems in a suitable asymptotic regime. Diffusion models, which capture fluctuations of the state of the network around its mean behavior, and their invariant distributions have been well studied for networks of single server queues in heavy traffic (i.e., queues near instability). Most diffusion models for which rigorous limit theorems have been established  thus far have been  finite-dimensional  processes (e.g., reflected Brownian motions) or (in the case of certain non-head-of-the-line policies)  deterministic mappings of a finite-dimensional process (see, e.g.,  \cite{DoyLehShr01,KruLehRamShr08,KruLehRamShr11}). In contrast,  functional central limit theorems for many-server networks with general service distributions lead naturally to diffusion models that are truly infinite-dimensional, and therefore require new techniques for their analysis.  Whereas several limit theorems  for many-server queues have been established (see Section \ref{sub-prior} for a review),  not much work has been devoted to the analysis of the associated  diffusion limit.

The goal of this work is to introduce  useful representations of diffusion models of many-server queues with general service distributions,  and to develop techniques for the analysis of the associated processes and their invariant distributions.   In particular, we seek to resolve an open question related to the GI/GI/N queue, which is a network of  $N$ parallel servers to which a common stream of jobs with independent and identically distributed (i.i.d.) service requirements arrive according to a renewal process,  wait in a common queue if all servers are busy,   and are processed in a First-Come-First-Serve manner by servers when they become free. When the system is stable,  an important performance measure is the steady state distribution of $X^{(N)}$, the total number of jobs in the network, which includes those waiting in queue and those in service.  A quantity of particular interest is  the steady state probability that the queue is non-empty. An exact computation of this quantity is in general not feasible for large systems. However, when the service distribution $G$ is exponential and  the  traffic intensity (i.e., the ratio of the mean arrival rate to the mean service rate) of the system has the form $1 - \beta N^{-1/2} + o(N^{-1/2})$ for some $\beta > 0$, and the interarrival distribution satisfies some minor technical conditions, Halfin and Whitt \cite[Theorem 2]{HalWhi81} showed that the sequence of centered and renormalized processes $\hat{X}^{(N)} = (X^{(N)} - N)/\sqrt{N}$ converges weakly on finite time intervals to a positive recurrent diffusion $X$ with a constant negative drift when $X > 0$ and an Ornstein-Uhlenbeck type restoring drift when $X < 0$. Moreover, they also showed that, as the number of servers $N$ goes to infinity,  the invariant distribution of $\hat{X}^{(N)}$ converges to  the (unique) invariant distribution of the diffusion $X$ \cite[Proposition 1 and Corollary 2]{HalWhi81}, which provides an explicit approximation for the steady state probability of an $N$-server queue being strictly positive for large $N$.  Indeed, the asymptotic scaling for the traffic intensity mentioned above, which is commonly  referred to as the Halfin-Whitt asymptotic regime, was shown in \cite{HalWhi81} to be the only scaling that ensures that,  in the limit, the probability of a positive queue is  non-trivial (i.e., lies strictly between zero and one).

However, statistical analysis has shown that  service distributions are typically non-exponential \cite{Broetal05}, and the problem of  obtaining an analogous result for general, non-exponential service distributions was posed as an open problem in \cite[Section 4]{HalWhi81}. This problem has remained unsolved except for a few specific distributions \cite{GamMom08,JelManMom04,Whi05},
even though  tightness of the sequence of scaled queue-length processes was recently established under general assumptions by Gamarnik and Goldberg \cite{GamGol13,Gol13}. The missing element in converting the tightness result of \cite{GamGol13} to a convergence result was the identification and unique characterization of a candidate limit distribution. In analogy with the exponential case, a natural conjecture would be that the limit distribution is equal to  the unique stationary distribution (assuming one can be shown to exist) of the process $X$ obtained as the limit (on every finite interval) of $\{\hat{X}^{(N)}\}$.   However, whereas for exponential service distributions both the process $\hat{X}^{(N)}$ and its limit $X$ are Markov processes, this is no longer true for more general service distributions. Specifically, although  convergence (over finite time intervals) of the sequence of scaled processes $\{\hat{X}^{(N)}\}$ has been established for various classes of service distributions (see, e.g., \cite{PuhRei00,ManMom08,GamMom08,Ree09,PuhRee10,KasRam13}), the obtained limit is not Markovian,  with the exception of the work  in \cite{KasRam13}, which is discussed further in Section \ref{sub-prior}.  This makes characterization of the stationary distribution of $X$ challenging. It is easy to see that, except for special classes of distributions (e.g., phase-type distributions, as considered in \cite{PuhRei00}), any Markovian diffusion limit process will be infinite-dimensional. The key challenge is then to identify a suitable diffusion model,  whose invariant distribution can be analyzed and shown to be the limit of the stationary distributions of the GI/GI/N queue.

\subsection{Discussion of Results} \label{subs-discussion}

The first contribution of this article is to introduce a two-component Markov process $(X,Z)$ that serves as a diffusion model in the Halfin-Whitt asymptotic regime
(see Definition \ref{def_solutionProcess}). The first component $X$  is real-valued and is the limit of the sequence $\{\hat{X}^N\}$. The second component  $Z = \{Z(t, \cdot), t \geq 0\}$, which takes values in the Hilbert space $\Hone\ho$ of square integrable functions on $\ho$ that have a square integrable weak derivative,  keeps track of just enough additional information so that $(X, Z)$ is a  Markov process. Under suitable conditions on the service distribution,  we  characterize the components $X$ and  $Z$ to be the unique solution to a coupled pair of stochastic equations  driven by a Brownian motion and an independent space-time white noise (see Theorem \ref{thm_ExistUnique}). Specifically, $X$ satisfies an It\^{o} equation with a constant diffusion coefficient and a $Z$-dependent drift,  and $Z$ is an $\Hone\ho$-valued process driven by a space-time white noise that satisfies a (non-standard) stochastic partial differential equation (SPDE) on the domain $\ho$.  The two components are further linked by a nonlinear boundary condition:  at each  time $t \geq 0$,  $Z(t,0)$ is a (deterministic) nonlinear function of $X(t)$. A precise formulation of these equations is given in Definition \ref{def_csae}. The SPDE characterization of $(X,Z)$ facilitates the use of tools from stochastic calculus  to compute performance measures of interest and natural generalizations would  potentially also be useful for  studying diffusion control problems for many-server networks, in a manner analogous to Brownian control problems that have  been studied in the context of  single-server networks \cite{HarBook13}.

The second contribution of this article (see Theorem \ref{thm_Stationary}) is to establish uniqueness of the invariant distribution of the Markov process $(X,Z)$.  Standard methods for establishing ergodicity of  finite-dimensional Markov processes such as positive Harris recurrence are not well suited to this setting due to the infinite-dimensional nature of the state space.  Other techniques such as the dissipativity method used for studying ergodicity of nonlinear SPDEs (see, e.g. \cite{DaPZabBook96,MasSei99}) also appear not immediately applicable due to the non-standard form of the equations, in particular, the presence of the non-linear boundary condition. Instead, we adopt  the asymptotic (equivalent) coupling approach developed in a series of papers by Hairer, Mattingly, Scheutzow and co-authors  (see, e.g., \cite{EMatSin01,Hai02,BakMat05,HaiMatSch11} and references therein) to show that $(X,Z)$ has a unique invariant distribution. The asymptotic equivalent coupling that we construct has a different flavor from that used in previous works, and  entails the analysis of a certain renewal equation.  We believe that SPDEs with this kind of boundary condition are likely to arise in the study of scaling limits and control of other parallel server networks, and so our constructions could be useful in a broader context.

In a companion paper \cite{AghRam16interchange} we introduce an $\Hone\ho$-valued process $Z^{(N)}$ that, together with  $X^{(N)}$, serves as a state descriptor of the GI/GI/N queue (see  Section \ref{sec_newrep}).  As depicted in Figure \ref{Figure:clt}, we show in  \cite[Theorem  2.1]{AghRam16interchange} that each $\ynhat \doteq (\xnhat, \znhat)$  has a  stationary distribution $\pinhat$ and, in \cite[Proposition 5.1]{AghRam16interchange} that  the sequence $\{\pinhat\}$ is tight, building on earlier results in \cite{GamGol13}. Furthermore, under convergence assumptions on the initial conditions,  it follows from \cite[Proposition 7.1]{AghRam16interchange} that for every $t \geq 0$, the marginal $\ynhat(t) = \{(\hat X^{(N)}(t),\hat Z^{(N)}(t))\}$ converges weakly to the corresponding marginal $Y(t) = (X(t),Z(t))$ of the diffusion model with the limiting initial condition. In particular, initializing the $N$-server system such that  $\ynhat(0)$ has law $\pinhat$, and considering a subsequence that converges to some limit $Y(0)$, whose law denoted by $\pi$, this implies that  along that subsequence,  $\ynhat(t)$ converges to  $Y(t)$, where $Y$ is  the diffusion model with initial condition $Y(0)$.  Since $\ynhat(t)$ has law $\pinhat$ by stationarity,  the law of $Y(t)$ is $\pi$ for each $t \geq 0$. Theorems \ref{thm_ExistUnique} and \ref{thm_Stationary} then  imply that  $\pi$ is the  unique invariant distribution of the diffusion model. When  combined, these  results show that it is also the limit of the sequence $\{\pinhat\}_{N \in \mathbb{N}}$  (see Theorem \ref{Thm_conv}),  thus resolving in the affirmative (for a large class of service distributions) the open question posed in \cite{HalWhi81} and reiterated in \cite{GamGol13}.

\begin{figure}[t]
\centering
\includegraphics[height=3cm]{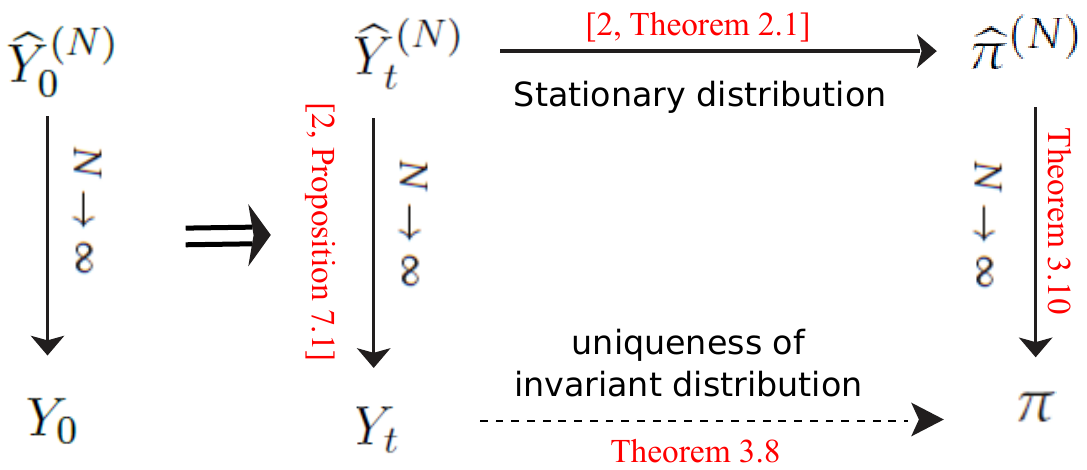}
\vspace{-0.1in}
\caption{Schematic for proof of Theorem \ref{Thm_conv}}
\label{Figure:clt}
\end{figure}

More generally,  this work also serves to illustrate the usefulness of the technique of asymptotic (equivalent) coupling for the study of stability properties of many-server stochastic networks with general service distributions. Our framework would also allow one to use the general theory of  Markov processes, although in an infinite-dimensional setting, to try to obtain a convenient characterization or numerical approximation of the limiting distribution. Such questions are  relegated to  future work.

\subsection{Relevant Prior Work on Infinite-dimensional Representations of the GI/GI/N queue}\label{sub-prior}

A Markovian state descriptor for the GI/GI/N queue was proposed by  Kaspi and Ramanan in \cite{KasRam11,KasRam13} in terms of a pair $(X^{(N)}, \nu^{(N)})$, where $\nu^{(N)}_t$ is a finite measure  on $\hc$; see Section \ref{sec_measurerep}.  A functional strong law of large numbers (or fluid) limit was established in \cite{KasRam11} and the sequence of suitably renormalized fluctuations of the state  around the fluid limit (in the subcritical, critical  and supercritical  regimes)  was shown to converge in the Halfin-Whitt regime to a limit process $(X, \nu)$ in \cite{KasRam13}. However, $(X,\nu)$ lies in a somewhat complicated space (the $\nu$ component is distribution-valued) and turns out  not to be a Markov process on its own (i.e., the Markov property of the state  is lost in the limit)  unless one imposes very stringent assumptions on the service distribution, or one adds a third component to Markovianize the state.  A key insight that led to our simpler representation is the observation that the first and third components (when the latter is chosen to be on a suitable space) on their own form a nice Markov process  $(X,Z)$. It is worth pointing out that the choice of the state space of $(X,Z)$ is somewhat subtle (see Remark \ref{rem-subtle}  for an elaboration of this point).

For the simpler case of infinite-server queues, the $Z$ process in our representation can be shown to be Markovian on its own, there is no nonlinear coupling with $X$ to deal with, and the dynamics are simpler to describe (via a linear SPDE). In this case, it is more straightforward to establish uniqueness and identify the form of the invariant distribution without resorting to any asymptotic coupling argument.  Other works that consider infinite-server queues include \cite{DecMoy08} and \cite{ReeTal15}.  Specifically, in \cite{ReeTal15}  a different representation of the state in the space of tempered distributions is used, and a diffusion limit is characterized as  a tempered-distribution-valued Ornstein Uhlenbeck process.   While this choice of state space facilitates the analysis of the invariant distribution, it requires stronger conditions on the service distribution, such as infinite differentiability of the hazard rate function and boundedness of all its derivatives  (see \cite[Assumption 1.2]{ReeTal15}).

\subsection{Outline of the Rest of the Paper}\label{SUBSECintro_outline}

In Section \ref{sec_main}, we  introduce the assumptions on the service distribution and the \textit{diffusion model SPDE}, and  state our main results, Theorems  \ref{thm_ExistUnique} and \ref{thm_Stationary}.   In Section \ref{sec_explicit}, we  provide an explicit construction of the diffusion model $(X,Z)$ and show that it is the unique solution to the diffusion model SPDE, and is also a time-homogeneous Feller Markov process.  The proof of Theorem \ref{thm_ExistUnique} is given at the end of Section \ref{sec_explicitMarkov}.  In Section \ref{SECuniq}, which is devoted to the proof of Theorem \ref{thm_Stationary}, we construct a suitable asymptotic equivalent coupling to show that the diffusion model has at most one invariant distribution. Proofs of a few additional results are relegated to Appendices \ref{keyAPX} and \ref{sec_proofGamma}.

\subsection{Notation}\label{sec_notation}

The following notation will be used throughout the paper. $\Z$,  $\Z_+$ and $\N$ are the sets of integers, nonnegative integers and positive integers, respectively. Also, $\R$ is the set of real numbers and $\R_+$ the subset of nonnegative real numbers. For $a, b \in \R$, $a \wedge b$ and $a \vee b$ denote the minimum and maximum of $a$ and $b$, respectively. Also,  $a^+\doteq a\vee 0$ and $a^-\doteq-(a\wedge 0)$. For a set $B$, $\indic{B}{\cdot}$ is the indicator function of the set B (i.e., $\indic{B}{x} = 1$ if $x \in B$ and $\indic{B}{x} = 0$ otherwise). Moreover, with a slight abuse of notation,  $\f1$ denotes the constant function equal to $1$ on any domain $V$.

For every $n\in\N$ and  subset $V\subset\R^n$, $\mathbb{C}(V)$, $\mathbb{C}_b(V)$ and $\mathbb{C}_c(V)$ are respectively, the spaces of continuous functions on $V$,  bounded continuous functions on $V$ and continuous functions with compact support on $V$. For $f\in\C$ and $T>0$, $\|f\|_T$ denotes the supremum of $|f(s)|$ over $s \in [0,T]$, and $\|f\|_\infty$ denotes the supremum of $|f(s)|$ over $\hc$. A function for which $\|f\|_T < \infty$ for every $T < \infty$ is said to be locally bounded.  Also, $\mathbb{C}^0\hc$ denotes the subspace of functions $f\in\C$ with $f(0)=0$, $\mathbb{C}^1\hc$ denotes the set of functions $f \in \C$ for which the derivative,  denoted by $f^\prime$, exists and is continuous on $\hc$ (with $f'(0)$ denoting the right derivative at $0$), and $\mathbb{C}^1_b\hc$ represents the subset of functions in $\mathbb{C}^1\hc$ that are bounded and have a uniformly bounded derivative. Moreover, for every Polish space $\cx$, $\mathbb{C}(\hc;\cx)$ denotes  the set of continuous $\cx$-valued functions on $\hc$. Recall that when $V\subset\R$, a function $f:V\mapsto\R$ is uniformly continuous on an interval $I\subset V$ if for every  $\epsilon>0$, there exists a $\delta>0$ such that for every $t,s\in I$ with $|t-s|<\delta,$ $|f(t)-f(s)|\leq \epsilon$.  A  function $f:\ho\mapsto\R$ is called locally uniformly continuous if, for every $0 < T < \infty$,  it is uniformly continuous on the interval $I=(0,T)$.  Note that for a locally uniformly continuous function $f$, the limit $\lim_{t\downarrow0}f(t)$ exists and  $f$ can be continuously  extended to $\hc$ by setting  $f(0)\doteq\lim_{t\downarrow0}f(t).$

Let $\mathbb{L}^1\ho,$ $\mathbb{L}^2\ho,$ and $\mathbb{L}^\infty\ho,$ denote, respectively, the spaces of integrable, square-integrable and essentially bounded measurable functions on $\ho$, equipped  with their corresponding standard norms.  Also, let $\mathbb{L}^1_{\text{loc}}\ho$  denote the space of locally integrable functions on $\hc.$  For any $f\in\mathbb{L}^1_{\text{loc}}\ho$ and a function $g$ that is bounded on finite intervals,  $g*f$ denotes the (one-sided) convolution of two functions, defined as $f*g(t)\doteq\int_0^t f(t-s)g(s)ds$,  $t \geq0.$ Note that $f*g$ is locally integrable and locally bounded.
Let $\Hone\ho$  denote the space of square integrable functions $f$ on $\ho$ whose weak derivative $f^\prime$ exists and is also square integrable, equipped with the norm
\[
    \|f\|_{\Hone} = \left(\|f\|_{\Ltwo\ho}^2 + \|f^\prime\|_{\Ltwo\ho}^2\right)^{\frac{1}{2}}.
\]
The space $\Hone\ho$ is a separable Banach space, and hence, a Polish space (see e.g. \cite[Proposition 8.1 on p.\ 203]{Bre2011}). Also, for a function $t \mapsto \{u(t,r), r > 0\} \in\mathbb{C}([0,\infty),\Hone\ho)$, $\partial_ru(t,\cdot)$ denotes the weak derivative of $u(t,\cdot)$ for every $t\geq0.$ Finally, recall that every function $f\in\Hone\ho$ is almost everywhere equal to an absolutely continuous function whose derivative coincides with the weak derivative of $f$, almost everywhere  \cite[Problem 5 on p.\ 290]{evans}.

Finally, for two measures $\mu,\nu$ on a measurable space $(\Omega,\mathcal{F})$, $\mu$ is said to be absolutely continuous with respect to $\nu$, denoted $\mu\ll\nu$, if for every subset $A\in\mathcal{F}$, $\nu(A)=0$ implies $\mu(A)=0.$ When $\mu\ll\nu$ and $\nu\ll\mu$, $\mu$ and $\nu$ are said to be equivalent, and this is denoted by $\mu\sim\nu.$

\section{State representations of the $N$-server queue} \label{sec-nstate}

To motivate the form of the SPDE description of the diffusion model, we recall dynamical equations for the $N$-server model, first for the  measure-valued state descriptor of the GI/GI/N queue  introduced in \cite{KasRam11,KasRam13} in Section \ref{sec_measurerep},  and  then for the reduced state representation in Section \ref{sec_newrep}.

\subsection{A Measure-Valued Representation for the GI/GI/N Queue}\label{sec_measurerep}

Recall that the total number of jobs in system at time $t$ is denoted by $\xn(t)$. Also, let $\agen_j(t)$ denote the age of job $j$, which is the amount of service that the job has received by time $t$. In \cite{KasRam11}, the GI/GI/N queue was described by the state $(X^{(N)}, \nu^{(N)})$, where $\nun_t$ is the finite measure
\[
\nun_t=\sum_j \delta_{\agen_j(t)},
\]
 which is the sum, over jobs in service at time $t$, of unit Dirac masses at the ages of the  jobs. In particular,  $\nun_t(\f1)$ is the number of jobs in service (recall that $\nun_t(\f1)$  is the integral of the constant function $\f1$ with respect to $\nun_t$), and the non-idling property implies
\begin{equation}\label{pre_nonidling}
  (\xn(t)-N)\wedge 0 =\nun_t(\f1)-N.
\end{equation}
Let $\en(t)$ and $\dn(t)$  be the cumulative number of jobs that, respectively, arrived into and departed from the system during $[0,t]$.  Then, the following equation reflects a simple mass balance for the number of jobs in system:
\begin{equation}\label{pre_x}
  \xn(t)=\xn(0)+\en(t)-\dn(t).
\end{equation}
Moreover, defining $\kn(t)$ to be the number of jobs that have entered service during $[0,t]$, a mass balance equation for the number of jobs in service implies
\begin{equation}\label{pre_k}
  \kn(t)=\nun_t(\f1)-\nun_0(\f1)+\dn(t).
\end{equation}

Now, consider the Halfin-Whitt regime where the service distribution has unit mean and the arrival rate satisfies $\lambda^{(N)}=N - \beta \sqrt{N} + o(\sqrt{N})$ as $N\to\infty$. In \cite{KasRam11},  a fluid (or functional law of large numbers)
limit was obtained and shown to have
 $(\overline X^*,\overline \nu^*) = (1, \overline G(x) dx)$ as an invariant state.  The dynamics of   the ``diffusion-scaled'' processes
\[
(\xnhat_t,\nunhat_t )\doteq \frac{1}{\sqrt{N}}(\xn-N\overline X^*,\nun-N\overline \nu^*),
\]
and $\hat{H}^{(N)}(t)\doteq(H^{(N)}(t)-t)/\sqrt{N}$ for $H=E,D,K,$ were studied in
detail in \cite{KasRam13}.  It was shown in \cite[Corollary 5.5]{KasRam13} that
\begin{equation}\label{pre_dnmgale}
  \dnhat(t)=\int_0^t \nunhat_s(h)ds +\hat{\M}^{(N)}(\f1),
\end{equation}
where $h$ is the hazard rate function of the service time distribution, and $\hat{\M}^{(N)}$ is a c\`{a}dl\`{a}g orthogonal martingale measure with covariance functional
\begin{equation}
\label{cov-mgale}  \langle \hat{M}^{(N)} (B), \hat{M}^{(N)} (\tilde{B}) \rangle = \int_0^t \left( \int_{B \cap \tilde{B}} h(x) \bar{\nu}_s^{(N)} (dx) \right) \, ds,
\end{equation}
for any Borel  subset $B, \tilde{B}$ of $[0,\infty)$, and for any bounded continuous function $\varphi$ on $[0,\infty)^2$, $\hat{M}^{(N)} (\varphi)$ represents the stochastic integral of $\varphi$ with respect to $\hat{M}^{(N)}$ (see \cite[Section 4.1]{KasRam13} and \cite[Chapters 1 and 2]{WalshBook} for more details on martingale measures). Therefore, \eqref{pre_x} implies that
\begin{equation}\label{pre_xhat}
\xnhat(t)=\xnhat(0)+\enhat(t)-\int_0^t\nunhat_s(h)ds-\hat{\M}^{(N)}(\f1).
\end{equation}
It was also shown, in \cite[Proposition 6.2]{KasRam13},
 that for $f\in\mathbb{C}_b^1\hc,$ $\nunhat_t(f)$ satisfies
\begin{equation}\label{pre_nu}
  \nunhat_t(f)=\nunhat_0(f)+\int_0^t\nunhat_s\left(\frac{df}{dx}  -hf\right)ds +f(0)\knhat(t)-\hat{\M}^{(N)} (f),
\end{equation}
with
\begin{equation}\label{pre_khat}
  \knhat(t)=\nunhat_t(\f1)-\nunhat_0(\f1)+\int_0^t\nunhat_s(h)ds+\hat{\M}^{(N)}(\f1),
\end{equation}
and $\nunhat$ and $\xnhat$ further coupled through the following diffusion-scaled
version of the
non-idling condition in \eqref{pre_nonidling}:
\begin{equation}\label{pre_nonidlinghat}
  \nunhat_t(\f1)=\xnhat(t)\wedge 0.
\end{equation}

Furthermore, it was shown in \cite[Theorem 3]{KasRam13} that $\{(\hat{X}^{(N)}, \hat{\nu}^{(N)})\}_{N \in \N}$ converges to a limit process $(X, \nu)$ that is characterized by a coupled pair of stochastic evolution equations.  However, this process is not very amenable to analysis: it takes values in the somewhat complicated state space  $\R\times\mathbb{H}_{-2}$ (where $\mathbb{H}_{-2}$ is a certain distribution space), and is not Markov on its own (i.e., the Markov property of the state representation is not preserved in the limit), although it could be Markovianized by the addition of a third component, thereby making the state space even more complicated.

\subsection{A Reduced Representation}\label{sec_newrep}

We now consider a more tractable  representation $\yn\doteq(\xn,\zn)$ for the GI/GI/N model,  used in \cite{AghRam16interchange},   in which we append to $\xn(t)$, the number of jobs in system,  a function-valued random element  $\zn(t,\cdot)$ defined as follows: for every $t\geq0$,
\begin{equation}\label{pre_z}
    \zn(t,r)\doteq\sum_j\frac{\overline G(a_j^{(N)}(t)+r)}{\overline G(a_j^{(N)}(t))}, \quad r \geq 0,
\end{equation}
where the summation is over the indices of jobs  in service at time $t$. Roughly speaking, $Z^{(N)}(t,r)$ is the conditional expected number of jobs receiving service at time $t$ that will still be in the network at time $t+r$, given the network state at time $t$. The key advantage of this representation is that the process  $Z^{(N)}$ lies in a simpler Hilbert space, but at the same time contains enough information so as  to lead to a  diffusion model that is Markov.

Defining a one-parameter family of functions $\{\vartheta^r;r\geq0\}$ by
\begin{equation}\label{bdeftemp}
    \vartheta^r(x) \doteq \frac{\overline G(x+r)}{\overline G(x)},\quad\quad\quad r,x\geq 0,
\end{equation}
the random element $\zn(t,\cdot)$ can be written in terms of $\nun_t$ as
\begin{equation}\label{pre_znurelation}
    Z^{(N)}(t,r)\doteq\nun_t(\vartheta^r).
\end{equation}
Note  $\vartheta^0=\f1$, and hence, $\zn(t,0)=\nun_t(\f1)$ is the number of jobs in service at time $t$.  Further, define $\bar{Z}^*(r) \doteq \langle \vartheta^r, \nu^* \rangle = \int_0^\infty \bar{G}(x+r) dx$ and set $\znhat(t,\cdot) \doteq (Z^{(N)}(t,r) - N\bar{Z}^*(r))/\sqrt{N}$.    Then, substituting $f = \vartheta^r$ in the equation \eqref{pre_nu}, noting that  $\partial_x \vartheta^r - h \vartheta^r = \partial_r \vartheta^r$ and $\vartheta^r(0)= \overline{G}(r)$, and invoking the relation
\begin{equation}\label{pre_drZ}
  \nun_t (\partial_r\vartheta^r) = \sum_j\frac{g(a_j^{(N)}(t)+r)}{\overline G(a_j^{(N)}(t))}= -\partial_r \zn(t,r),
\end{equation}
we observe that $\znhat$ satisfies
\begin{equation} \label{ztr1}
    \znhat(t,r) = \znhat(0,r) + \int_0^t \partial_r \znhat(s, r) ds + \overline{G} (r) \knhat(t) - \hat{\M}^{(N)}_t (\vartheta^r),
\end{equation}
where, substituting $\nun_\cdot(\f1)=\zn(\cdot,0)$ and $\langle h, \nunhat_s\rangle=-\partial_r \znhat (s,0)$ in \eqref{pre_khat} and \eqref{pre_nonidlinghat},  we have
\begin{equation}
\label{knhat}
 \knhat (t) = \znhat (t,0) - \znhat(0,0) - \int_0^t \partial_r \znhat (s,0) ds  +\hat{\M}^{(N)}_t (\f1),
\end{equation}
and
\begin{equation}
\label{red-nonidlingnhat}
\znhat (t,0) = \xnhat(t) \wedge 0.
\end{equation}
The \textit{diffusion model SPDE} introduced
in Section \ref{sec_mainSAE} is a limit analog of
the equations in the last three displays.

\section{Assumptions and Main Results}\label{sec_main}

\subsection{Assumptions}\label{sec_ass}

Throughout, the function $G$ represents the cumulative distribution function (cdf) of the  service distribution in the GI/GI/N model, and $\overline{G}\doteq 1- G$ denotes the complementary cdf.

\begin{assumption}\label{hh2_AS}
The function $G$ satisfies the following properties:
\begin{enumerate}[\quad\quad a.]
    \item \label{h0_AS} $G$ is continuously differentiable with derivative $g$ and $\int_0^\infty \overline{G}(x) dx =1$.
    \item \label{h_AS} The function $h$ defined by
        \begin{equation}\label{def-h}
            h(x)\doteq\frac{g(x)}{\overline{G}(x)}, \quad x \in [0,\infty),
        \end{equation}
        is uniformly bounded, that is, $H\doteq \sup_{x\in\hc}h(x)<\infty.$
    \item \label{h2_AS} The derivative $g$ is continuously differentiable with  derivative $g^\prime$, and the function
        \[
            h_2(x)\doteq\frac{g^\prime(x)}{\overline G(x)},\quad\quad x\in\hc,
        \]
        is uniformly bounded, that is, $H_2\doteq \sup_{x\in\hc}|h_2(x)|<\infty.$
\end{enumerate}
\end{assumption}

\begin{remark} \label{rem-meanone}
{\em Assumption \ref{hh2_AS}.\ref{h0_AS} implies that the service distribution has a continuous probability density function (p.d.f.) $g$ and  finite mean that is set (by changing units if necessary) to be 1. Note that $h$ represents the hazard rate function of the service time distribution, and the boundedness of $h$ implies that the support of the service time distribution is all of $[0,\infty)$.}
\end{remark}

\begin{assumption}\label{finite2_AS}
    For some $\epsilon>0$, there exists a finite positive constant $c$ such that $\overline G(x)\leq c \;x^{-2-\epsilon}$ for all sufficiently large $x$. In other words, the service time distribution has a finite $(2+\epsilon)$ moment.
\end{assumption}

\begin{remark}\label{integrable}
{\em
Since $\overline G(x)\leq 1$ and $\int_0^\infty \overline G(x) dx=1$, $\overline G$ lies in $\Lone\ho\cap\Ltwo\ho$.  Hence, Assumptions \ref{hh2_AS}.\ref{h_AS} and \ref{hh2_AS}.\ref{h2_AS}, respectively, imply  that $g$ and $g^\prime$ also lie in $\Lone\ho\cap\Ltwo\ho.$ Also,  Assumption \ref{finite2_AS} and the bound  $\int_\cdot^\infty \overline G(x)dx\leq1$ imply that $\int_\cdot^\infty \overline G(x)dx$ also lies in $\Lone\ho\cap\Ltwo\ho$.}
\end{remark}

It is easily verified (see Appendix \ref{apver}) that Assumptions \ref{hh2_AS} and  \ref{finite2_AS} are satisfied by a large class of distributions of interest, including phase-type distributions, Gamma distributions with shape parameter $\alpha\geq 2$,  Lomax distributions (i.e.,  generalized Pareto distributions with location parameter $\mu=0$) with shape parameter $\alpha>2$, and the log-normal distribution, which has been empirically observed to be a good fit for service distributions arising in applications \cite[Section 4.3]{Broetal05}.

\subsection{An SPDE}\label{sec_mainSAE}

In this section, we introduce an SPDE that is shown in Theorem \ref{thm_ExistUnique} to characterize the diffusion model $Y = (X,Z)$.

\subsubsection{Driving Processes}
The SPDE is driven by a Brownian motion $B$ and an independent space-time white noise ${\mathcal M}$ on $[0,\infty)^2$ based on the measure $g (x) dx \otimes dt$, defined on a common probability space $(\Omega, {\mathcal F}, \mathbb{P})$, i.e., ${\mathcal M}$ $=$ $\left\{{\mathcal M}_t (A), A \in {\mathcal B}[0,\infty), t \in   [0,\infty) \right\}$ is a continuous martingale  measure  with covariance given by
\begin{equation}\label{MAcov}
    \langle \M(A),\M(\tilde A) \rangle_t = t\;\int_0^\infty \indic{A\cap\tilde A}{x}g(x)dx,    \quad A, \tilde A \in {\mathcal B} ([0,\infty)).
\end{equation}
For definitions and properties of martingale measures and space-time white noise, we refer the reader to  \cite[Chapters 1 and 2]{WalshBook}. For every bounded measurable function $\varphi$ on $\hc\times\hc$,

the stochastic integral of $\varphi$ on $\hc\times[0,t]$ with respect to the martingale measure $\M$ is denoted by
\begin{equation}\label{MintegralDef}
    \M_t(\varphi)\doteq\iint\limits_{[0,\infty)\times[0,t]}\varphi(x,s)\M(dx,ds).
\end{equation}
Note that $\mathcal{M}_{\cdot}(\varphi)$ is a continuous Gaussian process with independent increments. Also, let  $\tilde{\mathcal{F}}_t\doteq \sigma( B(s), \mathcal{M}_s(A); 0\leq s\leq t,A\in\mathcal{B}\ho),$ and let the filtration $\{{\mathcal F}_t\}$ denote the augmentation (see \cite[Definition 7.2 of Chapter 2]{KarShr91}) of $\{\tilde{{\mathcal F}}_t\}$ with respect to $\mathbb{P}$. Recall that the stochastic integral $\{\M_t (\varphi), t \geq 0\}$  is an $\{{\mathcal F}_t\}$-martingale with (predictable) quadratic variation
\begin{equation}\label{Mcov}
    \langle \M(\varphi)\rangle_t = \int_0^t\int_0^\infty \varphi^2(x,s)g(x)dx\;ds.
\end{equation}

The processes $B$ and $\M$ arise as  limits of suitably scaled fluctuations of the arrival and departure processes, respectively,  in the GI/GI/N model. Specifically, the  fluctuations $\hat{E}^{(N)}$ of the scaled renewal arrival processes  converge weakly to the process $E(t)= \sigma B(t) - \beta t$,  $t \geq 0$, where $\beta$ is the constant in the Halfin-Whitt scaling, and $\sigma > 0$ is a constant that depends on the mean and variance of the interarrival times (see \cite[Remark 5.1]{KasRam13}).   Moreover, it follows from  \cite[Lemma 5.9]{KasRam11} and \cite[Corollary 8.3]{KasRam13} that  when the fluid limit is invariant, with $\overline{\nu}_{\cdot} (dx)  \equiv \overline{G}(x) dx$ (referred to as the critical case in \cite{KasRam13}), for suitable test functions $f$, the sequence  of scaled local martingales $\hat{\M}^{(N)} (f)$ converges to $\M(f)$; indeed, the covariance functional \eqref{MAcov} is the limit of the covariance functional of $\hat{\M}^{(N)}$, obtained by  replacing $\bar{\nu}^{N}$ by its limit $\bar{\nu}$ in the expression \eqref{cov-mgale}.    Furthermore, the independence of $\M$ and $B$ follows from the asymptotic independence result of \cite[Proposition 8.4]{KasRam13}.

\subsubsection{The Diffusion Model SPDE}\label{subs-xz}

To introduce the state space $\Y$ of the diffusion model, we first  list relevant properties of  the  space $\Hone\ho$.

\renewcommand{\theenumi}{\alph{enumi}}
\begin{lemma}\label{propertiesofH1}
The  space $\Hone\ho$ satisfies the following properties:
\begin{enumerate}
    \item For  every function $f\in\Hone\ho$, there exists a (unique) function $f^*\in\C$ such that $f=f^*$ a.e.\ on $(0,\infty).$ \label{propertiesofH1_version}
    \item  The embedding $I:\Hone\ho\mapsto\C$ that takes $f$ to $f^*$ is continuous.\label{propertiesofH1_mapping}
    \item \label{prop_translation} For every $t\geq0$, the mapping $f\mapsto f(t+\cdot)$ is a continuous mapping from  $\Hone\ho$ into itself. Also, for every $f\in\Hone\ho,$ the translation mapping $t\mapsto f(t+\cdot)$ from $\hc$ to $\Hone\ho$ is continuous. Moreover,
    \begin{equation}\label{translation_eq}
        \lim_{t\to\infty} \|f(t+\cdot)\|_{\Hone}=0.
    \end{equation}
\end{enumerate}
\end{lemma}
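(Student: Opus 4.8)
The plan is to prove the three parts in order, leaning on the one-dimensional Sobolev theory for the half-line.

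For part \eqref{propertiesofH1_version}: given $f\in\Hone\ho$, its weak derivative $f'$ lies in $\Ltwo\ho\subset\mathbb{L}^1_{\text{loc}}\ho$, so for any fixed $r_0>0$ the function $f^*(r)\doteq f(r_0)+\int_{r_0}^r f'(s)\,ds$ is well-defined and absolutely continuous on $\ho$, coincides a.e.\ with $f$ (this is the standard fact recalled at the end of Section \ref{sec_notation}), and is continuous on $\ho$. To extend $f^*$ continuously to $0$, note that by Cauchy--Schwarz $|f^*(r)-f^*(r')|\le |r-r'|^{1/2}\|f'\|_{\Ltwo\ho}$, so $f^*$ is (locally, in fact globally) uniformly continuous on $\ho$ and hence has a limit as $r\downarrow0$; setting $f^*(0)$ equal to this limit puts $f^*\in\C$. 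Uniqueness of $f^*$ in $\C$ is immediate since two continuous functions agreeing a.e.\ agree everywhere.

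For part \eqref{propertiesofH1_mapping}: I would establish the quantitative bound $\|f^*\|_\infty\le c\,\|f\|_{\Hone}$ for a universal constant $c$, which gives continuity of the linear map $I$. The standard argument: from $f^*(r)^2 = f^*(r_0)^2 + 2\int_{r_0}^r f^*(s)f'(s)\,ds$ and integrating $r_0$ over, say, $[0,1]$ (or using that $f\in\Ltwo$ forces $\liminf_{r\to\infty}|f^*(r)|=0$ to pin a base point), one bounds $|f^*(r)|^2\lesssim \|f\|_{\Ltwo\ho}^2 + 2\|f\|_{\Ltwo\ho}\|f'\|_{\Ltwo\ho}\lesssim \|f\|_{\Hone}^2$. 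Linearity of $I$ then upgrades this pointwise bound to continuity with respect to the sup norm on $\C$.

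For part \eqref{prop_translation}: fix $t\ge0$ and write $\tau_t f\doteq f(t+\cdot)$. Since $f\in\Hone\ho$ has $f'\in\Ltwo\ho$, clearly $\tau_t f$ and $(\tau_t f)' = \tau_t(f')$ both lie in $\Ltwo\ho$ with $\|\tau_t f\|_{\Hone}\le\|f\|_{\Hone}$, so $\tau_t$ maps $\Hone\ho$ into itself and is a contraction, hence continuous. For continuity of $t\mapsto\tau_t f$ into $\Hone\ho$: this reduces to the statement that translation is continuous in $\Ltwo$-norm applied to both $f$ and $f'$, i.e.\ $\|\tau_{t}g-\tau_{s}g\|_{\Ltwo\ho}\to0$ as $s\to t$ for $g\in\Ltwo\ho$, which is the classical strong continuity of the translation semigroup on $\Ltwo$ (prove it first for $g\in\mathbb{C}_c\ho$, which is dense, then pass to the limit using the uniform bound $\|\tau_t\|\le1$). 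Finally, for \eqref{translation_eq}, since $\|\tau_t f\|_{\Hone}^2 = \int_t^\infty f^2 + \int_t^\infty (f')^2$ and both $f^2,(f')^2$ are in $\mathbb{L}^1\ho$ (as $f,f'\in\Ltwo\ho$), the tails vanish as $t\to\infty$ by dominated convergence (or simply by absolute continuity of the integral).

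I expect no serious obstacle here — this is essentially a recollection of standard facts about $\Hone$ on a half-line — but the one point requiring a little care is the boundary behavior at $0$: establishing that elements of $\Hone\ho$ (which is not $\Hone$ with a trace-zero condition) genuinely extend continuously up to the endpoint, and that the embedding constant in part \eqref{propertiesofH1_mapping} is controlled despite the domain being unbounded. Both are handled by the Cauchy--Schwarz / fundamental-theorem-of-calculus estimates above together with the fact that $\Ltwo$-membership forces the relevant function to be small somewhere arbitrarily far out.
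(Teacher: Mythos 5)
Your proposal is correct and follows essentially the same route as the paper: the paper disposes of parts (a) and (b) by citing the standard one-dimensional Sobolev embedding results (Brezis, Theorems 8.2 and 8.8), whose standard proofs are exactly the fundamental-theorem-of-calculus and $f^2(r)=f^2(r_0)+2\int f f'$ arguments you spell out, and its Appendix~\ref{sec_proofSpace} proof of part (c) uses the same contraction bound, density of $\mathbb{C}_c\ho$ in $\Ltwo\ho$ for strong continuity of translation, and vanishing tails of $f^2,(f')^2\in\Lone\ho$ that you give. No gaps.
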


\begin{proof}
Part \ref{propertiesofH1_version} is proved in \cite[Theorem 8.2]{Bre2011}.  Since  $\|f^*\|_{T}\leq\|f^*\|_{\mathbb{L^\infty}\ho}$  for $f^* \in\C$ and $T<\infty$,  part \ref{propertiesofH1_mapping} follows immediately from the bound (5) of \cite[Theorem 8.8]{Bre2011}. Part \ref{prop_translation} is elementary;  however, a proof is provided in Appendix \ref{sec_proofSpace}.
\end{proof}

Using $I$ to denote the embedding from $\Hone\ho$ to $\C$ as defined above, we define the space
\[
   \Y \doteq \left\{ (x,f) \in \R \times \Hone(0,\infty):  I[f](0) =  x \wedge 0 \right\},
\]
which will serve as the state space of the diffusion model.

\begin{corollary} \label{cor-propertiesofH1_Y}
  $\Y$ is a closed subspace of $\mathbb{R} \times \Hone\ho$ and hence, is a Polish space.
\end{corollary}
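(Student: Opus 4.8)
The plan is to exhibit $\Y$ as the preimage of a closed set under a continuous map, and then invoke the standard fact that a closed subspace of a Polish space is Polish. Concretely, define the map $\Phi : \R \times \Hone\ho \to \R$ by $\Phi(x,f) \doteq I[f](0) - (x \wedge 0)$, so that $\Y = \Phi^{-1}(\{0\})$. Since $\{0\}$ is closed in $\R$, it suffices to show that $\Phi$ is continuous; then $\Y$ is closed in $\R \times \Hone\ho$. The latter space is Polish (it is a product of a Polish space and the separable Banach space $\Hone\ho$, which is Polish by the remark in Section~\ref{sec_notation}), and a closed subset of a Polish space is itself Polish in the subspace topology, which gives the conclusion.

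The only real content is the continuity of $\Phi$. I would argue that $\Phi$ is the difference of two continuous maps. The map $(x,f) \mapsto x \wedge 0$ is continuous on $\R$ (it is Lipschitz, being the minimum of the identity and the constant $0$), hence continuous as a function of $(x,f)$. For the other term, $(x,f) \mapsto I[f](0)$, I would factor it as the composition of three continuous maps: the coordinate projection $(x,f) \mapsto f$ from $\R \times \Hone\ho$ to $\Hone\ho$ (continuous); the embedding $I : \Hone\ho \to \C$ (continuous by Lemma~\ref{propertiesofH1}\ref{propertiesofH1_mapping}); and the evaluation-at-$0$ functional $u \mapsto u(0)$ from $\C$ to $\R$. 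The last of these is continuous because for $u, \tilde u \in \C$ and any $T > 0$, $|u(0) - \tilde u(0)| \le \|u - \tilde u\|_T$, so evaluation at $0$ is $1$-Lipschitz with respect to the topology of uniform convergence on compacts (indeed with respect to $\|\cdot\|_T$ for any fixed $T$). Composing, $(x,f) \mapsto I[f](0)$ is continuous, and hence so is $\Phi$.

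I do not anticipate a genuine obstacle here; the statement is essentially bookkeeping once Lemma~\ref{propertiesofH1} is in hand. The one point that warrants a line of care is making sure the evaluation functional is well-defined and continuous on the right space: it is crucial that we first pass through the embedding $I$ into $\C$ (where pointwise evaluation at $0$ makes literal sense), rather than trying to evaluate an $\Hone\ho$-equivalence-class at the single point $0$, which is not well-defined a priori. With that factorization, the argument is clean. Finally, to conclude that $\Y$ is Polish I would cite the standard topological fact (e.g.\ that a $G_\delta$, in particular closed, subset of a Polish space is Polish) — no further estimates are needed.
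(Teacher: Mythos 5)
Your proposal is correct and follows essentially the same route as the paper: writing $\Y$ as the preimage of $\{0\}$ under the continuous map $(x,f)\mapsto I[f](0)-x\wedge 0$, using Lemma \ref{propertiesofH1}.\ref{propertiesofH1_mapping} for continuity of the embedding, and then concluding that a closed subset of the Polish space $\R\times\Hone\ho$ is Polish. The extra detail you supply (Lipschitz continuity of evaluation at $0$ and of $x\mapsto x\wedge 0$) is just a fuller spelling-out of the paper's argument.
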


\begin{proof}
Note that the mapping $f^* \mapsto  f^*(0)$ is continuous from $\C$ to $\R$, and hence by part \ref{propertiesofH1_mapping} of Lemma \ref{propertiesofH1}, the mapping that takes $f\in\Hone\ho$ to $I[f](0)$ is continuous from $\Hone\ho$ to $\R$.   Since $x\mapsto x\wedge0$ is also continuous on $\R$, the set $\Y$ is the pre-image of the closed set $\{0\}$ under the continuous map $I[f](0)-x\wedge0$, and hence is closed.
\end{proof}

\renewcommand{\theenumi}{\arabic{enumi}}

\begin{remark}\label{remark_notation}
{\em Lemma \ref{propertiesofH1} asserts that for every function $f\in\Hone\ho$, there exists a unique continuous function $f^*$ on $\hc$ whose restriction to $\ho$ belongs to the equivalence class of $f$. This continuous representative is used to define the evaluation of $f$ on $\hc$; in particular, we use $f(0)$ to  denote the evaluation of $f^*$ at $0$.  For  ease of notation (and as is customary in the literature, see, e.g., \cite[Remark 5, Section 8]{Bre2011}), we denote the continuous representative of $f$ again by $f$. }
\end{remark}

Using the notation of Remark \ref{remark_notation}, we can rewrite the state space $\Y$ as
\begin{equation}\label{spaceYdefinition}
    \Y \doteq \left\{ (x,f) \in \R \times \Hone(0,\infty):  f(0) =  x \wedge 0 \right\}.
\end{equation}
Let $\Y$ be equipped with its Borel $\sigma$-algebra ${\mathcal B} (\Y)$.  We now introduce the diffusion model SPDE.
Note that the diffusion model equations  \eqref{SPDEintz}, \eqref{SPDEintbc} and \eqref{SPDEintx} for $(X,Z)$ in  Definition \ref{def_csae} below  are simply limit analogs of
the equations  \eqref{ztr1}-\eqref{knhat}, \eqref{pre_nonidlinghat} and \eqref{pre_xhat}, respectively, of the GI/GI/N model.

\begin{definition}[Diffusion Model SPDE]\label{def_csae}
Let $Y_0=(X_0,Z_0(\cdot))$ be a $\Y$-valued random element defined on $(\Omega, {\mathcal F}, \mathbb{P})$, independent of $B$ and $\M$. A continuous $\Y$-valued stochastic process $Y=\{(X(t),Z(t,\cdot));t\geq0\}$ is said to be a solution to the \textit{diffusion model SPDE} with initial condition $Y_0$ if
 \begin{enumerate}
   \item \label{def_csaeAdapted} $Y$ is $\{\mathcal{F}^{Y_0}_t\}$-adapted, where $\mathcal{F}^{Y_0}_t\doteq\mathcal{F}_t\vee \sigma(Y_0);$
   \item \label{def_csaeInitial}$Y(0)= Y_0$, $\mathbb{P}$-almost surely;
   \item \label{def_csaeCondition} $\mathbb{P}$-almost surely,  $\partial_rZ(\cdot,\cdot):\ho\times\ho\mapsto\R$ is locally integrable and for every $t>0$, there exists (a unique) $F_t^*\in\C$ such that the function
       \begin{equation}\label{SPDEint_condition}
           r\mapsto \int_0^t \partial_r Z(s,r)ds
        \end{equation}
        is equal to $F^*_t$ a.e. on $\ho$. Again, with a slight abuse of notation, for $r\geq0$ (and in particular,  $r=0)$ we denote by $\int_0^t\partial_rZ(s,r)ds$ the evaluation of the continuous representative $F^*_t$ at $r$.
   \item \label{def_csaeEquation}$\mathbb{P}$-almost surely,  $Z$ satisfies
        \begin{align}
            Z(t,r) =& Z_0(r)+ \int_0^t \partial_rZ(s,r)ds-\mathcal{M}_t(\vartheta^r) \label{SPDEintz}\\
           & +\overline G(r)\left\{Z(t,0)- Z_0(0) -\int_0^t \partial_rZ(s,0)ds
              + \mathcal{M}_t(\f1) \right\}, \quad  \forall t,r\geq 0,\notag
        \end{align}
        subject to the boundary condition
        \begin{equation}\label{SPDEintbc}
            Z(t,0)=X(t)\wedge 0,\quad \forall t\geq0,
        \end{equation}
        and $X$ satisfies the stochastic equation
        \begin{equation}\label{SPDEintx}
            X(t)=X_0+\sigma B(t)-\beta t -\mathcal{M}_t(\f1) +\int_0^t\partial_r{Z(s,0)ds},\quad \forall t\geq 0.
        \end{equation}
\end{enumerate}
Given $B$, ${\mathcal M}$ as above, we say the diffusion model SPDE has a unique solution if for every initial condition $Y_0$ and every two solutions $Y$ and $\tilde Y$ with initial condition $Y_0$, $\Probil{Y(t)=\tilde Y(t); \forall t\geq 0}=1.$
\end{definition}

\subsection{Main Results}\label{sec_mainStationary}

We first state our results and then describe how they are useful for showing   convergence of stationary distributions of the centered and  scaled number of jobs $\xnhat$ in system in the Halfin-Whitt regime, thus resolving the open problem posed in \cite{HalWhi81}.  To state our first result, recall that a Markov family $\{P^y;y\in\Y\}$ with corresponding transition semigroup $\{\mathcal{P}_t;t\geq0\}$ is called Feller if for every continuous and bounded function $F$ on $\Y$, $\mathcal{P}_t[F]$ is a continuous function.

\begin{theorem}\label{thm_ExistUnique}
Suppose Assumptions \ref{hh2_AS}-\ref{finite2_AS} hold. Then, for every $\Y$-valued random element $Y_0$, there exists a unique solution $Y$ to the diffusion model SPDE with initial condition $Y_0$. Furthermore, if $P^y$ is the law of the solution with initial condition $y\in\Y$, then $\{P^y;y\in\Y\}$ is a time-homogeneous Feller Markov family.
\end{theorem}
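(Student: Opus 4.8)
The plan is to build the solution explicitly rather than appeal to an abstract fixed-point theorem, exploiting the fact that the SPDE is \emph{linear} in $Z$ once the boundary term $Z(t,0)$ is regarded as a known forcing. First I would observe that the pair \eqref{SPDEintz}--\eqref{SPDEintx}, together with the constraint \eqref{SPDEintbc}, can be decoupled as follows. Write $Z(t,0)=X(t)\wedge 0$ and substitute into \eqref{SPDEintx}; then \eqref{SPDEintx} together with the requirement that $\int_0^t\partial_rZ(s,0)\,ds$ be a continuous function of $r$ (item \ref{def_csaeCondition}) forces a relation between $X$ and the ``trace'' process $\zeta(t)\doteq\int_0^t\partial_rZ(s,0)\,ds$. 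Evaluating \eqref{SPDEintz} at $r=0$ shows it is an identity, so the real content is: (i) an equation for $X$ (equivalently for $Z(\cdot,0)$) driven by $B$, $\M_\cdot(\f1)$ and $\zeta$; and (ii) given $X$, an explicit formula recovering $Z(t,\cdot)$ on $\ho$. The key is to identify $\zeta$. Differentiating (formally) in $r$ and using $\partial_r\vartheta^r=\partial_x\vartheta^r-h\vartheta^r$, one finds that $\zeta$ satisfies a deterministic \emph{renewal-type equation} whose kernel is built from $g$ and $\overline G$; this is precisely where Assumptions~\ref{hh2_AS} and~\ref{finite2_AS} enter, guaranteeing the kernel lies in $\Lone\ho\cap\Ltwo\ho$ (Remark~\ref{integrable}) so that the renewal equation has a unique locally bounded solution. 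I expect the paper's Section~\ref{sec_explicit} to carry this out via an explicit mild/variation-of-constants representation of $Z$ as a stochastic convolution against the semigroup of the translation operator $f\mapsto f(r+\cdot)$ on $\Hone\ho$ (Lemma~\ref{propertiesofH1}\ref{prop_translation}), plus a boundary correction term proportional to $\overline G(r)$.

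With that representation in hand, \textbf{existence} follows by checking that the explicitly constructed process (a) takes values in $\Y$, i.e.\ satisfies the nonlinear boundary relation $Z(t,0)=X(t)\wedge0$ pathwise --- this is automatic from the construction since we \emph{imposed} $Z(\cdot,0)=X\wedge0$ and then solved; (b) has sample paths continuous in $t$ with values in $\R\times\Hone\ho$ --- this uses the regularity lemmas promised in Section~\ref{sec_explicitPrelim} (continuity of the stochastic integrals $t\mapsto\M_t(\vartheta^r)$ jointly in $(t,r)$, and $\Hone$-valuedness, which requires the $\Ltwo$-control on $g,g'$ from Assumption~\ref{hh2_AS}); (c) satisfies item~\ref{def_csaeCondition}, namely that $r\mapsto\int_0^t\partial_rZ(s,r)\,ds$ has a continuous representative --- again read off from the explicit formula, since $\partial_rZ(s,r)$ inherits enough smoothness from the smoothness of $\overline G$ and the semigroup action. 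For \textbf{uniqueness}: given two solutions $Y,\tilde Y$ with the same $Y_0$, set $\Delta=X-\tilde X$; subtracting the two copies of \eqref{SPDEintx} and \eqref{SPDEintz} kills all the noise terms, leaving a \emph{deterministic} linear system for $(\Delta, \Delta Z)$ whose only input is the difference of the trace processes. That difference satisfies a homogeneous renewal equation with the same $\Lone$ kernel, and by Gronwall/renewal uniqueness it must vanish on every $[0,T]$; hence $\Delta\equiv0$ and then $\Delta Z\equiv0$ from the $Z$-formula. This is the step I expect to be the main obstacle --- not conceptually, but because making the formal $r$-differentiation rigorous (justifying $\partial_r Z(s,r)=\nu_s(\partial_r\vartheta^r)$-type identities in integrated form, and controlling the renewal kernel near $r=0$ where $\partial_r\vartheta^r|_{r=0}=-h$ only has the boundedness from Assumption~\ref{hh2_AS}.\ref{h_AS}) requires care with the function spaces; the reason Assumption~\ref{hh2_AS}.\ref{h2_AS} is imposed is almost certainly to get the second-order control needed to keep $Z(t,\cdot)\in\Hone\ho$ (one weak derivative) rather than merely continuous, and to differentiate the kernel.

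Finally, the \textbf{Markov and Feller} assertions. Time-homogeneity and the Markov property follow from the standard flow/cocycle argument: because the solution is a measurable functional $Y(t)=\Phi_t\big(Y_0,\,B|_{[0,t]},\,\M|_{[0,t]}\big)$ of the initial data and the driving noise on $[0,t]$, and because $B$ and $\M$ have stationary independent increments (so $(B(t+\cdot)-B(t),\M_{t+\cdot}-\M_t)$ is an independent copy of $(B,\M)$), one gets $Y(t+s)=\Phi_s\big(Y(t),\theta_t B,\theta_t\M\big)$ with $\theta_t$ the shift; conditioning on $\F_t^{Y_0}$ and using independence of the increments yields the Markov property with a transition semigroup depending only on the time increment $s$. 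For the \textbf{Feller property} one must show $y\mapsto \mathbb{E}[F(Y^y(t))]$ is continuous for bounded continuous $F$; by dominated convergence it suffices to prove \emph{continuous dependence of the solution on the initial condition}: if $y_n\to y$ in $\Y$ then $Y^{y_n}(t)\to Y^{y}(t)$ in $\R\times\Hone\ho$ in probability (indeed a.s., along the same noise). This is exactly the linear-dependence structure again: the map $Y_0\mapsto Y$ is, modulo the renewal solve and the $\min$ with $0$ in the boundary condition, built from bounded linear operators on $\Hone\ho$ (translation semigroup, convolution with the $\Lone$ kernel, embedding $I$) plus Lipschitz maps ($x\mapsto x\wedge0$), so the requisite continuity estimate on $[0,T]$ follows from the continuity statements in Lemma~\ref{propertiesofH1} and a Gronwall argument. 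I would present this continuity estimate as a standalone lemma, since it simultaneously delivers pathwise uniqueness, the Feller property, and (in the companion arguments of Section~\ref{SECuniq}) the ingredients for the asymptotic coupling.
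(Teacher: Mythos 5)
Your overall architecture matches the paper's: an explicit construction of $Z$ as a stochastic convolution plus a boundary-correction term proportional to $\overline G(r)$, a scalar equation determining $X$ and the boundary input, a Gronwall argument for uniqueness, the flow/cocycle property plus stationary independent increments of $(B,\M)$ for the time-homogeneous Markov property, and continuity in the initial condition plus bounded convergence for the Feller property. One minor correction: the equation pinning down the boundary process is not a linear renewal equation. Because of the constraint $Z(t,0)=X(t)\wedge 0$, it is a Volterra equation with the Lipschitz nonlinearity $x\mapsto x^+$ inside the convolution with $g$ (the paper's CMS mapping, Lemma \ref{CMSMPROP}); its unique solvability needs only local integrability of $g$ and Lipschitz Volterra theory plus Gronwall, not the $\Lone\ho\cap\Ltwo\ho$ integrability you invoke (that integrability, and the genuine renewal equation, enter later in the invariant-distribution analysis, not in this theorem).

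The genuine gap is in your uniqueness step. You assert that subtracting two solutions kills the noise and leaves a deterministic system ``whose only input is the difference of the trace processes,'' which then satisfies a homogeneous equation and vanishes. But for an arbitrary solution in the sense of Definition \ref{def_csae} you do not yet know that its $Z$-component is given by the explicit mild formula in terms of $K$; subtracting \eqref{SPDEintz} only tells you that $\Delta Z$ is a \emph{weak} solution of the transport equation $\Delta Z(t,r)=\int_0^t\partial_r\Delta Z(s,r)\,ds+\overline G(r)\Delta K(t)$, with no more regularity than items \ref{def_csaeCondition}--\ref{def_csaeEquation} provide ($\Hone\ho$ in $r$, integrated form in $t$). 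To close the system at the boundary --- i.e., to obtain the scalar equation for $\delta(s)=\partial_r\Delta Z(s,0)$ that Gronwall can kill --- one must first show that the \emph{only} weak solution in this class is $\Delta Z(t,\cdot)=\Gamma_t\Delta K$. That is exactly Lemma \ref{lem_PDE}, which the paper proves by mollifying in $r$ and running an energy ($\Ltwo\ho$) estimate on $[\delta,\infty)$; without some such uniqueness statement for the low-regularity transport equation, the passage from ``noise cancels'' to ``homogeneous equation for the trace, hence zero'' does not go through, since a priori nonuniqueness of the weak transport equation need not be visible at $r=0$. You flag this step as ``requiring care with function spaces'' but supply no mechanism, and it is the one real missing idea; once it is in place, the remainder of your plan (Gronwall on $\delta$, the flow-property argument for the Markov property, continuity of the solution map for Feller) coincides with the paper's proof.
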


The proof of Theorem \ref{thm_ExistUnique} is given at the end of Section \ref{sec_explicitMarkov}. Let $\{\mathcal{P}_t;t\geq0\}$ be the transition semigroup associated with  the Markov family $\{P^y;y\in\Y\}$ of Theorem \ref{thm_ExistUnique}, and recall that a probability measure $\mu$ on $(\Y,\mathcal{B}(\Y))$ is said to be an
invariant or stationary distribution for $\{\mathcal{P}_t\}$ If
\begin{equation}
    \mu\mathcal{P}_t=\mu,  \qquad t \geq 0.\label{eq-Pt-invariant}
\end{equation}

\begin{theorem}\label{thm_Stationary}
Suppose Assumptions \ref{hh2_AS}-\ref{finite2_AS} hold. Then the transition semigroup $\{\mathcal{P}_t;t\geq0\}$ associated with the diffusion model SPDE has at most one invariant distribution.
\end{theorem}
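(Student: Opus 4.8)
The plan is to establish uniqueness via the asymptotic (equivalent) coupling method. Let $\mu_1,\mu_2$ be two invariant distributions of $\{\mathcal P_t\}$; we must show $\mu_1=\mu_2$. By the general uniqueness criterion for asymptotic couplings (\cite{HaiMatSch11}, and also \cite{EMatSin01,Hai02,BakMat05}), it suffices to construct, for each pair of initial conditions $y_1=(x_1,f_1)$, $y_2=(x_2,f_2)$ in $\Y$, a coupling $(Y^1,Y^2)=\big((X^1,Z^1),(X^2,Z^2)\big)$ of the path laws $P^{y_1},P^{y_2}$, defined on a probability space $(\Omega,\F,\mathbf Q)$, such that (i) $Y^2$ has law $P^{y_2}$ while the law of $Y^1$ is absolutely continuous with respect to $P^{y_1}$; and (ii) the trajectories merge, i.e.
\[
  |X^1(t)-X^2(t)|+\|Z^1(t,\cdot)-Z^2(t,\cdot)\|_{\Hone}\ \longrightarrow\ 0 \qquad\text{as }t\to\infty,\ \ \mathbf Q\text{-a.s.}
\]
The rest of the proof is the construction of such a coupling, which uses a Girsanov shift of the driving Brownian motion together with the analysis of a renewal equation.

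Take $Y^1$ to be the solution of the diffusion model SPDE with initial condition $y_1$ driven by the given pair $(B,\M)$, and $Y^2$ the solution with initial condition $y_2$ driven by $\big(B+\sigma^{-1}\int_0^{\cdot}v(s)\,ds,\ \M\big)$ — the \emph{same} martingale measure $\M$, but a drift-perturbed Brownian motion — for an adapted real-valued control $v$ to be specified. Subtracting \eqref{SPDEintx} for the two copies, the $\sigma B-\beta t$ and $\M_t(\f1)$ terms cancel and $X^2(t)-X^1(t)=(x_2-x_1)+\int_0^t v(s)\,ds+\int_0^t(\partial_rZ^2(s,0)-\partial_rZ^1(s,0))\,ds$; subtracting \eqref{SPDEintz} (the stochastic integrals cancel since $\M$ is shared) shows that $W(t,\cdot):=Z^2(t,\cdot)-Z^1(t,\cdot)$ satisfies the deterministic transport-type equation $W(t,r)=W_0(r)+\int_0^t\partial_rW(s,r)\,ds+\overline G(r)A(t)$, with $W_0:=f_2-f_1\in\Hone\ho$, boundary value $W(t,0)=X^2(t)\wedge0-X^1(t)\wedge0$ by \eqref{SPDEintbc}, and $A$ the continuous function (with $A(0)=0$) given by $A(t)=W(t,0)-W_0(0)-\int_0^t(\partial_rZ^2(s,0)-\partial_rZ^1(s,0))\,ds$, which is the difference of the scaled cumulative numbers of jobs entering service. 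Solving the equation for $W$ (e.g.\ $V:=W-\overline G\,A$ satisfies $\partial_tV=\partial_rV-g\,A$, $V(0,\cdot)=W_0$) yields
\[
  W(t,r)=W_0(r+t)+\overline G(r)A(t)-\int_0^t g(r+t-s)A(s)\,ds,\qquad r,t\ge 0,
\]
and evaluating at $r=0$ gives the renewal equation $A-g*A=\psi$, with $\psi(\cdot):=W(\cdot,0)-W_0(\cdot)$ and $*$ the one-sided convolution. This is the renewal equation alluded to in Section~\ref{SUBSECintro_desc}; since $g$ is a probability density (Remark~\ref{rem-meanone}) it is solved by $A=\psi+\big(\sum_{n\ge1}g^{*n}\big)*\psi$.

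Now choose $v$ so that $X^2-X^1\to0$ — a first attempt being $v(t)=-(\partial_rZ^2(t,0)-\partial_rZ^1(t,0))-(x_2-x_1)e^{-t}$, which forces $X^2(t)-X^1(t)=(x_2-x_1)e^{-t}$, hence $W(t,0)\to0$. Since also $W_0(t)\to0$ (as $W_0\in\Hone\ho$), we get $\psi(t)\to0$; the crux of the renewal-equation analysis (discussed below) is to deduce that $A(t)$ converges to a constant $c_\infty$ and is bounded. Granting this, in the representation of $W$ the term $W_0(t+\cdot)\to0$ in $\Hone\ho$ by the translation-decay property Lemma~\ref{propertiesofH1}(\ref{prop_translation})--\eqref{translation_eq}, and, using $\int_0^t g(r+t-s)\,ds=\overline G(r)-\overline G(r+t)$, the remaining two terms combine to $c_\infty\overline G(r)-c_\infty\overline G(r)+o(1)$, with the $o(1)$ controlled in $\Hone\ho$ by
\[
  \Big\| \int_0^t g((t-s)+\cdot)\big(A(s)-c_\infty\big)\,ds \Big\|_{\Hone} \ \le\ \int_0^t \|g(u+\cdot)\|_{\Hone}\,|A(t-u)-c_\infty|\,du \ \longrightarrow\ 0,
\]
where the convergence uses that $u\mapsto\|g(u+\cdot)\|_{\Hone}$ (and likewise $u\mapsto\|\overline G(u+\cdot)\|_{\Hone}$) is integrable on $\ho$ — this is exactly where the $(2+\epsilon)$-moment Assumption~\ref{finite2_AS} (together with the boundedness of $h,h_2$ in Assumption~\ref{hh2_AS}) enters — convolved against the bounded, vanishing-at-infinity function $A-c_\infty$. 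Hence $\|W(t,\cdot)\|_{\Hone}\to0$, which with $X^2-X^1\to0$ gives the merging (ii).

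It remains to verify that the coupling is legitimate. One needs $v$ admissible, i.e.\ $\int_0^T v(s)^2\,ds<\infty$ for every $T$, $\mathbf P$-a.s., so that the Girsanov density $\mathcal E(\sigma^{-1}\int_0^{\cdot}v\,dB)$ is a true martingale on each finite horizon, defining $\mathbf Q$ under which $B+\sigma^{-1}\int_0^{\cdot}v$ is a Brownian motion independent of $\M$ (the density depends only on $B$, which is independent of $\M$); then $Y^2$ has law $P^{y_2}$ and the law of $Y^1$ is $\ll P^{y_1}$, giving (i). Well-posedness of the perturbed coupled system $(Y^1,Y^2,v)$ with the stated adaptedness follows from Theorem~\ref{thm_ExistUnique} and a standard Girsanov/fixed-point argument. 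Admissibility reduces, via $\partial_rZ^2(t,0)-\partial_rZ^1(t,0)=W_0'(t)-g(0)A(t)-(g'*A)(t)$ (differentiate the representation of $W$ at $r=0$; here $g'\in\Lone\ho$ by Assumption~\ref{hh2_AS}(\ref{h2_AS})), to the square-integrability in $t$ of $W_0'\in\Ltwo\ho$, of $A-c_\infty$, and of $g'*(A-c_\infty)$ — the constant contributions $\mp g(0)c_\infty$ canceling. I expect the principal obstacle to be precisely this last point: extracting from the renewal equation $A-g*A=\psi$ the \emph{quantitative} control of $A$ (its boundedness, and the decay/integrability of $A-c_\infty$) with the correct dependence on the initial data and on the driving noise. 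This is delicate because $\psi$ inherits whatever, possibly slow, decay $W_0=f_2-f_1$ has, which is tied to the subtle choice of the state space $\Y$ (cf.\ Remark~\ref{rem-subtle}) and, again, to Assumption~\ref{finite2_AS}; a more careful choice of $v$ — one that also shapes the boundary value $W(\cdot,0)$ so as to speed up the decay of $\psi$ — may be required. A secondary difficulty is that the nonlinear boundary map $x\mapsto x\wedge0$ in \eqref{SPDEintbc} prevents $W(\cdot,0)$, hence $\psi$, from being a smooth functional of the noise, so the analysis must be carried out throughout at the level of the integrated identities above. The regularity estimates for the relevant stochastic convolutions collected in Section~\ref{sec_explicitPrelim}, and the translation property \eqref{translation_eq}, are the main technical inputs.
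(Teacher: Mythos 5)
Your plan follows the same broad strategy as the paper (asymptotic equivalent coupling via a Girsanov drift shift, exponential merging of the $X$-components, a transport equation for the $Z$-difference analyzed through a renewal equation), but the step you yourself flag as "the principal obstacle" is precisely the mathematical core of the proof, and it is not supplied. You grant that the solution $A$ of $A-g*A=\psi$ is bounded, converges to some $c_\infty$, and that $A-c_\infty$ has enough decay/square-integrability to make the drift admissible and the convolution terms vanish in $\Hone\ho$; none of this follows from what you write, and it is doubtful it holds in the generality you attempt (all pairs in $\Y$, forcing $\psi$ whose decay is only that of an arbitrary $\Hone$ function, so that one would need a quantitative key-renewal-theorem rate). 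The paper avoids exactly this trap by (i) restricting the coupling to $A=\{(x,z)\in\Y: x\ge 0\}$, which kills the constant offsets ($\Delta x_0^-=\Delta z_0(0)=0$, cf.\ \eqref{uniq_Dk} and \eqref{eq-intF}), and (ii) working at the derivative level: the quantity $\Delta R$ satisfies a renewal equation whose forcing $\bF$ \emph{and} its integral $\intbF$ lie in $\Ltwo\ho$ with deterministic bounds, so that the $\Ltwo$ renewal estimate of Proposition \ref{renewalTHM}.\ref{renewalTHM_key} (based on $\|U-l^+\|_{TV}<\infty$ under the finite second moment of Assumption \ref{finite2_AS}) yields $\|\Delta R\|_{\Ltwo}\le \bar C_1\|\Delta z_0\|_{\Hone}+\bar C_2|\Delta x_0|$ (Lemma \ref{uniq_DnuLEM}); there is then no need for boundedness of $A$ or a rate of convergence $A\to c_\infty$. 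The price of restricting to $A$ is that condition (\ref{prop1}) of Proposition \ref{abs_thm} must be verified ($\mu(A)>0$ for every invariant $\mu$), which the paper does by a Gaussian positivity argument; your "couple every pair" plan only sidesteps this because the hard renewal estimate was assumed.

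Two further points in your writeup would fail as stated. First, for Proposition \ref{abs_thm} the marginals of the coupling must be absolutely continuous with respect to $P^{y_i}$ as measures on the \emph{infinite-horizon} path space $(\Y^{\R_+},\cb(\Y)^{\R_+})$; a Girsanov density that is a true martingale on each finite horizon gives only local absolute continuity, which is not enough (Brownian motion with and without drift are locally equivalent but mutually singular on $[0,\infty)$). Even almost-sure finiteness of $\int_0^\infty v^2\,ds$ does not by itself make the exponential a uniformly integrable martingale; the paper gets a \emph{deterministic} bound $\|m\|^2_{\Ltwo}\le C(y,\tilde y)$, so Novikov's condition holds over $[0,\infty)$ and the measure change is an equivalence on all of $\F$ (alternatively one would have to truncate the control, which you do not do). Second, your $v$ depends on $Y^2$, so "let $Y^2$ be the solution driven by $B+\sigma^{-1}\int_0^\cdot v$" is a fixed-point problem not covered by Theorem \ref{thm_ExistUnique}; the paper instead constructs the second process explicitly (the linear equation \eqref{uniq_txDef} for $\tilde X$, the renewal equation \eqref{uniq_nuthDef} for $\tilde R$, and \eqref{ZtDef} for $\tilde Z$) and only afterwards identifies it, via Girsanov, as a solution of the SPDE driven by $\tilde B$. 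Finally, the measurability condition (\ref{propMble}) of Proposition \ref{abs_thm} (the content of Lemma \ref{lem_mble}) is not addressed at all.
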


Theorem \ref{thm_Stationary} is proved in Section \ref{SUBSECuniq_proof}.

\begin{remark}\label{rem-subtle}
{\em
A key contribution of our work is the identification of a suitable augmentation of the state $X$, with a function-valued process such as $Z$, that  facilitates the analysis of both the process and its invariant distribution.  It is worth emphasizing that the most convenient choice of function space for $Z$ is not completely obvious. For example, $Z$ could also be viewed as  a continuous process taking values in the spaces $\Ltwo\ho$, $\C$, $\mathbb{C}^1\hc$, or $\mathbb{W}^{1,1}\ho$, the Sobolev space of integrable functions with integrable weak derivatives on $\ho$. However, in the case of $\Ltwo\ho$ or $\C$, $X$ does not seem to admit a representation as a nice It\^{o} process, and it is not clear if $(X,Z)$ is a  Feller process. On the other hand,  although the choice of $\mathbb{C}^1\hc$ leads to a Feller process, it seems difficult to show uniqueness of the invariant distribution in this space.  Lastly, when the state  space of $Z$ is chosen to be $\mathbb{W}^{1,1}\ho$, it is possible to show that $(X,Z)$ is  both a continuous homogeneous Feller process and has a unique invariant distribution (albeit the latter only under more restrictive assumptions on $G$). However, in this case, it does not seem easy to establish convergence of the centered scaled marginals  $\hat{Z}^N(t)$ to $Z(t)$, which is a key step in \cite{AghRam16interchange}  that is used to identify the limit of the sequence of scaled $N$-server stationary distributions. The choice of the space $\Hone \ho$  for $Z$ allows us to  establish all three properties at once; in particular, the Hilbert structure of $\Hone \ho$ is exploited in \cite[Lemma 7.3 and Proposition 7.1]{AghRam16interchange} to establish finite-dimensional convergence of $\hat{Z}^N(t)$ to $Z(t)$ (when the initial conditions converge in a suitable sense). }
\end{remark}

The assumption  $\beta>0$ is not necessary for the results of this paper; however, we impose  it since it is needed to prove existence of the stationary distribution $\pi$.  As explained in Section \ref{subs-discussion}, the motivation for
our  results above is that,
when combined with those of \cite{AghRam16interchange},  they yield the following result, which in particular establishes existence of a stationary distribution for $\{\mathcal{P}_t\}$.

\begin{theorem}(\cite[Theorem 2.2]{AghRam16interchange})\label{Thm_conv}
Suppose Assumptions \ref{hh2_AS}-\ref{finite2_AS} hold. In addition, suppose $G$ has a finite $(3+\epsilon)$ moment for some $\epsilon>0$, and $g'$ has a bounded weak derivative $g''$ which satisfies $g''(x) = \mathcal{O}(x^{-(2+\epsilon)})$ as $x\to\infty$. Then the transition semigroup $\{\mathcal{P}_t;t\geq0\}$ associated with the diffusion model SPDE has a unique stationary distribution $\pi$. Moreover, $\pinhat$ converges weakly to $\pi$ as $N\to\infty$.
\end{theorem}

\begin{remark} \label{rem-zn}
{\em
It is possible to establish existence of the invariant distribution of $\{{\mathcal P}_t\}$ under a finite $(2+\epsilon)$ moment assumption via a different argument, namely by an application of the Krylov-Bogoliubov theorem together with certain bounds on $\hat X$ that are obtained from uniform bounds on the fluctuations of the number of jobs $\hat X^{(N)}$ in the $N$ server queue obtained in \cite[Corollary 5.1]{AghRam16interchange}.}
\end{remark}

\section{An Explicit Solution to the SPDE}\label{sec_explicit}

The goal of this section is to prove Theorem \ref{thm_ExistUnique}. We start by establishing existence and uniqueness of a solution to the diffusion model SPDE. First, in Section \ref{sec_explicitPrelim},  we state a number of basic results that are required to define a candidate solution, which we call the diffusion model.   In Section \ref{sec_explicitDef}, we provide an explicit construction of this  $\Y$-valued stochastic process.  In Section \ref{sec_explicitExist}, we verify that $Y$ is indeed a solution to the diffusion model SPDE and in  Section \ref{sec_explicitUnique}, prove that it is the unique solution.  Finally, in Section \ref{sec_explicitMarkov} we show that the diffusion model is a time-homogeneous Feller Markov process.

\subsection{Preliminaries}\label{sec_explicitPrelim}

In this section we establish regularity properties of various objects that arise in the analysis of the SPDE.

\subsubsection{Properties of the Martingale Measure} \label{sec_explicitPrelimMgale}
Recall the definition given in Section \ref{sec_mainSAE} of the continuous martingale measure $\mathcal{M}$ and stochastic integral $\M_t(\varphi)$.  We now define two families of operators that allow us to represent  some relevant quantities in a more succinct manner. Consider the family $\{\Psi_t,t\geq 0\}$ of operators that map functions on $[0,\infty)$ to functions on $[0,\infty)\times[0,\infty)$, and are defined as follows: for every $t\geq 0$,
\begin{equation}\label{def_Psi}
    (\Psi_tf)(x,s)\doteq f(x+(t-s)^+)\;\frac{\overline G(x+(t-s)^+)}{\overline G(x)},\quad  (x,s)\in\hc^2.
\end{equation}
In particular, for any bounded function $f$ on $[0,\infty)$, $\sup_{t\geq 0} \|\Psi_tf\|_\infty \leq \|f\|_\infty$, and hence, $\Psi_t$ maps the space $\mathbb{C}_b[0,\infty)$ of continuous bounded functions on $[0,\infty)$ to the space $\mathbb{C}_b(\hc\times\hc)$ of continuous bounded functions on $\hc\times\hc$. Next, consider the family of operators $\{\Phi_t,t\geq 0\}$ which are defined as follows: for every $t\geq 0$, let
\begin{equation}\label{PhiDef}
    (\Phi_tf)(x) \doteq f(x+t)\;\frac{\overline G(x+t)}{\overline G(x)},\quad x\in[0,\infty),
\end{equation}
    for any function $f$ on $[0,\infty)$.
The operator $\Phi_t$ clearly maps the space $\mathbb{C}_b[0,\infty)$ into itself.

Observe that the family of functions $\{\vartheta^r;r\geq0\}$ defined in \eqref{bdeftemp} admits  the representation
\begin{equation} \label{vartheta-rep}
    \vartheta^r=\Phi_r\f1, \quad    r \geq 0.
\end{equation}
Also, $\Phi_0f=f$ and the family of operators $\{\Phi_t, t \geq 0\}$ satisfies the  semigroup property:
\[
    \Phi_t\Phi_s=\Phi_{t+s},\quad\quad\quad t,s\geq0.
\]
Furthermore, for every function $f$ defined on $[0,\infty)$ and $s,t>0$,
\begin{equation}\label{groupproperty}
    (\Psi_s\Phi_tf)(x,v)=(\Psi_{s+t}f)(x,v),\quad (x,v)\in[0,\infty)\times[0,s].
\end{equation}

Next, we define a family of stochastic convolution integrals:  for $ t \geq 0$ and $f\in\mathbb{C}_b[0,\infty)$,
\begin{align}\label{Hdefinition}
    \mathcal{H}_t(f)\doteq \M_t(\Psi_t f)=  \iint\limits_{[0,\infty)\times[0,t]}f(x+t-s)\;\frac{\overline G(x+t-s)}{\overline G(x)}\M(dx,ds).
\end{align}

Recall that a stochastic process or random field  $\{\xi_1(t);t\in\mathcal{T}\}$ with index set $\mathcal{T}$ is called a modification of another stochastic process or random field $\{\xi_2(t);t\in\mathcal{T}\}$ defined on a common probability space,  if  for every $t\in\mathcal{T}$, $\xi_1(t)=\xi_2(t),$ almost surely.  In contrast, $\xi_1$ and $\xi_2$  are said to be indistinguishable if $\Probil{\xi_1(t)=\xi_2(t)\text{ for all }t\in\mathcal{T}}=1.$  Lemma \ref{McontinuousGP} and Proposition \ref{Mcontinuity} below  show that we can choose suitably regular modifications of certain stochastic integrals.

\begin{lemma}\label{McontinuousGP}
Suppose Assumption \ref{hh2_AS} holds. Then, $\{\mathcal{M}_t(\Phi_r\f1);t,r\geq 0\}$, $\{\mathcal{M}_t(\Psi_{t+r}\f1);t,r\geq 0\}$ and $\{\mathcal{M}_t(\Psi_{t+r}h);t,r\geq 0\}$ have  modifications that  are jointly continuous on  $\hc^2$.
\end{lemma}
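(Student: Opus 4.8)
The plan is to establish joint continuity of each of the three Gaussian random fields by the Kolmogorov continuity criterion, after first checking measurability and then bounding the variance of increments. Since $\M_t(\varphi)$ is, for fixed $\varphi$, a mean-zero Gaussian process, the increment $\M_t(\Phi_r\f1) - \M_{t'}(\Phi_{r'}\f1)$ is Gaussian with mean zero, so all its moments are controlled by its variance; it therefore suffices to show that this variance is bounded above by $C\,(|t-t'| + |r-r'|)^\alpha$ for some $\alpha > 0$ on compact subsets of $\hc\times\hc$, and then invoke the multiparameter Kolmogorov criterion (e.g.\ \cite[Chapter 1]{WalshBook}) to get a jointly continuous modification. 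The same scheme applies verbatim to the fields indexed by $\Psi_{t+r}\f1$ and $\Psi_{t+r}h$, so the work is really a variance estimate.

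For the variance computation I would use the isometry \eqref{Mcov}: $\mathrm{Var}(\M_t(\varphi) - \M_{t'}(\psi)) = \int_0^\infty\int_0^\infty (\varphi(x,s)\indicw{[0,t]}(s) - \psi(x,s)\indicw{[0,t']}(s))^2 g(x)\,dx\,ds$ (extending the integrands by the appropriate time cutoffs). For the first field, $\Phi_r\f1 = \vartheta^r$ does not depend on $s$, so $\M_t(\vartheta^r)$ has variance $t\int_0^\infty \vartheta^r(x)^2 g(x)\,dx$; the increment splits into a piece controlled by $|t-t'|$ times a bounded integral (using $0 \le \vartheta^r \le 1$ and $g\in\Lone\ho$ from Remark \ref{integrable}) and a piece controlled by $\int_0^\infty (\vartheta^r(x) - \vartheta^{r'}(x))^2 g(x)\,dx$. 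For the latter I would use that $r\mapsto\vartheta^r(x) = \overline G(x+r)/\overline G(x)$ is differentiable with $\partial_r\vartheta^r(x) = -h(x+r)\vartheta^r(x)$, which is bounded by $H$ (Assumption \ref{hh2_AS}.\ref{h_AS}), so $|\vartheta^r(x) - \vartheta^{r'}(x)| \le H|r-r'|$, giving a bound of order $|r-r'|^2$; combining yields Lipschitz control of the variance in $(t,r)$, hence Hölder-$\tfrac12$ control of the $\Ltwo$ norm of the increment, which is more than enough. For the $\Psi_{t+r}$-fields, note from \eqref{def_Psi} that $(\Psi_{t+r}f)(x,s) = f(x + (t+r-s)^+)\,\overline G(x+(t+r-s)^+)/\overline G(x)$ for $s\in[0,t]$ (and the cutoff $\indicw{[0,t]}(s)$ restricts $s\le t$, so the argument $(t+r-s)^+ = t+r-s \ge r \ge 0$); I would change variables and again differentiate in the parameter $t+r$, using boundedness of $h$ and $h_2$ to control derivatives, together with the time-horizon cutoff to handle the dependence on $t$ through the domain of integration.

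The one genuine subtlety — and the step I expect to be the main obstacle — is the joint measurability of the random fields $(t,r,\omega)\mapsto\M_t(\Phi_r\f1)(\omega)$ etc., which is a prerequisite for even stating the Kolmogorov criterion, and the verification that $\M_t(\Psi_{t+r}\f1)$ is well defined, i.e.\ that $(x,s)\mapsto(\Psi_{t+r}\f1)(x,s)\indicw{[0,t]}(s)$ is a legitimate integrand for $\M$ (bounded and predictable, or at least in the class for which the stochastic integral \eqref{MintegralDef} is defined — which holds since it is bounded and deterministic, hence predictable). For joint measurability I would argue by approximation: for fixed $\varphi$ bounded and continuous, $\M_t(\varphi)$ has a continuous-in-$t$ modification by Walsh's theory; for the dependence on $r$ one can discretize $r$, use continuity of $r\mapsto\Phi_r\f1$ (resp.\ $r\mapsto\Psi_{t+r}f$) in, say, $\Ltwo(g(x)dx\otimes ds)$ on compacts — which follows from the same Lipschitz estimate above via dominated convergence — and the $\Ltwo(\Omega)$-isometry to conclude that $r\mapsto\M_t(\Phi_r\f1)$ is $\Ltwo(\Omega)$-continuous, hence has a measurable (indeed continuous-in-probability) version; a standard argument then produces a jointly measurable version, after which the Kolmogorov estimate upgrades it to a jointly continuous one and the three modifications are obtained simultaneously.

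Finally I would remark that, having produced jointly continuous modifications, any two such modifications are indistinguishable (being a.s.\ equal on the dense set $\hc^2\cap\Q^2$ and continuous), so the modifications in the statement are essentially unique; this is what lets later sections freely refer to ``the'' continuous version. I would also record, for use elsewhere, the explicit variance formulas obtained along the way, since they reappear when identifying the Gaussian structure of $Z$.
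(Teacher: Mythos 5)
Your proposal is correct and takes essentially the same route as the paper: moment bounds on the increments of the three fields (via the white-noise isometry and the boundedness of $h$ and $h_2$, exactly the mean-value estimates the paper uses), followed by the multiparameter Kolmogorov--Centsov criterion on compacts. The only cosmetic difference is that you exploit Gaussianity of these deterministic-integrand integrals to upgrade the variance bound to arbitrarily high moments, whereas the paper bounds sixth moments directly via the Burkholder--Davis--Gundy inequality; also, your worry about prior joint measurability is unnecessary, since the Kolmogorov criterion constructs the continuous modification from the values on a countable dense set without it.
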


\begin{proof}
Fix $T\in\ho$,  $0\leq s\leq t\leq T$ and $0\leq r,u\leq T$. Recalling from \eqref{MAcov} the quadratic variation process of integrals with respect to $\M$, and using the Burkholder-Davis-Gundy inequality for martingales (see e.g \cite[Theorem 7.11]{WalshBook}), there exists a constant $\hat c_1<\infty$ such that for every bounded measurable function $\varphi$ on $\hc\times\hc$,
\begin{equation}\label{temp_M6}
    \Ept{\left|\M_t(\varphi)-\M_s(\varphi)\right|^{6}} \leq \hat c_1 \left(\int_0^t\int_0^\infty\varphi^2(x,v)g(x)dx dv\right)^3.
\end{equation}
Let $\hat c_2 < \infty$ be such that $(a+b)^6 \leq \hat c_2(a^6 + b^6)$, and set $c\doteq\hat c_1\hat c_2<\infty$. Then, by \eqref{PhiDef} and  the bound \eqref{temp_M6}, $\mathbb{E}[|\mathcal{M}_t(\Phi_{r}\f1)-\mathcal{M}_s(\Phi_{u}\f1)|^6 ]$ is bounded by
\begin{align*}
    &   c\;\Ept{|\mathcal{M}_t(\Phi_{r}\f1)-\mathcal{M}_s(\Phi_{r}\f1)|^6 }
    + c\; \Ept{|\mathcal{M}_s(\Phi_{r}\f1-\Phi_{u}\f1)|^6 }  \\[2mm]
    &\leq \:c\left(\int_s^t\int_0^\infty \frac{\overline G(x+r)^2}{\overline G(x)^2}\:g(x)dxdv\right)^3   \\
    &\hspace{0.5cm}+ c\left(\int_0^s\int_0^\infty \frac{(\overline G(x+r)-\overline G(x+u))^2}{\overline G(x)^2}\:g(x)dxdv\right)^3.
\end{align*}
where $c\doteq\hat c_1\hat c_2<\infty$. Then, by Assumption \ref{hh2_AS}.\ref{h_AS} and the mean value theorem, there exists $r^*\in[r,u]$ such that
\begin{align*}
    \Ept{|\mathcal{M}_t(\Phi_{r}\f1)-\mathcal{M}_s(\Phi_{u}\f1)|^6 }
    &\leq  \:c|t-s|^3 + cT^3 \left(\int_0^\infty \frac{g(x+r^*)^2}{\overline G(x)^2}\:g(x)dx\right)^3 |r-u|^6 \\[2mm]
    &\leq  \:c|t-s|^3 + cT^3H^6 (2T)^3 |r-u|^3.
\end{align*}
Note that the inequality $|r-u|\leq2T$ is used in the second line, which holds because $r,u\in[0,T]$. Therefore, by the Kolmogorov-Centsov theorem (see, e.g., \cite[Theorem I.25.2]{RogWilbook1}, with $n=2$, $\alpha=6$ and $\epsilon=1$), the random field $\{\mathcal{M}_t(\Phi_r\f1);t,r\geq 0\}$ has a continuous modification.

In a similar fashion, by  definition \eqref{def_Psi} and another application of the bound \eqref{temp_M6}, there exists $r^*\in[t+r,s+u]$ such that
\begin{align*}
    & \Ept{|\mathcal{M}_t(\Psi_{t+r}\f1)-\mathcal{M}_s(\Psi_{s+u}\f1)|^6 } \\[1mm]
    &\hspace{1cm}\leq \:c\left(\int_s^t\int_0^\infty \frac{\overline G(t+x+r-v)^2}{\overline G(x)^2}\:g(x)dxdv\right)^3   \\
    &\hspace{1.5cm}+ c\left(\int_0^s\int_0^\infty \frac{(\overline G(t+x+r-v)-\overline G(s+x+u-v))^2}{\overline G(x)^2}\:g(x)dxdv\right)^3 \\
    &\hspace{1cm}\leq  \:c|t-s|^3 + c\left(\int_0^s \int_0^\infty \frac{g(x+r^*-v)^2}{\overline G(x)^2}\:g(x)dx dv\right)^3 (|t-s| + |r-u|)^6 \\[1mm]
    &\hspace{1cm}\leq  \:c|t-s|^3 + \hat{c}_2 cT^3H^6(2T)^3 (|t-s|^3 + |r-u|^3) \\[1mm]
    &\hspace{1cm}\leq  \:\tilde c(|t-s|^3+|r-u|^3),
\end{align*}
for some $\tilde c < \infty$. Again,  by the Kolmogorov-\v{C}entsov theorem,  the process $\{\mathcal{M}_t(\Psi_{t+r}\f1);t,r\geq 0\}$ has a continuous modification. Similarly, using Assumption \ref{hh2_AS}.\ref{h2_AS} and \eqref{def_Psi}, we obtain
\begin{align*}
    \Ept{|\mathcal{M}_t(\Psi_{t+r}h)-\mathcal{M}_s(\Psi_{s+u}h)|^6 } &\leq  \:c|t-s|^3 + c3T^3H_2^6(2T)^3 |t+r-s-u|^3\\ &\leq \tilde c(|t-s|^3+|r-u|^3),
\end{align*}
for some $c,\tilde c < \infty$.  Thus, $\{\mathcal{M}_t(\Psi_{t+r}h);t,r\geq 0\}$ has a continuous modification.
\end{proof}

\begin{remark}\label{remark_contModification}
{\em By $\mathcal{M}_t(\Phi_r\f1)$, $\mathcal{M}_t(\Psi_{t+r}\f1)$ and $\mathcal{M}_t(\Psi_{t+r}h)$, we always denote the jointly continuous modification. Note that, by substituting $r=0$ in Lemma \ref{McontinuousGP}, this also implies the  continuity of the stochastic processes $t\mapsto\M_t(\f1),$ $t\mapsto\mathcal{H}_t(\f1)$ and $t\mapsto\mathcal{H}_t(h)$. }
\end{remark}

\begin{proposition}\label{Mcontinuity}
Let Assumption \ref{hh2_AS}-\ref{finite2_AS} hold. Then $\{\mathcal{M}_t(\Psi_{t+\cdot}\f1);\:t\geq 0\}$ has a continuous $\Hone\ho$-valued modification. Also, almost surely, for  every $t\geq 0$, the function $r\mapsto{\mathcal M}_t(\Psi_{t+r}\f1)$ has weak derivative $-\mathcal{M}_t(\Psi_{t+r}h)$ on $\ho$.
\end{proposition}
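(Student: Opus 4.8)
The plan is to first establish the pointwise (in $t$) membership of $r\mapsto \M_t(\Psi_{t+r}\f1)$ in $\Hone\ho$, identify its weak derivative, and only then upgrade to a continuous $\Hone\ho$-valued modification. For the weak-derivative claim, fix $t\geq 0$ and consider the candidate primitive relation
\[
  \M_t(\Psi_{t+r}\f1) - \M_t(\Psi_t\f1) = -\int_0^r \M_t(\Psi_{t+u}h)\,du, \qquad r\geq 0.
\]
This should follow from the elementary identity $\partial_u \vartheta^u = -h\vartheta^u$ (equivalently $\partial_u\Phi_u\f1 = -\Phi_u h$ in the appropriate sense), combined with the group/semigroup identities \eqref{groupproperty} and $\Phi_t\Phi_s=\Phi_{t+s}$, which let one write $\Psi_{t+r}\f1 = \Psi_t(\Phi_r\f1)$ on $[0,\infty)\times[0,t]$. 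Concretely, $(\Psi_{t+r}\f1)(x,s) - (\Psi_{t+u}\f1)(x,s) = -\int_u^r (\Psi_{t+w}h)(x,s)\,dw$ pointwise in $(x,s)$; then applying $\M_t$ and interchanging the stochastic integral with the $dw$-integral (justified by the stochastic Fubini theorem \cite[Theorem 2.6]{WalshBook}, using Assumption \ref{hh2_AS} to control the second moments $\int_0^t\int_0^\infty (\Psi_{t+w}h)^2(x,s)g(x)\,dx\,ds$ uniformly for $w$ in compact sets) yields the displayed identity. Since by Lemma \ref{McontinuousGP} (with the continuous modifications fixed as in Remark \ref{remark_contModification}) the map $u\mapsto \M_t(\Psi_{t+u}h)$ is continuous, the right-hand side is an absolutely continuous function of $r$ with derivative $-\M_t(\Psi_{t+r}h)$; hence $r\mapsto \M_t(\Psi_{t+r}\f1)$ is (a.e.\ equal to) an absolutely continuous function whose weak derivative is $-\M_t(\Psi_{t+r}h)$ on $\ho$. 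Square-integrability of both $r\mapsto\M_t(\Psi_{t+r}\f1)$ and $r\mapsto \M_t(\Psi_{t+r}h)$ over $\ho$ then needs a tail estimate: using \eqref{Mcov}, $\Ept{\M_t(\Psi_{t+r}h)^2} = \int_0^t\int_0^\infty (\Psi_{t+r}h)^2(x,s)g(x)\,dx\,ds$, and by definition of $\Psi$ this integrand involves $h(x+t+r-s)^2\,\overline G(x+t+r-s)^2/\overline G(x)^2 \cdot g(x)$; bounding $h\le H$ and $g(x)\le H\overline G(x)$ reduces matters to $\int_0^\infty \overline G(x+t+r-s)^2/\overline G(x)\,dx$-type quantities, and integrating in $r$ and invoking Assumption \ref{finite2_AS} (which puts $\int_\cdot^\infty \overline G$ and related tails in $\Lone\cap\Ltwo$, cf.\ Remark \ref{integrable}) gives $\Ept{\int_0^\infty \M_t(\Psi_{t+r}h)^2\,dr}<\infty$, hence a.s.\ finiteness; the same computation with $h$ replaced by $\f1$ handles $r\mapsto\M_t(\Psi_{t+r}\f1)$. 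This establishes $\M_t(\Psi_{t+\cdot}\f1)\in\Hone\ho$ a.s.\ for each $t$.

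For the continuity in $t$ of the $\Hone\ho$-valued process, the natural route is a Kolmogorov-type continuity criterion applied to the two-parameter Gaussian field, at the level of the $\Hone$-norm. Writing $N_t(r)\doteq\M_t(\Psi_{t+r}\f1)$ and $D_t(r)\doteq\M_t(\Psi_{t+r}h)$, one has $\|N_t - N_{t'}\|_{\Hone}^2 = \|N_t-N_{t'}\|_{\Ltwo\ho}^2 + \|D_t - D_{t'}\|_{\Ltwo\ho}^2$, and it suffices to bound, for $t,t'$ in a compact interval, the moments
\[
  \Ept{\|N_t-N_{t'}\|_{\Ltwo\ho}^{2p}} \quad\text{and}\quad \Ept{\|D_t-D_{t'}\|_{\Ltwo\ho}^{2p}}
\]
by $C|t-t'|^{1+\delta}$ for some $p$ and $\delta>0$. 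Because these are (integrals over $r$ of) Gaussian random variables, $2p$-th moments reduce to second moments by hypercontractivity/Gaussianity, so the real work is the second-moment bound $\Ept{\|N_t - N_{t'}\|_{\Ltwo\ho}^2} = \int_0^\infty \Ept{(N_t(r) - N_{t'}(r))^2}\,dr$. Splitting $N_t(r) - N_{t'}(r)$ (say $t'<t$) into the "matching region" $[0,t']$ contribution, where one uses $\Psi_{t+r}\f1 - \Psi_{t'+r}\f1$ and the already-derived differentiability to write the difference as $-\int_{t'}^{t}(\text{something involving }\Psi h)\,$-type terms plus the boundary increment of $\M$ on $(t',t]$, one gets a bound of the form $C(|t-t'| + |t-t'|^2)$ after integrating in $r$, using Assumptions \ref{hh2_AS} and \ref{finite2_AS} exactly as in the tail estimate above. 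Taking $p$ large enough ($2p>1$, i.e.\ any $p\ge 1$ gives Hölder exponent $<1/2$ — one should take $p$ large to get positive Hölder exponent via $2p$ moments $\lesssim |t-t'|^p$ after the Gaussian moment comparison against the $|t-t'|^1$ second-moment bound, yielding exponent $(p-1)/(2p)$) then Kolmogorov's continuity theorem for Banach-space-valued processes gives a continuous $\Hone\ho$-valued modification.

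The main obstacle I anticipate is the bookkeeping in the second-moment increment estimate: controlling $\int_0^\infty\Ept{(\M_t(\Psi_{t+r}\f1)-\M_{t'}(\Psi_{t'+r}\f1))^2}\,dr$ requires carefully separating the change in the \emph{integrand} $\Psi_{t+r}\f1 \to \Psi_{t'+r}\f1$ from the change in the \emph{domain of integration} $[0,t]\to[0,t']$ of the martingale-measure integral, and then integrating the resulting bounds in $r$ over all of $\ho$ — this is precisely where Assumption \ref{finite2_AS}'s $(2+\epsilon)$-moment (equivalently, the $\Lone\cap\Ltwo$ membership of $\int_\cdot^\infty\overline G$ from Remark \ref{integrable}) is needed to make the $r$-integral of the tail terms finite, and where the jointly-continuous modifications from Lemma \ref{McontinuousGP} are needed to make the a.s.\ pointwise identifications rigorous rather than merely $\Ltwo$-valid. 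The weak-derivative identification itself is comparatively routine once the stochastic Fubini interchange is justified. Details are carried out in Section \ref{sec_proof}.
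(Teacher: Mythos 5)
Your proposal is correct and follows essentially the same route as the paper: stochastic Fubini (Walsh) to get $\M_t(\Psi_{t+r}\f1)=\mathcal{H}_t(\f1)-\int_0^r\M_t(\Psi_{t+u}h)\,du$ and hence the weak derivative, tail estimates via \eqref{Mcov} and Assumption \ref{finite2_AS} for the $\Ltwo\ho$ membership in $r$, and a Kolmogorov continuity criterion for $\Hone\ho$-valued processes applied to the increments split into the integrand change on the common time interval plus the martingale-measure increment on $(s,t]$. The only cosmetic difference is that the paper fixes $p=2$ and bounds the fourth moment of the $\Hone$-increment directly via the Gaussian product-moment identity (obtaining $\Ept{\|\cdot\|_{\Hone}^4}\leq C_T|t-s|^2$), rather than invoking hypercontractivity for general $p$ as you suggest; both yield the required $1+\delta$ exponent.
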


To prove Proposition \ref{Mcontinuity} we need the following intermediate result on properties of the random element $\M_t(\Psi_{t+\cdot}\f1)$ for fixed $t\geq0.$

\begin{lemma}\label{que_Mfubini}
If Assumption \ref{hh2_AS}.\ref{h_AS} holds, then the following properties  hold:
\begin{enumerate}[\quad a.]
    \item \label{que_Mfubini_ac} For every $t\geq0$, almost surely,
        \begin{equation}\label{que_Mderivative}
	       {\cal M}_t(\Psi_{t+r}\f1)= {\mathcal H}_t(\f1) -\int_0^r {\cal M}_t(\Psi_{t+u}h)du,\quad\quad\quad  \forall r\geq0;
        \end{equation}
    \item \label{que_Mfubini_L2} If, in addition, Assumption \ref{finite2_AS} holds, then for every $t\geq0$, the random functions $r\mapsto\mathcal{M}_t(\Psi_{t+r}\f1)$ and $r\mapsto\mathcal{M}_t(\Psi_{t+r}h)$ lie in $\Ltwo\ho$, almost surely.
\end{enumerate}
\end{lemma}

\begin{proof}
We start by proving property \ref{que_Mfubini_ac}, which is similar in spirit to, but not a direct consequence of, Lemma E.1 of \cite{KasRam13}. For fixed $t > 0$,  by Assumption \ref{hh2_AS}.\ref{h_AS}, the function
\[
    (\Psi_{t+u}h)(x,s)=\frac{g(x+t+u-s)}{\overline G(x)},
\]
is measurable in $(u,x,s)$, bounded and satisfies
\[
    \int_0^r (\Psi_{t+u}h)(x,s)du =\frac{\overline G(x+t-s)}{\overline G(x)}-\frac{\overline G(x+t+r-s)}{\overline G(x)}= \Psi_t\f1(x,s) - \Psi_{t+r}\f1(x,s).
\]
Since $\M$ is an orthogonal, and therefore a worthy, martingale measure, by the stochastic Fubini theorem for martingale measures
(see e.g. \cite[Theorem 2.6]{WalshBook})  we have
\[
    \int_0^r \M_t(\Psi_{t+u}h) du =\M_t\left(\int_0^r\Psi_{t+u}h\;du\right) =\M_t(\Psi_t\f1)- \M_t(\Psi_{t+r}\f1),
\]
Since $\mathcal{H}_t(\f1)=\M_t(\Psi_t\f1)$, equation \eqref{que_Mderivative} follows from the last display.

Now we turn to the proof of property \ref{que_Mfubini_L2}. For every $t,r\geq 0$, \eqref{Mcov}, \eqref{def_Psi} and Assumption \ref{hh2_AS}.\ref{h_AS} imply that
\begin{align*}\label{limit_constemp04}
    \mathbb{E}[{\mathcal M}_t\big(\Psi_{t+r} \f1\big)^2]&=  \int_0^t\int_0^\infty \frac{\overline{G}(x+t+r-s)^2} {\overline{G}(x)^2}g(x)dxds\notag\\
	& \leq  t\int_0^\infty \frac{\overline{G}(x+r)^2}{\overline{G}(x)^2}g(x)dx \notag\\ &\leq t H \int_r^\infty \overline G (x)dx.
\end{align*}
Similarly, we have
\begin{align*}
    \mathbb{E}[{\mathcal M}\big(\Psi_{t+r}h\big)^2] &=  \int_0^t\int_0^\infty \frac{g(x+t+r-s)^2}{\overline G(x)^2}g(x)dxds\notag\\
    & \leq H^2 \int_0^t\int_0^\infty \frac{\overline{G}(x+t+r-s)^2}{\overline{G}(x)^2}g(x)dxds \\
    & \leq tH^3 \int_r^\infty \overline G (x)dx.
\end{align*}
Therefore, Fubini's theorem and the last two inequalities together  show that
\begin{align*}
    &\max\left(\Ept{\int_0^\infty\mathcal M_t\big(\Psi_{t+r}\f1\big)^2 dr},\Ept{\int_0^\infty{\mathcal M}_t\big(\Psi_{t+r}h\big)^2 dr}\right) \\
     &\hspace{6cm}\leq t(H^3\vee H)\int_0^\infty \int_r^\infty \overline G(x)dx\;dr,
\end{align*}
which is finite by Assumption \ref{finite2_AS} (see Remark \ref{integrable}).  Therefore, the $\mathbb{L}^2$ norms of both ${\mathcal M}_t\big(\Psi_{t+\cdot}\f1\big)$  and  ${\mathcal M}_t\big(\Psi_{t+\cdot}h\big)$ are finite in expectation, and hence  finite almost surely.
\end{proof}

\begin{corollary}\label{Mconsistency}
Suppose Assumptions \ref{hh2_AS}.\ref{h_AS}-\ref{finite2_AS} hold. Then for every $t\geq 0$, almost surely, the function ${\mathcal M}_t(\Psi_{t+\cdot}\f1)$ lies in $\Hone\ho$ and has weak derivative $-\M_t(\Psi_{t+\cdot}h).$
\end{corollary}

\begin{proof}
Fix $t\geq0$. It follows from part \ref{que_Mfubini_ac} of Lemma \ref{que_Mfubini} that almost surely, the function $r\mapsto{\mathcal M}_t(\Psi_{t+r}\f1)$ is (locally) absolutely continuous with density (and hence, weak derivative) $-{\mathcal M}_t(\Psi_{t+\cdot}h)$. Moreover, part \ref{que_Mfubini_L2} of the same lemma shows that both the function ${\mathcal M}_t(\Psi_{t+\cdot}\f1)$ and its weak derivative, $-{\mathcal M}_t(\Psi_{t+\cdot}h)$, lie in $\Ltwo\ho$, almost surely.
\end{proof}

Corollary \ref{Mconsistency} shows that  $\{{\mathcal M}_t(\Psi_{t+\cdot}\f1);t\geq0\}$ is an $\Hone\ho$-valued stochastic process. Next, we show that this process has a continuous modification.

\begin{lemma}\label{lem_Mcontinuity}
Let Assumptions \ref{hh2_AS}-\ref{finite2_AS} hold.  Then the $\Hone \ho$-valued process $\{\mathcal{M}_t(\Psi_{t+\cdot}\f1);\:t\geq 0\}$ has a continuous modification.
\end{lemma}

\begin{proof}
Choose $T\geq 0$ and $0\leq s\leq t\leq T$, and define
\begin{equation}\label{tempdef_zeta}
    \zeta_{s,t}(r)\doteq \mathcal{M}_t(\Psi_{t+r}\f1) - \mathcal{M}_s(\Psi_{s+r}\f1), \quad r \in (0,\infty).
\end{equation}

Then for every $r\geq 0$,
\begin{align*}
    \zeta_{s,t}(r)=& \iint\limits_{[0,\infty)\times[0,t]}\Psi_{t+r}\f1 (x,v)\M(dx,dv) - \iint\limits_{[0,\infty)\times[0,s]}\Psi_{s+r}\f1(x,v)\M(dx,dv)\notag\\
    =& \iint\limits_{[0,\infty)\times[0,s]}\left(\Psi_{t+r}\f1(x,v)-\Psi_{s+r}\f1(x,v)\right)\M(dx,dv)  \\
     &+ \iint\limits_{[0,\infty)\times(s,t]}\Psi_{t+r}\f1(x,v)\M(dx,dv).
\end{align*}
Since  $\Psi_{t+\cdot}\f1$ and $\Psi_{s+\cdot}\f1$ are deterministic functions and $\M$ has independent increments, this shows that $\{\zeta_{s,t}(r);r\geq0\}$ is the sum of two independent Gaussian processes. Therefore, $\{\zeta_{s,t}(r); r\geq 0\}$ is a Gaussian process with covariance function $\sigma(r,u)=\sigma_1(r,u)+\sigma_2(r,u),$ where for $r,u \geq 0$,
\begin{eqnarray*}
    \sigma_1 (r,u) &\doteq &\Ept{\M_s(\Psi_{t+r}\f1-\Psi_{s+r}\f1)\M_s(\Psi_{t+u}\f1-\Psi_{s+u}\f1)}, \\
    \sigma_2 (r,u) & \doteq &  \Ept{\;\;\iint\limits_{[0,\infty)\times(s,t]}\Psi_{t+r}\f1(x,v)\M(dx,dv) \iint\limits_{[0,\infty)\times(s,t]}\Psi_{t+u}\f1(x,v)\M(dx,dv)\;}.
\end{eqnarray*}
Using the fact that for $0 \leq s, t < \infty$ and $a \geq 0$,
\[
    \left(\Psi_{t+a}\f1-\Psi_{s+a}\f1\right)(x,v)=\frac{\overline G(t+a+x-v)-\overline G(s+a+x-v)}{\overline G(x)},
\]
by the mean value theorem for $\overline G$, \eqref{Mcov}, Assumption \ref{hh2_AS}.\ref{h_AS}, and the monotonicity of $\overline G$, we have for some  $t_1,t_2 \in (s,t)$,
\begin{align*}
    \sigma_1(r,u)&= |t-s|^2\int_0^s\int_0^\infty \frac{g(t_1+r+x-v)g(t_2+u+x-v)}{\overline G(x)^2}\: g(x)dx\:dv \\
    &\leq TH^3|t-s|^2\int_0^\infty \frac{\overline G(x+r)\overline G(x+u)}{\overline G(x)}\:dx,
\end{align*}
and an analogous calculation shows that
\begin{align*}
    \sigma_2(r,u)&= \int_s^t\int_0^\infty \frac{\overline G(t+r+x-v)\overline G(t+u+x-v)}{\overline G(x)^2}\:g(x)dx\;dv \\
    &  \leq H|t-s| \int_0^\infty \frac{\overline G(x+r)\overline G(x+u)}{\overline G(x)}\:dx.
\end{align*}
Setting $C_T \doteq (1+T^2H^2)H$, this implies that for every $r,u\geq0$,
\begin{equation}\label{tempH2}
    \sigma(r,u) \leq C_T|t-s| \int_0^\infty \frac{\overline G(x+r)\overline G(x+u)}{\overline G(x)}\:dx.
\end{equation}

Recall that given a pair of jointly Gaussian random variables $(\xi_1,\xi_2)$ with covariance matrix $\Sigma =[\sigma(i,j) ]_{i,j=1,2}$,  $\Ept{\xi_1^2\xi_2^2}=\sigma(1,1)\sigma(2,2)+\sigma(1,2)^2$. Applying this identity with $\xi_1=\zeta_{s,t}(r)$ and $\xi_2=\zeta_{s,t}(u)$, and using \eqref{tempH2} and the Cauchy-Schwarz inequality, we obtain
\begin{align}\label{tempH3}
    \Ept{\zeta_{s,t}(r)^2\zeta_{s,t}(u)^2} \leq&  C_T^2|t-s|^2  \left(\int_0^\infty \frac{\overline G(x+r)^2}{\overline G(x)}\:dx\right) \left(\int_0^\infty \frac{\overline G(x+u)^2}{\overline G(x)}\:dx\right) \notag\\
    &+ C_T^2|t-s|^2\left(\int_0^\infty \frac{\overline G(x+r)\overline G(x+u)}{\overline G(x)}\:dx\right)^2  \notag\\
    \leq& 2 C_T^2 |t-s|^2 \left(\int_r^\infty \overline G(x)dx\right)\left(\int_u^\infty \overline G(x)dx\right).
\end{align}
Then, by Tonelli's theorem,
\begin{align*}
    \Ept{\|\zeta_{s,t}(\cdot)\|_{\Ltwo}^4}=&\Ept{\left(\int_0^\infty\zeta^2_{s,t}(r)dr\right)^2} \\
        =&\Ept{\int_0^\infty\int_0^\infty\zeta^2_{s,t}(r)\zeta^2_{s,t}(u)dr\;du} \\ =&\int_0^\infty\int_0^\infty\Ept{\zeta^2_{s,t}(r)\zeta^2_{s,t}(u)}dr\;du.
\end{align*}
The bound \eqref{tempH3} then implies that
\begin{align*}
    \Ept{\|\zeta_{s,t}(\cdot)\|_{\Ltwo}^4}=\leq 2 C_T^2 |t-s|^2 \left(\int_0^\infty \int_r^\infty \overline G(x)dx\;dr \right)^2\leq \tilde C_T|t-s|^2
\end{align*}
for $\tilde C_T \doteq 2 C_T^2  \left(\int_0^\infty \int_r^\infty \overline G(x)dx\;dr \right)^2 ,$ which is finite by Assumption \ref{finite2_AS} (see Remark \ref{integrable}). Substituting the definition \eqref{tempdef_zeta} of $\zeta_{s,t}$ into the last inequality, it follows that
\begin{equation}\label{MkolmogorovL2eq}
    \Ept{\|\mathcal{M}_t(\Psi_{t+\cdot}\f1)-\mathcal{M}_s(\Psi_{s+\cdot}\f1)\|^4_{\Ltwo}} \leq \tilde C_T |t-s|^2.
\end{equation}

Now, note that by \eqref{tempdef_zeta} and Corollary \ref{Mconsistency}, almost surely,  $\zeta_{s,t}(\cdot)$ has weak derivative
\[
    \zeta^\prime_{s,t}(r)\doteq -\mathcal{M}_t(\Psi_{t+r}h) + \mathcal{M}_s(\Psi_{s+r}h).
\]
Using estimates analogous to those used above, one can show that
\begin{equation}\label{MprimekolmogorovL2eq}
    \Ept{\|\mathcal{M}_t(\Psi_{t+\cdot}h)-\mathcal{M}_s(\Psi_{s+\cdot}h)\|^4_{\Ltwo}} \leq  C^\prime_T |t-s|^2,
\end{equation}
where now
\[
    C^\prime_T= 2 \left(H^2+2T^2HH_2^2 \right)^2  \left(\int_0^\infty \int_r^\infty \overline G(x)dx\;dr \right)^2,
\]
which is finite by Assumptions \ref{hh2_AS} and \ref{finite2_AS}. Combining \eqref{MkolmogorovL2eq} and \eqref{MprimekolmogorovL2eq}, for every $s,t\leq T$, we have
\[
    \Ept{\|\mathcal{M}_t(\Psi_{t+\cdot}h)-\mathcal{M}_s(\Psi_{s+\cdot}h)\|^4_{\Hone}} \leq \tilde C_T |t-s|^2,
\]
for a finite $\tilde C_T$ that depends only  on $T$. The existence of an $\Hone\ho$-continuous modification of $\mathcal{M}_t(\Psi_{t+\cdot}h)$
then follows from a version of Kolmogorov's continuity criterion for stochastic processes taking values in general Polish spaces (see, e.g., Lemma 2.1 in \cite{Sch09}).
\end{proof}

\begin{proof}[Proof of Proposition \ref{Mcontinuity}]
Proposition \ref{Mcontinuity} follows from Corollary \ref{Mconsistency} and Lemma \ref{lem_Mcontinuity}.
\end{proof}

\begin{remark}\label{remark_modification}
{\em By the continuous embedding result of Lemma \ref{propertiesofH1}.\ref{propertiesofH1_mapping}, given a real-valued function $f$ on $\hc\times\ho$ such that  $t\mapsto f(t,\cdot)$ is continuous from $\hc$ to $\Hone\ho$, the mapping $(t,r) \mapsto f(t,r)$ has a representative that is a jointly continuously function on $\hc\times\hc$. Therefore, the continuous modification of $\{{\cal M}_t(\Psi_{t+\cdot} (\f1); t \geq 0\}$ in Proposition \ref{Mcontinuity} is indistinguishable from the jointly continuous modification of $\{ {\cal M}_t (\Psi_{t+r} (\f1); t, r \geq 0\}$ in Remark \ref{remark_contModification}.}
\end{remark}

Next, we establish a simple relation  used in the proof of Theorem \ref{thm_ExistUnique}.

\begin{lemma}\label{lem_MfubiniStrong}
Suppose Assumption \ref{hh2_AS}  holds. Then, almost surely, for all $t, r \geq 0$,
\begin{equation}\label{Mderivative2Strong}
	{\cal M}_t(\Psi_{t+r}\f1)= \mathcal{M}_t(\Phi_r \f1) -\int_0^t {\cal M}_s(\Psi_{s+r}h) ds.
\end{equation}
\end{lemma}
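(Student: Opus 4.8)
The plan is to establish \eqref{Mderivative2Strong} by first proving the identity for fixed $t,r$ (in the almost-sure sense, i.e., for each $(t,r)$ separately) using a stochastic Fubini theorem, and then upgrading to the stronger ``for all $t,r$'' statement by appealing to the joint continuity of all three processes involved, which is furnished by Lemma \ref{McontinuousGP} and Remark \ref{remark_contModification}.

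First I would fix $t,r\geq 0$ and unpack both sides as stochastic integrals against $\M$. By definition, ${\cal M}_t(\Psi_{t+r}\f1)=\iint_{[0,\infty)\times[0,t]}\frac{\overline G(x+t+r-s)}{\overline G(x)}\,\M(dx,ds)$ and $\mathcal{M}_t(\Phi_r\f1)=\iint_{[0,\infty)\times[0,t]}\frac{\overline G(x+r)}{\overline G(x)}\,\M(dx,ds)$. The key analytic input is the deterministic identity, valid for each fixed $s$ and $x$,
\[
    \frac{\overline G(x+t+r-s)}{\overline G(x)} = \frac{\overline G(x+r)}{\overline G(x)} - \int_s^t \frac{g(x+u+r-s)}{\overline G(x)}\,du,
\]
which follows by writing $\overline G(x+r) - \overline G(x+t+r-s) = \int_{x+r}^{x+t+r-s} g(y)\,dy$ and changing variables $y = x+u+r-s$; note $u \mapsto g(x+u+r-s)/\overline G(x) = h(x+u+r-s)\,\overline G(x+u+r-s)/\overline G(x) = (\Psi_{u+r}h)(x,s)$ when one recalls \eqref{def_Psi}, so the integrand is exactly $(\Psi_{u+r}h)(x,s)$. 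Substituting this into the stochastic integral representation of ${\cal M}_t(\Psi_{t+r}\f1)$ and splitting gives
\[
    {\cal M}_t(\Psi_{t+r}\f1) = \mathcal{M}_t(\Phi_r\f1) - \iint_{[0,\infty)\times[0,t]}\left(\int_s^t (\Psi_{u+r}h)(x,s)\,du\right)\M(dx,ds).
\]
The remaining task is to interchange the $du$ integral with the $\M(dx,ds)$ integral, so that the last term becomes $\int_0^t \left(\iint_{[0,\infty)\times[0,u]}(\Psi_{u+r}h)(x,s)\,\M(dx,ds)\right)du = \int_0^t {\cal M}_u(\Psi_{u+r}h)\,du$ (the inner domain collapses from $[0,t]$ to $[0,u]$ because $(\Psi_{u+r}h)(x,s)$ involves $(u-s)^+$ and, once extended, integrating $s$ over $(u,t]$ contributes a fixed deterministic shift — more carefully, one checks $\mathbbm{1}_{\{s\le u\}}(\Psi_{u+r}h)(x,s)$ is the relevant predictable integrand). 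This interchange is a stochastic Fubini theorem for martingale measures; I would invoke the version in Walsh \cite[Chapter 2]{WalshBook}, whose hypotheses reduce to checking $\int_0^t\int_0^t\int_0^\infty (\Psi_{u+r}h)^2(x,s)\,g(x)\,dx\,ds\,du<\infty$, and this is immediate from Assumption \ref{hh2_AS}.\ref{h_AS} since $|(\Psi_{u+r}h)(x,s)|\le H$ and $\int_0^\infty g(x)\,dx = 1$, giving the bound $H^2 t^2$.

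The main obstacle is bookkeeping in the stochastic Fubini step: correctly tracking the domains of integration (the $(t-s)^+$ truncation inside $\Psi$, and the reduction of the inner time-domain from $[0,t]$ to $[0,u]$) and confirming that the integrand $(x,s)\mapsto\mathbbm{1}_{[0,u]}(s)(\Psi_{u+r}h)(x,s)$ is predictable and square-integrable uniformly enough to apply the theorem. Once the identity \eqref{Mderivative2Strong} holds for each fixed $(t,r)$ outside a null set, I would finish by noting that both $\{{\cal M}_t(\Psi_{t+r}\f1)\}$ and $\{\mathcal{M}_t(\Phi_r\f1)\}$ have jointly continuous modifications in $(t,r)$ by Lemma \ref{McontinuousGP}, and $(t,r)\mapsto\int_0^t{\cal M}_s(\Psi_{s+r}h)\,ds$ is jointly continuous because $(s,r)\mapsto{\cal M}_s(\Psi_{s+r}h)$ has a jointly continuous (hence locally bounded) modification, again by Lemma \ref{McontinuousGP}, so the $ds$-integral depends continuously on $(t,r)$. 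Two processes that agree almost surely for each fixed parameter value in a dense set and are both a.s.\ continuous in the parameter are indistinguishable, which yields the ``$\forall t,r\geq 0$'' conclusion.
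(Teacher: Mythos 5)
Your proposal is correct, and its core is the same as the paper's: a stochastic Fubini interchange based on the calculus identity $\overline G(x+r)-\overline G(x+t+r-s)=\int_s^t g(x+u+r-s)\,du$, followed by an upgrade from a fixed-parameter identity to an indistinguishability statement via the jointly continuous modifications of Lemma \ref{McontinuousGP}. The execution differs in one respect worth noting: the paper does not apply the stochastic Fubini theorem directly here, but instead deduces the fixed-$r$ identity (its Lemma \ref{Mfubini2}) from Lemma E.1 of \cite{KasRam13}, applied with $\tilde\varphi=\Phi_r h$ and the computation $\Xi_t(\Phi_r h)=\Phi_r\f1-\Psi_{t+r}\f1$, which yields the identity simultaneously for all $t\geq 0$ at fixed $r$, so only a one-parameter continuity upgrade in $r$ is then needed. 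You instead invoke Walsh's stochastic Fubini theorem directly for each fixed $(t,r)$ — legitimate, since the integrand is deterministic and bounded by $H$ and the covariance measure $g(x)dx\otimes ds$ has finite mass on $[0,\infty)\times[0,t]$, exactly as in the paper's own use of \cite[Theorem 2.6]{WalshBook} in Lemma \ref{que_Mfubini} — and then upgrade in both parameters over a countable dense set, which works equally well. Your route is thus self-contained relative to this paper, at the cost of redoing the interchange that \cite{KasRam13} already packages. One small slip in a parenthetical: the indicator $\mathbbm{1}_{\{s\le u\}}$ arises simply because the original iterated integral runs over $\{0\le s\le u\le t\}$ (the constraint $u\in[s,t]$), not because of the $(\cdot)^+$ truncation inside $\Psi_{u+r}$ — indeed $(u+r-s)^+>0$ can hold for $s>u$ when $r>0$ — but since you correctly identify $\mathbbm{1}_{\{s\le u\}}(\Psi_{u+r}h)(x,s)$ as the integrand whose $\M$-integral over $[0,\infty)\times[0,t]$ equals ${\cal M}_u(\Psi_{u+r}h)$, this does not affect the argument.
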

\begin{proof}  Since,  from \eqref{def_Psi} and \eqref{PhiDef} it is clear that
 $\int_u^t \Psi_{s+r} h (x,u) ds  = \Psi_{t+r} \f1 - \Phi_r \f1$,
 the stochastic Fubini theorem for martingale measures (\cite[Theorem 2.6]{WalshBook}) shows that  for every $r\geq0$, almost surely,
\begin{equation}\label{Mderivative2}
	{\cal M}_t(\Psi_{t+r}\f1)= \mathcal{M}_t(\Phi_r \f1) -\int_0^t {\cal M}_s(\Psi_{s+r}h) ds ,\quad\quad \forall t\geq 0;
\end{equation}
However,  Lemma \ref{McontinuousGP} and  Remark \ref{remark_contModification} show that both sides of \eqref{Mderivative2} are jointly continuous in $(t,r)$.   This implies   the stronger statement
that  almost surely, the equality in \eqref{Mderivative2} is satisfied for every $r, t\geq 0$.
\end{proof}

Setting $r=0$ in \eqref{Mderivative2Strong}, we see that, almost surely,
\begin{equation}\label{Hintegral}
  {\cal H}_t(\f1)=\M_t(\f1)-\int_0^t{\cal H}_s(h)ds, \quad t \geq 0.
\end{equation}

\subsubsection{An Auxiliary Mapping}

Next, we introduce an auxiliary mapping that  appears in the explicit construction of the diffusion model (see Definition \ref{def_solutionProcess}).  For every $t\geq 0$, define
\begin{equation}\label{def_Gamma}
    (\Gamma_t\kappa)(r) \doteq \overline G(r)\kappa(t) - \int_0^t\kappa(s)g(t+r-s)ds, \quad\quad r\in\hc,
\end{equation}
for $\kappa \in \C$. The following lemma establishes useful properties of the mapping $\Gamma_t$.

\begin{lemma}\label{Kfubini}
Under Assumptions \ref{hh2_AS}-\ref{finite2_AS}, the following assertions hold:
\renewcommand{\theenumi}{\alph{enumi}}
\begin{enumerate}
    \item \label{Kfubini_range} For every $t\geq0$ and $\kappa\in\C$, the function $\Gamma_t\kappa$ lies in $\Hone\ho \cap \mathbb{C}^1[0,\infty)$ and has derivative
        \begin{equation}\label{Gkdensity}
            (\Gamma_t\kappa)^\prime(r)= -g(r)\kappa(t) - \int_0^t\kappa(s)g^\prime(t+r-s)ds,\quad\quad r\in\hc.
        \end{equation}
    \item \label{Kfubini_continuous} For every $t\geq0$, the mapping $\Gamma_t:\C\to\Hone\ho$  is continuous.
    \item \label{Kcontinuity} For every $\kappa \in \C$, the mapping $t\mapsto \Gamma_t\kappa$ from $[0,\infty)$ to $\Hone\ho$ is continuous.
\end{enumerate}
\renewcommand{\theenumi}{\arabic{enumi}}
\end{lemma}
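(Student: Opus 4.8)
The plan is to prove part \ref{Kfubini_range} by differentiating the defining formula \eqref{def_Gamma} directly, and then to read off parts \ref{Kfubini_continuous} and \ref{Kcontinuity} from the resulting explicit expressions for $\Gamma_t\kappa$ and $(\Gamma_t\kappa)^\prime$ together with a handful of $\Ltwo\ho$ estimates. For part \ref{Kfubini_range}, recall that $\overline{G}$ is continuously differentiable with $\overline{G}^\prime=-g$ by Assumption \ref{hh2_AS}.\ref{h0_AS}, and that $r\mapsto\int_0^t\kappa(s)g(t+r-s)\,ds$ may be differentiated under the integral sign: for $r$ in a compact interval and $s\in[0,t]$ the argument $t+r-s$ stays in a fixed compact set on which $g^\prime$ is continuous (Assumption \ref{hh2_AS}.\ref{h2_AS}) hence bounded, while $\kappa$ is bounded on $[0,t]$, so the Leibniz rule applies and yields \eqref{Gkdensity}. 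Continuity of the right-hand side of \eqref{Gkdensity} in $r$ follows from continuity of $g$, of $g^\prime$, and of $\kappa$ together with compactness of $[0,t]$, giving $\Gamma_t\kappa\in\Cone\hc$ (the value at $r=0$ being the right derivative).

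To see that $\Gamma_t\kappa\in\Hone\ho$, I would use the identity $\int_0^t g(t+r-s)\,ds=\overline{G}(r)-\overline{G}(t+r)\le\overline{G}(r)$ to obtain $|\Gamma_t\kappa(r)|\le 2\|\kappa\|_t\,\overline{G}(r)$, and the bound $|g^\prime|\le H_2\overline{G}$ from Assumption \ref{hh2_AS}.\ref{h2_AS} to obtain $|(\Gamma_t\kappa)^\prime(r)|\le\|\kappa\|_t|g(r)|+H_2\|\kappa\|_t\int_r^\infty\overline{G}(u)\,du$. Since $\overline{G}$, $g$ and $\int_\cdot^\infty\overline{G}(u)\,du$ all lie in $\Ltwo\ho$ (Remark \ref{integrable}, which uses Assumptions \ref{hh2_AS} and \ref{finite2_AS}), both $\Gamma_t\kappa$ and its classical derivative are in $\Ltwo\ho$; since for a $\Cone\hc$ function the classical derivative is also the weak derivative, $\Gamma_t\kappa\in\Hone\ho$. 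Part \ref{Kfubini_continuous} is then immediate: $\Gamma_t$ is linear in $\kappa$, and the two pointwise bounds just obtained give $\|\Gamma_t\kappa\|_{\Hone}\le C\|\kappa\|_t$ for a constant $C$ depending only on $\|\overline{G}\|_{\Ltwo\ho}$, $\|g\|_{\Ltwo\ho}$, $H_2$ and $\|\int_\cdot^\infty\overline{G}(u)\,du\|_{\Ltwo\ho}$, so that $\kappa_n\to\kappa$ in $\C$ (hence $\|\kappa_n-\kappa\|_t\to0$) implies $\Gamma_t\kappa_n\to\Gamma_t\kappa$ in $\Hone\ho$.

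For part \ref{Kcontinuity}, fix $\kappa\in\C$. Since the $\Hone\ho$-norm of a function is the Euclidean combination of the $\Ltwo\ho$-norms of the function and its weak derivative, it suffices to show that $t\mapsto\Gamma_t\kappa$ and $t\mapsto(\Gamma_t\kappa)^\prime$ are continuous from $\hc$ into $\Ltwo\ho$. Changing variables $u=t+r-s$ rewrites $\Gamma_t\kappa(r)=\kappa(t)\overline{G}(r)-\int_r^{t+r}\kappa(t+r-u)g(u)\,du$, and likewise $(\Gamma_t\kappa)^\prime(r)=-\kappa(t)g(r)-\int_r^{t+r}\kappa(t+r-u)g^\prime(u)\,du$. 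For $t^\prime=t+\delta$ with $\delta>0$ (the case $\delta<0$ being symmetric), I would split $\Gamma_{t^\prime}\kappa-\Gamma_t\kappa$ into: the term $(\kappa(t^\prime)-\kappa(t))\overline{G}(r)$, whose $\Ltwo\ho$-norm tends to $0$ by continuity of $\kappa$; the ``interior'' term $\int_r^{t+r}[\kappa(t^\prime+r-u)-\kappa(t+r-u)]g(u)\,du$, bounded pointwise by $\omega_\kappa(\delta)\,\overline{G}(r)$ (using $\int_r^{t+r}g\le\overline{G}(r)$), where $\omega_\kappa(\delta)$ is the modulus of continuity of $\kappa$ on a fixed compact interval containing $[0,t+1]$; and the ``boundary'' term $\int_{t+r}^{t^\prime+r}\kappa(t^\prime+r-u)g(u)\,du$, bounded pointwise by $\|\kappa\|_{t+1}H\delta\,\overline{G}(r)$ via $g\le H\overline{G}$ (Assumption \ref{hh2_AS}.\ref{h_AS}) and monotonicity of $\overline{G}$. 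The identical splitting for $(\Gamma_{t^\prime}\kappa)^\prime-(\Gamma_t\kappa)^\prime$ replaces $g$ by $g^\prime$ and $\overline{G}(r)$ by $g(r)$ in the first term, and uses $|g^\prime|\le H_2\overline{G}$ together with $\int_r^{t+r}\overline{G}\le\int_r^\infty\overline{G}$, so that the interior term is bounded by $\omega_\kappa(\delta)H_2\int_r^\infty\overline{G}(u)\,du$. In every case the pointwise bound is a factor ($\omega_\kappa(\delta)$, $|\kappa(t')-\kappa(t)|$, or $\delta$) that vanishes as $\delta\to0$ times a fixed $\Ltwo\ho$ function ($\overline{G}$, $g$, or $\int_\cdot^\infty\overline{G}$), so every piece tends to $0$ in $\Ltwo\ho$; alternatively, the boundary terms can be handled by the $\Hone\ho$-translation continuity of $\overline{G}$ and $\int_\cdot^\infty\overline{G}$ supplied by Lemma \ref{propertiesofH1}.\ref{prop_translation}.

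The main obstacle is the bookkeeping in part \ref{Kcontinuity}: one must arrange that all three pieces of the difference vanish in $\Ltwo\ho$ simultaneously, which forces one to combine the uniform continuity of $\kappa$ on compacts (for the interior terms) with the tail integrability $\int_\cdot^\infty\overline{G}\in\Ltwo\ho$ coming from Assumption \ref{finite2_AS} (for the derivative estimate) and the hazard-rate bounds from Assumption \ref{hh2_AS} (for the boundary terms). The remaining computations in parts \ref{Kfubini_range} and \ref{Kfubini_continuous} are routine once the pointwise $\overline{G}$-type envelopes are in hand.
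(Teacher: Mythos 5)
Your proposal is correct, and for parts \ref{Kfubini_range} and \ref{Kfubini_continuous} it is essentially the paper's argument: differentiate \eqref{def_Gamma} to get \eqref{Gkdensity}, then use the envelopes $|\Gamma_t\kappa(r)|\le 2\|\kappa\|_t\overline G(r)$ and $|(\Gamma_t\kappa)'(r)|\le \|\kappa\|_t\bigl(g(r)+H_2\int_r^\infty\overline G(u)du\bigr)$ together with Remark \ref{integrable} to conclude membership in $\Hone\ho$, and linearity plus the same envelopes to get the Lipschitz bound $\|\Gamma_t\kappa-\Gamma_t\tilde\kappa\|_{\Hone}\le C\|\kappa-\tilde\kappa\|_t$. (The paper bounds $g(r)\le H\overline G(r)$ where you keep $g\in\Ltwo\ho$ directly; this is immaterial.)

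For part \ref{Kcontinuity} you take a genuinely different route. The paper keeps the convolution in the form $\int_0^{t}\kappa(u)g(\cdot+t-u)du$, splits $\Gamma_t\kappa-\Gamma_s\kappa$ into the $(\kappa(t)-\kappa(s))\overline G$ term, a short-interval term over $[s,t]$, and a term involving $g(\cdot+t-u)-g(\cdot+s-u)$, and then controls the last piece by the strong continuity of translations of $g$ (and $g'$) in the $\Ltwo$ norm plus the bounded convergence theorem. You instead change variables so that the kernel $g$ (resp.\ $g'$) is frozen and the time increment is absorbed into the argument of $\kappa$; the interior piece is then controlled by the modulus of continuity of $\kappa$ on a compact interval, multiplied by the fixed $\Ltwo\ho$ envelopes $\overline G$, $g$, or $\int_\cdot^\infty\overline G$, and the boundary piece by $H$, $H_2$ and monotonicity of $\overline G$. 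Both decompositions are valid under Assumptions \ref{hh2_AS} and \ref{finite2_AS}; your version trades the paper's reliance on $\Ltwo$-translation continuity of $g,g'$ for uniform continuity of $\kappa$ on compacts, which is arguably more elementary, while the paper's version avoids the change of variables and keeps the estimate uniform in the input only through $\|\kappa\|_T$.
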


\begin{proof}
The proof is postponed to Appendix \ref{sec_proofGamma}.
\end{proof}

\subsection{The Diffusion Model}\label{sec_explicitDef}

Here, we explicitly construct our proposed diffusion model $Y = (X,Z)$. In the next two subsections we  show that $Y$  is indeed the unique solution to the diffusion model SPDE. The $X$-component of this process is defined in terms of the (deterministic) centered many-server (CMS) mapping introduced in \cite{KasRam13},  and recalled below as Definition \ref{def_cmsm}.

The first assertion of  Lemma \ref{CMSMPROP} below, which  characterizes the CMS mapping, and shows it is continuous,  can be deduced from a more general result established in \cite{KasRam13} (see  Proposition 7.3 and the proof of Lemma 7.4 therein).  Since the proof of this characterization is simpler in our context (because we only consider the  so-called critical regime) for completeness, we include a direct proof  below.  Recall that $\mathbb{C}^0[0,\infty)$ denotes the set of functions $f \in \C$ with $f(0) = 0$.

\begin{lemma}\label{CMSMPROP}
Given $(\eta,x_0,\zeta)\in \mathbb{C}^0[0,\infty) \times \R \times \C$ with $\zeta(0)=x_0\wedge 0$, there exists a unique pair $(\kappa,x) \in \mathbb{C}^0[0,\infty)\times\C$ that satisfies  the following equations: for every $t\in [0,\infty)$,
\begin{align}
    \label{cms_abstract1}x(t)\wedge0 & = \zeta(t) + \kappa(t) -\int_0^t{g(t-s)\kappa(s)ds},\\
    \label{cms_abstract2}\kappa(t) & =  \eta(t)-x^+(t) +x_0^+.
\end{align}
Furthermore, the mapping $\Lambda:\mathbb{C}^0[0,\infty) \times \R \times \C \mapsto \mathbb{C}^0[0,\infty)\times\C$ that takes $(\eta,x_0,\zeta)$ to  $(\kappa,x)$ is continuous and non-anticipative, that is, for every $T \in (0, \infty)$ and $(\kappa_i, x_i)=\Lambda(\eta_i,x_0,\zeta_i),i=1,2$, if $(\eta_1,\zeta_1)$ and $(\eta_2,\zeta_2)$ are equal  on $[0,T]$, then $(\kappa_1, x_1)$ coincides with $(\kappa_2, x_2)$ on $[0,T]$.
\end{lemma}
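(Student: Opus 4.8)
The plan is to recast the system \eqref{cms_abstract1}--\eqref{cms_abstract2} as a fixed-point problem for the single function $x \in \C$ and solve it by a Banach-type contraction argument on each finite interval $[0,T]$, obtaining a unique solution on $\hc$ by concatenation. First I would eliminate $\kappa$: substituting \eqref{cms_abstract2} into \eqref{cms_abstract1} gives, for every $t \geq 0$,
\begin{equation*}
    x(t)\wedge 0 = \zeta(t) + \bigl(\eta(t) - x^+(t) + x_0^+\bigr) - \int_0^t g(t-s)\bigl(\eta(s) - x^+(s) + x_0^+\bigr)\,ds.
\end{equation*}
Writing $x = x^+ - x^-$ and noting $x\wedge 0 = -x^-$, this is an equation that determines $x^+$ and $x^-$ from each other through the convolution with $g$; equivalently, defining $\Xi(x)(t)$ to be the right-hand side above (which depends on $x$ only through $x^+$), one seeks $x$ with $-x^-(t) = \Xi(x)(t)$, i.e. $x(t) = x^+(t) + \Xi(x)(t)$ where $\Xi(x)(t) \le 0$ must hold automatically. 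The cleanest route is to observe that, given any candidate for $x^+ =: u \ge 0$, the equation forces $x\wedge 0 = \Xi(u)$ and $x \vee 0 = u$, so consistency requires $\Xi(u) \le 0$ and $u \cdot \Xi(u) \equiv 0$; then $x := u + \Xi(u)$. I would instead phrase the fixed point directly on $x$: define $(\mathcal{T}x)(t) \doteq x^+(t) + \zeta(t) + \eta(t) + x_0^+ - \int_0^t g(t-s)(x^+(s) - x_0^+)\,ds - \int_0^t g(t-s)\eta(s)\,ds$ and seek $x = \mathcal{T}x$. Since $|a^+ - b^+| \le |a-b|$, the map $x \mapsto x^+$ is a contraction-neutral Lipschitz-$1$ operation, and the convolution term $\int_0^t g(t-s) x^+(s)\,ds$ is, on $[0,T]$, Lipschitz in $x$ with constant $T \sup_{[0,T]} g \le T \|g\|_\infty$ in the sup norm (here $g$ is bounded since $g = h\overline G$ with $h$ bounded by Assumption \ref{hh2_AS}.\ref{h_AS}); the non-contractive part $x \mapsto x^+$ must be handled by a standard trick, e.g. using a weighted norm $\|x\|_{T,\lambda} \doteq \sup_{t\le T} e^{-\lambda t}|x(t)|$ with $\lambda$ large, under which the whole map becomes a strict contraction. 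This yields existence and uniqueness of $x \in \C$; one then defines $\kappa$ by \eqref{cms_abstract2}, checks $\kappa \in \mathbb{C}^0[0,\infty)$ (clear, since $\kappa(0) = \eta(0) - x_0^+ + x_0^+ = 0$ because $\eta \in \mathbb{C}^0$), and verifies \eqref{cms_abstract1} holds, noting that the boundary condition $\zeta(0) = x_0 \wedge 0$ is exactly what makes the equation at $t=0$ consistent (indeed at $t=0$, \eqref{cms_abstract1} reads $x(0)\wedge 0 = \zeta(0) + \kappa(0) = \zeta(0)$, forcing $x(0)\wedge 0 = x_0 \wedge 0$, which is compatible).

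The argument for \emph{uniqueness} is subsumed in the contraction (on each $[0,T]$, hence on $\hc$). For \emph{continuity} of $\Lambda$, I would use the standard stability-of-fixed-points estimate: if $(\eta_i, x_{0}, \zeta_i)$, $i=1,2$ are two inputs (I would first treat fixed $x_0$, then note $x_0$ enters continuously and linearly through $x_0^+$ and the convolution, so the joint continuity follows the same way) with corresponding solutions $x_i$, subtracting the fixed-point equations and using $|x_1^+ - x_2^+| \le |x_1 - x_2|$ together with the Lipschitz bound on the convolution gives, in the weighted norm on $[0,T]$,
\begin{equation*}
    \|x_1 - x_2\|_{T,\lambda} \le C(T)\bigl(\|\eta_1 - \eta_2\|_T + \|\zeta_1 - \zeta_2\|_T\bigr) + \tfrac{1}{2}\|x_1 - x_2\|_{T,\lambda},
\end{equation*}
hence $\|x_1 - x_2\|_{T,\lambda} \le 2C(T)(\|\eta_1 - \eta_2\|_T + \|\zeta_1 - \zeta_2\|_T)$, which gives continuity of $(\eta,\zeta) \mapsto x$ in the topology of uniform convergence on compacts; continuity of $(\eta,\zeta)\mapsto \kappa$ then follows from \eqref{cms_abstract2} since $x \mapsto x^+$ is continuous. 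Finally, \emph{non-anticipativity} is immediate from the construction: all the maps involved (the convolution $\int_0^t g(t-s)\,\cdot\,ds$, the pointwise operations $x\mapsto x^\pm$, and evaluation of $\eta,\zeta$ at times $\le t$) depend only on the restriction of the data to $[0,t]$, so running the contraction on $[0,T]$ produces a solution that agrees with any solution built from data coinciding on $[0,T]$; one can phrase this crisply by noting the fixed-point iteration on $[0,T]$ never references input values past $T$.

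I expect the main obstacle to be the non-contractive nature of $x \mapsto x^+$: the naive Lipschitz constant of $\mathcal{T}$ in the plain sup norm on $[0,T]$ is $1 + T\|g\|_\infty > 1$, so one genuinely needs the exponentially weighted norm (or, alternatively, a short-interval/bootstrapping argument concatenating solutions on intervals of length $1/(2\|g\|_\infty)$) to close the contraction; care is needed to check that the $x^+$ term contributes a factor that does not spoil the $\lambda$-dependence — in fact $\sup_{t\le T} e^{-\lambda t}|x^+(t)| \le \|x\|_{T,\lambda}$ contributes a constant-$1$ term that must be absorbed, which is why $\mathcal{T}x$ is best written with $x^+(t)$ appearing both inside and outside so that it cancels, reducing the genuine fixed-point to the convolution term alone. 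A secondary, purely bookkeeping point is confirming that the solution $x$ stays in $\C$ (not merely locally bounded) and that $\kappa(0)=0$, both of which are straightforward from continuity of $g$, $\eta$, $\zeta$ and the explicit formulas.
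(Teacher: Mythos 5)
Your proposal is correct and takes essentially the same route as the paper: after substituting \eqref{cms_abstract2} into \eqref{cms_abstract1} the terms $x^+(t)$ cancel and the system reduces to the Volterra equation $x(t)=r(t)+\int_0^t g(t-s)x^+(s)\,ds$ with the Lipschitz-$1$ nonlinearity $x\mapsto x^+$ under the convolution, which the paper handles by citing standard Volterra theory \cite{Bur05} for existence/uniqueness, \cite[Proposition 7.3]{KasRam13} for continuity, and Gronwall for non-anticipativity, while you reprove the same facts by a weighted-norm contraction, a fixed-point stability estimate, and locality of the iteration. The only blemish is your displayed formula for $\mathcal{T}$, which retains an uncancelled $x^+(t)$ and flips the sign of the term $\int_0^t g(t-s)\bigl(x^+(s)-x_0^+\bigr)ds$; your closing remark makes clear you intend the cancelled (correct) form, and the contraction and stability estimates are unaffected since they use only $|a^+-b^+|\leq|a-b|$ and the integrability of $g$.
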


\begin{proof}
Fix $(\eta,x_0,\zeta)\in \mathbb{C}^0[0,\infty) \times \R \times \C$ with $\zeta(0)=x_0\wedge 0,$  and set
\begin{equation}\label{tempdef_r}
    r(t)\doteq\zeta(t)+\eta(t)-\int_0^tg(t-s)\eta(s)ds+\overline G(t)x_0^+, \quad t \geq 0.
\end{equation}
Then, substituting $\kappa$ from \eqref{cms_abstract2}  into \eqref{cms_abstract1}, it is straightforward to see that $(\kappa, x)$ satisfy \eqref{cms_abstract1} and \eqref{cms_abstract2} if and only if $x$ satisfies the Volterra equation
\begin{equation}\label{Xconvtemp1}
    x(t)=r(t)+\int_0^tg(t-s)x^+(s) ds,\quad t \geq 0,
\end{equation}
and $\kappa$ satisfies \eqref{cms_abstract2}. However,  since $F(x) = x^+$ is Lipschitz and the assumptions on $(\eta,x_0,\zeta)$ imply that $r$ is continuous and  $r(0)=x_0$, there exists a unique solution $\overline{x}$ to \eqref{Xconvtemp1} (Theorem 3.2.1 of  \cite{Bur05} shows that a unique solution exists on a finite interval, while Theorem 3.3.6 of \cite{Bur05} ensures that the solution can be extended to the whole interval $\hc$).   Defining $\overline{\kappa}$ as in \eqref{cms_abstract2}, with $x$ replaced by $\overline{x}$, it follows that $(\overline{\kappa}, \overline{x})$ is the unique solution to the equations \eqref{cms_abstract1}-\eqref{cms_abstract2} associated with $(\eta,x_0,\zeta)$. Let $\Lambda$ denote the map that takes $(\eta,x_0,\zeta)$ to this unique solution.

The continuity of $\Lambda$ follows from Proposition 7.3 in \cite{KasRam13}. To prove the non-anticipative property, for $i=1,2,$ define $r_i$ by \eqref{tempdef_r} with $\eta$ and $\zeta$ replaced by $\eta_i$ and $\zeta_i$.   We need to show that for every $t\in[0,T]$, if $(\eta_1,\zeta_1)$ and $(\eta_2,\zeta_2)$ agree on $[0,t]$, we have $r_1(t)=r_2(t)$.  Subtracting equation \eqref{Xconvtemp1} with  $x$ and $r$ replaced by $x_i$ and $r_i$, $i=1,2,$ respectively, and defining $\Delta x=x_1- x_2$, we have
\[
\Delta x(t)=\int_0^t g(t-s)\left(x_1^+(s)-x_2^+(s)\right) ds,\quad\quad t \in [0,T].
\]
Using the fact that  the map $F(x) = x^+$ is Lipschitz with constant $1$, we have
\[
|\Delta x(t)|\leq \int_0^t g(t-s)|x_1^+(s)-x_2^+(s)|ds \leq \int_0^t g(t-s)|\Delta x(s)| ds\,\quad\quad  t\in [0,T].
\]
Since $\Delta x (0) = 0$, Gronwall's inequality implies $\Delta x(t)=0$ for  $t\in[0,T]$.  Then, because $\kappa_i$ satisfies \eqref{cms_abstract2} with $\eta$ and $x$ replaced by  $\eta_i$ and $x_i$, respectively, we also have  $\kappa_1(t)=\kappa_2(t)$ for  $t\in[0,T]$.
\end{proof}

\begin{definition}[\textbf{Centered Many-Server Mapping}]\label{def_cmsm}
Equations \eqref{cms_abstract1} and \eqref{cms_abstract2} are called the centered many-server (CMS) equations associated with $(\eta,x_0,\zeta),$ and the mapping $\Lambda$  that takes $(\eta,x_0,\zeta)$ to the unique solution $(\kappa,x)$ of the CMS equations associated with $(\eta, x_0, \zeta)$  is called the centered many-server (CMS) Mapping.
\end{definition}

Recall that $B$ is a Brownian motion and  for fixed $\sigma,\beta>0,$ define
\begin{equation}\label{EDef}
    E(t) \doteq   \sigma B(t) -  \beta t, \quad t \geq 0.
\end{equation}

\begin{definition}[\textbf{Diffusion Model}]\label{def_solutionProcess}
For every $\Y$-valued random element $Y_0=(X_0,Z_0)$, the diffusion model $Y^{Y_0}=\{Y^{Y_0}(t);t\geq0\}$ with initial condition $Y_0$  is defined by $Y^{Y_0}(t)=(X(t),Z(t,\cdot))$, where
\begin{equation} \label{XKdefinition}
    (K,X)\doteq\Lambda(E,X_0,Z_0-{\cal H}(\f1)),
\end{equation}
and for every $t, r\geq 0$,
\begin{equation}\label{def_Z}
	Z(t,r) \doteq  Z_0(t+r) -\M_t(\Psi_{t+r}\f1)+\Gamma_tK(r),
\end{equation}
where  $\{\Psi_t;t\geq0\}$ and $\{\Gamma_t;t\geq0\}$ are given by \eqref{def_Psi} and \eqref{def_Gamma}, respectively.
\end{definition}

Deterministic initial conditions will be denoted by lower case: $y=(x_0,z_0)\in\Y.$
Also, when the initial condition $Y_0$ is clear from the context, we will often not mention it explicitly, and also omit the superscript $Y_0$ and just use $Y$ to denote the diffusion model.

\begin{remark} \label{rem-Zprop}
{\em Note that $E$ and ${\mathcal H} (\mathbf{1})$ have continuous sample paths and  $E(0) = {\mathcal H}_0 (\mathbf{1}) = 0$. Also, for every $\Y$-valued random element $(X_0, Z_0)$,  by Lemma \ref{propertiesofH1}.\ref{propertiesofH1_version} and Remark \ref{remark_notation}, $Z_0$  has a representative (also denoted by $Z_0$) which is continuous on $[0,\infty)$ and satisfies  $Z_0(0)=X_0\wedge 0$ by the definition of $\Y$. Therefore, almost surely, $(E, x_0, Z_0 - {\mathcal H}(\mathbf{1}))$ lies in the domain of the CMS mapping $\Lambda$.  Hence, $(K,X)$ in \eqref{XKdefinition} is well defined and (by Lemma \ref{CMSMPROP}) satisfies a.s.,
\begin{align}
    X(t)\wedge 0 & = Z_0(t) -\mathcal{H}_t(\f1) + K(t) -\int_0^t{g(t-s)K(s)ds},\label{csme1}\\
    K(t) & =  \sigma B(t) -\beta t - X^+(t) +X_0^+,  \label{csme2}
\end{align}
for $t \geq 0$.
Also, almost surely for every $t \geq 0$,  $Z_0 \in \Hone \ho$ implies $Z_0(t+\cdot)$  lies in $\Hone\ho$ (see Lemma \ref{propertiesofH1}.\ref{prop_translation}), the continuity of $K$ and Lemma \ref{Kfubini}.\ref{Kfubini_range} imply $\Gamma_t K (\cdot) \in \Hone \ho$, and Proposition \ref{Mcontinuity} implies $r \mapsto \M_t(\Psi_{t+r}\f1)$ lies in $\Hone \ho$.  Thus, $Z(t,\cdot)$ in \eqref{def_Z} also lies in $\Hone \ho$ and furthermore,  by Lemma \ref{propertiesofH1}.\ref{propertiesofH1_version},  has a continuous modification, so $Z(t,r)$ in \eqref{def_Z} is well defined for $r \in [0,\infty)$.}
\end{remark}

\subsection{Existence of a Solution}\label{sec_explicitExist}

We now show that the diffusion model defined in Section \ref{sec_explicitDef} is indeed a solution to the diffusion model SPDE. Throughout this section, we fix $Y_0=(X_0,Z_0)\in \Y$ and let $Y=Y^{Y_0}$ be the diffusion model with initial condition $Y_0$, as specified in Definition \ref{def_solutionProcess}.

\begin{proposition}\label{consistencyTHM}
Suppose Assumptions  \ref{hh2_AS}-\ref{finite2_AS} hold. Then the diffusion model $Y = (X,Z)$ with initial condition $Y_0$ satisfies the following properties:
\begin{enumerate}[\quad a.]
    \item \label{consistencyTHM_consistency}  Almost surely, the sample paths of $\{Z(t,\cdot);t\geq0\}$ are $\Hone\ho$-valued and continuous, and for every $t\geq0,$ the weak derivative $\partial_r Z(t,\cdot)$ of $Z(t,\cdot)$ satisfies for a.e. $r\in\ho$,
          \begin{equation}\label{ZprimeDef}
            \partial_r Z(t,r)=  Z_0^\prime(t+r)+{\cal M}_t(\Psi_{t+r}h) -g(r)K(t)  - \int_0^t{K(s)g^\prime(t+r-s)ds}.
          \end{equation}
    \item \label{consistencyTHM_z0} Almost surely, $Z(t,0)=X(t)\wedge 0$, for all $t\geq0.$
    \item \label{consistencyTHM_continuity} $\{Y(t);\:t\geq 0\}$ is an almost surely continuous  $\Y$-valued process.
\end{enumerate}
\end{proposition}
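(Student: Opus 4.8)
The plan is to verify each of the three assertions \ref{consistencyTHM_consistency}, \ref{consistencyTHM_z0}, \ref{consistencyTHM_continuity} directly from the explicit formula \eqref{def_Z} for $Z$ together with the already-established properties of the three ingredients appearing in it: the translate $Z_0(t+\cdot)$, the stochastic convolution $r\mapsto\M_t(\Psi_{t+r}\f1)$, and the term $\Gamma_tK(\cdot)$. Recall that $(K,X)=\Lambda(E,X_0,Z_0-\mathcal H(\f1))$ is well defined with $K\in\mathbb C^0[0,\infty)$ and $X\in\C$, by Lemma \ref{CMSMPROP} and Remark \ref{rem-Zprop}, so each of these three pieces is already known to be an $\Hone\ho$-valued object.

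\emph{Part \ref{consistencyTHM_consistency}.} For fixed $t\geq0$, the fact that $Z(t,\cdot)\in\Hone\ho$ is noted in Remark \ref{rem-Zprop}; its weak derivative is the sum of the weak derivatives of the three summands, and these are identified term by term: the weak derivative of $Z_0(t+\cdot)$ is $Z_0'(t+\cdot)$ (translation commutes with weak differentiation, cf. Lemma \ref{propertiesofH1}.\ref{prop_translation}); the weak derivative of $r\mapsto\M_t(\Psi_{t+r}\f1)$ is $-\M_t(\Psi_{t+r}h)$ by Proposition \ref{Mcontinuity}; and the weak derivative of $\Gamma_tK$ is given by the explicit formula \eqref{Gkdensity} of Lemma \ref{Kfubini}.\ref{Kfubini_range} with $\kappa=K$. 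Adding these gives exactly \eqref{ZprimeDef}. For the continuity of $t\mapsto Z(t,\cdot)$ in $\Hone\ho$, I would again argue summand by summand: $t\mapsto Z_0(t+\cdot)$ is $\Hone\ho$-continuous by Lemma \ref{propertiesofH1}.\ref{prop_translation}; $t\mapsto\M_t(\Psi_{t+\cdot}\f1)$ is $\Hone\ho$-continuous by Proposition \ref{Mcontinuity}; and $t\mapsto\Gamma_tK$ is $\Hone\ho$-continuous by Lemma \ref{Kfubini}.\ref{Kcontinuity} (using also the sample-path continuity of $K$). Hence the sum is a.s. $\Hone\ho$-valued and continuous.

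\emph{Part \ref{consistencyTHM_z0}.} Set $r=0$ in \eqref{def_Z}. Then $\M_0(\Psi_{t}\f1)=\M_t(\Psi_t\f1)=\mathcal H_t(\f1)$ by the definition \eqref{Hdefinition}, and $\Gamma_tK(0)=\overline G(0)K(t)-\int_0^t K(s)g(t-s)ds=K(t)-\int_0^t g(t-s)K(s)ds$ since $\overline G(0)=1$. Therefore $Z(t,0)=Z_0(t)-\mathcal H_t(\f1)+K(t)-\int_0^t g(t-s)K(s)ds$, which is precisely the right-hand side of \eqref{csme1}; by that identity it equals $X(t)\wedge0$. The only subtlety is that the evaluation $Z(t,0)$ must be interpreted via the continuous representative of $Z(t,\cdot)$ (Remark \ref{remark_notation}); but the continuous representatives of $Z_0(t+\cdot)$, of $r\mapsto\M_t(\Psi_{t+r}\f1)$ (continuity in $r$ from Remark \ref{remark_modification}), and of $\Gamma_tK$ (which lies in $\mathbb C^1[0,\infty)$ by Lemma \ref{Kfubini}.\ref{Kfubini_range}) all have their pointwise value at $r=0$ agreeing with the formula just written, so the identification is legitimate.

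\emph{Part \ref{consistencyTHM_continuity}.} Combining Part \ref{consistencyTHM_consistency} (continuity of $t\mapsto Z(t,\cdot)$ in $\Hone\ho$) with the sample-path continuity of $X$ (which holds since $X=\Lambda(\cdots)_2\in\C$ and $E$, $\mathcal H(\f1)$ have continuous paths), we get that $t\mapsto(X(t),Z(t,\cdot))$ is a.s. continuous into $\R\times\Hone\ho$; and by Part \ref{consistencyTHM_z0} it takes values in the closed subspace $\Y$ of \eqref{spaceYdefinition}. By Corollary \ref{cor-propertiesofH1_Y}, $\Y$ carries the subspace topology, so $Y$ is an a.s. continuous $\Y$-valued process. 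The main obstacle is bookkeeping rather than anything deep: one must keep careful track of the distinction between an $\Hone\ho$-equivalence class and its continuous representative throughout, and in particular make sure that the pointwise boundary identity in Part \ref{consistencyTHM_z0} is read in terms of the right (continuous) version — this is exactly the point flagged in Remarks \ref{remark_notation}, \ref{remark_contModification} and \ref{remark_modification}, which reconcile the jointly-continuous-in-$(t,r)$ modifications with the $\Hone\ho$-valued-continuous-in-$t$ modifications.
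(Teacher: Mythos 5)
Your proposal is correct and follows essentially the same route as the paper's proof: a term-by-term treatment of \eqref{def_Z} using Lemma \ref{propertiesofH1}.\ref{prop_translation}, Proposition \ref{Mcontinuity} and Lemma \ref{Kfubini} for part \ref{consistencyTHM_consistency}, substitution of $r=0$ together with \eqref{csme1} for part \ref{consistencyTHM_z0}, and continuity of $X$ from the CMS mapping for part \ref{consistencyTHM_continuity}. The only blemish is the typo ``$\M_0(\Psi_t\f1)$'' (it should read $\M_t(\Psi_t\f1)$), which does not affect the argument.
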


\begin{proof}
For part \ref{consistencyTHM_consistency}, we look at each term in the definition of $Z$ in \eqref{def_Z} separately. Since $Z_0\in\Hone\ho$, the translation mapping $t\mapsto Z_0(t+\cdot)$ is continuous in $\Hone\ho$ by Lemma \ref{propertiesofH1}.\ref{prop_translation} and for each $t > 0$, $Z_0(t+\cdot)$ has weak derivative $Z_0^\prime(t+\cdot)$.   For the second term, by Proposition \ref{Mcontinuity},  $\{{\cal M}_t(\Psi_{t+\cdot}\f1);t\geq0\}$ is a continuous $\Hone\ho$-valued process and almost surely, for every $t\geq 0$, the function $r\mapsto{\cal M}_t(\Psi_{t+r}\f1)$ has weak derivative $r\mapsto -{\cal M}_t(\Psi_{t+r}h)$. Finally for the third term, since the range of the CMS map $\Lambda$ lies in $\mathbb{C}^0[0,\infty) \times \C$ (see Definition \ref{def_cmsm} and Lemma \ref{CMSMPROP}), almost surely, the process $K$ defined in \eqref{XKdefinition} is continuous.   Therefore,  by Lemma \ref{Kfubini}.\ref{Kcontinuity}, $\{\big(\Gamma_tK\big); t \geq 0\}$ is a continuous $\Hone \ho$-valued process. Also, by \eqref{Gkdensity}, for every  $t\geq0$, $r \mapsto \Gamma_tK(r)$ has weak derivative $-g(r)K(t)  - \int_0^t{K(s)g^\prime(t+r-s)ds}$. This completes the proof of part \ref{consistencyTHM_consistency}.

Next, for part \ref{consistencyTHM_z0}, substituting $r=0$ in \eqref{def_Z} and  the definition \eqref{def_Gamma} of $\Gamma_t$, and using the identity $\mathcal{H}_t(\f1)=\M_t(\Psi_t\f1)$ from \eqref{Hdefinition}, we obtain \[Z(t,0)=Z_0(t)-\mathcal{H}_t(\f1)+ K(t)- \int_0^t{K(s)g(t-s)ds}.\] The assertion in \ref{consistencyTHM_z0} then follows from equation \eqref{csme1}.

Finally, since the range of $\Lambda$ lies in $\mathbb{C}^0[0,\infty) \times \C$ (see Definition \ref{CMSMPROP} and Lemma \ref{CMSMPROP}),  by \eqref{XKdefinition} $\{X(t);t\geq0\}$ is a.s.\ continuous. Along with  parts \ref{consistencyTHM_consistency} and \ref{consistencyTHM_z0} above, this proves part \ref{consistencyTHM_continuity}.
\end{proof}

Next, we show that $Z$ satisfies the regularity condition required in part \ref{def_csaeCondition} of Definition \ref{def_csae}.

\begin{lemma}\label{intZprime}
Suppose Assumptions \ref{hh2_AS}-\ref{finite2_AS} hold, and let $Y = (X,Z)$ be the diffusion model with initial condition
$Y_0$. Then, almost surely, $(s,r)\mapsto\partial_rZ(s,r)$ is locally integrable on $\ho\times\ho$ and for every  $t\geq0$, there exists a continuous function $F_t^*$ on $\hc$ such that the function $r\mapsto \int_0^t \partial_r Z(s,r)ds$ is equal to $F_t$ almost everywhere on $\ho$. Moreover, for every $t,r\geq0$
\begin{align}\label{intZprime_eq}
    \int_0^t \partial_r Z(s,r)ds  & = Z_0(t+r)-Z_0(r)  + {\mathcal M}_t(\Phi_r \f1)  - {\cal M}_t(\Psi_{t+r}\f1)\notag\\
          &\quad -\int_0^t K(s)g(t+r-s)ds.
\end{align}
\end{lemma}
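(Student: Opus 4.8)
The plan is to establish \eqref{intZprime_eq} as an identity between continuous functions of $r$ (for each fixed $t$), and then read off the local integrability and the existence of the continuous representative $F_t^*$ as consequences. I would start from the explicit formula \eqref{ZprimeDef} for the weak derivative $\partial_r Z(s,r)$ obtained in Proposition \ref{consistencyTHM}.\ref{consistencyTHM_consistency}, namely
\[
    \partial_r Z(s,r)=  Z_0^\prime(s+r)+{\cal M}_s(\Psi_{s+r}h) -g(r)K(s)  - \int_0^s{K(u)g^\prime(s+r-u)du},
\]
valid for a.e.\ $r$, and integrate this expression over $s\in[0,t]$, treating the four terms separately. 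The first term integrates directly: $\int_0^t Z_0'(s+r)\,ds = Z_0(t+r)-Z_0(r)$ by the fundamental theorem of calculus (recall $Z_0\in\Hone\ho$ has an absolutely continuous representative). The third term gives $-g(r)\int_0^t K(s)\,ds$, which combines with part of the fourth term via an integration-by-parts / Fubini argument to produce $-\int_0^t K(s)g(t+r-s)\,ds$; indeed, interchanging the order of integration in $\int_0^t\!\int_0^s K(u)g'(s+r-u)\,du\,ds$ and integrating the $s$-variable against $g'$ yields $\int_0^t K(u)\big(g(t+r-u)-g(r)\big)\,du$, and the $-g(r)\int_0^t K(u)\,du$ piece cancels the contribution of the third term, leaving exactly $-\int_0^t K(s)g(t+r-s)\,ds$. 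The second term, $\int_0^t {\cal M}_s(\Psi_{s+r}h)\,ds$, is precisely what Lemma \ref{lem_MfubiniStrong} identifies: by \eqref{Mderivative2Strong} we have $\int_0^t {\cal M}_s(\Psi_{s+r}h)\,ds = {\cal M}_t(\Phi_r\f1) - {\cal M}_t(\Psi_{t+r}\f1)$. Summing the four contributions gives the right-hand side of \eqref{intZprime_eq}.

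For the local integrability claim, I would argue that each of the four terms in $\partial_r Z(s,r)$ is jointly (locally) integrable on $\ho\times\ho$: $Z_0'$ is in $\Ltwo\ho\subset\Lone_{\mathrm{loc}}$, hence $(s,r)\mapsto Z_0'(s+r)$ is locally integrable by a change of variables; $g$ and $g'$ are in $\Lone\ho\cap\Ltwo\ho$ by Remark \ref{integrable} and $K$ is a.s.\ locally bounded (being continuous), so the $g(r)K(s)$ and convolution terms are locally integrable; and for the martingale term $(s,r)\mapsto{\cal M}_s(\Psi_{s+r}h)$ one uses the joint continuity of the chosen modification from Lemma \ref{McontinuousGP} together with Remark \ref{remark_modification}, which makes it locally bounded in $(s,r)$, hence locally integrable. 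This justifies applying Fubini's theorem in the manipulations above and also yields the hypothesis of Definition \ref{def_csae}.\ref{def_csaeCondition}.

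Finally, to exhibit the continuous function $F_t^*$: the right-hand side of \eqref{intZprime_eq}, as a function of $r$ for fixed $t$, is manifestly continuous — $Z_0(t+\cdot)$ and $Z_0(\cdot)$ are continuous (continuous representatives in $\Hone\ho$), ${\cal M}_t(\Phi_r\f1)$ and ${\cal M}_t(\Psi_{t+r}\f1)$ are continuous in $r$ by Lemma \ref{McontinuousGP} and Proposition \ref{Mcontinuity}, and $r\mapsto\int_0^t K(s)g(t+r-s)\,ds$ is continuous since $K$ is locally bounded and $g\in\mathbb{C}(\hc)$. So one takes $F_t^*$ to be this continuous right-hand side, and uniqueness of the continuous representative is automatic. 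I expect the main obstacle to be the careful Fubini/integration-by-parts bookkeeping in handling the $\int_0^t\!\int_0^s K(u)g'(s+r-u)\,du\,ds$ term — in particular, making sure the interchange is justified by the local integrability estimates and that the boundary terms from integrating $g'$ are accounted for correctly — together with invoking Lemma \ref{lem_MfubiniStrong} in precisely the right form for the stochastic-convolution term. The rest is routine.
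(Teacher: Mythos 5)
Your proposal is correct and follows essentially the same route as the paper's proof: decompose $\partial_r Z(s,r)$ via \eqref{ZprimeDef}, integrate the four terms separately (the fundamental theorem of calculus for $Z_0'$, Lemma \ref{lem_MfubiniStrong} for the stochastic-convolution term, and Fubini for the $K*g'$ term, with the $g(r)\int_0^t K$ pieces cancelling), and read off local integrability and the continuous representative from joint continuity/integrability of each term. The only cosmetic difference is that you justify local integrability of the $K$-terms via integrability of $g,g'$ and local boundedness of $K$, where the paper simply invokes joint continuity; both are fine.
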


\begin{remark}
  {\em Recall again that for  $r\geq0$ (and, in particular, $r=0$), $\int_0^t \partial_r Z(s,r)ds$, $Z_0(t+r)$ and $Z_0(r)$ in the above identity denote the evaluation of their corresponding continuous representative at $r$.}
\end{remark}

\begin{proof}[Proof of Lemma \ref{intZprime}]
By Proposition \ref{consistencyTHM}.\ref{consistencyTHM_consistency}, almost surely for every $s\geq0$, the weak derivative $\partial_r Z(s,\cdot)$ of $r \mapsto Z(s, r)$ exists and is given by \eqref{ZprimeDef}. We prove the claims separately for each term on the  right-hand side of \eqref{ZprimeDef}.  First, since $Z_0(\cdot)\in\Hone\ho$, the mapping $(s,r)\mapsto Z_0^\prime(r+s)$ is clearly locally integrable  on $\ho\times\ho$ and for every $t\geq0$, and almost every $r\in\ho$,
\begin{equation}\label{Zprime_temp1}
  \int_0^tZ_0^\prime(s+r)ds= Z_0(t+r)-Z_0(r).
\end{equation}
Moreover, for every $t\geq0$, since $Z_0(\cdot)\in\Hone\ho$, $Z_0(t+\cdot)$ also lies in $\Hone \ho$, and therefore by Lemma \ref{propertiesofH1}.\ref{propertiesofH1_version}, there exists a continuous function on $\hc$, that is equal to $Z_0(t+r)-Z_0(r)$ almost everywhere on $\ho$.

Next, Lemma \ref{McontinuousGP} implies that almost surely, $(s,r) \mapsto \M_s(\Psi_{s+r}h)$ is jointly continuous and hence, locally integrable on $\ho\times\ho$.  Also, for every $t\geq0$,  by \eqref{Mderivative2Strong} of Lemma \ref{lem_MfubiniStrong},
\begin{equation}\label{Zprime_temp2}
    \int_0^t {\cal M}_s(\Psi_{s+r}h) ds= \mathcal{M}_t(\Phi_r \f1) -{\cal M}_t(\Psi_{t+r}\f1), \quad\quad r \geq 0.
\end{equation}
By Lemma \ref{McontinuousGP},  $r\mapsto\mathcal{M}_t(\Phi_r \f1)-{\cal M}_t(\Psi_{t+r}\f1)$ is continuous on $\hc$.

Finally, it follows from the continuity of $K,$ $g$ and $g^\prime$ (see Assumption \ref{hh2_AS}) that the mapping
$(s,r)\mapsto$ $ g(r)K(s)+\int_0^sK(v)g^\prime(s+r-v)dv$ is jointly continuous and hence, locally integrable on  $\ho\times\ho$. Also, for every $t\geq0$, using Fubini's theorem,
\begin{align} \label{Zprime_temp3}
    \int_0^t \Big( g(r) K(s)&+\int_0^sK(v)g^\prime(s+r-v)dv\Big)ds \notag \\
    & = g(r)\int_0^tK(s)ds+ \int_0^t \int_v^t{K(v)g^\prime(s+r-v)ds}\:dv \notag\\
    & = \int_0^tK(s)g(t+r-s)ds.
\end{align}
Again, by the continuity of $K$ and $g$, the mapping $r\mapsto\int_0^tK(s)g(t+r-s)ds$ is continuous on $\hc$.  Equation  \eqref{intZprime_eq} then follows from equations \eqref{ZprimeDef}-\eqref{Zprime_temp3}.
\end{proof}

We now obtain  an alternative characterization of the diffusion model SPDE \eqref{SPDEintz}-\eqref{SPDEintx}.

\begin{lemma}\label{Kspde}
Given a $\Y$-valued random element  $Y_0 = (X_0, Z_0(\cdot))$, let  $\{Y(t)=(X(t),Z(t,\cdot));t\geq0\}$ be a continuous $\Y$-valued process that satisfies conditions \ref{def_csaeAdapted}-\ref{def_csaeCondition} of Definition \ref{def_csae}, and suppose $(X,Z)$ satisfies  equations \eqref{SPDEintbc} and \eqref{SPDEintx}.  Then, $(X,Z)$ satisfies equation  \eqref{SPDEintz} if and only if for all $t, r \geq 0$,
\begin{eqnarray}\label{SPDEintzk}
    Z(t,r)  =  Z_0(r)+ \int_0^t\partial_r Z(s,r)ds -\mathcal{M}_t(\Phi_r\f1)+ \overline G(r)K(t),
\end{eqnarray}
with $K(t)=\sigma B(t)-\beta t-X^+(t)+X_0^+$.
\end{lemma}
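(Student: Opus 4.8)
The plan is to prove the equivalence by substituting the expression \eqref{k}/\eqref{SPDEintbc} for $K$ into \eqref{SPDEintz}, and identifying the bracketed term with $\overline{G}(r) K(t)$ and the martingale term $-\mathcal{M}_t(\vartheta^r) + \overline{G}(r)\mathcal{M}_t(\mathbf{1})$ with $-\mathcal{M}_t(\Phi_r\mathbf{1})$, modulo the convolution/Fubini correction that \eqref{intZprime_eq} makes precise. First I would observe that, since $(X,Z)$ is assumed to satisfy \eqref{SPDEintbc}, we have $Z(t,0) = X(t)\wedge 0$ and $Z_0(0) = X_0 \wedge 0$, so the bracketed quantity in \eqref{SPDEintz} equals
\[
    \left( X(t)\wedge 0 \right) - \left( X_0 \wedge 0 \right) - \int_0^t \partial_r Z(s,0)\,ds + \mathcal{M}_t(\mathbf{1}).
\]
Using \eqref{SPDEintx} (which gives $\int_0^t \partial_r Z(s,0)\,ds = X(t) - X_0 - \sigma B(t) + \beta t + \mathcal{M}_t(\mathbf{1})$) together with the identity $a - a^+ = a \wedge 0$, one checks that this bracketed quantity equals exactly $K(t) = \sigma B(t) - \beta t - X^+(t) + X_0^+$. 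So the bracket in \eqref{SPDEintz} simplifies to $\overline{G}(r)K(t)$, matching the last term in \eqref{SPDEintzk}; this part is purely algebraic.

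It then remains to reconcile the remaining pieces: \eqref{SPDEintz} has $Z_0(r) + \int_0^t \partial_r Z(s,r)\,ds - \mathcal{M}_t(\vartheta^r)$ where I have already isolated the replacement, but wait — more carefully, \eqref{SPDEintz} as written is an identity for $Z(t,r)$ with right-hand side $Z_0(r) + \int_0^t \partial_r Z(s,r)\,ds - \mathcal{M}_t(\vartheta^r) + \overline{G}(r)(\cdots)$, whereas \eqref{SPDEintzk} has right-hand side $Z_0(r) + \int_0^t \partial_r Z(s,r)\,ds - \mathcal{M}_t(\Phi_r\mathbf{1}) + \overline{G}(r)K(t)$. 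Since $\vartheta^r = \Phi_r\mathbf{1}$ by \eqref{vartheta-rep}, the martingale terms $-\mathcal{M}_t(\vartheta^r)$ and $-\mathcal{M}_t(\Phi_r\mathbf{1})$ are literally the same, and the first two terms $Z_0(r) + \int_0^t \partial_r Z(s,r)\,ds$ are common to both. Hence the whole equivalence reduces to the claim that the bracketed term in \eqref{SPDEintz} equals $\overline{G}(r)K(t)$ with $K$ as in the statement, once \eqref{SPDEintbc} and \eqref{SPDEintx} hold. So the argument is essentially the algebraic manipulation in the previous paragraph, applied for each fixed $r \geq 0$ and each $t \geq 0$; I would state it for all $t,r$ simultaneously, noting that all the quantities involved are well-defined (in particular $\int_0^t \partial_r Z(s,0)\,ds$ is interpreted via the continuous representative $F_t^*$ from condition \ref{def_csaeCondition}, and $Z(t,0)$, $Z_0(0)$ via the continuous representatives from Remark \ref{remark_notation}).

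The one subtlety to handle carefully — and the main (mild) obstacle — is that \eqref{SPDEintz} and \eqref{SPDEintzk} are claimed to hold pointwise for all $t, r \geq 0$ (not just a.e.\ $r$), so I must make sure the identification of terms is valid at the level of continuous representatives rather than merely a.e. This is where condition \ref{def_csaeCondition} of Definition \ref{def_csae} and Remark \ref{remark_notation} enter: each summand in question ($Z_0(r)$, $Z_0(r+t)$ implicitly, $\int_0^t \partial_r Z(s,r)\,ds$, $\mathcal{M}_t(\Phi_r\mathbf{1})$, and $\overline{G}(r)K(t)$) has a canonical continuous-in-$r$ version — for the martingale term by Lemma \ref{McontinuousGP}, for the integral term by condition \ref{def_csaeCondition}, for $\overline{G}$ by Assumption \ref{hh2_AS}.\ref{h0_AS}, and for $Z_0$ by Lemma \ref{propertiesofH1}.\ref{propertiesofH1_version} — so an a.e.\ identity between two continuous functions of $r$ upgrades to an everywhere identity. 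I would therefore first establish the equivalence as an a.e.-in-$r$ statement by the algebra above (legitimate since the $\int_0^t \partial_r Z(s,r)\,ds$ terms cancel verbatim), and then invoke continuity of both sides in $r$ to conclude the pointwise identity for all $t, r \geq 0$. No appeal to \eqref{intZprime_eq} is actually needed here since the potentially delicate term $\int_0^t \partial_r Z(s,r)\,ds$ appears identically on both sides and cancels; \eqref{intZprime_eq} is only relevant for the separate verification (done in Proposition \ref{consistencyTHM} and the surrounding results) that the explicitly constructed $Z$ satisfies these equations.
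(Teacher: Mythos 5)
Your proposal is correct and follows essentially the same route as the paper: both arguments cancel the common terms $Z_0(r)+\int_0^t\partial_r Z(s,r)\,ds$ and the martingale term (using $\vartheta^r=\Phi_r\f1$), reducing the equivalence to the identity $K(t)=Z(t,0)-Z_0(0)-\int_0^t\partial_r Z(s,0)\,ds+\mathcal{M}_t(\f1)$, which is then verified by the same algebraic substitution of \eqref{SPDEintbc} and \eqref{SPDEintx}. Your extra remarks on continuous representatives are harmless but not needed beyond what condition \ref{def_csaeCondition} already provides.
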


\begin{proof}
Comparing \eqref{SPDEintzk} and \eqref{SPDEintz}, and recalling that $\vartheta^r = \Phi_r \f1$, it is clear that these two equations are equivalent if and only if for every $t \geq 0$,  the following identity holds:
\begin{equation}\label{temp-eqvt}
    K(t) =  Z(t,0) - Z_0(0) - \int_0^t \partial_r Z(s,0) ds + {\mathcal M}_t (\f1).
\end{equation}
On the other hand, by \eqref{SPDEintbc} and \eqref{SPDEintx} and the definition of $K$ given above,   for $t \geq 0$, we have
\begin{align*}
    Z(t,0) & = X(t) - X^+(t) \\
    & = X_0+\sigma B (t) -\beta t -\mathcal{M}_t(\f1) +\int_0^t{\partial_rZ(s,0)ds}-X^+(t) \notag\\
    & = K(t)-\mathcal{M}_t(\f1) +\int_0^t{\partial_rZ(s,0)ds} + X_0 - X_0^+.\notag
\end{align*}
Equation  \eqref{SPDEintbc} (for $t=0$) also  implies that $Z_0(0) = Z(0,0) = X(0) \wedge 0 = X_0 - X^+_0$. When substituted into the last
display,  \eqref{temp-eqvt} follows.
\end{proof}

\begin{proposition}\label{existTHM}
Suppose Assumptions \ref{hh2_AS}-\ref{finite2_AS} hold, and given a $\Y$-valued  random element $Y_0$, let $Y=\{Y(t);t\geq0\}$ be the diffusion model with initial condition $Y_0$ specified  in Definition \ref{def_solutionProcess}. Then $Y$ is a solution to the diffusion model SPDE with initial condition $Y_0$.
\end{proposition}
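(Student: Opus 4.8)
The plan is to verify, one at a time, that the explicitly constructed process $Y = (X,Z)$ of Definition \ref{def_solutionProcess} satisfies each of the four conditions \ref{def_csaeAdapted}--\ref{def_csaeEquation} in Definition \ref{def_csae}. Conditions \ref{def_csaeAdapted} and \ref{def_csaeInitial} are essentially immediate: adaptedness follows because $K$ and $X$ are obtained by applying the non-anticipative CMS mapping $\Lambda$ of Lemma \ref{CMSMPROP} to $(E, X_0, Z_0 - {\mathcal H}(\f1))$, which is $\{\mathcal{F}^{Y_0}_t\}$-adapted by construction of $E$ (via $B$) and of ${\mathcal H}(\f1)$ (via ${\mathcal M}$), while $Z(t,\cdot)$ in \eqref{def_Z} is built from $Z_0(t+\cdot)$, $\M_t(\Psi_{t+\cdot}\f1)$ and $\Gamma_t K(\cdot)$, each adapted; and setting $t = r = 0$ in \eqref{def_Z}, using $\M_0 = 0$, $\Gamma_0 K \equiv 0$, gives $Z(0,\cdot) = Z_0(\cdot)$, $X(0) = X_0$ by definition of $\Lambda$. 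Condition \ref{def_csaeCondition}, the local integrability of $\partial_r Z$ and the existence of a continuous representative $F_t^*$ of $r \mapsto \int_0^t \partial_r Z(s,r)\,ds$, is exactly the content of Lemma \ref{intZprime}, which also supplies the explicit formula \eqref{intZprime_eq}; so that step is already done.

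The substantive work is condition \ref{def_csaeEquation}. The boundary condition \eqref{SPDEintbc}, $Z(t,0) = X(t) \wedge 0$, is precisely Proposition \ref{consistencyTHM}.\ref{consistencyTHM_z0}, and continuity of $Y$ as a $\Y$-valued process is Proposition \ref{consistencyTHM}.\ref{consistencyTHM_continuity}, so it remains to establish the two stochastic equations \eqref{SPDEintz} and \eqref{SPDEintx}. For \eqref{SPDEintx}: starting from the CMS relation \eqref{csme2}, $K(t) = \sigma B(t) - \beta t - X^+(t) + X_0^+$, rearrange to $X(t) = X_0 + \sigma B(t) - \beta t - (K(t) - K(0) - (\text{something}))$; more directly, combine \eqref{csme1}, \eqref{csme2} and the identity $X = X^+ - X^-$ with $X \wedge 0 = -X^-$, so that $X(t) = X_0 + \sigma B(t) - \beta t - {\mathcal H}_t(\f1) + K(t) - \int_0^t g(t-s)K(s)\,ds - (\text{the } \kappa\text{-convolution terms})$; then use the identity \eqref{Hintegral}, ${\mathcal H}_t(\f1) = \M_t(\f1) - \int_0^t {\mathcal H}_s(h)\,ds$, and the formula \eqref{intZprime_eq} evaluated at $r = 0$ — which expresses $\int_0^t \partial_r Z(s,0)\,ds$ in terms of $Z_0(t) - Z_0(0)$, $\M_t(\f1)$, ${\mathcal H}_t(\f1)$ and $\int_0^t K(s)g(t-s)\,ds$ — to collapse everything into exactly \eqref{SPDEintx}. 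This is a bookkeeping exercise: one must carefully match the convolution-of-$K$ term from \eqref{csme1} against the $\int_0^t K(s)g(t-s)\,ds$ term appearing in \eqref{intZprime_eq} at $r=0$, and track the constants $Z_0(0) = X_0 \wedge 0 = X_0 - X_0^+$.

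For \eqref{SPDEintz}: by Lemma \ref{Kspde}, since we will have already verified conditions \ref{def_csaeAdapted}--\ref{def_csaeCondition} together with \eqref{SPDEintbc} and \eqref{SPDEintx}, it suffices to verify the equivalent form \eqref{SPDEintzk}, namely $Z(t,r) = Z_0(r) + \int_0^t \partial_r Z(s,r)\,ds - \M_t(\Phi_r \f1) + \overline G(r) K(t)$, with $K(t) = \sigma B(t) - \beta t - X^+(t) + X_0^+$. Here I would substitute the explicit formula \eqref{def_Z} for $Z(t,r)$ on the left, substitute \eqref{intZprime_eq} for $\int_0^t \partial_r Z(s,r)\,ds$ on the right, and substitute the definition \eqref{def_Gamma} of $\Gamma_t K$, i.e. $\Gamma_t K(r) = \overline G(r) K(t) - \int_0^t K(s) g(t+r-s)\,ds$; the $Z_0(t+r)$ terms, the $\M_t(\Psi_{t+r}\f1)$ terms, the $\M_t(\Phi_r\f1)$ terms and the $\int_0^t K(s) g(t+r-s)\,ds$ terms should then cancel in pairs, leaving the identity. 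The main obstacle I anticipate is purely one of careful algebra and sign-tracking — there are no new estimates needed — but some care is required in justifying that these pointwise-in-$r$ identities hold for \emph{all} $r \geq 0$ (not just a.e.), which is where the agreement, guaranteed by Lemma \ref{intZprime} and Remark \ref{remark_notation}, between the continuous representatives of $\int_0^t \partial_r Z(s,\cdot)\,ds$, $Z_0(t+\cdot)$, $\M_t(\Psi_{t+\cdot}\f1)$ and $\Gamma_t K(\cdot)$ must be invoked, so that the convention of Definition \ref{def_csae}.\ref{def_csaeCondition} for evaluating $\int_0^t \partial_r Z(s,0)\,ds$ is consistent with the one used in \eqref{intZprime_eq}.
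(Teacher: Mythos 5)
Your proposal is correct and follows essentially the same route as the paper: verify conditions \ref{def_csaeAdapted}--\ref{def_csaeCondition} (adaptedness via the non-anticipative CMS map, condition \ref{def_csaeCondition} via Lemma \ref{intZprime}), get \eqref{SPDEintbc} and continuity from Proposition \ref{consistencyTHM}, reduce \eqref{SPDEintz} to \eqref{SPDEintzk} via Lemma \ref{Kspde}, and close both \eqref{SPDEintzk} and \eqref{SPDEintx} by substituting \eqref{def_Z}, \eqref{def_Gamma} and \eqref{intZprime_eq} (at general $r$ and at $r=0$, respectively) together with \eqref{csme1}--\eqref{csme2}. The only place where the paper is more explicit than you are is the adaptedness step, which it carries out via the stopped paths $E(\cdot\wedge t)$, $\mathcal{H}_{\cdot\wedge t}(\f1)$ together with the non-anticipativity and measurability of $\Lambda$ and the fact that $\Gamma_t K$ depends on $K$ only through $K(\cdot\wedge t)$ -- the very ingredients you name.
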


\begin{proof}
By Proposition \ref{consistencyTHM}.\ref{consistencyTHM_continuity},  $Y$ is a continuous $\Y$-valued process. We show that it satisfies conditions \ref{def_csaeAdapted}-\ref{def_csaeEquation} of Definition \ref{def_csae}. Condition \ref{def_csaeInitial} holds  because $Y (0) = Y_0$ by definition, and condition \ref{def_csaeCondition} follows from Lemma \ref{intZprime}.
Next, we verify condition \ref{def_csaeAdapted} of Definition \ref{def_csae} by showing that $Y$ is $\{{\mathcal F}_t^{Y_0}\}$-adapted.  Fix $t\geq0$, and note that the stopped processes $\{E(s\wedge t);s\geq0\}$  and  $\{\mathcal{H}_{s\wedge t}(\f1);s\geq0\}$ agree with $E$ and $\mathcal{H}(\f1)$, respectively, on $[0,t]$. Hence, by definition \eqref{XKdefinition} of $(K,X)$ and the non-anticipative property of $\Lambda$ proved in Lemma \ref{CMSMPROP}, we have
\[
    \big(K(\cdot\wedge t),X(\cdot\wedge t)\big)=\Lambda\big( E(\cdot\wedge t),X_0,Z_0-\mathcal{H}_{\cdot\wedge t}(\f1)\big).
\]
Clearly,  $\{E(s\wedge t); s \in [0,\infty)\}$  and  $\{\mathcal{H}_{s\wedge t}(\f1), s \in [0,\infty)\}$ are $\filt_t$-measurable, while $X_0$ and $Z_0$ are $\sigma(Y_0)$-measurable. Therefore, by the continuity, and hence, measurability of the mapping $\Lambda$ proved in Lemma \ref{CMSMPROP}, $X(t)$ and $K(\cdot\wedge t)$ are $\filt^{Y_0}_t$-measurable. Moreover, from the definition of the mapping $\Gamma_t$,  for every $\kappa$ and $r\geq0$, the value of $(\Gamma_t\kappa)(r)$ does not depend on the values of $\kappa$ outside the interval $[0,t]$, and hence, $\Gamma_tK=\Gamma_tK(\cdot\wedge t).$ On the other hand, $\Gamma_t$ is continuous by Lemma \ref{Kfubini}.\ref{Kcontinuity}, and hence $\Gamma_tK(\cdot\wedge t)$ is also $\mathcal{F}_t^{Y_0}$-measurable. Also,  $\M_t(\Psi_{t+\cdot}\f1)$ is $\filt_t$-measurable and $Z_0(t+\cdot)$ is $\sigma(Y_0)$-measurable by definition. Therefore, $Z(t,\cdot)$ defined in \eqref{def_Z} is $\filt_t^{Y_0}$-measurable. Consequently, $Y$  is $\{\filt_t^{Y_0}\}$-adapted, and condition \ref{def_csaeAdapted} follows.

We now turn to the proof of condition \ref{def_csaeEquation}. By Lemma \ref{Kspde} it suffices to show that $X$ and $Z$ satisfy equations \eqref{SPDEintzk}, \eqref{SPDEintbc} and \eqref{SPDEintx}. First, substituting $\int_0^t\partial_rZ(s,r)ds$ from \eqref{intZprime_eq}, and using the definition \eqref{def_Gamma}  of $\Gamma_t$, the right-hand side of \eqref{SPDEintzk} is equal to
\begin{align*}
    &  Z_0(r)+Z_0(t+r)-Z_0(r)  + {\mathcal M}_t(\Phi_r \f1)  - {\cal M}_t(\Psi_{t+r}\f1) \\
    & \quad  -\int_0^t K(s)g(t+r-s)ds -\mathcal{M}_t(\Phi_r\f1)+ \overline G(r)K(t) \\
    & \qquad \qquad \quad = Z_0(t+r)   - {\cal M}_t(\Psi_{t+r}\f1) + \Gamma_t K (r).
\end{align*}
By \eqref{def_Z}, this is equal to $Z(t,r)$,  which proves \eqref{SPDEintzk}. Moreover, Proposition \ref{consistencyTHM}.\ref{consistencyTHM_z0} shows that the relation  $X(t) \wedge 0 = Z(t,0)$ in \eqref{SPDEintbc} holds. Combining this relation with  the expression for $X^+(t)$ from \eqref{csme2}, we have almost surely, for every $t \geq 0$,
\[
    X(t) = X^+(t) + X(t) \wedge 0 \notag = X_0^++\sigma B(t)-\beta t-K(t) + Z(t,0).
\]
Together with the expression for $Z(t,0)$ in \eqref{def_Z}  and for $\Gamma_t$ in   \eqref{def_Gamma}, both with $r=0$, this implies
\begin{equation}\label{temp_spde1}
    X(t)       = X_0^++\sigma B(t)-\beta t + Z_0(t) -{\cal H}_t(\f1)- \int_0^t{K(s)g(t-s)ds,}
\end{equation}
whereas substituting $r=0$ in \eqref{intZprime_eq},  we have almost surely, for every $t\geq0$,
\begin{equation}\label{temp_spde2}
    \int_0^t \partial_r Z(s,0)ds = Z_0(t)-Z_0(0)  + {\mathcal M}_t( \f1)  - {\cal H}_t(\f1) -\int_0^t K(s)g(t-s)ds,
\end{equation}
where we have used the identities ${\cal H}_t (\f1) = {\mathcal M}_t (\Psi_t \f1)$ and $\Phi_0 \f1 = \f1$. Equation \eqref{SPDEintx} then follows from \eqref{temp_spde2}, \eqref{temp_spde1} and the relation   $Z_0(0)=X(0) \wedge 0$.  This completes the proof.
\end{proof}

\subsection{Uniqueness}\label{sec_explicitUnique}

Here, we show that the diffusion model SPDE has a unique solution. We first establish uniqueness of the weak solution to a transport equation within a certain class of functions.

\begin{lemma}\label{lem_PDE}
Suppose Assumption \ref{hh2_AS} holds, and a function $F\in\C$ with $F(0)=0$ is given. Let $\xi:\hc\times\hc\mapsto\R$ be a function  that satisfies the following two properties:
\begin{enumerate}
    \item The mapping $t\mapsto\xi(t,\cdot)$ lies in $\mathbb{C}(\hc;\Hone\ho)$.   For each $t > 0$, let  $\partial_r\xi(t,\cdot)$  denote the weak derivative of $\xi(t, \cdot)$.\label{lemPDE_cont}
    \item The function $\partial_r\xi:\ho\times\ho\mapsto\R$ is locally integrable.\label{lemPDE_loc}
\end{enumerate}
Then $\xi$ satisfies the equation
\begin{equation}\label{pde_eq}
    \xi(t,r)= \int_0^t\partial_r\xi(s,r)ds +\overline G(r)F(t),\quad\quad \text{a.e. } r\in\ho,
\end{equation}
 for every $t\geq0$ if and only if
\begin{equation}\label{pde_sln}
    \xi(t,r)=\Gamma_tF(r),\quad\quad t,r\geq0,
\end{equation}
with $\Gamma_t$ as defined  in \eqref{def_Gamma}.
\end{lemma}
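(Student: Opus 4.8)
The plan is to reduce the statement to an identity that can be verified directly on the explicit formula for $\Gamma_t F$, and separately to prove that any two solutions of \eqref{pde_eq} coincide. For the ``if'' direction, I would take $\xi(t,r) \doteq \Gamma_t F(r)$ and simply check that it satisfies \eqref{pde_eq}. By Lemma \ref{Kfubini}.\ref{Kfubini_range} and \ref{Kfubini}.\ref{Kcontinuity}, $t \mapsto \Gamma_t F(\cdot)$ is continuous in $\Hone\ho$ and, for each $t>0$, has weak derivative given by \eqref{Gkdensity}; thus properties \ref{lemPDE_cont} and \ref{lemPDE_loc} hold. To verify \eqref{pde_eq}, integrate \eqref{Gkdensity} over $s \in [0,t]$ and apply Fubini's theorem to the double integral $\int_0^t \int_0^s F(v) g'(s+r-v)\, dv\, ds$ exactly as in \eqref{Zprime_temp3} (with $F$ in place of $K$), obtaining $\int_0^t \partial_r(\Gamma_s F)(r)\, ds = -\int_0^t F(s) g(t+r-s)\, ds + (\text{something at } s=0)$; since $\Gamma_0 F = 0$ (as $F(0)=0$, using $\overline G(r)\cdot 0 = 0$ and the convolution over an empty interval), this rearranges into $\Gamma_t F(r) = \int_0^t \partial_r(\Gamma_s F)(r)\, ds + \overline G(r) F(t)$, which is precisely \eqref{pde_eq}.

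For the ``only if'' direction — the substantive part — suppose $\xi$ satisfies properties \ref{lemPDE_cont}, \ref{lemPDE_loc} and \eqref{pde_eq}. Set $\zeta(t,r) \doteq \xi(t,r) - \Gamma_t F(r)$. By the ``if'' direction, $\zeta$ again satisfies \ref{lemPDE_cont}, \ref{lemPDE_loc}, and the homogeneous equation $\zeta(t,r) = \int_0^t \partial_r \zeta(s,r)\, ds$ for a.e.\ $r \in \ho$, for every $t \geq 0$; moreover $\zeta(0,\cdot) = 0$. The goal is to conclude $\zeta \equiv 0$. The natural approach is to test against smooth compactly supported functions: for $\varphi \in \mathbb{C}^1_c(\ho)$, define $m_\varphi(t) \doteq \int_0^\infty \zeta(t,r)\varphi(r)\, dr$. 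Using the integral equation and Fubini, and then integrating by parts in $r$ (legitimate because $\zeta(s,\cdot) \in \Hone\ho$ and $\varphi$ has compact support in $\ho$, so no boundary terms arise), one gets $m_\varphi(t) = \int_0^t \int_0^\infty \partial_r\zeta(s,r)\varphi(r)\, dr\, ds = -\int_0^t \int_0^\infty \zeta(s,r)\varphi'(r)\, dr\, ds = -\int_0^t m_{\varphi'}(s)\, ds$. This says the $\Hone\ho$-valued (indeed $\Ltwo\ho$-valued) path $t \mapsto \zeta(t,\cdot)$ solves, weakly, the first-order transport equation $\partial_t \zeta = \partial_r \zeta$ on $\ho$ with zero initial data and with the free (outflow) boundary at $r=0$ — information propagates toward the boundary, so no boundary condition is needed for uniqueness.

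To finish, I would make the propagation rigorous by the method of characteristics. Fix $T>0$ and $\psi \in \mathbb{C}^1_c(\ho)$, and for $t \in [0,T]$ consider $n(t) \doteq \int_0^\infty \zeta(t,r)\, \psi(r - (T-t))\, dr$, where $\psi(\cdot - (T-t))$ is interpreted as $0$ when its argument is negative (for $T$ large enough relative to $\mathrm{supp}\,\psi$ this translate stays supported in $\ho$ for all $t\in[0,T]$). Differentiating in $t$ and using $m_\varphi(t) = -\int_0^t m_{\varphi'}(s)\,ds$ with $\varphi$ the shifted test function, the spatial-derivative term produced by the equation is exactly cancelled by the time-derivative of the shift, giving $n'(t) = 0$ in the distributional sense; since $n(0) = \int_0^\infty \zeta(0,r)\psi(r-T)\,dr = 0$, we get $n(T) = \int_0^\infty \zeta(T,r)\psi(r)\, dr = 0$. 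As $\psi \in \mathbb{C}^1_c(\ho)$ and $T>0$ were arbitrary, $\zeta(T,\cdot) = 0$ in $\Ltwo\ho$ for every $T$, hence (by property \ref{lemPDE_cont} and Lemma \ref{propertiesofH1}.\ref{propertiesofH1_version}) $\zeta \equiv 0$, i.e.\ $\xi(t,r) = \Gamma_t F(r)$ for all $t, r \geq 0$. The main obstacle I anticipate is handling the boundary at $r=0$ cleanly: one must check that the characteristic lines carrying the test function never hit $r=0$ within the time window (which is why the shifted test function is kept compactly supported inside $\ho$), and that the integration by parts in $r$ genuinely has no boundary contribution — both are guaranteed by the $\Hone\ho$-regularity in property \ref{lemPDE_cont} together with the compact support of the test functions, but they are the points that need care rather than the algebra.
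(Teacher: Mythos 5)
Your proposal is correct, and while the ``if'' direction (direct verification that $\Gamma_t F$ solves \eqref{pde_eq} via \eqref{Gkdensity} and Fubini, using $\Gamma_0F=0$) coincides with the paper's computation, your uniqueness argument takes a genuinely different route. The paper reduces to the homogeneous equation exactly as you do, but then mollifies in $r$ with a kernel $\rho_\epsilon$, shows the mollified difference solves the classical transport equation $\partial_t\xi^\circ_\epsilon=\partial_r\xi^\circ_\epsilon$ on $r>\delta>\epsilon$, and runs an $\Ltwo(\delta,\infty)$ energy identity whose boundary flux at $r=\delta$ has a favorable sign, concluding after letting $\epsilon\to0$ and then varying $\delta$. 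You instead argue by duality along characteristics: test the homogeneous equation against $\varphi\in\mathbb{C}^1_c\ho$ to get $m_\varphi(t)=-\int_0^t m_{\varphi'}(s)\,ds$, and then show that $n(t)=\int_0^\infty\zeta(t,r)\psi(r-(T-t))\,dr$ is constant, the right-translates of $\psi$ never meeting the outflow boundary $r=0$ (in fact this holds for every $T>0$, not only ``$T$ large'': the shift $T-t\geq0$ always moves the support to the right). Both proofs exploit the same outflow structure; yours trades the paper's mollification bookkeeping and double limit for a single differentiation step that is the one place your sketch is loose. Since $\zeta$ is only weakly differentiable in $t$, the cancellation $n'\equiv0$ should be justified, e.g., by noting that $\Phi(t,s)\doteq\int_0^\infty\zeta(t,r)\psi(r-T+s)\,dr$ has partial derivatives $\partial_t\Phi(t,s)=-\int\zeta(t,r)\psi'(r-T+s)\,dr$ (from $m_\varphi'=-m_{\varphi'}$, valid because $t\mapsto\zeta(t,\cdot)$ is $\Ltwo$-continuous) and $\partial_s\Phi(t,s)=+\int\zeta(t,r)\psi'(r-T+s)\,dr$, both jointly continuous, so the chain rule applies to $n(t)=\Phi(t,t)$; the Fubini and integration-by-parts steps are unproblematic since property \ref{lemPDE_cont} bounds $\|\partial_r\zeta(s,\cdot)\|_{\Ltwo}$ on compact time intervals and $\varphi$ has compact support in $\ho$. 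With that step filled in, your argument is a complete and somewhat more elementary alternative to the paper's proof.
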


\begin{proof}
This result would be standard if $\xi$ and $F$ were continuously differentiable functions. In that case,  \eqref{pde_eq} would reduce to the classical inhomogeneous transport equation $\partial_t\xi(t,r)=\partial_r\xi(t,r)+\overline G(r)F'(t)$ with initial condition $\xi(0,\cdot)\equiv0$, whose unique solution is \cite[Section 2.1.2]{evans}:
\[
    \xi(t,r)=\int_0^tG(t-s+r)F'(s)ds=\overline G(r)F(t)-\int_0^tF(s)g(t-s+r)ds=\Gamma_tF(r).
\]
While there are several related  results, there appears to be no readily quotable result for the class of $\xi$ and  $F$ mentioned above.  Thus, for completeness, we include a proof in Appendix \ref{ap-c}.
\end{proof}

\begin{proposition}\label{prop_uniqueness}
Suppose  Assumptions \ref{hh2_AS}-\ref{finite2_AS} hold. Then, for every $\Y$-valued random element $Y_0=(X_0,Z_0)$, there is at most one solution to the diffusion model SPDE of Definition \ref{def_csae} with initial condition $Y_0$.
\end{proposition}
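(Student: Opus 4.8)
The plan is to reduce uniqueness of the SPDE solution to uniqueness in the CMS equations together with the transport-equation uniqueness established in Lemma~\ref{lem_PDE}. Suppose $Y=(X,Z)$ is any solution of the diffusion model SPDE with initial condition $Y_0=(X_0,Z_0)$ in the sense of Definition~\ref{def_csae}. The first step is to extract from the SPDE a closed system for the pair $(K,X)$, where $K(t)\doteq \sigma B(t)-\beta t-X^+(t)+X_0^+$. Using the boundary condition \eqref{SPDEintbc} and the It\^o equation \eqref{SPDEintx} exactly as in the proof of Lemma~\ref{Kspde}, one obtains the identity $Z(t,0)=K(t)-\mathcal M_t(\f1)+\int_0^t\partial_rZ(s,0)\,ds+X_0-X_0^+$; combined with the $r=0$ case of equation \eqref{SPDEintz} (equivalently \eqref{SPDEintzk}), this shows that any solution must satisfy \eqref{csme1}--\eqref{csme2}, i.e.\ $(K,X)=\Lambda(E,X_0,Z_0-\mathcal H(\f1))$. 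Since $\Lambda$ is single-valued by Lemma~\ref{CMSMPROP}, the processes $X$ and $K$ are uniquely determined (almost surely, for all $t$, by path continuity).

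The second step is to show that, once $K$ (and hence $\int_0^t\partial_rZ(s,0)\,ds$) is pinned down, the full profile $Z(t,\cdot)$ is also uniquely determined. Here I would introduce $\xi(t,r)\doteq Z(t,r)-Z_0(t+r)+\mathcal M_t(\Psi_{t+r}\f1)$ and verify that $\xi$ satisfies the hypotheses of Lemma~\ref{lem_PDE}: property~\ref{lemPDE_cont} follows because $Z(\cdot,\cdot)$, $Z_0(\cdot+\cdot)$ (Lemma~\ref{propertiesofH1}.\ref{prop_translation}) and $\{\mathcal M_t(\Psi_{t+\cdot}\f1)\}$ (Proposition~\ref{Mcontinuity}) are all continuous $\Hone\ho$-valued, and property~\ref{lemPDE_loc} follows from condition~\ref{def_csaeCondition} of Definition~\ref{def_csae} together with the local integrability of $\partial_r\mathcal M_t(\Psi_{t+r}\f1)=-\mathcal M_t(\Psi_{t+r}h)$ (Lemma~\ref{McontinuousGP}). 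Rewriting the SPDE \eqref{SPDEintz} in terms of $\xi$ using Lemma~\ref{lem_MfubiniStrong} (to convert $\int_0^t\partial_r\mathcal M_s(\Psi_{s+r}h)\,ds$ into $\mathcal M_t(\Phi_r\f1)-\mathcal M_t(\Psi_{t+r}\f1)$ so the martingale-measure terms telescope correctly) should show that $\xi$ satisfies the weak transport equation \eqref{pde_eq} with $F=K$ and $F(0)=K(0)=0$. Lemma~\ref{lem_PDE} then forces $\xi(t,r)=\Gamma_tK(r)$, i.e.\ $Z(t,r)=Z_0(t+r)-\mathcal M_t(\Psi_{t+r}\f1)+\Gamma_tK(r)$, which is precisely the explicit formula \eqref{def_Z}. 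Hence $Z$ agrees (for all $t$ and a.e.\ $r$, and then for all $r$ by continuity of representatives) with the diffusion model of Definition~\ref{def_solutionProcess}, so any two solutions with the same initial condition are indistinguishable.

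The main obstacle I anticipate is the bookkeeping in the second step: one must be careful that the ``$\partial_r$'' appearing in \eqref{SPDEintz} (the weak spatial derivative of the continuous representative, per condition~\ref{def_csaeCondition}) is compatible with the weak derivative hypothesis of Lemma~\ref{lem_PDE}, and that the integral $\int_0^t\partial_rZ(s,r)\,ds$ in the SPDE — which a priori is only defined via its continuous representative $F_t^*$ — matches $\int_0^t\partial_r\xi(s,r)\,ds+\int_0^t\partial_r(Z_0(s+r)-\mathcal M_s(\Psi_{s+r}\f1))\,ds$ after a Fubini-type interchange. The interchange for the $Z_0$ term is immediate since $Z_0\in\Hone\ho$, and for the martingale-measure term it is exactly the content of Lemma~\ref{lem_MfubiniStrong}; so the task is really to assemble these pieces cleanly rather than to prove anything genuinely new. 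A minor additional point is that all identities hold a priori only almost everywhere in $r$ or almost surely, so at the end one invokes continuity in $r$ (Lemma~\ref{propertiesofH1}) and in $t$ (continuity of all processes involved) to upgrade to ``$Y(t)=\tilde Y(t)$ for all $t\geq0$, $\mathbb P$-a.s.'', which is the statement of the proposition.
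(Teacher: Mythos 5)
Your overall strategy --- identify \emph{any} solution with the explicit construction of Definition~\ref{def_solutionProcess}, using Lemma~\ref{lem_PDE} plus uniqueness of the CMS mapping --- is genuinely different from the paper's proof and can be made to work, but as written your first step is circular. The identity you obtain ``exactly as in the proof of Lemma~\ref{Kspde}'' and the $r=0$ case of \eqref{SPDEintzk} are one and the same relation, namely $Z(t,0)=Z_0(0)+\int_0^t\partial_rZ(s,0)\,ds-\mathcal{M}_t(\f1)+K(t)$ (recall $Z_0(0)=X_0\wedge 0=X_0-X_0^+$), so combining them yields nothing new. To arrive at \eqref{csme1} you need the further identity $\int_0^t\partial_rZ(s,0)\,ds=Z_0(t)-Z_0(0)+\mathcal{M}_t(\f1)-\mathcal{H}_t(\f1)-\int_0^tK(s)g(t-s)\,ds$, i.e.\ \eqref{intZprime_eq} at $r=0$; but Lemma~\ref{intZprime} proves that identity only for the \emph{constructed} diffusion model, and for an arbitrary solution it becomes available only after the spatial profile of $Z$ has been identified --- which is exactly your second step. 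Hence the assertion $(K,X)=\Lambda(E,X_0,Z_0-\mathcal{H}(\f1))$ is not justified at the point where you make it; the convolution term $\int_0^t g(t-s)K(s)\,ds$ and the terms $Z_0(t)-\mathcal{H}_t(\f1)$ simply cannot be produced from the boundary condition, the It\^{o} equation and the $r=0$ equation alone.

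The gap is repaired by reversing the order of your two steps. Your second step uses only that $K(t)=\sigma B(t)-\beta t-X^+(t)+X_0^+$ is continuous with $K(0)=0$, not that it arises from $\Lambda$; and your computation there is correct: with $\xi(t,r)=Z(t,r)-Z_0(t+r)+\mathcal{M}_t(\Psi_{t+r}\f1)$, Lemma~\ref{lem_MfubiniStrong} and \eqref{SPDEintzk} show that $\xi$ solves \eqref{pde_eq} with $F=K$, the regularity hypotheses of Lemma~\ref{lem_PDE} hold by condition~\ref{def_csaeCondition}, Lemma~\ref{propertiesofH1} and Proposition~\ref{Mcontinuity}/Lemma~\ref{McontinuousGP}, and so $Z(t,\cdot)=Z_0(t+\cdot)-\mathcal{M}_t(\Psi_{t+\cdot}\f1)+\Gamma_tK$. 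Only now can you run the Lemma~\ref{intZprime} computation for this $Z$, substitute into the boundary relation to obtain \eqref{csme1}, and conclude from the uniqueness assertion of Lemma~\ref{CMSMPROP} that $(K,X)=\Lambda(E,X_0,Z_0-\mathcal{H}(\f1))$, which pins down $X$, $K$ and then $Z$. For comparison, the paper never identifies a general solution with the explicit construction: it subtracts two solutions, applies Lemma~\ref{lem_PDE} with $F=\Delta K$ to get $\Delta Z=\Gamma_t\Delta K$, and closes the nonlinear loop with a Gronwall estimate on $\delta(s)=\partial_r\Delta Z(s,0)$. In your corrected route that Gronwall step is absorbed into the (already proved) uniqueness of the CMS mapping, which is a legitimate, arguably cleaner, alternative --- but the order of the two steps is essential.
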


\begin{proof}
For $i= 1, 2$, let $Y_i=(X_i,Z_i)$ be a solution to the diffusion model SPDE with initial condition $Y_0$, and define $K_i(t)\doteq \sigma B(t)-\beta t-X_i^+(t)+X_0^+$.  We need to show that $\Probil{Y_1(t) = Y_2(t);\forall t\geq0}=1$. Denote $\Delta H=H_1-H_2$ for $H=Y,X,Z,K$, and note that  $\Delta K(t)=X_2^+(t) -X_1^+(t), t\geq0.$  By Lemma \ref{Kspde}, for $i=1,2$, $Z_i$ satisfies the equation \eqref{SPDEintzk} with $K$ replaced by $K_i$, and hence $\Delta Z$ satisfies the following nonhomogeneous     transport equation:
\begin{equation*}
    \Delta Z(t,r)=\int_0^t \partial_r \Delta Z(s,r)ds +\overline G(r)\Delta K(t),  \quad t,r \geq0,
\end{equation*}
with $\Delta Z(0,\cdot) \equiv 0$. By Definition \ref{def_csae}, both $Z_1, Z_2$ and hence $\Delta Z$ satisfy properties \ref{lemPDE_cont} and \ref{lemPDE_loc} of Lemma \ref{lem_PDE}. Therefore, by \eqref{pde_sln} of Lemma \ref{lem_PDE}, with $F$ replaced by $\Delta K$, we have
\begin{equation}\label{uniq_tempZ}
  \Delta Z(t,r) = \Gamma_t\Delta K(r),\quad\quad t,r\geq0.
\end{equation}
Since $\Delta K$ is continuous almost surely, an application of Lemma \ref{Kfubini}.\ref{Kfubini_range} shows that for every $t\geq 0$,  $\Delta Z(t,\cdot)$ is continuously differentiable on $\hc$, with continuous derivative
\begin{equation*}
   \partial_r \Delta Z(s,r)=-g(r)\Delta K(s)- \int_0^s\Delta K(u)g^\prime(r+s-v)dv,\quad\quad r, s \in\hc.
\end{equation*}
In particular, the function $\delta(s)\doteq \partial_r \Delta Z(s,0)$ is well defined on $\hc$ and satisfies
\begin{equation}\label{R_unique}
  \delta(s) =-g(0)\Delta K(s)- \int_0^s \Delta K(u) g^\prime(s-v) dv.
\end{equation}
Also, recalling the constants $H$ and $H_2$ from  Assumptions \ref{hh2_AS}.\ref{h_AS} and \ref{hh2_AS}.\ref{h2_AS}, and the fact that $\int_0^\infty \overline{G} (x) = 1$ by Assumption \ref{hh2_AS}.\ref{h0_AS} and Remark \ref{rem-meanone}, we see that
\begin{equation}\label{tempbound}
  |\partial_r\Delta Z(s,r)|\leq (H+H_2)\|\Delta K\|_t,\quad\quad \forall s\in[0,t],r\geq0.
\end{equation}
On the other hand, for $i = 1, 2$, $X_i$  satisfies \eqref{SPDEintx} with $Z$ replaced by $Z_i$. Therefore,
\begin{equation}\label{uniq_tempx}
  \Delta X(t)= \int_0^t\partial_r\Delta Z(s,0)ds = \int_0^t \delta(s)ds,
\end{equation}
In turn, this implies
\begin{equation}\label{uniq_tempBound}
  |\Delta K(t)|=|X_1^+(t)-X_2^+(t)|\leq |\Delta X(t)|\leq \int_0^t |\delta(s)|ds, \quad t \in [0,\infty),
\end{equation}
which, when combined with \eqref{R_unique} and  the bounds in Assumption \ref{hh2_AS}, shows that for every $T<\infty$ and $t\in[0,T]$,
\begin{align*}
  |\delta(t)| & \leq g(0)|\Delta K(t)|+ \int_0^t |g^\prime(t-s)||\Delta K(s)|ds\\
         & \leq H \int_0^t |\delta(s)|ds + H_2\int_0^t\left( \int_0^v |\delta(s)|ds \right) dv \\
         & \leq (H+ T H_2)\int_0^t |\delta(s)|ds.
\end{align*}
Since $\delta(0)= 0$,  by Gronwall's lemma this implies  $\delta(t)=0$ for all $t\geq0$.  When combined with \eqref{uniq_tempBound}, this implies that  $\Delta X(t)= \Delta K(t) = 0$ for all $t \geq 0$, which in turn implies $\Delta Z(t,\cdot) = 0$ due to  \eqref{uniq_tempZ}.  Thus, we have shown that $Y_1(t) = Y_2(t)$  for all $t \geq 0$,  which proves the desired  uniqueness.
\end{proof}

\subsection{Markov Property}\label{sec_explicitMarkov}

Finally, we show that the family of laws $\{P^y, y \in \Y\}$ associated with the diffusion model of Section \ref{sec_explicitDef} is a time-homogeneous Feller Markov family.  For $s\geq 0$, we define the operator $\Theta_s$ as follows: for every function $F$ on $[0,\infty)$, the shifted function $\Theta_sF$ is defined by
\begin{equation}\label{def_shift}
    \left(\Theta_s F\right)(t)\doteq F(s+t)-F(s),\quad\quad  t\geq0.
\end{equation}
Also, for every $f\in\mathbb{C}_b\hc$, we define the shifted convolution integral as
\begin{equation}\label{ThetaHDef}
  (\Theta_s\mathcal{H})_t(f) \doteq \mathcal{M}_{s+t}(\Psi_{s+t}f)-\mathcal{M}_{s}(\Psi_{s+t}f),\quad\quad t\geq0.
\end{equation}
Then, the identity \eqref{groupproperty} shows that
\begin{align}\label{limit_markovlemmatemp4}
    (\Theta_s\mathcal{H})_t(\Phi_r\f1)  =  \mathcal{M}_{s+t}(\Psi_{s+t+r}\f1) - \mathcal{M}_{s}(\Psi_{s+t+r}\f1).
\end{align}

\begin{lemma}\label{markovfunction}
Suppose Assumptions \ref{hh2_AS}-\ref{finite2_AS} hold, and let $Y_0$ be a $\Y$-valued random element. If $Y=(X,Z)$ is the diffusion model with initial condition $Y_0$, then almost surely, for every $s \geq 0$,
\begin{equation}\label{markovfunction_Z}
    Z(s+t,r) = Z(s,t+r) + (\Gamma_t\Theta_sK)(r) - (\Theta_s\mathcal{H})_t(\Phi_r\f1), \quad t, r \geq 0,
\end{equation}
and  $(\Theta_sK,X(s+\cdot))$ solves the CMS equation associated with $(\Theta_sE,X(s),Z(s,\cdot)-(\Theta_s\mathcal{H})(\f1))$, i.e.,
\begin{equation}\label{markovfunction_Lambda}
    \left(\Theta_sK, X(s+\cdot)\right) = \Lambda \left(\Theta_sE,X(s),Z(s,\cdot)-(\Theta_s\mathcal{H})_{\cdot}(\f1)\right).
\end{equation}
\end{lemma}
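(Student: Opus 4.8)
The plan is to derive both \eqref{markovfunction_Z} and \eqref{markovfunction_Lambda} pathwise from the explicit formulas of Definition \ref{def_solutionProcess}, by exhibiting a ``cocycle'' structure of the deterministic operator $\Gamma_t$ and of the stochastic convolution under the shift $\Theta_s$. I would fix once and for all a full-measure event $\Omega_0$ on which, simultaneously: $(K,X)$ satisfies \eqref{csme1}--\eqref{csme2} and $Z$ is given by \eqref{def_Z}; the conclusions of Proposition \ref{consistencyTHM} hold, in particular $Z(u,0)=X(u)\wedge0$ for every $u\ge0$, and the paths of $X$, $Z(\cdot,\cdot)$, $K$, $\mathcal{H}(\f1)$ and of the jointly continuous modification of $\{\M_t(\Psi_{t+r}\f1)\}$ from Lemma \ref{McontinuousGP} are continuous; and \eqref{limit_markovlemmatemp4} holds for all $s,t,r\ge0$. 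Since every manipulation below is a pathwise identity between continuous functions, the resulting identities then hold for all $s,t,r\ge0$ on $\Omega_0$.

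The one genuinely computational input is a deterministic semigroup-type identity for $\Gamma$: for every $\kappa\in\C$ and all $s,t,r\ge0$,
\[
    \Gamma_{s+t}\kappa(r)=\Gamma_s\kappa(t+r)+\bigl(\Gamma_t(\Theta_s\kappa)\bigr)(r).
\]
I would prove it directly from \eqref{def_Gamma}: substituting $v=s+u$ in the convolution integral in $(\Gamma_t(\Theta_s\kappa))(r)$ and using $\int_0^tg(t+r-u)\,du=\overline{G}(r)-\overline{G}(t+r)$, the $\overline{G}(r)\kappa(s)$ terms cancel and the integrals $\int_0^s$ and $\int_s^{s+t}$ merge into $\int_0^{s+t}\kappa(v)g(s+t+r-v)\,dv$, which together with $\overline{G}(r)\kappa(s+t)$ is exactly $\Gamma_{s+t}\kappa(r)$. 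Its stochastic counterpart is already available as \eqref{limit_markovlemmatemp4}.

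Granting these, \eqref{markovfunction_Z} is immediate: expand $Z(s,t+r)$ and $Z(s+t,r)$ via \eqref{def_Z} and $(\Theta_s\mathcal{H})_t(\Phi_r\f1)$ via \eqref{limit_markovlemmatemp4}; the $Z_0(s+t+r)$ terms coincide, the two copies of $\M_s(\Psi_{s+t+r}\f1)$ cancel leaving $-\M_{s+t}(\Psi_{s+t+r}\f1)$, and the $\Gamma$-cocycle turns $\Gamma_sK(t+r)+\Gamma_t(\Theta_sK)(r)$ into $\Gamma_{s+t}K(r)$, which is precisely the right-hand side of \eqref{def_Z} at $(s+t,r)$, i.e.\ $Z(s+t,r)$. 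For \eqref{markovfunction_Lambda} I would then argue in three steps. First, subtracting \eqref{csme2} at time $s$ from the same relation at time $s+t$ and using $(\Theta_sE)(t)=\sigma(B(s+t)-B(s))-\beta t$ gives the second CMS equation $(\Theta_sK)(t)=(\Theta_sE)(t)-X(s+t)^++X(s)^+$. Second, I would check that the triple $\bigl(\Theta_sE,\,X(s),\,Z(s,\cdot)-(\Theta_s\mathcal{H})_\cdot(\f1)\bigr)$ lies in the domain of $\Lambda$: $\Theta_sE$ and $(\Theta_s\mathcal{H})(\f1)$ have continuous paths vanishing at $0$ (for the latter write $(\Theta_s\mathcal{H})_t(\f1)=\mathcal{H}_{s+t}(\f1)-\M_s(\Psi_{s+t}\f1)$ and invoke Remark \ref{remark_contModification} and Lemma \ref{McontinuousGP}), $u\mapsto Z(s,u)$ is continuous, and the compatibility $Z(s,0)-(\Theta_s\mathcal{H})_0(\f1)=X(s)\wedge0$ holds since $(\Theta_s\mathcal{H})_0(\f1)=0$ and $Z(s,0)=X(s)\wedge0$; moreover $\Theta_sK$ and $X(s+\cdot)$ are continuous with $\Theta_sK(0)=0$, so $(\Theta_sK,X(s+\cdot))$ lies in the range of $\Lambda$. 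Third, setting $r=0$ in \eqref{markovfunction_Z} and using $\Phi_0\f1=\f1$, $\Gamma_t\kappa(0)=\kappa(t)-\int_0^t\kappa(u)g(t-u)\,du$ and the boundary relation $Z(s+t,0)=X(s+t)\wedge0$ yields
\[
    X(s+t)\wedge0=\bigl[Z(s,t)-(\Theta_s\mathcal{H})_t(\f1)\bigr]+(\Theta_sK)(t)-\int_0^tg(t-u)(\Theta_sK)(u)\,du,
\]
which is precisely \eqref{cms_abstract1} for the above triple; by the uniqueness part of Lemma \ref{CMSMPROP}, $(\Theta_sK,X(s+\cdot))=\Lambda\bigl(\Theta_sE,X(s),Z(s,\cdot)-(\Theta_s\mathcal{H})_\cdot(\f1)\bigr)$.

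I do not expect a serious obstacle here; the lemma is essentially a bookkeeping exercise. The two points that demand care are the change of variables in the $\Gamma$-cocycle identity, and making sure that the continuous modification of $\{\M_t(\Psi_{t+r}\f1)\}$ used inside \eqref{def_Z} is literally the same modification appearing in \eqref{limit_markovlemmatemp4} (both being the jointly continuous modification from Lemma \ref{McontinuousGP}), so that the cancellations in \eqref{markovfunction_Z} are valid off a single null set and hence hold simultaneously for all $s,t,r\ge0$.
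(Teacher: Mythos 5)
Your proposal is correct and follows essentially the same route as the paper: the $\Gamma$-cocycle identity you prove for general $\kappa$ is exactly the paper's computation $(\Gamma_t\Theta_sK)(r)=(\Gamma_{s+t}K)(r)-(\Gamma_sK)(t+r)$, and \eqref{markovfunction_Z} then follows from \eqref{def_Z} and \eqref{limit_markovlemmatemp4} by the same cancellations, while \eqref{markovfunction_Lambda} is obtained, as in the paper, by differencing \eqref{csme2}, setting $r=0$ in \eqref{markovfunction_Z} together with the boundary relation from Proposition \ref{consistencyTHM}, and invoking the uniqueness in Lemma \ref{CMSMPROP}. Your additional explicit verification that the shifted triple lies in the domain of $\Lambda$ is a harmless (and slightly more careful) bookkeeping step left implicit in the paper.
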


\begin{proof}
By  \eqref{def_Z} and \eqref{def_Gamma} we have
\begin{equation}\label{limit_markovlemmatemp2}
    Z(s,t+r) = Z_0(s+t+r) -{\cal M}_s(\Psi_{s+t+r}\f1)+\overline{G}(t+r)K(s)  - \int_0^{s}K(v)g(s+t+r-v)dv.
\end{equation}
Also, by definition \eqref{def_shift} of $\Theta_sK$ and \eqref{def_Gamma} of $\Gamma_t$, we have
\begin{align}
    (\Gamma_t\Theta_sK)(r) & =  \overline G(r)(\Theta_sK)(t)+ \int_0^t(\Theta_sK)(v) g(t+r-v)dv \notag\\
    & =  \overline G(r)\big(K(s+t)-K(s)\big) -\int_0^t\left(K(s+v)-K(s)\right)g(t+r-v)dv \notag\\
    & = \overline G(r)K(s+t)- \overline G(r)K(s) -\int_s^{t+s}K(v)g(t+s+r-v)dv\notag\\  &\hspace{.5cm}-K(s)\big(\overline G(t+r)-\overline G(r)\big) \notag\\
    & =  \overline G(r)K(s+t)-\int_0^{t+s}K(v)g(t+s+r-v)dv  - \overline G(t+r)K(s) \\
   &\hspace{.5cm} +\int_0^sK(v)g(t+s+r-v)dv\notag\\
   & =  (\Gamma_{s+t}K)(r)-(\Gamma_sK)(t+r).\label{limit_markovlemmatemp3}
\end{align}
Substituting $Z$ from \eqref{def_Z} (with $t$ and $r$ replaced by $s$ and $t+r$, respectively) and using equations \eqref{limit_markovlemmatemp4} and \eqref{limit_markovlemmatemp3}, the right-hand side of \eqref{markovfunction_Z} is equal to
\[
    \begin{array}{l}
    Z_0(s+t+r) -{\cal M}_s(\Psi_{s+t+r}\f1)+\Gamma_sK(t+r) +(\Gamma_{s+t}K)(r)-(\Gamma_sK)(t+r)\notag\\[1mm]
    \qquad -\mathcal{M}_{s+t}(\Psi_{s+t+r}\f1) + \mathcal{M}_{s}(\Psi_{s+t+r}\f1)\notag\\[1mm]
    \qquad \qquad = Z_0(s+t+r) +(\Gamma_{s+t}K)(r) -\mathcal{M}_{s+t}(\Psi_{s+t+r}\f1), \notag
\\[1mm]
   \qquad \qquad = Z(s+t,r),\notag
    \end{array}
\]
where the last equality uses  \eqref{def_Z}.  This proves \eqref{markovfunction_Z}.

To prove \eqref{markovfunction_Lambda},  subtract  equation \eqref{csme2} with $t=s$ from the same equation with $t$ replaced by $t+s$, and use \eqref{EDef},  to get
\begin{eqnarray}
    \nonumber
    (\Theta_sK)(t) = K(s+t) -K(s) & = &  E(s+t) - E(s) -X^+(s+t) +X^+(s), \\
    \label{limit_markovlambda2}
    & = & (\Theta_s E) (t) - X^+(s+t) + X^+(s).
\end{eqnarray}
Additionally, by Proposition \ref{consistencyTHM}.\ref{consistencyTHM_z0} with $t$ replaced by $s+t$, $ X(s+t)\wedge 0=  Z(s+t,0).$ Substituting $Z(s+t,0)$ from \eqref{markovfunction_Z},  using definition \eqref{def_Gamma} of $\Gamma_t$ and the identity $\Phi_0 \f1 = \f1$, we obtain
\begin{equation}\label{limit_markovlambda3}
    X(s+t)\wedge 0 = Z(s,t) - (\Theta_s\mathcal{H})_t(\f1) + (\Theta_sK)(t) - \int_0^t(\Theta_sK)(v) g(t-v) dv.
\end{equation}
Equation \eqref{markovfunction_Lambda} then follows from \eqref{limit_markovlambda2}, \eqref{limit_markovlambda3} and Definition \ref{def_cmsm} of $\Lambda$ (also see Lemma \ref{CMSMPROP}).
\end{proof}

\begin{lemma}\label{lem_homogen}
Suppose Assumptions \ref{hh2_AS}-\ref{finite2_AS} hold. Then, for every $s\geq0$, the processes $\{\Theta_sE(t);t\geq0\}$  and $\{(\Theta_s\mathcal{H})_t(\f1);t\geq0\}$ are independent of $\filt_s$, and have the same distribution as the processes $E$ and $\mathcal{H}(\f1)$, respectively. Moreover, for every $s,t\geq0,$ the $\Hone\ho$-valued random element $(\Theta_s\mathcal{H})_{t}(\Phi_\cdot\f1)$  is independent of $\filt_s$ and has the same distribution as $\mathcal{H}_{t}(\Phi_\cdot\f1)$.
\end{lemma}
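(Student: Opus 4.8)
The plan is to reduce both parts of the lemma to a single time-shift invariance property of the driving noise. Set $B^{(s)}(t)\doteq B(s+t)-B(s)$ for $t\ge0$, and let $\mathcal{M}^{(s)}$ be the shifted martingale measure defined by $\mathcal{M}^{(s)}_u(A)\doteq\mathcal{M}_{s+u}(A)-\mathcal{M}_s(A)$ for $u\ge0$ and Borel $A$. The first step is to show that the pair $(B^{(s)},\mathcal{M}^{(s)})$ has the same law as $(B,\mathcal{M})$ and is independent of $\filt_s$. Both assertions follow from Gaussianity together with the independent-increments structure of $B$ and $\mathcal{M}$: the collection of random variables $B^{(s)}(t)$ and $\mathcal{M}^{(s)}_u(A)$, $t,u\ge0$, is jointly Gaussian with mean zero, and because the covariance functional \eqref{MAcov} of $\mathcal{M}$ depends on time only through increments, $B$ is a Brownian motion, and $B$ is independent of $\mathcal{M}$, this collection has exactly the covariance structure (including vanishing cross-covariances) of the corresponding collection built from $(B,\mathcal{M})$ — which yields equality in law — while having zero cross-covariance with, and hence being independent of, $\tilde{\mathcal{F}}_s=\sigma(B(u),\mathcal{M}_u(A);u\le s)$; independence then passes to the augmentation $\filt_s$. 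I would also record, for later use, that $\mathcal{M}^{(s)}$ is itself a continuous martingale measure with covariance functional \eqref{MAcov}, now with respect to $\{\filt_{s+u}\}_{u\ge0}$ (see \cite[Chapter 2]{WalshBook}).

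The second step is a routine change-of-variables identity for the Walsh stochastic integral: for every bounded measurable $\varphi$ on $\hc\times\hc$ and $t\ge0$,
\[
    \iint\limits_{[0,\infty)\times(s,s+t]}\varphi(x,v)\,\mathcal{M}(dx,dv)=\iint\limits_{[0,\infty)\times[0,t]}\varphi(x,s+u)\,\mathcal{M}^{(s)}(dx,du),
\]
which is immediate from the construction of the stochastic integral by simple-function approximation. Combined with the elementary identity $(\Psi_{s+t+r}\f1)(x,s+u)=(\Psi_{t+r}\f1)(x,u)$, which reads off directly from the definition \eqref{def_Psi} of $\Psi$, this lets me rewrite $(\Theta_s\mathcal{H})_t(\f1)=\mathcal{M}_{s+t}(\Psi_{s+t}\f1)-\mathcal{M}_s(\Psi_{s+t}\f1)$ — see \eqref{ThetaHDef} and \eqref{Hdefinition} — as $\iint_{[0,\infty)\times[0,t]}(\Psi_t\f1)(x,u)\,\mathcal{M}^{(s)}(dx,du)$, that is, the very same measurable functional of $\mathcal{M}^{(s)}$ that $\mathcal{H}_t(\f1)$ is of $\mathcal{M}$; and, using \eqref{limit_markovlemmatemp4} and \eqref{groupproperty}, to rewrite $(\Theta_s\mathcal{H})_t(\Phi_r\f1)=\mathcal{M}_{s+t}(\Psi_{s+t+r}\f1)-\mathcal{M}_s(\Psi_{s+t+r}\f1)$ as $\iint_{[0,\infty)\times[0,t]}(\Psi_{t+r}\f1)(x,u)\,\mathcal{M}^{(s)}(dx,du)$, the same functional of $\mathcal{M}^{(s)}$ that $\mathcal{H}_t(\Phi_r\f1)=\mathcal{M}_t(\Psi_{t+r}\f1)$ is of $\mathcal{M}$. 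Here I would invoke Proposition \ref{Mcontinuity}, which applies verbatim to $\mathcal{M}^{(s)}$ since it shares the covariance functional \eqref{MAcov}, to guarantee that $r\mapsto(\Theta_s\mathcal{H})_t(\Phi_r\f1)$ is indeed a (continuous) $\Hone\ho$-valued random element.

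With these two steps in hand the conclusion is immediate. Since $\Theta_sE(t)=\sigma B^{(s)}(t)-\beta t$ (cf.\ \eqref{EDef}), the process $\Theta_sE$ is the same deterministic functional of $B^{(s)}$ that $E$ is of $B$, and $\{(\Theta_s\mathcal{H})_t(\f1);t\ge0\}$ is the same measurable functional of $\mathcal{M}^{(s)}$ that $\{\mathcal{H}_t(\f1);t\ge0\}$ is of $\mathcal{M}$; combining this with the equality in law of $(B^{(s)},\mathcal{M}^{(s)})$ and $(B,\mathcal{M})$ and with $B\perp\mathcal{M}$ gives $(\Theta_sE,(\Theta_s\mathcal{H})(\f1))\deq(E,\mathcal{H}(\f1))$, and the independence of $(B^{(s)},\mathcal{M}^{(s)})$ from $\filt_s$ gives the asserted independence. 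The same reasoning, applied to the $\Hone\ho$-valued functional identified above, gives $(\Theta_s\mathcal{H})_t(\Phi_\cdot\f1)\deq\mathcal{H}_t(\Phi_\cdot\f1)$ as $\Hone\ho$-valued random elements, independent of $\filt_s$. I expect the main difficulty to be organizational rather than conceptual: to make ``the same measurable functional'' and ``equal in law'' literally precise one must fix a Polish path space on which $\mathcal{M}$ (hence $\mathcal{M}^{(s)}$) is realized as a random element and check measurability of the maps involved, one must verify the Walsh-integral time-shift identity from the defining simple-function approximation, and one must confirm that passing from $\tilde{\mathcal{F}}_s$ to its augmentation $\filt_s$ does not destroy the independence (a standard fact about augmentation by $\mathbb{P}$-null sets, here aided by the right-continuity of the natural filtration of the white noise). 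None of these requires a new idea, so I would state the martingale-measure facts with a pointer to \cite[Chapter 2]{WalshBook} and keep the write-up brief.
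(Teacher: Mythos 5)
Your proposal is correct and follows essentially the same route as the paper: the paper likewise introduces the shifted white noise $\tilde{\M}_t(A)=\M_{s+t}(A)-\M_s(A)$ (your $\mathcal{M}^{(s)}$), uses stationary independent increments of $B$ and $\M$ to get equality in law and independence of $\filt_s$, and then rewrites $(\Theta_s\mathcal{H})_t(f)$ via the time-shift change of variables as $\tilde{\M}_t(\Psi_t f)$, i.e.\ the same functional of the shifted noise, before specializing to $f=\f1$ and $f=\Phi_r\f1$. The extra bookkeeping you flag (path-space realization, augmentation) is glossed over in the paper as well and poses no obstacle.
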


\begin{proof}
Fix $s \geq 0$.   By definition \eqref{EDef} of $E$ and \eqref{def_shift} of $\Theta_sE$, we have
\[
    \Theta_sE(t)= E(s+t)-E(s)=\sigma B(s+t)- \sigma B(s) - \beta t,\quad\quad t\geq0.
\]
Since  $B$  has independent stationary increments and is independent of $\M$, $B(s+t)-B(s)$ is independent of $\filt_s$ for all $t\geq0$, and $\{B(s+t)-B(s);t\geq0\}$ is itself a Brownian motion. Hence, the claim for  $(\Theta_sE)$ follows.

Next, define the martingale measure $\tilde{\M}$ as follows:  $\tilde{\M}_t (A) \doteq  {\mathcal M}_{s+t}(A) - {\mathcal M}_s(A)$ for $t \geq 0$, $A \in {\mathcal B}[0,\infty)$. Since $\M$ is a white noise independent of $B$, $\tilde{\M}$ is again a white noise with the same distribution as $\M$, and independent of $\filt_s.$  Also,  for any continuous $f$, using  \eqref{ThetaHDef} we have
\begin{align*}
    (\Theta_s\mathcal{H})_t(f) & = \mathcal{M}_{s+t}(\Psi_{s+t}f)-\mathcal{M}_{s}(\Psi_{s+t} f)\\
    & =\int_s^{s+t}\int_0^\infty \frac{\overline G(t+s-v+x)}{\overline G(x)} f(x) \mathcal{M}(dv,dx)\\
    & =\int_0^{t}\int_0^\infty \frac{\overline G(t-v+x)}{\overline G(x)} f(x) \tilde{\mathcal{M}}(dv,dx)\\
    & =\tilde{\M}_t(\Psi_tf).
\end{align*}
Substituting $f = \f1$ and $f=\Phi_{r} \f1,$, we conclude that the processes  $(\Theta_s\mathcal{H})_{\cdot}(\f1)$ and  $(\Theta_s\mathcal{H})_t(\Phi_\cdot\f1)$ are independent of $\filt_s$, and have the same distribution as $\{\mathcal{H}_t(\f1);t\geq0\}$ and $\mathcal{H}_t(\Phi_{\cdot}\f1)$, respectively.
\end{proof}

\begin{proposition}\label{markovPRP}
Let Assumptions \ref{hh2_AS}-\ref{finite2_AS} hold. Then $\{P^y;y\in \Y\}$, where $P^y$ is the law of the diffusion model $Y^y$ with initial condition $y$, is a time-homogeneous Feller Markov family.
\end{proposition}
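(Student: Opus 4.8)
The plan is to deduce all three assertions -- the Markov property, time-homogeneity, and the Feller property -- from the structural identities already established. The starting point is the observation that the diffusion model is a \emph{deterministic measurable functional} of the initial condition and the driving noise: combining Definition~\ref{def_solutionProcess}, the identity $\mathcal{H}_t(\Phi_r\f1)=\mathcal{M}_t(\Psi_{t+r}\f1)$ (which follows from the group property \eqref{groupproperty} and the definition \eqref{Hdefinition}), and the continuity statements in Lemma~\ref{CMSMPROP}, Lemma~\ref{Kfubini} and Lemma~\ref{propertiesofH1}, one obtains a measurable map $\Xi$ from $\Y\times\mathbb{C}^0[0,\infty)\times\mathbb{C}(\hc;\Hone\ho)$ into $\mathbb{C}(\hc;\Y)$ such that $Y^{Y_0}=\Xi\big(Y_0,E,\mathcal{H}_{\cdot}(\Phi_{\cdot}\f1)\big)$ almost surely. (Here $E$ carries the Brownian input and $\mathcal{H}_{\cdot}(\Phi_{\cdot}\f1)$ carries the martingale-measure input; note $\mathcal{H}(\f1)=\mathcal{H}_{\cdot}(\Phi_0\f1)$.) One then defines $\mathcal{P}_tF(y)\doteq\Ept{F(Y^y(t))}$; measurability of $y\mapsto\mathcal{P}_tF(y)$ for bounded measurable $F$ follows from measurability of $\Xi$ together with a Fubini argument (or from the Feller property proved below).

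\textbf{Markov property and homogeneity.} Fix $s\geq 0$ and let $Y=Y^{Y_0}$. The key step is to read off from Lemma~\ref{markovfunction} that the shifted path $Y(s+\cdot)$ is obtained by applying \emph{the same} functional $\Xi$ to the state $Y(s)$ and the shifted noise: equations \eqref{markovfunction_Z} and \eqref{markovfunction_Lambda}, together with \eqref{limit_markovlemmatemp4}, give upon comparison with \eqref{XKdefinition}--\eqref{def_Z} that $Y(s+\cdot)=\Xi\big(Y(s),\Theta_sE,(\Theta_s\mathcal{H})_{\cdot}(\Phi_{\cdot}\f1)\big)$ almost surely. By Proposition~\ref{existTHM}, $Y$ is $\{\filt_t^{Y_0}\}$-adapted, so $Y(s)$ is $\filt_s^{Y_0}$-measurable; by Lemma~\ref{lem_homogen} together with the independence of $Y_0$ from $(B,\mathcal{M})$, the pair $\big(\Theta_sE,(\Theta_s\mathcal{H})_{\cdot}(\Phi_{\cdot}\f1)\big)$ is independent of $\filt_s^{Y_0}$ and has the same law as $\big(E,\mathcal{H}_{\cdot}(\Phi_{\cdot}\f1)\big)$. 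A routine conditioning argument -- freeze the $\filt_s^{Y_0}$-measurable first argument of $\Xi$, then integrate out the independent second argument -- then yields $\Ept{\Phi(Y(s+\cdot))\mid\filt_s^{Y_0}}=\Ept{\Phi(Y^y(\cdot))}\big|_{y=Y(s)}$ for every bounded measurable $\Phi$ on path space. Specializing to $\Phi(w)=F(w(t))$ gives the Markov property relative to $\{\filt_t^{Y_0}\}$ (hence relative to the natural filtration of $Y$), and since neither $\Xi$ nor the law of $\big(E,\mathcal{H}_{\cdot}(\Phi_{\cdot}\f1)\big)$ depends on $s$, the semigroup $\{\mathcal{P}_t\}$ is time-homogeneous.

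\textbf{Feller property.} Here I would fix $t\geq 0$ and $F\in\mathbb{C}_b(\Y)$ and show that $y_n\to y$ in $\Y$ implies $Y^{y_n}(t)\to Y^y(t)$ in $\Y$ almost surely; the claim then follows by bounded convergence. Writing $y_n=(x_0^n,z_0^n)$, convergence in $\Y$ gives $z_0^n\to z_0$ in $\Hone\ho$, hence in $\C$ by Lemma~\ref{propertiesofH1}.\ref{propertiesofH1_mapping}, so $z_0^n-\mathcal{H}(\f1)\to z_0-\mathcal{H}(\f1)$ in $\C$ almost surely; continuity of $\Lambda$ (Lemma~\ref{CMSMPROP}) then gives $(K^n,X^n)\to(K,X)$ in $\mathbb{C}^0[0,\infty)\times\C$, in particular $X^n(t)\to X(t)$. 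In the expression \eqref{def_Z} for $Z^{y_n}(t,\cdot)$, the translation term $z_0^n(t+\cdot)\to z_0(t+\cdot)$ in $\Hone\ho$ by Lemma~\ref{propertiesofH1}.\ref{prop_translation}, the martingale-measure term is independent of $n$, and $\Gamma_tK^n\to\Gamma_tK$ in $\Hone\ho$ by Lemma~\ref{Kfubini}.\ref{Kfubini_continuous}; hence $Z^{y_n}(t,\cdot)\to Z(t,\cdot)$ in $\Hone\ho$ and therefore $Y^{y_n}(t)\to Y^y(t)$ in $\Y$.

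\textbf{Main obstacle.} Most of the genuine analysis has been offloaded into the preceding lemmas, so the remaining work is essentially bookkeeping; the one point requiring care is the identification of the ``effective'' driving noise. One must recognize that although $Z$ in \eqref{def_Z} is written in terms of the apparently more complicated object $\mathcal{M}_t(\Psi_{t+r}\f1)$, this equals $\mathcal{H}_t(\Phi_r\f1)$, so that the entire solution map is a functional of precisely the triple $\big(E,\mathcal{H}(\f1),\mathcal{H}_{\cdot}(\Phi_{\cdot}\f1)\big)$ for which Lemma~\ref{lem_homogen} supplies the needed independence-and-stationarity of the time shifts. Without this matching, the freezing/conditioning step of the Markov argument would not go through with the available lemmas.
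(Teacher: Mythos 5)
Your proposal is correct and follows essentially the same route as the paper's proof: the paper likewise builds a continuous map $\Pi_t$ sending $\big(Y^y(s),\Theta_sE,(\Theta_s\mathcal{H})_\cdot(\f1),(\Theta_s\mathcal{H})_t(\Phi_\cdot\f1)\big)$ to $Y^y(s+t)$ via Lemma \ref{markovfunction}, invokes Lemma \ref{lem_homogen} to freeze the $\filt_s$-measurable state and integrate out the independent, stationarily shifted noise, and obtains the Feller property from continuity of $\Lambda$, $\Gamma_t$, the translation map and the embedding, together with bounded convergence. The only cosmetic difference is that you package the driving noise as the full path/field $\mathcal{H}_\cdot(\Phi_\cdot\f1)$ (which requires the process-level form of Lemma \ref{lem_homogen}, immediate from its proof via $\tilde{\M}$), whereas the paper keeps the fixed-$t$ objects that the stated lemma covers directly.
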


\begin{proof}
The Feller property can be deduced from the proof of Theorem 5(2) in Section 9.5 of \cite{KasRam13}, but since in our case the Markov process is homogeneous, the proof is simpler and so we include it here. Let $Y^y = (X^y,Z^y)$ be the diffusion model with initial condition $y\in\Y$. Recall from Proposition \ref{Mcontinuity} that  for any  $s, t \geq 0$, $\mathcal{M}_{s+t}(\Psi_{s+t+\cdot}\f1)$  and $\mathcal{M}_{s}(\Psi_{s+t+\cdot}\f1)$ both lie in $\Hone \ho$ almost surely, and hence, so does $(\Theta_s\mathcal{H})_t(\Phi_\cdot\f1)$, using equation \eqref{limit_markovlemmatemp4}.  First, we claim that for every  $t\geq0$, there exists a continuous mapping $\Pi_t:\R\times\Hone\ho\times\C^2\times\Hone\ho\mapsto\R\times\Hone\ho$  such that for  $s\geq0$,
\begin{align}\label{markov_claim}
    Y^y(s+t)&=\big(X^y(s+t), Z^y(s+t,\cdot)\big) \notag\\
    &=  \Pi_t \big(Y^y(s),  \Theta_sE (\cdot), (\Theta_s\mathcal{H})_\cdot(\f1), (\Theta_s\mathcal{H})_t(\Phi_\cdot\f1) \big).
\end{align}
Recall that $\mathbb{C}^0\hc$ is the space of continuous functions $f$ with $f(0)=0$, and for notational convenience, set $\mathbb{D} \doteq \mathbb{R} \times  \Hone \ho \times \mathbb{C}^0[0,\infty)^2$. To see why the claim is true, first recall that the embedding from $\Hone\ho$ to $\C$ and the evaluation map $f\mapsto f(t)$ from $\C$ to $\mathbb{R}$ are continuous (see Lemma \ref{propertiesofH1}.\ref{propertiesofH1_mapping} for the former). Then, by the representation of $(X^y(s+\cdot),\Theta_sK(\cdot))$ in
 \eqref{markovfunction_Lambda}  and the continuity of the CMS mapping $\Lambda:  \mathbb{C}^0[0,\infty)\times\R\times\C\mapsto \mathbb{C}^0[0,\infty) \times \C$ from Lemma \ref{CMSMPROP},
it follows that there exists a continuous mapping $F^1_t:\mathbb{D}\mapsto\mathbb{C}^0[0,\infty) \times \R$ such that for every $s\geq0$,
\begin{equation*}
  (\Theta_sK(\cdot), X^y(s+t)) = F^1_t(X^y(s), Z^y(s,\cdot), \Theta_sE(\cdot), (\Theta_s\mathcal{H})_\cdot(\f1)).
\end{equation*}
Moreover, it follows from the representation of $Z^y(s+t,\cdot)$ in \eqref{markovfunction_Z}, the continuity of $\Gamma_t$ established in Lemma \ref{Kfubini}.\ref{Kfubini_continuous}, and the continuity of the shift mapping $f(\cdot) \mapsto f(t+\cdot)$ on  $\Hone \ho$, that there exists a continuous function $F_t^2:\Hone\ho\times\mathbb{C}^0\hc\times\Hone\ho\mapsto\Hone\ho$ such that for every $s\geq0$,
\begin{equation*}
    Z^y(s+t,\cdot)=F_t^2(Z^y(s,\cdot),\Theta_sK(\cdot),(\Theta_s\mathcal{H})_t(\Phi_r\f1)).
\end{equation*}
The claim follows from the last two displays.

Next, we prove the Markov property of the family $\{P^y;y\in\Y\}$. For every $y\in\Y$, $\F_s^y=\F_s\vee\sigma(y)=\F_s$, and hence by condition 1. of Definition \ref{def_csae} and Proposition \ref{existTHM}, $(X^y(s),Z^y(s,\cdot))$ is $\filt_s$-measurable. Also, $\Theta_sE (\cdot)$, $(\Theta_s\mathcal{H})_\cdot(\f1)$ and  $(\Theta_s\mathcal{H})_t(\Phi_\cdot\f1)$ are independent of $\filt_s$ by Lemma \ref{lem_homogen}. Hence, for every bounded and  measurable functional $F:\Y \mapsto \R$, using the claim \eqref{markov_claim}  we have
\begin{align*}
    \Ept{F(Y^y(s+t))|\filt_s}& = \Ept{F\left(\Pi_t \big(Y^y(s),  \Theta_sE (\cdot), (\Theta_s\mathcal{H})_\cdot(\f1), (\Theta_s\mathcal{H})_t(\Phi_\cdot\f1) \big)\right) \Big| \filt_s }\\
    & = \Ept{F\left(\Pi_t \big(Y^y(s),  \Theta_sE (\cdot), (\Theta_s\mathcal{H})_\cdot(\f1), (\Theta_s\mathcal{H})_t(\Phi_\cdot\f1) \big)\right) \Big| Y^y(s) }\\
    & = \Ept{F(Y^y(s+t))|Y^y(s)}.
\end{align*}
Moreover, using the simple observation that for any two independent random variables $\xi_1$ and $\xi_2$, and any bounded measurable function $f$, one has $\mathbb{E}[f(\xi_1,\xi_2)|\xi_1]=Q(\xi_1)$ where $Q(a)=\mathbb{E}[f(a,\xi_2)]$ (this is immediate to see for separable functions of the form $f(a,b)=f_1(a)f_2(b)$, and can be extended to general bounded measurable functions using the linearity of conditional expectation and the monotone convergence theorem), and applying Lemma \ref{lem_homogen}, for every $y'\in\Y,$ we have
\begin{align}
    \Ept{F(Y^y(s+t))|Y^y(s)=y'}& = \Ept{F\left(\Pi_t \big(y',  \Theta_sE (\cdot), (\Theta_s\mathcal{H})_\cdot(\f1), (\Theta_s\mathcal{H})_t(\Phi_\cdot\f1) \big)\right)}\notag \\
    & = \Ept{F\left(\Pi_t \big(y', E , \mathcal{H}(\f1), \mathcal{H}_t(\Phi_\cdot\f1) \big)\right)}\notag\\
    & = \Ept{F(Y^{y'}(t))}.\label{temp_markov}
\end{align}
The last two displays show that the family $\{P^y;y\in\Y\}$ is a time-homogeneous Markov family.

Finally, we prove the Feller property. Recall that $\{\mathcal{P}_t\}$ denotes the transition semigroup corresponding to the Markov family $\{P^y\}$, and note that for every $t\geq0$ and bounded continuous function $F$, by another use of \eqref{temp_markov},
\[
    \mathcal{P}_t[F](y)=\Ept{F(Y^y)(t)}= \Ept{F\left(\Pi_t \big(y, E , \mathcal{H}(\f1), \mathcal{H}_t(\Phi_\cdot\f1) \big)\right)},\quad\quad y\in\Y.
\]
By the continuity of the mapping $\Pi_t$ and \eqref{temp_markov} and an application of the bounded convergence theorem, the mapping  $y\mapsto\mathcal{P}_t[F](y)$ is continuous, and hence the Markov family $\{P^y\}$ is Feller.
\end{proof}

\begin{proof}[Proof of Theorem \ref{thm_ExistUnique}]
Existence of a solution follows from Proposition \ref{existTHM} while uniqueness  follows from Proposition \ref{prop_uniqueness}, and the Feller Markov property is proved in Proposition \ref{markovPRP}.
\end{proof}

\section{Uniqueness of the Invariant Distribution}\label{SECuniq}

This section is devoted to the proof of Theorem \ref{thm_Stationary}. As mentioned in the introduction, to prove uniqueness of the invariant distribution, we will adopt the so-called \textit{asymptotic (equivalent) coupling} method,  which is particularly well suited to infinite-dimensional Markov processes. In Section \ref{sec_uniqACGF}, we first  describe this method in the generality required for our problem, following the exposition in \cite{HaiMatSch11}.  At the end of Section \ref{sec_uniqACGF}, we discuss the main steps involved in applying this framework to our problem, which are carried out in Sections \ref{SUBSECuniq_scheme}--\ref{SUBSECuniq_proof}.

\subsection{Asymptotic Coupling: The General Framework}\label{sec_uniqACGF}

Let $\cx$ be a Polish space with a compatible metric  $d(\cdot,\cdot)$, equipped with the Borel $\sigma$-algebra $\mathcal{B}(\cx)$. As usual, let $\cxr$ denote the space of $\cx$-valued functions on $\hc$. We endow $\cxr$ with the Kolmogorov $\sigma$-algebra $\cbr$, which is the $\sigma$-algebra generated by all cylinder sets. Let $\mathbb{M}_1(\cxr)$ and $\mathbb{M}_1(\cxr \times \cxr)$ denote the spaces of probability measures on $(\cxr,\cbr)$ and $(\cxr \times \cxr, \cbr \otimes \cbr)$.  For every $m_1, m_2\in\mathbb{M}_1(\cxr)$, recall that  a coupling of $m_1$ and $m_2$ is a probability measure $\cplgen\in \mathbb{M}_1(\cxr\times\cxr)$ whose first and second marginals, respectively, are $m_1$ and $m_2$, that is,   $\proj^{(i)}_\#\cplgen=m_i$ for $i=1,2$, where,  $\Pi^{(i)}$ is the $i$th coordinate projection map, and $\Pi^{(i)}_\#\cplgen$ is the push-forward of the measure $\cplgen$ under $\Pi^{(i)}$.   Define $\mathcal{C}(m_1,m_2) \subset \mathbb{M}_1(\cxr \times \cxr)$ to be the set of couplings of $m_1$ and $m_2$. One can relax the definition of a coupling to define the space of {\em absolutely continuous couplings}  as follows:
\begin{equation}\label{def-eqcoup}
    \tilde{ \mathcal{C}} (m_1,m_2)\doteq \{ \cplgen \in \mathbb{M}_1(\cxr \times \cxr);\; \Pi^{(i)}_\#\cplgen\ll  m_i, i=1,2 \}.
\end{equation}
If $\cplgen$ in $\tilde{ \mathcal{C}} (m_1,m_2)$ satisfies $\Pi^{(i)}_\#\cplgen \sim m_i$, $i = 1, 2$, $\cplgen$ will be referred to as an {\em equivalent coupling} of $m_1$ and $m_2$. In contrast to  a coupling, the corresponding marginals of an absolutely continuous (or equivalent) coupling $\cplgen$ need only be absolutely continuous with respect to (resp., equivalent to), and not necessarily equal to, $m_1$ and $m_2$, respectively.

Let $\{\mathcal{P}_t \} = \{\mathcal{P}_t; \:t\geq 0\}$ be the transition semigroup of a Markov kernel on $(\cx,\mathcal{B}(\cx))$. For every $y\in\Y$, let $P^y$ denote the distribution of the Markov process with initial value $y$ and transition semigroup $\{\mathcal{P}_t\}$ on the path space $(\cx^{\R_+},\mathcal{B}(\cx)^{\R_+})$ ($P^y$ is denoted in \cite{HaiMatSch11} by $\mathcal{P}_{\hc}\delta_y$).
Recall that a probability measure $\mu$ on $(\cx,\mathcal{B}(\cx))$ is called an invariant distribution for the semigroup $\{\mathcal{P}_t\}$ if \eqref{eq-Pt-invariant} holds for every $t\geq0$. Finally, let $\mathcal{D}$ be the set of pairs of paths that meet at infinity:
\begin{equation}\label{curlyD}
    \mathcal{D} \doteq \left\{ (x,y)\in \cx^{\R_+} \times \cx^{\R_+}: \; \lim_{t\to\infty} d(x(t),y(t))=0 \right\},
\end{equation}
and  note that  $\mathcal{D}\in\cbr \otimes \cbr.$

Recall that a probability measure $\mu$ on $(\cx,\mathcal{B}(\cx))$ is called an invariant distribution for the semigroup $\{\mathcal{P}_t\}$ if \eqref{eq-Pt-invariant} holds for every $t\geq0$.

\begin{proposition}\label{abs_thm}
Assume there exists a measurable set $A\in \mathcal{B}(\cx)$ and a mapping $\cpl:A\times A\ni(y,\tilde y)\mapsto \cpl_{y, \tilde{y}} \in \tilde{\mathcal{C}}(P^y,P^{\tilde{y}})$ with the following properties:
\begin{enumerate} [(I)]
    \item \label{prop1}$\mu(A)>0$ for any invariant probability measure $\mu$ of $\{\mathcal{P}_t\}$.
    \item \label{propMble} For every measurable set $B\in \cbr\otimes\cbr$, $(y,\tilde y)\mapsto\cpl_{y,\tilde y}(B)$ is measurable.
    \item \label{propD} For every $y,\tilde{y} \in A$, $\cpl_{y,\tilde{y}}(\mathcal{D})>0$.
\end{enumerate}
Then $\{{\mathcal P}_t\}$ has at most one invariant probability measure.
\end{proposition}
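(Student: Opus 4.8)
The plan is to argue by contradiction: suppose $\mu_1$ and $\mu_2$ are two distinct invariant probability measures of $\{\mathcal{P}_t\}$. The standard first reduction is that any two distinct invariant measures contain, in their ergodic decomposition, two mutually singular ergodic invariant measures; so without loss of generality we may assume $\mu_1 \perp \mu_2$, i.e.\ there is a measurable set $S \in \mathcal{B}(\cx)$ with $\mu_1(S) = 1$ and $\mu_2(S) = 0$. The goal is to derive a contradiction with property (\ref{propD}), using the fact that the asymptotic coupling $\cpl_{y,\tilde y}$ has a positive probability of landing in $\mathcal{D}$ while being absolutely continuous with respect to the laws started from $y$ and $\tilde y$.

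The key steps, in order, are as follows. First, by property (\ref{prop1}), $\mu_1(A) > 0$ and $\mu_2(A) > 0$, so we may choose a point $y \in A \cap S$ that is, in a suitable sense, ``typical'' for $\mu_1$ (e.g.\ a point where the pathwise ergodic theorem holds along $P^y$, so that $P^y$-almost every trajectory spends asymptotically a full-density fraction of time in $S$; more precisely the occupation measures converge to $\mu_1$), and similarly a point $\tilde y \in A \setminus S$ typical for $\mu_2$, so that $P^{\tilde y}$-almost every trajectory has occupation measures converging to $\mu_2$. Because $\mu_1 \perp \mu_2$, the limiting occupation measures differ, and in particular a $P^y$-typical path and a $P^{\tilde y}$-typical path cannot have $d$-distance tending to $0$: if $d(x(t), \tilde x(t)) \to 0$ then the two occupation measures would have to coincide (this uses that bounded uniformly continuous test functions are determined in the limit, i.e.\ $\tfrac1T\int_0^T [f(x(t)) - f(\tilde x(t))]\,dt \to 0$ for such $f$). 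Hence the set $\mathcal{D}$ is $P^y \otimes P^{\tilde y}$-null — and, more to the point, it is null for the product of any measures respectively absolutely continuous with respect to $P^y$ and $P^{\tilde y}$. Wait: that last assertion needs care, since $\cpl_{y,\tilde y}$ is generally \emph{not} a product measure.

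The correct route is: fix the $\mu_i$-typical points $y,\tilde y \in A$ as above, and let $\Gamma^{(1)} = \Pi^{(1)}_\#\cpl_{y,\tilde y}$ and $\Gamma^{(2)} = \Pi^{(2)}_\#\cpl_{y,\tilde y}$ be the marginals of the asymptotic coupling. By definition of $\tilde{\mathcal{C}}(P^y, P^{\tilde y})$ we have $\Gamma^{(1)} \ll P^y$ and $\Gamma^{(2)} \ll P^{\tilde y}$. Let $R_1 = \{x \in \cxr : \text{occupation measures of } x \text{ converge to } \mu_1\}$ and $R_2$ the analogous set for $\mu_2$. By the ergodic theorem along the Markov process and our choice of $y,\tilde y$, $P^y(R_1) = 1$ and $P^{\tilde y}(R_2) = 1$; hence $\Gamma^{(1)}(R_1) = 1$ and $\Gamma^{(2)}(R_2) = 1$ by absolute continuity. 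Therefore $\cpl_{y,\tilde y}(R_1 \times R_2) = 1$ (the first coordinate lands in $R_1$ a.s., the second in $R_2$ a.s., so their intersection has full measure). But $R_1 \times R_2$ is disjoint from $\mathcal{D}$: on $R_1 \times R_2$ the two coordinate paths have occupation measures converging to $\mu_1 \neq \mu_2$, which as noted above forbids $d(x(t),\tilde x(t)) \to 0$. Consequently $\cpl_{y,\tilde y}(\mathcal{D}) \le \cpl_{y,\tilde y}((R_1\times R_2)^c) = 0$, contradicting property (\ref{propD}). This contradiction shows $\mu_1 = \mu_2$, so $\{\mathcal{P}_t\}$ has at most one invariant probability measure. (Property (\ref{propMble}), the joint measurability of $(y,\tilde y) \mapsto \cpl_{y,\tilde y}(B)$, is what is needed to make all the above selections and integrations legitimate, e.g.\ to disintegrate and to assert that the relevant sets of ``good'' pairs $(y,\tilde y)$ are measurable of positive $\mu_1\otimes\mu_2$-measure, though in the contradiction argument we only need a single good pair.)

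The main obstacle, and the step requiring the most care, is the ergodic-theorem input: one must verify that for $\mu_1$-a.e.\ $y$, $P^y$-almost every path has time-averaged occupation measures converging \emph{weakly} to $\mu_1$ (and not to some other ergodic component), and likewise for $\mu_2$. For an ergodic invariant measure this is the Birkhoff ergodic theorem applied to the shift on path space under $P_{\mu_1} = \int P^y \mu_1(dy)$, combined with a disintegration to pass to $P^y$ for $\mu_1$-a.e.\ $y$; one also needs enough regularity (separability of $\cx$, a countable convergence-determining class of bounded continuous functions) to upgrade ``$\tfrac1T\int_0^T f(x(t))\,dt \to \int f\,d\mu_1$ for each fixed $f$'' to ``the occupation measures converge weakly, $P^y$-a.s.'' — this is where the Polish structure of $\cx = \Y$ is used. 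The remaining pieces — absolute continuity transfers full-measure sets, a product of full-measure events has full measure under the coupling, and incompatibility of the limit occupation measures with $d(x(t),\tilde x(t))\to 0$ — are essentially formal.
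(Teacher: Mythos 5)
Your proposal is correct, and its engine is the same as the paper's: Birkhoff's ergodic theorem applied on path space to the (ergodic) stationary measures, transfer of the resulting full-measure ``typical time-average'' sets through the absolute continuity of the marginals of the asymptotic coupling, and the observation that on $\mathcal{D}$ asymptotic closeness of the two paths forces the Ces\`aro averages of bounded uniformly continuous (Lipschitz) test functions to agree, hence $\mu_1=\mu_2$. The organization differs, though, in a way worth noting. The paper does not select typical initial points: it averages the coupling, setting $\bar\Upsilon(\cdot)=\int \Upsilon_{y,\tilde y}(\cdot)\,\mu_1(dy)\mu_2(d\tilde y)$, which is an absolutely continuous coupling of the stationary path laws $P^{\mu_1},P^{\mu_2}$ with $\bar\Upsilon(\mathcal{D})>0$ by (I) and (III); it then intersects $\mathcal{D}$ with the per-test-function Birkhoff sets $B^\phi_1\times B^\phi_2$ and concludes directly, without contradiction. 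This is exactly where hypothesis (II) enters (to make $\bar\Upsilon$ well defined). Your route instead uses the ergodic decomposition/disintegration to find a single pair $(y,\tilde y)\in A\times A$ of $\mu_1$- and $\mu_2$-typical points and derives $\Upsilon_{y,\tilde y}(\mathcal{D})=0$, contradicting (III); as you note, this never uses (II), which is a small economy, but it pays for it by needing the statement ``for $\mu_i$-a.e.\ $y$, $P^y$-a.e.\ path is typical'' and the measurability bookkeeping behind it. Two further remarks: the mutual-singularity set $S$ plays no real role in your argument (all you need is $\mu_1\neq\mu_2$ ergodic), and the passage through weak convergence of occupation measures (hence a countable convergence-determining class) is heavier than necessary — since $\mu_1\neq\mu_2$, a single bounded Lipschitz $f_0$ with $\int f_0\,d\mu_1\neq\int f_0\,d\mu_2$ suffices, which is how the paper proceeds with its sets $B^\phi_i$. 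The measurability of time integrals along paths with respect to $\cbr$ is glossed at the same level in both arguments, so I do not count it against you.
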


\begin{proof}
Proposition \ref{abs_thm} can be easily deduced from \cite[Theorem 1.1. and Corollary 2.2]{HaiMatSch11}. For completeness, its proof is provided in Appendix \ref{APXasymp}. Also, see \cite[Theorem 2]{EMatSin01} and \cite[Lemma 8.5]{Mat03}.
\end{proof}

\begin{remark}
{\em   As observed in Remark 1.2 of \cite{HaiMatSch11}, for the purpose of Proposition \ref{abs_thm}, in the definition \eqref{def-eqcoup} of $\tilde{{\mathcal C}}$, one  can without loss of generality replace absolute continuity by (the apparently stronger condition of) equivalence.
This is because if there is an absolutely continuous coupling $\cpl$ that satisfies  conditions (\ref{propMble}) and (\ref{propD}) of  Proposition \ref{abs_thm}, then the measure $\frac{1}{2} ( \cpl + P^y \otimes P^{\tilde y})$ is an equivalent coupling that also satisfies the same  conditions. Thus, we refer to the approach to establishing uniqueness of the invariant distribution by invoking  Proposition \ref{abs_thm} as the {\em asymptotic equivalent coupling} approach.}
\end{remark}

In Sections \ref{SUBSECuniq_scheme}-\ref{SUBSECuniq_proof}, we apply the asymptotic equivalent coupling framework of Proposition \ref{abs_thm}, with $\cx = \Y$,
in order to establish uniqueness of the invariant distribution of the transition semigroup $\{{\mathcal P}_t\}$ associated to the Markov family $\{P^y;y\in\Y\}$ of the diffusion model defined in Section \ref{sec_explicitDef}. Let $A$ be the measurable subset of $\Y$ defined as
\begin{equation}\label{def_A}
     A \doteq \{(x,z)\in\Y: x\geq 0\}.
\end{equation}
First, in Section \ref{SUBSECuniq_scheme}, for each pair $y,\tilde y\in A$, we construct a pair of stochastic processes $(Y^{y},\tilde Y^{\tilde y})$ on a common probability space  and define the mapping
\begin{equation}
    \cpl_{y,\tilde y}: (y, \tilde {y}) \in A \times A\mapsto {\mathcal Law}(Y^{y},\tilde Y^{\tilde y})\in\mathbb{M}_1(\Y^{\R_+}\times\Y^{\R_+}),
\end{equation}
where  ${\mathcal Law}(Y^{y},\tilde Y^{\tilde y})$ denotes the joint distribution of $(Y^y, \tilde Y^{\tilde y})$. The required measurability properties are proved in Section \ref{SUBSECuniq_mble}. Next, in Section \ref{SUBSECuniq_conv}, we show that $\cpl_{y,\tilde y}(\mathcal{D})>0$, and in Section \ref{SUBSECuniq_marg} we show that the law of $Y^y$ is $P^y$, and that the law of $\tilde Y^{\tilde y}$ is equivalent to $P^{\tilde y}$,  thus establishing that $\cpl$ defines an asymptotic equivalent coupling. Finally, we combine these results  in Section \ref{SUBSECuniq_proof} to complete the proof of Theorem \ref{thm_Stationary}.

\begin{remark}
{\em
It is worthwhile to clarify why, in Proposition \ref{abs_thm}, we formulate a continuous-time version of the asymptotic coupling theorem, rather than deduce the result by simply applying the original discrete-time version established in \cite[Corollary 2.2]{HaiMatSch11}  to the discrete skeleton of the continuous-time process (i.e., the Markov chain obtained by sampling at integer times).   To show uniqueness of the invariant measure of the continuous-time Markov process,  is clearly suffices to show uniqueness of the invariant measure of  its discrete skeleton (that is, the Markov chain obtained by sampling the continuous process at integer times). In turn, by Corollary 2.2 of \cite{HaiMatSch11}, for uniqueness of the invariant measure of the discrete skeleton,  it suffices to verify the three conditions of that corollary, which are the natural discrete analogs of properties \eqref{prop1}, \eqref{propMble} and \eqref{propD} of Proposition \ref{abs_thm}. Now, the continuous version of properties \eqref{propMble} and \eqref{propD} immediately imply the discrete version because any  asymptotic equivalent coupling of the continuous-time process on some set $A$ induces  a corresponding asymptotic  equivalent coupling of the discrete skeleton. Thus, if $A = {\mathcal X}$  then the discrete-time version can be directly invoked because in this case the first condition of Corollary 2.2 of \cite{HaiMatSch11}, which states that $\mu(A)  > 0$ for every invariant measure $\mu$ of the discrete skeleton, is trivially satisfied. However, when $A$ is a strict subset of ${\mathcal X}$ then the property that $\mu(A) > 0$ for all invariant measures $\mu$ of the continuous-time process need not  imply that
$\mu(A) > 0$ for all invariant measures of the discrete skeleton, since the latter could in general be strictly larger. Moreover, in some situations (as turns out to be the case in our application), it may be relatively easy to show the former, but non-trivial to show the latter. In such cases, it is more convenient  to directly apply the continuous-time version of the result, as formulated in Proposition \ref{abs_thm}.}
\end{remark}

\subsection{Construction of a Candidate Coupling} \label{SUBSECuniq_scheme}

Fix two initial conditions $y=(x_0, z_0) $ and $\tilde{y}= (\tilde{x}_0,  \tilde{z}_0)$ in the set $A$ defined in \eqref{def_A},  and  let $Y = Y^y$  be the diffusion model with initial condition $y$.
Then, by Definition \ref{def_solutionProcess}, $Y = (X,Z)$ and the associated process $K$ satisfy
\eqref{XKdefinition} and  \eqref{def_Z} with $(x_0, z_0)$ in place of $(X_0, Z_0)$.
In particular,   $(K,X) = \Lambda(E, x_0, z_0  - {\mathcal H}(\f1))$.
Now, define the (random) locally integrable function
\begin{equation}\label{def_R}
    R(t) \doteq  z_0^\prime(t)+{\cal H}_t(h) -g(0)K(t)  - \int_0^t{K(s)g^\prime(t-s)ds}.
\end{equation}
Combining \eqref{Hintegral}  and  \eqref{intZprime_eq}, the latter with $r=0$, we have
\begin{equation}\label{RZ}
    \int_0^t R(s)ds = \int_0^t\partial_rZ(s,0)ds.
\end{equation}

We now construct another process $\tilde Y$, which starts from $\tilde y$,  on the same probability space. Fix $\lambda>0$. Given $X$ defined above, it is easy to see  that  $\mathbb{P}$-almost surely, the  linear  integral equation
\begin{align}\label{uniq_txDef}
	\tilde{X}(t)= & \: \tilde{x}_0-x_0 -\lambda \int_0^t\tilde X(s)ds + X(t) + \lambda\int_0^t{X(s)ds},
\end{align}
has a unique continuous solution $\tilde{X}$, which has the from
\begin{equation}\label{tildeX_exp}
    \tilde X(t)= \tilde x_0e^{-\lambda t}+F(t)-\lambda \int_0^t e^{-\lambda(t-s)}F(s)ds,\quad\quad t\geq0,
\end{equation}
where $F(t)\doteq X(t)-x_0+\lambda\int_0^tX(s)ds$. Since $Y$ satisfies the diffusion model SPDE, by Proposition \ref{existTHM}, $X$ satisfies \eqref{SPDEintx}, which when combined with  \eqref{RZ} and \eqref{uniq_txDef}, shows that for $t \geq 0$,
\begin{align}\label{uniq_txDefalternate}
	\tilde{X}(t)= &\: \tilde{x}_0+\sigma B(t) -\beta t -\mathcal{M}_t(\f1)-\lambda\int_0^t{\tilde{X}(s)ds}
                +\lambda\int_0^t{X(s)ds}+\int_0^t{R(s)ds}.
\end{align}
Also, for $t \geq 0$, define
\begin{equation}\label{uniq_kbarDef}
    \overline{K}(t) \doteq \sigma B(t) -\beta t -\big(\tilde{X}^+(t)-\tilde{x}_0^+\big) +\int_0^t{\big(R(s)+\lambda X(s) - \lambda \tilde{X}(s)\big)ds}.
\end{equation}
Clearly, $\overline K$ is also continuous, almost surely. We now  introduce a process $\tilde{R}$ that, as shown in Corollary \ref{cor-tildeR}  below, can be characterized as the unique (almost surely locally integrable) solution to the following renewal equation:
\begin{align}\label{uniq_nuthDef}
	\tilde R(t) = & \tilde{z}^\prime_0(t)- g(0)\overline K(t)-\int_0^t{\overline K(s)g^\prime(t-s)ds} +\mathcal{H}_t(h)
	+\int_0^t{g(t-s)\tilde R(s)ds}.
\end{align}
In Proposition \ref{renewalTHM}, we first collect some general results on solutions of the renewal equation. Part \ref{renewalTHM_exist} of the proposition is used below in the  proof of Corollary \ref{cor-tildeR}, part \ref{renewalTHM_key} is  used in Section \ref{SUBSECuniq_conv} to establish an  asymptotic convergence property of the diffusion model, and part \ref{renewalTHM_continuous} is used in Section \ref{SUBSECuniq_mble} to establish measurability properties of the candidate asymptotic coupling. Recall the definition of the convolution operator $*$ from Section \ref{sec_notation}.

\begin{proposition}\label{renewalTHM}
Suppose Assumption \ref{hh2_AS}.\ref{h0_AS} holds.
\begin{enumerate}[\quad\quad a.]
    \item \label{renewalTHM_exist} If  $f\in\Lone_{\text{loc}}\ho$, the renewal equation
        \begin{equation}\label{apx3_renewalEQ}
            \varphi =  f+ g*\varphi
        \end{equation}
        has a unique locally integrable solution $\varphi_*$.
    \item \label{renewalTHM_key} If $G$ has a finite second moment, the function $f$ lies in $\Ltwo\ho$ and its integral  $\intf (t)\doteq \int_0^tf(s)ds,t\geq0,$ lies in $\Ltwo\ho$, then the solution $\varphi_*$ of the renewal equation \eqref{apx3_renewalEQ}  also lies in $\Ltwo\ho$. Moreover, there exist constants $c_1, c_2 < \infty$ such that
        \begin{equation}
            \|\varphi_*\|_{\mathbb{L}^2} \leq c_1 \|f\|_{\mathbb{L}^2} +  c_2  \|\intf\|_{\mathbb{L}^2}.
            \label{ineq-l2phi}
        \end{equation}
    \item If $f\in\C$, then the solution $\varphi_*$ of the renewal equation \eqref{apx3_renewalEQ} also lies in $\C$ and the mapping that takes $f$ to $\varphi_* \in \C$ is continuous. \label{renewalTHM_continuous}
\end{enumerate}
\end{proposition}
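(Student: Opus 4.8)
The common ingredient for all three parts is the \emph{renewal density} (resolvent kernel) $r\doteq\sum_{n\ge1}g^{*n}$, so I would begin by showing $r\in\Lone_{\text{loc}}\ho$ and that its primitive $U(t)\doteq\int_0^t r(s)\,ds$ equals the renewal function $U(t)=\sum_{n\ge1}\Probil{S_n\le t}$, where $S_n=\xi_1+\cdots+\xi_n$ is a sum of i.i.d.\ copies of a variable $\xi$ with density $g$. Since $G(0)=0$ forces $\xi>0$ a.s., comparing $S_n$ with a binomial count of the $\xi_i$ exceeding a level $a>0$ with $\Probil{\xi\ge a}>0$ gives $U(t)\le(\lfloor t/a\rfloor+1)/\Probil{\xi\ge a}<\infty$ for all $t$; since each $g^{*n}\ge0$ and $\int_0^Tg^{*n}(s)\,ds=\Probil{S_n\le T}$, monotone convergence yields $r\in\Lone[0,T]$ for every $T$. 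For part (a), $\varphi_*\doteq f+r*f$ is then locally integrable (Young's inequality on bounded intervals) and solves \eqref{apx3_renewalEQ} by the identity $g*r=r-g$; uniqueness follows by iterating the homogeneous equation, since a locally integrable $\psi$ with $\psi=g*\psi$ satisfies $\|\psi\|_{\Lone[0,T]}\le\Probil{S_n\le T}\,\|\psi\|_{\Lone[0,T]}$ for every $n$ and $\Probil{S_n\le T}\to0$.

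The heart of the matter is part (b), where the $\Ltwo$ gain is a Fourier-domain cancellation. Writing $\hat f(\omega)\doteq\int_0^\infty e^{-i\omega t}f(t)\,dt$ (functions on $\ho$ extended by zero to the negative axis), the renewal equation transforms into $\hat\varphi_*(\omega)=\hat f(\omega)/(1-\hat g(\omega))$, and I would estimate $\|\varphi_*\|_{\Ltwo}^2=\tfrac1{2\pi}\int_\R|\hat f(\omega)|^2/|1-\hat g(\omega)|^2\,d\omega$ by splitting $\R$ at $\pm\epsilon$. Near the origin, the finite second moment hypothesis together with mean one give $\hat g(\omega)=1-i\omega+O(\omega^2)$, so $|1-\hat g(\omega)|\ge|\omega|/2$ for $\epsilon$ small; combined with $\hat f(\omega)/(i\omega)=\widehat{\intf}(\omega)$ (valid since $\intf\in\Ltwo\ho$ and $\intf(0)=0$) and Plancherel, the low-frequency part of $\|\varphi_*\|_{\Ltwo}^2$ is at most $4\|\intf\|_{\Ltwo}^2$. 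Away from the origin, $\omega\mapsto|1-\hat g(\omega)|$ is continuous and strictly positive on $\{|\omega|\ge\epsilon\}$ --- it cannot vanish for $\omega\neq0$ because $\xi$ has a density, so $|\hat g(\omega)|<1$ there, and $\hat g(\omega)\to0$ as $|\omega|\to\infty$ by Riemann--Lebesgue --- hence $\delta_\epsilon\doteq\inf_{|\omega|\ge\epsilon}|1-\hat g(\omega)|>0$ and the high-frequency part is at most $\delta_\epsilon^{-2}\|f\|_{\Ltwo}^2$. Adding the two pieces gives \eqref{ineq-l2phi}, with for instance $c_1=1/\delta_\epsilon$ and $c_2=2$.

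The one point requiring care in part (b) is that these Fourier manipulations presuppose $\varphi_*\in\Ltwo\ho$, whereas a priori only $\varphi_*\in\Ltwo_{\text{loc}}\ho$ is known (from $\varphi_*=f+r*f$ and Young's inequality on bounded intervals). I would circumvent this by first proving the estimate for $f$ in the dense class of smooth, compactly supported functions with $\int_0^\infty f=0$ --- for which $\intf$ is compactly supported and $\varphi_*$ is bounded (indeed $\varphi_*(t)\to0$ by the key renewal theorem), hence a tempered distribution; then $(1-\hat g)\hat\varphi_*=\hat f$ holds, and since $1-\hat g$ vanishes only at $\omega=0$, where $\hat f$ also vanishes, $\varphi_*$ coincides with the $\Ltwo$ function of the previous paragraph up to an additive polynomial, which the boundedness and decay of $\varphi_*$ force to be zero. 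The estimate then extends to all $f$ with $f,\intf\in\Ltwo\ho$ by density and linearity of the solution map, using that convergence in $\Ltwo\ho$ implies convergence in $\Lone_{\text{loc}}\ho$, so the $\Ltwo$ limit of the approximants is indeed $\varphi_*$. Finally, part (c) is elementary: from $\varphi_*=f+r*f$, continuity of $t\mapsto(r*f)(t)$ follows from uniform continuity of $f$ on compact intervals, $r\in\Lone_{\text{loc}}\ho$, and absolute continuity of the integral, so $\varphi_*\in\C$; and for $f,\tilde f\in\C$ the relation $\varphi_*-\tilde\varphi_*=(f-\tilde f)+r*(f-\tilde f)$ with Young's inequality gives $\|\varphi_*-\tilde\varphi_*\|_T\le(1+\|r\|_{\Lone[0,T]})\,\|f-\tilde f\|_T$ for every $T$, so $f\mapsto\varphi_*$ is continuous from $\C$ to $\C$. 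The main obstacle is thus part (b): both the frequency-split estimate itself and the preliminary passage from $\Ltwo_{\text{loc}}$ to $\Ltwo$.
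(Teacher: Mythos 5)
Your proposal is correct, but it takes a genuinely different route from the paper on the two substantive points. For existence in part (a) and for part (c) you and the paper do essentially the same thing (solution $\varphi_*=f+r*f$ with the renewal density, Young's inequality on bounded intervals, and the sup-norm bound $\|\varphi_*^1-\varphi_*^2\|_T\le(1+\|r\|_{\Lone[0,T]})\|f^1-f^2\|_T$), but your uniqueness argument differs: the paper mollifies a locally integrable solution of $\psi=g*\psi$ and then invokes a quotable uniqueness theorem for locally \emph{bounded} solutions (Asmussen), whereas you iterate to $\psi=g^{*n}*\psi$ and use $G^{*n}(T)=\Probil{S_n\le T}\to0$ directly at the $\Lone[0,T]$ level --- this is self-contained and arguably cleaner. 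The real divergence is part (b). The paper's argument is two lines given one cited fact: writing $u$ for the renewal density, $u*f=f*(u-1)+\intf$, and under mean one and a finite second moment the signed measure ``renewal measure minus Lebesgue'' has finite total variation (Lindvall), so $u-1\in\Lone\ho$ and Young's inequality gives \eqref{ineq-l2phi} with $c_1=1+\|u-1\|_{\Lone}$, $c_2=1$. You instead work on the Fourier side, $\hat\varphi_*=\hat f/(1-\hat g)$, split at $\pm\epsilon$, use $1-\hat g(\omega)=i\omega+O(\omega^2)$ (finite second moment, mean one) together with $\widehat{\intf}(\omega)=\hat f(\omega)/(i\omega)$ to absorb the zero at the origin into $\|\intf\|_{\Ltwo}$, and use absolute continuity of $G$ plus Riemann--Lebesgue for the bound away from the origin; the circularity (Plancherel presupposes $\varphi_*\in\Ltwo$) is handled by a dense class of compactly supported $f$ with vanishing integral, a tempered-distribution division/support argument, and passage to the limit. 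Both proofs exploit the second-moment hypothesis in the same essential way (it is exactly what makes $u-1$ integrable, resp.\ what controls $1-\hat g$ near $0$); what each buys is clear: the paper's proof is much shorter and yields explicit constants at the price of citing the coupling-theoretic total-variation bound, while yours is self-contained but requires the extra functional-analytic scaffolding you flag --- the convolution theorem for an $\Lone$ kernel against a bounded tempered distribution, the support argument forcing the polynomial discrepancy to vanish, and the density of compactly supported primitives vanishing at $0$ (i.e.\ that $\intf\in\mathbb{H}^1\ho$ with $\intf(0)=0$ can be approximated in that topology) --- all standard and completable, so the proposal stands as a valid alternative proof.
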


\begin{proof}
While existence and uniqueness of solutions to renewal equations under various conditions have been extensively studied (see, e.g., \cite[Theorem 2.4, p.\ 146]{Asm03}), the particular version stated above appears not to be readily available in the literature.  Hence, we provide the proof in Appendix \ref{keyAPX}.
\end{proof}

\begin{corollary} \label{cor-tildeR}
There exists a unique locally integrable process $\tilde{R}$ that  satisfies  equation \eqref{uniq_nuthDef}.
\end{corollary}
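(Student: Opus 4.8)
The plan is to recognize \eqref{uniq_nuthDef} as a renewal equation of the type covered by Proposition \ref{renewalTHM}.\ref{renewalTHM_exist} and to verify its hypothesis. Set
\[
    f(t) \doteq \tilde z_0^\prime(t) - g(0)\overline K(t) - \int_0^t \overline K(s) g^\prime(t-s)\, ds + \mathcal{H}_t(h), \qquad t \geq 0,
\]
so that \eqref{uniq_nuthDef} reads $\tilde R = f + g * \tilde R$. By Proposition \ref{renewalTHM}.\ref{renewalTHM_exist} (whose standing hypothesis, Assumption \ref{hh2_AS}.\ref{h0_AS}, is in force throughout this section), it then suffices to check that, almost surely, $f \in \Lone_{\text{loc}}\ho$.

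I would verify this term by term. Since $\tilde z_0 \in \Hone\ho$, its weak derivative satisfies $\tilde z_0^\prime \in \Ltwo\ho \subset \Lone_{\text{loc}}\ho$. The process $\overline K$ of \eqref{uniq_kbarDef} is almost surely continuous (as noted immediately below its definition), being assembled from the continuous processes $B$ and $\tilde X$ — the latter given explicitly by \eqref{tildeX_exp} — together with the absolutely continuous process $t \mapsto \int_0^t (R(s) + \lambda X(s) - \lambda \tilde X(s))\, ds$; hence $g(0)\overline K(\cdot)$ is continuous. The function $g^\prime$ is continuous and lies in $\Lone\ho$ by Assumption \ref{hh2_AS}.\ref{h2_AS} and Remark \ref{integrable}, so the one-sided convolution $g^\prime * \overline K$ is locally bounded and locally integrable (cf.\ the properties of $*$ recorded in Section \ref{sec_notation}). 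Finally, $t \mapsto \mathcal{H}_t(h)$ is continuous by Remark \ref{remark_contModification}. Summing, $f$ is almost surely a sum of functions in $\Lone_{\text{loc}}\ho$, hence lies in $\Lone_{\text{loc}}\ho$, and Proposition \ref{renewalTHM}.\ref{renewalTHM_exist} yields, for almost every $\omega$, a unique locally integrable function $\tilde R = \tilde R(\omega,\cdot)$ satisfying \eqref{uniq_nuthDef}.

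It remains to observe that $\tilde R$ is a genuine stochastic process, i.e.\ jointly measurable in $(\omega,t)$. This follows because the renewal solution map is linear and \emph{deterministic}: the unique locally integrable solution has the Neumann-series form $\tilde R = f + \hat g * f$, where $\hat g \doteq \sum_{n \geq 1} g^{*n}$ is the (deterministic) renewal kernel associated with $g$, itself locally integrable since its integral over $[0,t]$ equals the finite renewal function of $G$ minus one. As $\overline K$ and $t \mapsto \mathcal{H}_t(h)$ are jointly measurable (indeed $\{\filt_t\}$-adapted) while $\tilde z_0^\prime$ is deterministic, $f$ is jointly measurable in $(\omega,t)$, and convolution with the fixed kernel $\hat g$ preserves this by Fubini's theorem; hence so is $\tilde R$. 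The only point that requires any care — the ``main obstacle,'' such as it is — is the regularity bookkeeping for $f$ (recognizing in particular that the ordinary convolution term and the stochastic convolution $\mathcal{H}_\cdot(h)$ are continuous, hence harmless) together with the transfer of measurability from $f$ to $\tilde R$ through the deterministic renewal operator; existence and uniqueness themselves are an immediate appeal to Proposition \ref{renewalTHM}.\ref{renewalTHM_exist}.
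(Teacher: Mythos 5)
Your proposal is correct and follows essentially the same route as the paper: recognize \eqref{uniq_nuthDef} as the renewal equation $\tilde R = f + g*\tilde R$, verify term by term (continuity of $\overline K$ and of $t\mapsto\mathcal{H}_t(h)$, square integrability of $\tilde z_0^\prime$, local integrability of $g^\prime$) that the forcing term is almost surely in $\Lone_{\text{loc}}\ho$, and invoke Proposition \ref{renewalTHM}.\ref{renewalTHM_exist}. Your added remark on joint measurability via the deterministic renewal kernel is a harmless extra not present in the paper's (shorter) argument.
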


\begin{proof}
Almost surely, due to the continuity of $\overline K$ defined in \eqref{uniq_kbarDef}, the continuity of $t\mapsto \mathcal{H}_t$ (see Remark \ref{remark_contModification}), the fact that $\tilde z^\prime_0\in \Ltwo\ho$, and  the local integrability of $g^\prime$, the function
\[
    t\mapsto  \tilde{z}^\prime_0(t)- g(0)\overline K(t)-\int_0^t{\overline K(s)g^\prime(t-s)ds} +\mathcal{H}_t(h)
\]
lies in $\Lone_{\text{loc}}\ho$. The corollary then follows from  Proposition \ref{renewalTHM}.\ref{renewalTHM_exist}.
\end{proof}

Next, for $t \geq 0$, define
\begin{equation}\label{uniq_ktDef}
    \tilde K(t) \doteq \overline K(t) -\int_0^t\tilde R(s)ds.
\end{equation}
Substituting $\overline K(t)$ from  above into  \eqref{uniq_nuthDef} and simplifying terms, we obtain
\begin{equation}\label{RtEQ}
    \tilde R(t) =  \tilde{z}^\prime_0(t) +\mathcal{H}_t(h) - g(0)\tilde K(t)-\int_0^t{\tilde K(u)g^\prime(t-u)du},
\end{equation}
which we observe is  analogous to \eqref{def_R}. Moreover, recall the definition \eqref{def_Gamma} of the family of mappings $\{\Gamma_t;t \geq0\}$, and in a  fashion  analogous to  the definition of $Z$ in \eqref{def_Z}, define
\begin{equation}\label{ZtDef}
	\tilde{Z}(t,r) \doteq  \tilde{z}_0(t+r) -{\cal M}_t(\Psi_{t+r}\f1)+\left(\Gamma_t\tilde K\right)(r),\quad\quad t,r\geq0.
\end{equation}
Since $\tilde K$ is almost surely continuous, it follows from (the proof of) Proposition \ref{consistencyTHM}.\ref{consistencyTHM_consistency} that $\tilde Z$ is also a continuous $\Hone\ho$-valued process.  Finally,  set $\tilde{Y} (t) = \tilde{Y}^{\tilde y}(t)\doteq (\tilde{X}(t),  \tilde{Z}(t))$, $t\geq0,$ and define $\cpl_{y,\tilde{y}}$ to be the joint distribution of $(Y^y,\tilde{Y}^{\tilde{y}})$.

\subsection{Proof of the Measurability Property}\label{SUBSECuniq_mble}

In this section we prove the measurability condition \eqref{propMble} of Proposition \ref{abs_thm} for the family of candidate equivalent  couplings $\{\cpl_{y, \tilde y}, (y, \tilde y) \in \Y \times \Y\}$ that we constructed in the last section.
Let $(Y^y, \tilde{Y}^{\tilde{y}})$ be the associated processes defined therein.

\begin{lemma}\label{lem_mble}
Suppose Assumptions  \ref{hh2_AS}-\ref{finite2_AS} hold, and let $A$ be the measurable subset of $\Y$ defined in \eqref{def_A}. Then, for every $B\in \cbr\otimes\cbr$, the mapping $(y,\tilde y)\mapsto\cpl_{y,\tilde y}(B)$ from $A \times A$ to $\R$ is measurable.
\end{lemma}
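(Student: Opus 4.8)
The plan is to reduce the claim, via a monotone-class argument, to the joint measurability of the finite-dimensional evaluations of the coupled pair $(Y^y,\tilde Y^{\tilde y})$ in the triple consisting of the two initial conditions and the driving noise, and then to invoke Fubini's theorem. Since the Kolmogorov $\sigma$-algebra $\cbr$ is generated by the coordinate maps, the collection of $B\in\cbr\otimes\cbr$ for which $(y,\tilde y)\mapsto\cpl_{y,\tilde y}(B)$ is measurable is a $\lambda$-system, so it is enough to treat sets $B$ determined by finitely many evaluation times $t_1,\dots,t_n$, $s_1,\dots,s_m$ and a Borel set $C_0\subset\Y^{n+m}$, i.e. $B=\{(w_1,w_2):(w_1(t_1),\dots,w_1(t_n),w_2(s_1),\dots,w_2(s_m))\in C_0\}$, which form a generating $\pi$-system. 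For such $B$, $\cpl_{y,\tilde y}(B)=\mathbb{P}\big((Y^y(t_1),\dots,\tilde Y^{\tilde y}(s_m))\in C_0\big)$; hence, once we know that $(y,\tilde y,\omega)\mapsto\big(Y^y(t_1)(\omega),\dots,\tilde Y^{\tilde y}(s_m)(\omega)\big)$ is jointly measurable into $\Y^{n+m}$, the integrand $(y,\tilde y,\omega)\mapsto\mathbbm{1}_{C_0}(\dots)$ is bounded and jointly measurable, and Fubini's theorem yields measurability of $(y,\tilde y)\mapsto\cpl_{y,\tilde y}(B)$ on $A\times A$. So the whole problem reduces to showing that for every fixed $t\geq0$ the maps $(y,\omega)\mapsto Y^y(t)$ and $(y,\tilde y,\omega)\mapsto\tilde Y^{\tilde y}(t)$ are jointly measurable into $\Y$.

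For the first map I would fix once and for all, on a $\mathbb{P}$-full set $\Omega_0$ (setting everything to a default on $\Omega_0^c$), the jointly measurable modifications of the noise functionals $E$, $\mathcal{H}_\cdot(\f1)$, $\mathcal{H}_\cdot(h)$ and $\{\mathcal{M}_t(\Psi_{t+r}\f1)\}$ furnished by Lemma \ref{McontinuousGP} and Proposition \ref{Mcontinuity}; these depend on $\omega$ but not on $y$. Then joint measurability is read off the explicit construction step by step: $(y,\omega)\mapsto(E,x_0,z_0-\mathcal{H}(\f1))$ is jointly measurable into $\mathbb{C}^0[0,\infty)\times\R\times\C$, composing with the continuous CMS map $\Lambda$ (Lemma \ref{CMSMPROP}) gives $(y,\omega)\mapsto(K,X)$, and then, using that $K\mapsto\Gamma_tK$ is continuous into $\Hone\ho$ (Lemma \ref{Kfubini}.\ref{Kfubini_continuous}), that $z_0\mapsto z_0(t+\cdot)$ is continuous on $\Hone\ho$ (Lemma \ref{propertiesofH1}.\ref{prop_translation}), and that $\omega\mapsto\mathcal{M}_t(\Psi_{t+\cdot}\f1)$ is $\Hone\ho$-measurable, formula \eqref{def_Z} makes $(y,\omega)\mapsto Z(t,\cdot)\in\Hone\ho$ jointly measurable; together with $(y,\omega)\mapsto X(t)$ this gives $(y,\omega)\mapsto Y^y(t)\in\Y$. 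The same bookkeeping applied to Section \ref{SUBSECuniq_scheme} handles most of $\tilde Y^{\tilde y}$: $\tilde X$ is a continuous functional of $X$ and the initial data via \eqref{tildeX_exp}; $R$ of \eqref{def_R} and $\overline K$ of \eqref{uniq_kbarDef} are measurable functionals of objects already shown to be jointly measurable; and once the integral $\int_0^\cdot\tilde R(s)\,ds$ is under control, $\tilde K$ of \eqref{uniq_ktDef} and $\tilde Z$ of \eqref{ZtDef} are treated exactly as $X$ and $Z$, using again the continuity of $\Gamma_t$ and of the shift.

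The one genuinely delicate point — the step I expect to be the main obstacle — is the measurability of $\tilde R$, the solution of the renewal equation \eqref{uniq_nuthDef}, because the forcing term contains $\tilde z_0'$, which is only an element of $\Ltwo\ho$, so Proposition \ref{renewalTHM}.\ref{renewalTHM_continuous} (continuity of the renewal solution map for \emph{continuous} data) does not apply directly. I would handle this by linearity: split the forcing as $\tilde f=\tilde f_0+\tilde z_0'$, where $\tilde f_0(t)=-g(0)\overline K(t)-\int_0^t\overline K(s)g'(t-s)\,ds+\mathcal{H}_t(h)$ is continuous and jointly measurable into $\C$, so that $\tilde R=\varphi_0+\varphi_1$ with $\varphi_0$, $\varphi_1$ the solutions of \eqref{apx3_renewalEQ} with data $\tilde f_0$, $\tilde z_0'$ respectively. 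For $\varphi_0$, Proposition \ref{renewalTHM}.\ref{renewalTHM_continuous} gives continuity of $\tilde f_0\mapsto\varphi_0\in\C$, hence joint measurability. For $\varphi_1$, I would use the renewal representation $\varphi_1=\tilde z_0'+u*\tilde z_0'$, where $u$ is the renewal density (the locally bounded solution of $u=g+g*u$), together with the Fubini identity
\[
    \int_0^t\varphi_1(s)\,ds=\int_0^t\tilde z_0'(v)\big(1+U(t-v)\big)\,dv,\qquad U(t)\doteq\int_0^tu(s)\,ds,
\]
whose right-hand side, for each fixed $t$, is a bounded linear functional of $\tilde z_0'\in\Ltwo\ho$ (the kernel $1+U(t-\cdot)$ being bounded on $[0,t]$) and is continuous in $t$ for fixed $\tilde z_0'$; this makes $(t,\tilde z_0')\mapsto\int_0^t\varphi_1(s)\,ds$ jointly measurable, hence $(y,\tilde y,\omega)\mapsto\int_0^\cdot\tilde R(s)\,ds\in\C$ jointly measurable. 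With all constituent maps jointly measurable, the Fubini/monotone-class argument of the first paragraph completes the proof.
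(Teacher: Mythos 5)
Your proposal is correct, but it takes a genuinely different route from the paper at the two places that matter. First, at the reduction step: the paper fixes $t$ and a full-measure event and proves that $(y,\tilde y)\mapsto (Y^y(t),\tilde Y^{\tilde y}(t))$ is almost surely \emph{continuous} on $A\times A$, then obtains measurability of expectations of cylinder functionals by bounded convergence and finishes with a monotone class argument; you instead establish joint measurability in $(y,\tilde y,\omega)$ and apply Fubini to indicators of cylinder sets, which is an equally valid (indeed slightly weaker) input to the same monotone-class step. Second, and more substantively, at the point you rightly identify as delicate — the renewal equation \eqref{uniq_nuthDef} whose forcing contains the merely square-integrable $\tilde z_0'$ — you argue by linearity, treating the $\tilde z_0'$ part through the explicit renewal-density representation $\varphi_1=\tilde z_0'+u*\tilde z_0'$ and the identity $\int_0^t\varphi_1(s)ds=\int_0^t\tilde z_0'(v)\big(1+U(t-v)\big)dv$, read as a bounded linear functional of $\tilde z_0'\in\Ltwo\ho$; this is sound, since uniqueness in Proposition \ref{renewalTHM}.\ref{renewalTHM_exist} gives $\tilde R=\varphi_0+\varphi_1$ and a Carath\'eodory argument upgrades the pointwise statements to measurability into $\C$. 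The paper avoids the renewal density entirely by integrating the equation: $\mathcal{I}_{\tilde R}(t)=\int_0^t\tilde R(s)ds$ satisfies $\mathcal{I}_{\tilde R}=\mathcal{I}_{\tilde F}+g*\mathcal{I}_{\tilde R}$ with $\mathcal{I}_{\tilde F}(t)=\tilde z_0(t)-g*\overline K(t)+\int_0^t\mathcal{H}_s(h)ds$ (using $\tilde z_0(0)=0$ on $A$), which is continuous, so Proposition \ref{renewalTHM}.\ref{renewalTHM_continuous} applies directly and even yields continuity of $(y,\tilde y)\mapsto\mathcal{I}_{\tilde R}\in\C$. In short, your route buys an argument needing only measurability plus $\Ltwo$-continuity of an explicit linear functional, while the paper's integration trick is shorter, stays within the continuous-data renewal lemma, and delivers the stronger a.s.\ continuity it then exploits; your gloss that $\int_0^\cdot R(s)ds$ entering $\overline K$ is a ``measurable functional'' is harmless, as it follows from the same identity $\int_0^t z_0'(s)ds=z_0(t)-z_0(0)$ you use elsewhere.
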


\begin{proof}
We claim that for every $t\geq0$, almost surely, the mapping $(y,\tilde y)\mapsto(Y^y(t),\tilde Y^{\tilde y}(t))$ is continuous from $A \times A$ to $\Y\times\Y$. We first show that assuming the claim, the assertion of the lemma holds, and then we prove the claim.

Assuming that the claim holds, for every $k\geq1$, $t_1, \ldots, t_k\geq0$ and functions $\varphi_1,...,\varphi_k\in\mathbb{C}_b(\Y\times\Y;\R)$, by invoking the bounded convergence theorem we conclude that the mapping
\begin{equation*}
    (y,\tilde y)\mapsto\Ept{\varphi_1\left(Y^y(t_1),\tilde Y^{\tilde y}(t_1)\right)... \;\varphi_k\left(Y^y(t_k),\tilde Y^{\tilde y}(t_k)\right)}.
\end{equation*}
is continuous, and hence, measurable. The assertion in the lemma then follows from the definition of the Kolmogorov $\sigma$-algebra and a standard monotone class argument.

We now turn to the proof of the claim.   Fix $t \geq 0$. We start by showing that the mapping from $y = (x_0, z_0) \in \Y$ to $Y^y(t)$ is continuous, almost surely. Fix $\omega\in \Omega^0$, where $\Omega^0$ is a subset of $\Omega$ with $\Probil{\Omega^0}=1$, on which Proposition \ref{consistencyTHM} holds and the Brownian motion $\{B(t);t\geq0\}$ has continuous paths. Recall again that $\mathbb{C}^0\hc$ denotes the space of continuous functions $f$ on $\hc$ with $f(0)=0$. The map from $y  = (x_0, z_0) \in \Y$ to $(E, x_0, z_0 - \mathcal{H}(\f1)) \in \mathbb{C}^0[0,\infty)\times \R \times \C$ is continuous due to Lemma \ref{propertiesofH1}.\ref{propertiesofH1_mapping}.  Since $(K,X) = \Lambda (E, x_0, z_0 - \mathcal{H}(\f1))$ by definition  \eqref{XKdefinition}, and  the CMS mapping $\Lambda$ is continuous by  Lemma \ref{CMSMPROP}, it follows that the mapping  $y \in \Y \mapsto (K,X) \in \mathbb{C}^0[0,\infty)\times \C$ is continuous.  In particular, for fixed $t > 0$,  the mapping $y \mapsto X(t)$ is  continuous and,  by the definition \eqref{def_Z} of $Z(t,\cdot)$, continuity of the translation map (Lemma \ref{propertiesofH1}.\ref{prop_translation}) and continuity of the mapping $\Gamma_t$ (Lemma \ref{Kfubini}.\ref{Kfubini_continuous}), the mapping $y \mapsto Z(t,\cdot) \in \Hone\hc$ is also continuous.  Thus, we have shown that the mapping $y \in \Y \mapsto Y^y(t) \in \Y$ is continuous for every $\omega\in\tilde \Omega$ and $t \geq 0$.

Next,  we prove continuity of the mapping $(y,\tilde y)$ $=((x_0, z_0), (\tilde x_0, \tilde z_0))$
$\in A \times A\mapsto \tilde Y^{\tilde y}(t) \in \Y$. It is clear from the equation \eqref{tildeX_exp} and the form of the function $F$ specified below  \eqref{tildeX_exp}  that the map $(x_0, \tilde x_0, X) \in \R^2 \times \C \mapsto \tilde X\in\C$ is  continuous. Together with the continuity of $y \mapsto X$ proved above,  this implies that the map $(y, \tilde y) \in \Y^2 \mapsto \tilde X \in \C$ is continuous.  Next,  recall that by  the definition of $A$, if $y= (x_0,z_0)\in A$ then $z_0(0)=x_0\wedge 0=0$. Using this identity,  equation \eqref{RZ} and equation \eqref{intZprime_eq} with $r=0$, we have
\[
    \mathcal{I}_R(t)\doteq \int_0^tR(s)ds =  z_0(t)  + {\mathcal M}_t(\f1)  - {\cal M}_t(\Psi_{t}\f1)  -\int_0^t K(u)g(t-u)du.
\]
When combined with the continuity of the mappings $z_0 \in \Hone \ho  \mapsto z_0 \in \C$  (which holds by Lemma \ref{propertiesofH1}.\ref{propertiesofH1_mapping}) and  $y \mapsto K$ (established above), this implies that  the mapping
$(y, \tilde y) \in A \times A$ $\mapsto$ $\mathcal{I}_R\in\C$ is continuous. In turn, due to  the continuity of the mappings that take $(y, \tilde y)$ to $X$ and $\tilde{X}$ established above, this implies that  the map from $(y, \tilde y) \in A \times A$ to $\overline K\in\C$ defined in \eqref{uniq_kbarDef} is also  continuous. Next, observe from \eqref{uniq_nuthDef} that $\tilde R$ satisfies the renewal equation $\tilde R=\tilde F+g*\tilde R$ where
\[
    \tilde F(t)\doteq  \tilde{z}^\prime_0(t)- g(0)\overline K(t)-\int_0^t{\overline K(s)g^\prime(t-s)ds} +\mathcal{H}_t(h),\quad\quad t\geq0.
\]
Therefore, $\mathcal{I}_{\tilde R}$ defined as  $\mathcal{I}_{\tilde R}(t)=\int_0^t\tilde R(s)ds$, $t\geq0$, satisfies the renewal equation $\mathcal{I}_{\tilde R}=\mathcal{I}_{\tilde F}+g*\mathcal{I}_{\tilde R},$ where, using  the identity $\tilde z_0(0)=\tilde x_0\wedge 0=0$ (which holds because $\tilde y\in A$), we obtain
\[
    \mathcal{I}_{\tilde F}(t)\doteq \int_0^t\tilde F(s)ds = \tilde z_0(t)-g* \overline K(t) + \int_0^t\mathcal{H}_s(h)ds, \quad\quad t\geq0.
\]
Therefore, $\mathcal{I}_{\tilde F}$ lies in $\mathbb{C}^0[0,\infty)$ and the map from $(y,\tilde y)$ to $\mathcal{I}_{\tilde F}$ is continuous.
An application of Proposition \ref{renewalTHM}.\ref{renewalTHM_continuous} then shows that $\mathcal{I}_{\tilde R}$ also lies in $\C$, and that the map from $\mathcal{I}_{\tilde F}$ to $\mathcal{I}_{\tilde R}$  and hence, from  $(y,\tilde y)$ to $\mathcal{I}_{\tilde R}$, is continuous.  Consequently, $\tilde K\in \C$ defined in \eqref{uniq_ktDef} is also obtained as a continuous map of $(y,\tilde y)$. Finally, by definition \eqref{ZtDef} of $\tilde Z(t,\cdot)$, continuity of the translation map (Lemma \ref{propertiesofH1}.\ref{prop_translation}) and the continuity of the mapping $\Gamma_t$ (Lemma \ref{Kfubini}.\ref{Kfubini_continuous}), $\tilde Z(t,\cdot)$ can also be expressed as a continuous mapping of $(y,\tilde y)$. Therefore, the mapping $(y, \tilde y) \in A \times A$ $\mapsto$ $\tilde Y^{\tilde y}(t) \in \Y$ is also continuous for every $\omega\in \Omega^0$. This completes the proof of the claim, and hence, the proof of the lemma.
\end{proof}

\subsection{Asymptotic Convergence} \label{SUBSECuniq_conv}

In this section, we prove condition \eqref{propD} of Proposition \ref{abs_thm}, that is, the asymptotic convergence of the processes $Y = Y^y$ and $\tilde{Y} = \tilde{Y}^{\tilde{y}}$ whenever their respective initial conditions $y=(x_0, z_0) $ and $\tilde{y}=(\tilde{x}_0, \tilde{z}_0)$ lie in the set $A$. Specifically, given the processes defined in Section \ref{SUBSECuniq_scheme}, define $\Delta H \doteq H-\tilde{H}$ for $H=Y$, $y$, $Z$, $z_0$, $K$, $X$, $X^+$, $X^-$, $x_0$, and $R$. Our goal is to show that almost surely,
\begin{equation} \label{asym-conv}
    \Delta Y(t) \to 0 \text{ in }\Y,\quad     \quad \mbox{ as } t \rightarrow \infty.
\end{equation}
This will follow from \eqref{Dx} and Lemma \ref{DZconv} below.

Subtracting $X$ from both sides of equation \eqref{uniq_txDef}, we see that $\Delta X$ satisfies the integral equation
\begin{equation}\label{uniq_Dx1}
	\Delta X(t) = \Delta x_0-\lambda\int_0^t{\Delta X(s)ds},
\end{equation}
whose solution is given by
\begin{equation}\label{Dx}
    \Delta X(t)=\Delta x_0e^{-\lambda t}.
\end{equation}
Clearly, $\Delta X(t)$ converges to zero with probability one as $t\to\infty$.

We now turn to the proof of the asymptotic convergence of $\Delta Z$,  which consists of two main steps.  In the first step (see Lemma \ref{uniq_DnuLEM}) we use the fact that $\Delta R$ satisfies a certain renewal equation to show that it is square integrable. In the second step (Lemma \ref{DZconv}) we combine the square integrability of $\Delta R$ with other estimates  to show that $\Delta Z(t, \cdot)$ converges to zero in $\Hone \ho$. First, note that on  substituting the expression for $\overline K$ from \eqref{uniq_kbarDef} into the definition of $\tilde K$  in \eqref{uniq_ktDef},  subtracting this from the equation for $K$ in \eqref{csme2}, and recalling the definition of $E$ from \eqref{EDef}, we obtain
\begin{equation}\label{uniq_DkDef}
    \Delta K(t) = -\Delta X^+(t)+\Delta x^+_0 -\int_0^t\Delta R(s)ds-\lambda \int_0^t\Delta X(s)ds.
\end{equation}
When combined with \eqref{uniq_Dx1} and the fact that $x_0,\tilde x_0\geq 0$, \eqref{uniq_DkDef} further simplifies to
\begin{equation}\label{uniq_Dk}
    \Delta K(t) = -\Delta X^-(t) -\int_0^t\Delta R(s)ds.
\end{equation}

\begin{lemma}\label{uniq_DnuLEM}
Let Assumptions \ref{hh2_AS}-\ref{finite2_AS} hold. Then $t\mapsto\Delta R(t)\in \Ltwo\ho$, almost surely.
Moreover, there exist deterministic constants $\bar{C}_1, \bar{C}_2  < \infty$ such that
\begin{equation}\label{ineq-deltaRl2}
    \|\Delta R\|_{\mathbb{L}^2}  \leq \bar{C}_1 \|\Delta z_0 \|_{\mathbb{H}^1}+ \bar{C}_2  |\Delta x_0|.
\end{equation}
\end{lemma}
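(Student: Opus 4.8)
The plan is to derive a renewal equation satisfied by $\Delta R$ and then invoke Proposition \ref{renewalTHM}.\ref{renewalTHM_key}. First I would subtract \eqref{RtEQ} from \eqref{def_R} (equivalently, the version of \eqref{def_R} obtained by writing $R$ in the form \eqref{RtEQ}, recalling that $z_0(0) = 0$ since $y \in A$). Since $R$ satisfies \eqref{def_R} and $\tilde R$ satisfies \eqref{RtEQ}, both of which have the ``$g'$-convolution'' structure, the stochastic terms ${\cal H}_t(h)$ cancel, and one obtains
\begin{equation*}
    \Delta R(t) = \Delta z_0'(t) - g(0) \Delta K(t) - \int_0^t \Delta K(s) g'(t-s) ds.
\end{equation*}
Now substitute the simplified expression \eqref{uniq_Dk} for $\Delta K$, namely $\Delta K(t) = -\Delta X^-(t) - \int_0^t \Delta R(s) ds$, into the right-hand side. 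Using Fubini's theorem on the double-convolution term $\int_0^t g'(t-s) \int_0^s \Delta R(u)\, du\, ds$, and the fact that $g(0) + \int_0^{t} g'(t - s)\,ds = g(0) + g(t) - g(0) = g(t)$ (more precisely $g(0)\mathbf 1 + g' * \mathbf 1 = g$ pointwise), the terms involving $\Delta R$ collapse to $\int_0^t g(t-s) \Delta R(s) ds$. This produces the renewal equation $\Delta R = \Delta F + g * \Delta R$ with forcing term
\begin{equation*}
    \Delta F(t) \doteq \Delta z_0'(t) + g(0) \Delta X^-(t) + \int_0^t \Delta X^-(s) g'(t-s) ds.
\end{equation*}

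Next I would verify the hypotheses of Proposition \ref{renewalTHM}.\ref{renewalTHM_key} for $\Delta F$: that $\Delta F \in \Ltwo\ho$ and that its antiderivative $\mathcal{I}_{\Delta F}(t) = \int_0^t \Delta F(s)\,ds$ is also in $\Ltwo\ho$, with $\Ltwo$-norm bounds that are linear in $\|\Delta z_0\|_{\Hone}$ and $|\Delta x_0|$. Since $\Delta z_0 \in \Hone\ho$, $\Delta z_0' \in \Ltwo\ho$ with $\|\Delta z_0'\|_{\Ltwo} \le \|\Delta z_0\|_{\Hone}$, and $\int_0^t \Delta z_0'(s)\,ds = \Delta z_0(t) - \Delta z_0(0) = \Delta z_0(t)$ (as $\Delta z_0(0) = x_0 \wedge 0 - \tilde x_0 \wedge 0 = 0$ because both initial conditions lie in $A$), which lies in $\Ltwo\ho$ with norm $\le \|\Delta z_0\|_{\Hone}$. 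For the terms involving $\Delta X^- $: by \eqref{Dx}, $\Delta X(t) = \Delta x_0 e^{-\lambda t}$, so $|\Delta X^-(t)| \le |\Delta x_0| e^{-\lambda t}$ is exponentially decaying, hence in $\Ltwo\ho \cap \Lone\ho$ with norms controlled by $|\Delta x_0|$. The term $g(0)\Delta X^-$ is then clearly in $\Ltwo$. For the convolution $g' * \Delta X^-$, I would use Young's inequality: $\|g' * \Delta X^-\|_{\Ltwo} \le \|g'\|_{\Lone} \|\Delta X^-\|_{\Ltwo}$ (recall $g' \in \Lone\ho$ by Remark \ref{integrable}), and similarly for its antiderivative, writing $\int_0^t (g' * \Delta X^-)(s)\,ds = (g' * \mathcal{I}_{\Delta X^-})(t)$ via Fubini, which is bounded in $\Ltwo$ by $\|g'\|_{\Lone}\|\mathcal{I}_{\Delta X^-}\|_{\Ltwo}$, and $\mathcal{I}_{\Delta X^-}$ inherits exponential decay. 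Collecting these estimates gives $\|\Delta F\|_{\Ltwo} + \|\mathcal{I}_{\Delta F}\|_{\Ltwo} \le c(\|\Delta z_0\|_{\Hone} + |\Delta x_0|)$ for a deterministic constant $c$, and then \eqref{ineq-l2phi} of Proposition \ref{renewalTHM}.\ref{renewalTHM_key} yields $\Delta R \in \Ltwo\ho$ together with the bound \eqref{ineq-deltaRl2}.

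The main obstacle I anticipate is the algebraic bookkeeping in the first step — carefully carrying out the Fubini interchange on the iterated convolution and checking that the $\Delta R$-dependent terms really do collapse to exactly $g * \Delta R$ (this relies on the identity $g(0) + \int_0^r g'(u)\,du = g(r)$, which in turn uses Assumption \ref{hh2_AS}.\ref{h2_AS} guaranteeing $g \in \Cone\hc$). A secondary subtlety is ensuring all manipulations hold on the almost-sure event where the relevant processes are continuous/locally integrable and where Corollary \ref{cor-tildeR} provides a genuine locally integrable solution $\tilde R$; once we are on that event, everything is deterministic and the renewal-equation machinery of Proposition \ref{renewalTHM} applies directly. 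The exponential decay of $\Delta X$ from \eqref{Dx} is what makes all the forcing-term estimates uniform and is the reason the coupling parameter $\lambda$ was introduced.
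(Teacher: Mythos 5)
Your route is the paper's route: subtract \eqref{RtEQ} from \eqref{def_R}, substitute \eqref{uniq_Dk}, collapse the $\Delta R$-terms via Fubini and $g(0)+\int_0^r g'(u)du=g(r)$ to get the renewal equation $\Delta R=\Delta F+g*\Delta R$ with exactly the paper's forcing term, and then invoke Proposition \ref{renewalTHM}.\ref{renewalTHM_key}. The bounds you give for $\|\Delta F\|_{\Ltwo}$ are also correct. The gap is in your estimate of $\|\mathcal{I}_{\Delta F}\|_{\Ltwo}$. You bound the antiderivative of $g'*\Delta X^-$ by $\|g'\|_{\Lone}\,\|\mathcal{I}_{\Delta X^-}\|_{\Ltwo}$ on the grounds that $\mathcal{I}_{\Delta X^-}$ ``inherits exponential decay.'' It does not: by \eqref{Dx}, $\Delta X^-(t)=(\Delta x_0)^- e^{-\lambda t}$, so $\mathcal{I}_{\Delta X^-}(t)=\int_0^t\Delta X^-(s)ds$ increases to the constant $(\Delta x_0)^-/\lambda$, which is strictly positive whenever $\Delta x_0<0$; hence $\mathcal{I}_{\Delta X^-}\notin\Ltwo\ho$ and the proposed Young bound is vacuous. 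For the same reason the antiderivative of the term $g(0)\Delta X^-$, which your proposal does not address at all when estimating $\mathcal{I}_{\Delta F}$, also converges to the nonzero constant $g(0)(\Delta x_0)^-/\lambda$ and is not square-integrable. So a term-by-term estimate of $\mathcal{I}_{\Delta F}$ cannot close the argument (except in the degenerate cases $\Delta x_0\ge 0$ or $g(0)=0$).

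What rescues the step is a cancellation between precisely these two problematic pieces. Integrating $\Delta F$, using $\int_u^t g'(s-u)ds=g(t-u)-g(0)$ and $\Delta z_0(0)=0$ (since $y,\tilde y\in A$), one gets $\mathcal{I}_{\Delta F}(t)=\Delta z_0(t)+(\Delta X^-*g)(t)$ — this is \eqref{eq-intF} in the paper — because the $g(0)\mathcal{I}_{\Delta X^-}$ contributions from the two terms cancel exactly. Then Young's inequality with $\|g\|_{\Lone}=1$, together with $\|\Delta X^-\|_{\Ltwo}\le|\Delta x_0|/\sqrt{2\lambda}$ from \eqref{Dx}, gives $\|\mathcal{I}_{\Delta F}\|_{\Ltwo}\le\|\Delta z_0\|_{\Hone}+C|\Delta x_0|$ for a deterministic constant, after which your appeal to \eqref{ineq-l2phi} and the rest of your argument go through verbatim.
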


\begin{proof}
Subtracting \eqref{RtEQ} from  \eqref{def_R}, we obtain
\begin{align} \label{uniq_Dnuh1}
	\Delta R(t) = & \Delta z^\prime_0(t)- g(0)\Delta K(t)-\int_0^t{\Delta K(s)g^\prime(t-s)ds}.
\end{align}
Substituting $\Delta K$ from  \eqref{uniq_Dk} into \eqref{uniq_Dnuh1} and using Fubini's theorem,  we see that $\Delta R$ satisfies the renewal equation
\begin{equation}\label{uniq_Dnuh}
    \Delta R=\bF+g * \Delta R,
\end{equation}
with
\begin{align} \label{uniq_eta}
	\bF (t) \doteq & \Delta z^\prime_0 (t) + g(0)\Delta X^- (t) +(\Delta X^-*g^\prime) (t), \qquad t \geq 0.
\end{align}

We now claim that there exists $C < \infty$ such that
\begin{equation} \label{l2F-claim}
    \max \left( \| \bF \|_{\mathbb{L}^2},   \| \intbF \|_{\mathbb{L}^2} \right) \leq   \| \Delta z_0\|_{\Hone}  + C |\Delta x_0| < \infty,
\end{equation}
where  $\intbF (t) \doteq \int_0^t \bF(s) \, ds$. We first show how the proof of the lemma follows from the claim. Indeed, since $\Delta R$ satisfies the renewal equation \eqref{uniq_Dnuh},  $G$ has a finite second moment by Assumption \ref{finite2_AS},  and $\bF$ and $\intbF$ lie in $\mathbb{L}^2\ho$ by \eqref{l2F-claim}, Proposition \ref{renewalTHM}.\ref{renewalTHM_key} implies that there exist deterministic constants such that
\[
    \|\Delta R \|_{\mathbb{L}^2} \leq c_1 \| \bF \|_{\mathbb{L}^2} + c_2 \| \intbF \|_{\mathbb{L}^2}.
\]
When combined with \eqref{l2F-claim}, this implies that $\Delta R$ lies in $\mathbb{L}^2 \ho$ and satisfies \eqref{ineq-deltaRl2}   with $\bar{C}_1 \doteq c_1 + c_2$ and $\bar{C}_2 \doteq (c_1 + c_2)C$. It only remains to prove the claim \eqref{l2F-claim}.

First,  note that  for every $t\geq 0$, using the fact that $\Delta z_0 (0) = - \Delta x_0^- = 0$ because $y, \tilde{y} \in A$ implies $x_0^- = \tilde{x}_0^- = 0$, we have
\begin{align}
    \notag \intbF (t) =&  \int_0^t\Delta z^\prime_0(s) ds + g(0)\int_0^t\Delta X^-(s)ds +\int_0^t\Delta X^-*g^\prime(s)ds \\
    \notag =& \Delta z_0(t)+ g(0)\int_0^t\Delta X^-(s)ds + \int_0^t \Delta X^-(s)\big(g(t-s)-g(0)\big)ds \\
    \label{eq-intF}
    =& \Delta z_0(t) + \Delta X^-* g (t).
\end{align}
Moreover, by Young's inequality (see, e.g., \cite[Theorem 3.9.4]{Bog07}), Assumption \ref{hh2_AS}.\ref{h2_AS}, the finite mean assumption on $G$ and  \eqref{Dx}, we have
\begin{equation} \label{eq-deltaxm}
    \|\Delta X^-*g^\prime\|_{\Ltwo}\leq \|g^\prime\|_{\Lone}\: \|\Delta X^-\|_{\Ltwo}\leq H_2\|\Delta X^-\|_{\Ltwo} < \infty.
\end{equation}
Together with \eqref{uniq_eta},  \eqref{eq-intF} and Minkowski's integral inequality, this implies that
\begin{eqnarray*}
    \|\bF \|_{\Ltwo}  & \leq & \|\Delta z_0 \|_{\Hone} +  (g(0) + H_2) \|\Delta X^-\|_{\Ltwo}, \\
    \| \intbF \|_{\Ltwo}  & \leq &   \|\Delta z_0 \|_{\Hone} + H_2\|\Delta X^-\|_{\Ltwo}.
\end{eqnarray*}
Since $\|\Delta X^-\|_{\Ltwo} \leq \frac{1}{\sqrt{2 \lambda}} |\Delta x_0 |$ due to \eqref{Dx}, we conclude that \eqref{l2F-claim} holds with $C \doteq (g(0) + H_2)/\sqrt{2 \lambda}$. This completes the proof of the lemma.
\end{proof}

In Lemma \ref{DZconv} below, we use \eqref{ineq-deltaRl2} to prove the asymptotic convergence of $\Delta Z(t, \cdot)$. The proof of Lemma \ref{DZconv} makes use of the following elementary result on the convolution operator.

\begin{lemma}\label{key_convvanish}
Let $v\in\Lone\ho$, and suppose $w\in\Lone_{\text{loc}}\ho$ is bounded and $ w(t)\to 0$ as $t\to\infty$. Then $\varphi\doteq v*w$ satisfies $\lim_{t\to \infty}\varphi(t)=0.$
\end{lemma}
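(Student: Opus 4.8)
The plan is to prove this by a standard splitting argument, exploiting that $v$ has small tail mass (being integrable) while $w$ is uniformly small far out (vanishing at infinity) and bounded everywhere; no approximate-identity machinery is needed.

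First I would record the elementary setup: set $M \doteq \sup_{s\geq 0}|w(s)| < \infty$ (finite since $w$ is bounded), and perform the change of variables $u = t-s$ to write $\varphi(t) = v*w(t) = \int_0^t v(u)\, w(t-u)\, du$ for $t \geq 0$, which is well defined since $v \in \Lone\ho$ and $w$ is bounded on finite intervals.

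Next, fix $\epsilon > 0$. Using $v \in \Lone\ho$, choose $A < \infty$ with $\int_A^\infty |v(u)|\, du < \epsilon$; using $w(s) \to 0$ as $s\to\infty$, choose $T < \infty$ with $|w(s)| \leq \epsilon$ for all $s > T$. Then for every $t > A + T$ I would split $|\varphi(t)| \leq \int_0^A |v(u)|\,|w(t-u)|\, du + \int_A^t |v(u)|\,|w(t-u)|\, du$. In the first integral, $u \leq A$ forces $t - u > T$, hence $|w(t-u)| \leq \epsilon$, so this term is at most $\epsilon \|v\|_{\Lone\ho}$; in the second integral, the crude bound $|w(t-u)| \leq M$ gives at most $M\epsilon$. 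Combining, $|\varphi(t)| \leq (\|v\|_{\Lone\ho} + M)\,\epsilon$ for all $t > A + T$, and since $\|v\|_{\Lone\ho} + M$ is a fixed finite constant and $\epsilon$ was arbitrary, $\varphi(t) \to 0$ as $t \to \infty$.

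I do not anticipate any genuine obstacle; the only point that needs (a single line of) care is ensuring $w(t-u)$ is uniformly small over the compact range $u \in [0,A]$, which is exactly why one takes $t$ larger than $A + T$ rather than merely larger than $T$.
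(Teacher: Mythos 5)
Your proof is correct and uses essentially the same splitting argument as the paper: both decompose the convolution so that one piece is controlled by the smallness of $w$ far out together with $\|v\|_{\Lone}$, and the other by the small tail mass of $v$ together with the bound on $w$; your version merely fixes both cutoffs in advance and bounds $|\varphi(t)|$ explicitly for $t>A+T$, whereas the paper splits once at $t_0$ and passes to a $\limsup$. No issues.
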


\begin{proof}
Fix $\varepsilon > 0$ and choose $t_0  < \infty$ such that $\text{ess sup}_{t \geq t_0} |w(t)| \leq \varepsilon$, where $\text{ess sup } f$ is the essential supremum of a function $f$.  Then for $t \geq t_0$,
\[
    \phi (t) = \int_0^{t_0} w(s) v(t-s) ds + \int_{t_0}^t w(s) v(t-s) ds,
\]
which implies that
\[
    |\phi(t)|  \leq ||w||_\infty \int_{t-t_0}^t |v(s)|ds + \varepsilon \|v\|_{\mathbb{L}^1}.
\]
Sending $t \rightarrow \infty$ in the last display, and noting that  $\int_{t-t_0}^t |v(s)| ds \rightarrow 0$ because $v \in \mathbb{L}^1 \ho$, it follows that $\limsup_{t \rightarrow \infty} |\phi(t)| \leq \varepsilon \|v\|_{\mathbb{L}^1}$.  Since $\varepsilon  > 0$ is arbitrary, the lemma follows on sending $\varepsilon \downarrow 0$.
\end{proof}

\begin{lemma}\label{DZconv}
Under Assumptions \ref{hh2_AS}-\ref{finite2_AS}, almost surely, $\Delta Z(t,\cdot)\to 0$ in $\Hone\ho$ as $t\to\infty.$
\end{lemma}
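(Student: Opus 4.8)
The plan is to work directly with the explicit representations \eqref{def_Z} of $Z$ and \eqref{ZtDef} of $\tilde Z$. Since both are driven by the \emph{same} stochastic convolution $\M_t(\Psi_{t+r}\f1)$, that term cancels on subtraction, and by linearity of the operator $\Gamma_t$ in \eqref{def_Gamma} one gets
\[
    \Delta Z(t,r) = \Delta z_0(t+r) + (\Gamma_t\Delta K)(r), \qquad t,r\geq 0.
\]
(In fact $\Delta R$, hence $\Delta K$ and $\Delta Z$, is deterministic: the forcing $\bF$ of the renewal equation \eqref{uniq_Dnuh} is deterministic by \eqref{uniq_eta} and \eqref{Dx}; so the asserted convergence is essentially a deterministic statement.) The first term tends to $0$ in $\Hone\ho$ by the translation property \eqref{translation_eq} of Lemma \ref{propertiesofH1}.\ref{prop_translation}, so it remains to prove $\|\Gamma_t\Delta K\|_{\Hone\ho}\to 0$. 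Note that $\Delta Z(t,\cdot)\in\Hone\ho$ with weak derivative $\Delta z_0'(t+\cdot)+(\Gamma_t\Delta K)'$, using Lemma \ref{Kfubini}.\ref{Kfubini_range} and \eqref{Gkdensity} for the second summand.

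Next I would record two facts about $\Delta K$. First, $\Delta K(0)=0$ (directly from \eqref{uniq_DkDef} at $t=0$ together with $\Delta X(0)=\Delta x_0$ from \eqref{Dx}, or by unwinding \eqref{csme2}, \eqref{uniq_ktDef}, \eqref{uniq_kbarDef} and \eqref{tildeX_exp}). Second, differentiating \eqref{uniq_DkDef} and using \eqref{Dx} (so that $\Delta X$ has constant sign), $\Delta K$ is absolutely continuous with $\Delta K'(s)=\rho(s)\doteq \lambda\,\Delta X^-(s)-\Delta R(s)$; by Lemma \ref{uniq_DnuLEM} $\Delta R\in\Ltwo\ho$, and $\Delta X^-(s)=(\Delta x_0)^-e^{-\lambda s}$ decays exponentially, so $\rho\in\Ltwo\ho$ and $\Delta K(t)=\int_0^t\rho(s)\,ds$. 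Substituting this into \eqref{def_Gamma} and into \eqref{Gkdensity}, changing variables $u=t-s$, and using $\int_0^t g(r+u)\,du=\overline{G}(r)-\overline{G}(r+t)$, the contributions of $\overline G(r)$, $g(r)$ and of $\Delta K(t)$ cancel, leaving the clean identities
\[
    (\Gamma_t\Delta K)(r)=\int_0^t \rho(s)\,\overline{G}(r+t-s)\,ds,\qquad
    (\Gamma_t\Delta K)'(r)=-\int_0^t \rho(s)\,g(r+t-s)\,ds.
\]
(Alternatively one may skip the exact cancellation and retain an extra term of size $|\Delta K(t)|\,\|\overline G(t+\cdot)\|_{\Ltwo\ho}=O(\sqrt t)\,O(t^{-3/2-\epsilon})\to 0$, the decay of $\|\overline G(t+\cdot)\|_{\Ltwo\ho}$ coming from Assumption \ref{finite2_AS}.)

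With these representations I would estimate both $\Ltwo\ho$-norms by Minkowski's integral inequality:
\[
    \|(\Gamma_t\Delta K)\|_{\Ltwo\ho}\leq \int_0^t |\rho(t-w)|\,\phi(w)\,dw,\qquad
    \|(\Gamma_t\Delta K)'\|_{\Ltwo\ho}\leq \int_0^t |\rho(t-w)|\,\widetilde\phi(w)\,dw,
\]
where $\phi(w)\doteq\|\overline{G}(w+\cdot)\|_{\Ltwo\ho}$ and $\widetilde\phi(w)\doteq\|g(w+\cdot)\|_{\Ltwo\ho}$. Both $\phi$ and $\widetilde\phi$ are bounded (by $\|\overline G\|_{\Ltwo\ho}$ and $\|g\|_{\Ltwo\ho}$), vanish as $w\to\infty$ (since $\overline G,g\in\Ltwo\ho$), and lie in $\Lone\ho$: indeed $\phi(w)^2=\int_w^\infty\overline G(s)^2\,ds=O(w^{-3-2\epsilon})$ by Assumption \ref{finite2_AS}, so $\phi(w)=O(w^{-3/2-\epsilon})$ is integrable, and $\widetilde\phi\leq H\phi$ because $g\leq H\overline G$ by Assumption \ref{hh2_AS}.\ref{h_AS}. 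By Cauchy--Schwarz,
\[
    \int_0^t |\rho(t-w)|\,\phi(w)\,dw \leq \|\phi\|_{\Lone\ho}^{1/2}\,\big(\rho^2 * \phi\big)(t)^{1/2},
\]
and since $\rho^2\in\Lone\ho$ and $\phi$ is bounded with $\phi(w)\to 0$, Lemma \ref{key_convvanish} gives $(\rho^2*\phi)(t)\to 0$; the same argument applies to $\widetilde\phi$. Hence $\|\Gamma_t\Delta K\|_{\Hone\ho}^2=\|(\Gamma_t\Delta K)\|_{\Ltwo\ho}^2+\|(\Gamma_t\Delta K)'\|_{\Ltwo\ho}^2\to 0$, which combined with the convergence of $\Delta z_0(t+\cdot)$ yields $\Delta Z(t,\cdot)\to 0$ in $\Hone\ho$.

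The main obstacle is the algebraic reduction producing the convolution representations of $\Gamma_t\Delta K$ and $(\Gamma_t\Delta K)'$ — making the boundary terms and the $\Delta K(t)$ contribution cancel cleanly, which hinges on $\Delta K(0)=0$ and on expressing $\Delta K$ as the integral of the $\Ltwo$ function $\rho$ — together with the verification that the kernels $w\mapsto\|\overline G(w+\cdot)\|_{\Ltwo\ho}$ and $w\mapsto\|g(w+\cdot)\|_{\Ltwo\ho}$ are in $\Lone\ho$; this is precisely where the extra moment hypothesis, Assumption \ref{finite2_AS}, enters (without it these kernels are only bounded and vanishing, which would not suffice to invoke Lemma \ref{key_convvanish}).
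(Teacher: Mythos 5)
Your overall skeleton is the right one (cancel the common stochastic convolution, kill $\Delta z_0(t+\cdot)$ by Lemma \ref{propertiesofH1}.\ref{prop_translation}, reduce $\Gamma_t\Delta K$ to convolutions against $\overline G$ and $g$, and finish with Lemma \ref{uniq_DnuLEM} and Lemma \ref{key_convvanish}), but there is a genuine gap in the middle step, caused by a misreading of the notation. In Section \ref{SUBSECuniq_conv} the symbol $\Delta X^{\pm}$ means $X^{\pm}-\tilde X^{\pm}$ (the difference of the positive/negative parts of the two processes), \emph{not} $(\Delta X)^{\pm}$. Only the difference $\Delta X(t)=X(t)-\tilde X(t)=\Delta x_0e^{-\lambda t}$ is deterministic; the processes $X^{\pm}(t)-\tilde X^{\pm}(t)$ are random (they depend on where $X(t)$ sits relative to $0$, e.g.\ they vanish when both $X(t),\tilde X(t)\geq 0$ and equal $\mp\Delta X(t)$ when both are $\leq 0$), and they inherit the Brownian-type roughness of $X$ on the random time set where exactly one of $X,\tilde X$ is negative. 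Consequently: (i) your parenthetical claim that $\Delta R$, $\Delta K$, $\Delta Z$ are deterministic is false, since $\bF$ in \eqref{uniq_eta} involves $X^- - \tilde X^-$; and (ii), more seriously, your key claim that $\Delta K$ is absolutely continuous with $\Delta K'(s)=\lambda\Delta X^-(s)-\Delta R(s)$ and $\Delta X^-(s)=(\Delta x_0)^-e^{-\lambda s}$ is false. Differentiating \eqref{uniq_DkDef} as you propose would require $\frac{d}{dt}\big(X^+(t)-\tilde X^+(t)\big)=-\lambda\big(X^+(t)-\tilde X^+(t)\big)$, which does not hold; by \eqref{uniq_Dk}, $\Delta K(t)=-\big(X^-(t)-\tilde X^-(t)\big)-\int_0^t\Delta R(s)\,ds$, and the first summand is in general not of bounded variation. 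Hence $\Delta K$ is not the integral of an $\Ltwo$ function $\rho$, and the "clean" identities $(\Gamma_t\Delta K)(r)=\int_0^t\rho(s)\overline G(r+t-s)\,ds$ and $(\Gamma_t\Delta K)'(r)=-\int_0^t\rho(s)g(r+t-s)\,ds$, on which your Minkowski/Cauchy--Schwarz estimates rest, do not hold as stated. (Your "alternative" fallback also misstates the leftover term: integrating by parts leaves $\Delta K(0)\,\overline G(t+r)$, not a term of size $|\Delta K(t)|\,\|\overline G(t+\cdot)\|_{\Ltwo}$; the boundary term genuinely present in $\Gamma_t\Delta K$ is $\overline G(r)\Delta K(t)$, whose $\Ltwo$-norm is of order $\sqrt t$ and does not decay on its own.)

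The repair is to exploit the linearity of $\Gamma_t$ in \eqref{def_Gamma} and treat the two summands of \eqref{uniq_Dk} separately, using for the rough piece only the Lipschitz bound $|X^-(t)-\tilde X^-(t)|\leq|\Delta X(t)|=|\Delta x_0|e^{-\lambda t}$: this produces the boundary term $-\overline G(r)\Delta X^-(t)$, whose $\Hone\ho$-norm is at most $\|\overline G\|_{\Hone}|\Delta x_0|e^{-\lambda t}\to0$, plus the two convolution terms $\int_0^t\Delta X^-(s)g(t+r-s)\,ds$ and $-\int_0^t\Delta R(s)\overline G(t+r-s)\,ds$, to each of which your convolution estimates (with kernels controlled by $\overline G$ and $\int_\cdot^\infty\overline G$, square-integrability of $\Delta R$ from Lemma \ref{uniq_DnuLEM}, exponential decay of $\Delta X^-$, and Lemma \ref{key_convvanish}) apply essentially verbatim. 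This is exactly the decomposition used in the paper's proof, so your estimates are salvageable once the absolute-continuity claim is dropped.
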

\begin{proof}
Subtracting \eqref{ZtDef} from \eqref{def_Z}, we see that
\[
    \Delta Z(t,r)=\Delta z_0(t+r) + (\Gamma_t\Delta K)(r).
\]
Using \eqref{def_Gamma} and \eqref{uniq_Dk} to expand the right-hand side above, we have
\begin{equation} \label{uniq_DJ}
     \Delta Z(t,r)=   \Delta z_0(t+r)- \overline G(r)\Delta X^-(t) + \zeta(t,r)+\xi(t,r),
\end{equation}
where $\zeta(t,r)\doteq\int_0^t\Delta X^-(s)g(t+r-s)ds$  and $\xi(t,r)\doteq-\int_0^t\Delta R(s)\overline G(t+r-s)ds.$ To prove the lemma, it suffices to show that almost surely, the $\Hone\ho$ norm (as a function of $r$) of each of the terms on the right-hand side of  \eqref{uniq_DJ} goes to zero as $t\to\infty$. For the first term, $\|\Delta z_0(t+\cdot)\|_{\Hone}\to0$ as $t\to\infty$ by \eqref{translation_eq} of Lemma \ref{propertiesofH1}. For the second term, by \eqref{Dx} we have
\[
    \|\overline G(\cdot)\Delta X^-(t)\|_{\Hone}=\|\overline G\|_{\Hone} |\Delta X^-(t)| \leq \|\overline G\|_{\Hone} |\Delta x_0| e^{-\lambda t},
\]
which converges to zero because  $\|\overline G\|_{\Hone}$ is finite by Assumption \ref{hh2_AS}.\ref{h_AS}.

Next, since $\Delta X^-$ is continuous and  $g^\prime$ is bounded and continuous by Assumption \ref{hh2_AS}.\ref{h2_AS}, by the bounded convergence theorem, for each $t\geq0$, $\zeta(t,\cdot)$ has a weak derivative
\begin{align*}
    \partial_r\zeta(t,r) &=\int_0^t\Delta X^-(s)g^\prime(t+r-s)ds \\
    &= \int_0^t \Delta X^-(t-s) g^\prime (r+s) ds, \quad \text{a.e. }r \in \ho.
\end{align*}
Therefore, applying H\"older's inequality and Tonelli's theorem, we see that
\begin{align*}
    \|\zeta(t,\cdot)\|^2_{\Ltwo} =& \int_0^\infty\left(\int_0^t\Delta X^-(t-s)g(r+s)ds\right)^2dr \\
    \leq& \int_0^\infty\left(\int_0^t\Delta X^-(t-s)^2g(r+s)ds\right)\left(\int_0^tg(r+s)ds\right)dr\\
    \leq& \int_0^t\Delta X^-(t-s)^2\int_0^\infty g(r+s)dr\:ds\\
    =& \left((\Delta X^-)^2* \overline G\right)(t),
\end{align*}
and, likewise, we have
\begin{eqnarray*}
    \|\partial_r\zeta(t,\cdot)\|^2_{\Ltwo} & \leq & \int_0^\infty \left( \int_0^t \Delta X^- (t-s) |g^\prime (r+s) | ds \right)^2 dr \\
     & \leq  & H_2^2 \int_0^\infty \left( \int_0^t \Delta X^- (t-s) \overline{G}(r+s) ds \right)^2 \, dr \\
    & \leq & H_2^2 \int_0^t\Delta X^-(t-s)^2\int_0^\infty \overline G(r+s)dr\:ds\\
    & = & H_2^2 \left((\Delta X^-)^2*  \int_\cdot^\infty \overline G(r)dr\right)(t).
\end{eqnarray*}
Now, $(\Delta X^-)^2$ is integrable by \eqref{Dx} and both $\overline G$ and $\int_\cdot^\infty \overline G(s)ds$ are continuous, bounded by $1$, lie in $\mathbb{L}^1 \ho$ and  vanish at infinity. Thus, it follows from Lemma \ref{key_convvanish}  that   as $t\to\infty$, $\|\zeta(t,\cdot)\|_{\Ltwo}\to 0$ and $\|\partial_r\zeta(t,\cdot)\|_{\Ltwo}\to 0$, and consequently,  $\|\zeta(t,\cdot)\|_{\Hone}\to 0$. Similarly, since $\Delta R$ is continuous and $\overline G$ has a continuous and bounded density $g$ due to  Assumption \ref{hh2_AS}.\ref{h_AS}, by the bounded convergence theorem, for each $t\geq0$, $\xi(t,\cdot)$ has weak derivative
\[
    \partial_r\xi(t,r)=\int_0^t\Delta R(s) g(r+t-s)ds, \quad\quad \text{a.e. }r \in \ho.
\]
An exactly analogous analysis then shows that
\[
    \|\xi(t,\cdot)\|^2_{\Ltwo} \leq \left(\Delta R^2*  \overline G\right)(t),
\]
and
\[
    \|\partial_r\xi(t,\cdot)\|^2_{\Ltwo} \leq H\left(\Delta R^2*  \int_\cdot^\infty \overline G(r)dr\right)(t).
\]
Again, since $\Delta R^2$ is integrable by Lemma \ref{uniq_DnuLEM} and both $\overline{G}$ and $\int_\cdot^\infty \overline G(s)ds$ are  integrable, bounded by $1$ and vanish at infinity, another application of  Lemma \ref{key_convvanish} shows that  as $t\to\infty$, $\|\xi(t,\cdot)\|_{\Ltwo}\to 0$ and $\|\partial_r\xi(t,\cdot)\|_{\Ltwo}\to 0$, and consequently, $\|\xi(t,\cdot)\|_{\Hone}\to 0$.  This completes the proof.
\end{proof}

\subsection{Marginal Distributions} \label{SUBSECuniq_marg}

In this section, we continue to use the $\Delta H$ notation introduced in Section \ref{SUBSECuniq_conv}. The main result of this section is as follows:

\begin{proposition} \label{margProp}
If Assumptions \ref{hh2_AS}-\ref{finite2_AS} hold, then $Y^y$ has distribution $P^{y}$, and the distribution of $\tilde{Y}^{\tilde{y}}$ on $\Y^{\R_+}$ is equivalent to $P^{\tilde{y}}$.
\end{proposition}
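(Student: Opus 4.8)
The first assertion is immediate: by its construction in Section~\ref{SUBSECuniq_scheme}, $Y^y$ is precisely the diffusion model with initial condition $y$ in the sense of Definition~\ref{def_solutionProcess}, whose law is by definition $P^y$.

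For the second assertion, the plan is to realize $\tilde Y^{\tilde y}$ as the diffusion model started from $\tilde y$ but driven by a Girsanov-shifted Brownian motion, with $\mathcal M$ unchanged. I would set $u(s)\doteq\Delta R(s)+\lambda\Delta X(s)$ and $\tilde B(t)\doteq B(t)+\tfrac1\sigma\int_0^t u(s)\,ds$, $t\ge0$. Because $\Lambda$ is non-anticipative (Lemma~\ref{CMSMPROP}), the renewal-equation solution map is non-anticipative (Proposition~\ref{renewalTHM}.\ref{renewalTHM_exist}), and the linear equation \eqref{uniq_txDef} is non-anticipative, the processes $K,X,R,\tilde X,\tilde R$, and hence $u$ and $\tilde B$, are $\{\mathcal F_t\}$-adapted. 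The crucial feature is that $u\in\Ltwo\ho$ with a \emph{deterministic} bound: by \eqref{Dx}, $\|\Delta X\|_{\Ltwo}=|\Delta x_0|/\sqrt{2\lambda}$, and by Lemma~\ref{uniq_DnuLEM}, $\|\Delta R\|_{\Ltwo}\le\bar C_1\|\Delta z_0\|_{\Hone}+\bar C_2|\Delta x_0|$, so $\int_0^\infty u(s)^2\,ds\le C(y,\tilde y)<\infty$ almost surely. Consequently Novikov's criterion holds for $\tfrac1\sigma u$, so $\mathcal E_t\doteq\exp\bigl(-\tfrac1\sigma\int_0^t u\,dB-\tfrac1{2\sigma^2}\int_0^t u^2\,ds\bigr)$ is a uniformly integrable $\{\mathcal F_t\}$-martingale with $\mathcal E_\infty>0$ $\mathbb P$-a.s., and $d\tilde{\mathbb P}\doteq\mathcal E_\infty\,d\mathbb P$ defines a probability measure on $(\Omega,\mathcal F_\infty)$ with $\tilde{\mathbb P}\sim\mathbb P$. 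By Girsanov's theorem $\tilde B$ is an $\{\mathcal F_t\}$-Brownian motion under $\tilde{\mathbb P}$; and since the change of density involves only the Brownian integrand and $\langle B,\mathcal M(\varphi)\rangle\equiv0$, the martingale measure $\mathcal M$ is unaffected and remains, under $\tilde{\mathbb P}$, a continuous martingale measure with covariance \eqref{MAcov} independent of $\tilde B$. In other words, $(\tilde B,\mathcal M)$ under $\tilde{\mathbb P}$ has the same law as $(B,\mathcal M)$ under $\mathbb P$.

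Next I would check that $\tilde Y^{\tilde y}=(\tilde X,\tilde Z)$ coincides with the diffusion model with initial condition $\tilde y$ built from the driver $(\tilde B,\mathcal M)$ via Definition~\ref{def_solutionProcess}. The formula \eqref{ZtDef} already has the shape of \eqref{def_Z} with $(\tilde x_0,\tilde z_0,\tilde K)$ replacing $(x_0,z_0,K)$, so it suffices to verify that $(\tilde K,\tilde X)=\Lambda\bigl(\sigma\tilde B-\beta\,\cdot\,,\,\tilde x_0,\,\tilde z_0-\mathcal H(\f1)\bigr)$, i.e.\ that $(\tilde K,\tilde X)$ solves the CMS equations \eqref{csme1}--\eqref{csme2} for this data. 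Substituting $\sigma B(t)=\sigma\tilde B(t)-\int_0^t u(s)\,ds$ with $u=\Delta R+\lambda\Delta X$ into \eqref{uniq_txDefalternate} collapses it to $\tilde X(t)=\tilde x_0+\sigma\tilde B(t)-\beta t-\mathcal M_t(\f1)+\int_0^t\tilde R(s)\,ds$, and the same substitution in \eqref{uniq_kbarDef}--\eqref{uniq_ktDef} gives $\tilde K(t)=\sigma\tilde B(t)-\beta t-\tilde X^+(t)+\tilde x_0^+$, which is \eqref{csme2}. The analogue of \eqref{RZ} for $\tilde Z$ --- obtained by applying the computation in the proof of Lemma~\ref{intZprime} to $(\tilde z_0,\tilde K)$ (cf.\ \eqref{RtEQ}) together with \eqref{Hintegral} --- yields $\int_0^t\tilde R(s)\,ds=\int_0^t\partial_r\tilde Z(s,0)\,ds$, and expanding it (using $\tilde z_0(0)=\tilde x_0\wedge0$ since $\tilde y\in\Y$) gives the boundary identity $\tilde X(t)\wedge0=\tilde Z(t,0)$; together these recover \eqref{csme1}. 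Hence $\tilde Y^{\tilde y}$ is the diffusion model with initial condition $\tilde y$ driven by $(\tilde B,\mathcal M)$, so by Proposition~\ref{existTHM} it solves the diffusion model SPDE on $(\Omega,\mathcal F,\{\mathcal F_t\},\tilde{\mathbb P})$, and therefore its $\tilde{\mathbb P}$-law equals $P^{\tilde y}$ (by the definition of $P^{\tilde y}$, or by uniqueness, Theorem~\ref{thm_ExistUnique}). Since $\tilde Y^{\tilde y}$ is $\mathcal F_\infty$-measurable and $\tilde{\mathbb P}\sim\mathbb P$ on $\mathcal F_\infty$, its $\mathbb P$-law is equivalent to $P^{\tilde y}$.

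The main obstacle I anticipate is the bookkeeping in the verification paragraph: one must carefully match the ad hoc definitions \eqref{def_R}, \eqref{uniq_kbarDef}--\eqref{uniq_ktDef}, \eqref{RtEQ}--\eqref{ZtDef} against the abstract recipe of Definition~\ref{def_solutionProcess} with the new driver, and in particular re-derive the boundary identity $\tilde X\wedge0=\tilde Z(\cdot,0)$ from scratch, since --- unlike in Proposition~\ref{consistencyTHM}.\ref{consistencyTHM_z0}, where it was a direct consequence of the defining equation \eqref{csme1} --- here $\tilde K$ was not introduced through $\Lambda$. A secondary point needing care is the Girsanov step: one must confirm that changing measure with an $\{\mathcal F_t\}$-adapted integrand leaves $\mathcal M$, and its independence of the Brownian motion, intact; this rests on the orthogonality $\langle B,\mathcal M(\varphi)\rangle\equiv0$ built into the construction of $\mathcal M$ and on the fact that the deterministic quadratic variation \eqref{Mcov} of $\mathcal M(\varphi)$ is preserved under an equivalent change of measure.
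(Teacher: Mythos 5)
Your proposal is correct and follows essentially the same route as the paper's proof: the paper likewise rewrites $\tilde Y^{\tilde y}$ as the diffusion model started from $\tilde y$ driven by the shifted Brownian motion $\tilde B$ (verifying $(\tilde K,\tilde X)=\Lambda(\tilde E,\tilde x_0,\tilde z_0-\mathcal H(\f1))$ via \eqref{tspdea}--\eqref{Tcsme3}) and then changes measure with the exponential martingale of $m=-(\Delta R+\lambda\Delta X)$, using the deterministic $\Ltwo$ bounds from \eqref{Dx} and Lemma \ref{uniq_DnuLEM} to get uniform integrability and invoking Girsanov together with $\langle B,\M(A)\rangle\equiv0$ to keep $\M$ and its independence intact. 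Your explicit $1/\sigma$ in the drift and the adaptedness remark are just careful versions of what the paper does implicitly, so there is no substantive difference.
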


\begin{proof}
$Y^y$ has distribution $P^y$ by definition.  For fixed $\tilde{y} \in \Y$, to show that the distribution of $\tilde{Y} = \tilde{Y}^{\tilde{y}}$ is equivalent to $P^{\tilde{y}}$, we first show that $\tilde{Y}$ satisfies the same equations as $Y$, but  with $B$ replaced by some process $\tilde{B}$, and then invoke Girsanov's theorem to prove that $\tilde{B}$ is a Brownian motion on the entire time interval $[0,\infty)$ under another probability measure $\tilde{\mathbb{P}}$ that is equivalent to $\mathbb{P}$. Let $\tilde{B}_t\doteq B_t - \int _0^t{m(s)ds}$, where $m$ is defined by
\begin{equation}\label{uniq_drift}
    m(s) \doteq- R(s) + \tilde R(s) - \lambda \big( X(s) -\tilde{X}(s)  \big) = -\Delta R(s) - \lambda \Delta X(s),
\end{equation}
and set $\tilde E\doteq \sigma \tilde B(t)-\beta t$. We will first show that
\begin{equation}\label{tilde-cms}
    (\tilde K,\tilde X)=\Lambda(\tilde E,\tilde x_0,\tilde z_0-\mathcal{H}_t(\f1)),
\end{equation}
where $\Lambda$ is the CMS mapping introduced in Definition \ref{def_cmsm}. To  prove \eqref{tilde-cms}, first note that
the  expression \eqref{uniq_txDefalternate} for $\tilde X$ can be rewritten as
\begin{equation} \label{tspdea}
    \tilde{X}(t)=\tilde{x}_0+\tilde E(t)-\mathcal{M}_t(\f1) +\int_0^t{\tilde R(s)ds}.
\end{equation}
By equation \eqref{RtEQ} for $\tilde R$, relation \eqref{Hintegral},  and Fubini's theorem, we have
\begin{align*}
    \int_0^t\tilde R(s)ds &=  \int_0^t\tilde{z}^\prime_0(s)ds +\int_0^t\mathcal{H}_s(h)ds - g(0)\int_0^t\tilde K(s)ds \notag\\ &\quad -\int_0^t\left( \int_0^s{\tilde K(v)g^\prime(s-v)dv}\right) ds   \\
    &=   \tilde z_0(t) - \tilde z_0(0)-\mathcal{H}_t(\f1) +\mathcal{M}_t(\f1) -\int_0^t\tilde K(s)g(t-s)ds.
\end{align*}
Together with \eqref{tspdea}, this implies
\begin{equation}\label{Tcsmetemp}
    \tilde X(t) = \tilde x_0+\tilde E(t)+\tilde V(t)-\tilde V(0)-\tilde K(t),
\end{equation}
where
\begin{equation}\label{Tcsme1}
    \tilde V(t) \doteq  \tilde z_0(t) -\mathcal{H}_t(\f1) + \tilde K(t) -\int_0^t \tilde K(s)g(t-s)ds.
\end{equation}
Also, by definitions \eqref{uniq_kbarDef} and \eqref{uniq_ktDef} of $\overline K$ and $\tilde K$ and equation \eqref{uniq_drift}, we have
\begin{equation} \label{Tcsme2}
    \tilde K(t) =\overline K(t) -\int_0^t\tilde R(s) ds = \tilde E(t) -\tilde{X}^+(t)+\tilde x _0^+.
\end{equation}
Substituting  $\tilde K$ from \eqref{Tcsme2} in \eqref{Tcsmetemp} and noting from \eqref{Tcsme1} and the fact that $(\tilde x_0,\tilde z_0)\in\Y$,  $\tilde V(0)=\tilde z_0(0)= -\tilde x_0^-$, we conclude that
\begin{equation}\label{Tcsme3}
    \tilde V(t)=\tilde X(t)\wedge 0=-\tilde X(t)^-.
\end{equation}
Comparing the equation obtained on substituting  $\tilde V$ from  \eqref{Tcsme3} into \eqref{Tcsme2} with the CMS equation \eqref{csme1}, and comparing \eqref{Tcsme2} with the CMS equation \eqref{csme2}, it follows that \eqref{tilde-cms} holds. Furthermore, $\tilde Z$ defined in \eqref{ZtDef} has the same form as the expression \eqref{def_Z} for $Z$, but with $K$ replaced by $\tilde K$.  In summary, we have shown that $\tilde Y$ is defined in the same way as the process $Y^{\tilde y}$ in Definition \ref{def_solutionProcess}, except that  $B$ is  replaced by $\tilde B$.  Therefore, to complete the proof, it suffices to show that there exists a new probability measure $\tilde{\mathbb{P}}$ on $(\Omega,\mathcal{F})$ that is  equivalent to $\mathbb{P}$ and under $\tilde {\mathbb{P}}$,  $\tilde B$ is a Brownian motion independent of $\M$.

Define the process $\{N_t;t\geq0\}$ as
\begin{equation}\label{etaDef}
	N_t\doteq\exp\left(\int_0^t{m(s)dB(s)}-\frac{1}{2}\int_0^t{m^2(s) ds}\right),\quad\quad t\geq0,
\end{equation}
where $m = -(\Delta R + \lambda \Delta X)$ is as above. Consider the local martingale  $M(t)\doteq\int_0^t{m(s)dB_s}$, with  quadratic variation $\langle M \rangle_{t}=\int_0^t{m^2(s)ds}$, $t \geq 0$. Note that $\langle M \rangle_{\infty}=||m||^2_{\Ltwo},$ and by (\ref{uniq_drift}), \eqref{ineq-deltaRl2} and \eqref{Dx}, there exist constants $C$, $\bar{C}_1, \bar{C}_2  < \infty$ such that
\[
    \|m\|_{\Ltwo}^2 \leq 2 \|\Delta R\|_{\Ltwo}^2+ 2\lambda^2\|\Delta X\|_{\Ltwo}^2 \leq     4 \bar{C}_1^2 \|z_0\|_{\Hone}^2 + (4\bar{C}_2^2 +\lambda) | \Delta x_0|^2\doteq C < \infty.
\]
Therefore, $\mathbb{E}[\text{exp}(\langle M\rangle_\infty/2)]<\infty$, which implies that $N$ is a uniformly integrable exponential martingale (see, e.g., \cite[Section 3]{Kaz82}). Then, by Doob's convergence theorem, $N_t$ converges almost surely as $t \rightarrow \infty$ to an integrable random variable $N_\infty$.  The inequality $\langle M\rangle_\infty<\infty$ also implies  that $M$ is itself  a uniformly integrable martingale. Therefore, by another application of Doob's convergence theorem, almost surely,
 $M(t)$ has a finite limit as $t\to\infty$,  which ensures that $N_\infty$ is almost surely positive.  Define a probability measure $\tilde {\mathbb{P}}$ on $(\Omega,\filt)$ by:
\begin{equation}\label{def_Pt}
    \tilde{\mathbb{P}}(A)=\Ept{\indicw{A}N_\infty},\quad\quad A\in\F,
\end{equation}
where $\mathbb{E}$ denotes expectation with respect to $\mathbb{P}$. Since $N_\infty$ is almost surely positive, $\tilde {\mathbb{P}}$ is equivalent to $\mathbb{P}.$ Also, by Girsanov's theorem (see, e.g., \cite[Theorem (38.5), Chapter IV]{RogWilbook2}), $\{\tilde B(t)=B(t)-\int_0^tm(s)ds, t \geq 0\}$ is a Brownian motion under $\tilde {\mathbb{P}}$. Moreover, recall that under $\mathbb{P}$, for every $A_1,A_2\in\mathcal{B}\hc$, $\M(A_i)$, $i=1,2,$ is a martingale independent of $B$, $\langle\M(A_i),B\rangle\equiv 0$.  Thus, by \cite[Proposition 5.4, Chapter 3]{KarShr91} (note that since $N$ is a martingale, for every $T>0$ and $A\in\filt_T$, $\tilde{\mathbb{P}}(A)=\Ept{\indicw{A}N_T}$, and hence, our definition of $\tilde{\mathbb{P}}$ is compatible with its definition (5.4)  in \cite[Chapter 3]{KarShr91}) under $\tilde{\mathbb{P}}$, $\M(A_i)$ is a martingale, $\langle\M(A_1),\M(A_2)\rangle_t = t\int_0^\infty \indic{A_1\cap A_2}{x}g(x)dx$ and $\langle\tilde B,\M(A_i)\rangle\equiv 0$ for $i=1,2.$ Therefore, under $\tilde{\mathbb{P}}$, $\M$ is a martingale measure with covariance function given in \eqref{MAcov} and is independent of $\tilde B$. This completes the proof.
\end{proof}

\subsection{Proof of  Theorem \ref{thm_Stationary}} \label{SUBSECuniq_proof}

\begin{proof}[Proof of Theorem \ref{thm_Stationary}]
Proposition \ref{markovPRP} shows that the diffusion model $Y$ is a time-homogeneous Feller Markov process on the Polish space $\Y$. Let ${\mathcal P} = \{{\mathcal P}_t, t \geq 0\}$ be the transition semigroup associated to $Y$. In order to show that $\{\mathcal{P}_t\}$ has at most one invariant distribution, it suffices to show that the candidate coupling $\{\cpl_{y, \tilde{y}}, (y, \tilde{y}) \in A^2\}$ constructed in Section \ref{SUBSECuniq_scheme} satisfies the conditions of Proposition \ref{abs_thm}. Recall that $A=\{(x,z)\in\Y\; ; x\geq 0 \}$. For every $y,\tilde y \in A$, it follows from Lemma \ref{lem_mble} that the mapping $(y,\tilde y)\mapsto\cpl_{y,\tilde y}(B)$ is measurable for every $B\in \cbr\otimes\cbr$ and from Proposition \ref{margProp} that $\cpl_{y,\tilde y}\in \tilde{\mathcal{C}}(P^y,P^{\tilde{y}})$. Moreover, \eqref{asym-conv} follows from \eqref{Dx} and Lemma \ref{DZconv}, and hence, $\cpl_{y,\tilde y}(\mathcal{D})=1.$  So, to complete the proof of the theorem, it suffices to show that the subset $A$ satisfies the first condition of Proposition  \ref{abs_thm}.

Let $\mu$ be an invariant distribution of $\{\mathcal P_t\}$. Assume to the contrary that $\mu(A)=0$. Let $Y_0$ be a $\Y$-valued random element distributed as $\mu$, and let $Y=(X,Z)$ be the diffusion model with initial condition $Y_0$. Since $\mu$ is invariant for $\{\mathcal{P}_t\}$, for every $t\geq 0$, $Y(t)$ is also distributed as $\mu$, and therefore, $\mathbb{P}\{Y(t)\in A\}= \mathbb{P}\{X(t) \geq 0\}=0$. Equivalently, we have $\mathbb{P}\{X(t)<0\}=1$ for all $t\geq 0.$ Since $X$ has continuous sample paths almost surely, this implies that
\begin{equation}\label{4e_cont}
    \mathbb{P}\{X(t)\leq 0 \mbox{ for every } t \geq 0\}=1.
\end{equation}
By \eqref{csme1}, \eqref{csme2} and \eqref{4e_cont}, almost surely, for every $t \geq 0$ we have
$K(t)=\sigma B(t)-\beta t -X^+(t) + X^+(0)= \sigma B(t)-\beta t,$ and
\begin{align*}
    X(t) = X(t) \wedge 0 & =  Z_0(t) + \sigma B(t) -\beta t -{\mathcal H}_t(\f1) -\int_0^tg(t-s)(\sigma B(s)-\beta s)ds \\
           & = Z_0(t) -\beta t + \beta \int_0^t s g(t-s) ds + \sigma \int_0^t \overline G(t-s)dB_s -{\mathcal H}_t(\f1).
\end{align*}
For every $t>0$, $\sigma\int_0^t \overline G(t-s) dB_s$ and $-{\mathcal H}_t(\f1)$  are two independent finite variance Gaussian random variables that are independent of $Z_0(\cdot)$. Therefore, $X(t)$ is the sum of the random variable $Z_0(t)$, an independent zero-mean Gaussian random variable with finite covariance, and a finite constant $c_t \doteq \beta t - \beta \int_0^t s g(t-s) ds$, which therefore satisfies $\Probil{X(t)>0}>0$. This contradicts \eqref{4e_cont}, and hence, $\mu(A)>0$, and the proof is complete.
\end{proof}

\noindent
\textbf{Acknowledgements. }
The authors would like to thank anonymous reviewers for feedback that improved the presentation of the paper.

\appendix

\section{Properties of the Renewal Equation}\label{keyAPX}

\begin{proof}[Proof of Proposition \ref{renewalTHM}.] For part \ref{renewalTHM_exist}, let $U = \sum_{n=0}^\infty G^{*n}$ be the renewal function associated with the distribution function $G$, where $G^{*n}$ denotes the $n$-fold convolution of $G$. Since the service distribution with cdf $G$ has probability density function $g$, by \cite[Proposition 2.7, Section V]{Asm03}, $U$ has density $u = U*g$, which satisfies the equation $u=g+g*u.$ Moreover, since $g$ is continuous (and hence locally bounded), $u$ is also locally bounded. Define the  function $\varphi_*\doteq f+u*f$.  Then the local integrability of $f$ and local boundedness of $u$ imply the local integrability of $\varphi_*$, and, using the distributive and associative properties of the convolution operation, we have
\[
    g*\varphi_* = g*(f+u*f)=g*f+(g*u)*f= g*f+(u-g)*f=u*f=\varphi_*-f.
\]
Therefore, $\varphi_*$ is a solution to the equation $\varphi = f+  g* \varphi$ given in \eqref{apx3_renewalEQ}.

To show that $\varphi_*$ is the unique solution to \eqref{apx3_renewalEQ} that lies in $\mathbb{L}^1_{\text{loc}}(0,\infty)$, let $\varphi_i\in\mathbb{L}^1_{\text{loc}}\ho$, $i=1,2,$ be two solutions to \eqref{apx3_renewalEQ}. Then  $\varphi=\varphi_1-\varphi_2$ is a solution to the equation $\varphi=g*\varphi$. For $\varepsilon  > 0$, let $\eta_\varepsilon(x)\doteq \indicil{(0,\varepsilon)}{x}/\varepsilon$, and note that the function $\varphi_\varepsilon\doteq \varphi*\eta_\varepsilon$ satisfies
\[
    \varphi_\varepsilon=\varphi*\eta_\epsilon = g*\varphi*\eta_\varepsilon=g*\varphi_\varepsilon.
\]
Also, for every $T < \infty$,
\[
    |\varphi_\varepsilon (t) | \leq \frac{1}{\varepsilon}\int_0^{T}|\varphi(x)|dx<\infty,\quad\quad t\in[0,T],
\]
where the finiteness holds since $\varphi \in \mathbb{L}_{\text{loc}}^1(0,\infty)$. Hence, $\varphi_\varepsilon$ is locally bounded and satisfies the renewal equation $\varphi_\varepsilon=g*\varphi_\varepsilon$ (i.e., with ``input function'' identically equal to zero). However, by \cite[Theorem 2.4, p.\ 146]{Asm03}, this implies $\varphi_\varepsilon \equiv 0$. Since this holds for every $\varepsilon > 0$, this implies  $\varphi \equiv 0$.

To see why part \ref{renewalTHM_key} holds, first note that $f \in \mathbb{L}^2 \ho$ implies $f \in \mathbb{L}_{\text{loc}}^1(0,\infty)$.
Therefore,  $\varphi_* = f  + u*f$  is a solution to \eqref{apx3_renewalEQ} from part \ref{renewalTHM_exist}, and, moreover, it satisfies
\[
    |\varphi_*(t)| \leq  |f(t)|+|u*f(t)| \leq |f(t)|+ \left|\int_0^t f(t-s)(u(s)-1)ds\right| +\left|\int_0^tf(s)ds\right|,
\]
and hence, recalling the notation $\mathcal{I}_f (t)  = \int_0^t f(s) ds$,
\begin{equation}\label{tempKey}
  \|\varphi_*\|_{\Ltwo}\leq \|f\|_{\Ltwo}+\|f*(u-1)\|_{\Ltwo}+\|\mathcal{I}_f\|_{\Ltwo}.
\end{equation}
With some abuse of notation, we also let $U$ denote the  renewal measure associated with the distribution $G$, and let $l^+$ denote Lebesgue measure on $\ho$.  Then, since $u-1$ is the density of the signed measure $U-l^+$ with respect to $l^+$ on $\ho$, we have
$
    \int_0^\infty |u(s)-1|ds = ||U-l^+||_{TV},
$
where $\|\cdot\|_{TV}$ is the total variation norm. Since $G$ has a finite second moment (and has mean $1$), it follows from \cite[eqn. (6.10), p.\ 86]{Lin92}  (with $\lambda = 1$) that $||U-l^+||_{TV}$ is finite and hence, $u-1\in\Lone\ho$. By Young's inequality, we then have $||f*(u-1)||_{\Ltwo} \leq ||u-1||_{\Lone}||f||_{\Ltwo}.$ Substituting this  inequality into \eqref{tempKey}, we obtain the bound \eqref{ineq-l2phi} with $c_1=1+\|u-1||_{\Lone}<\infty$ and $c_2=1.$

Finally, to see why \ref{renewalTHM_continuous} holds, note that since $f\in\C$ and $u$ is bounded on finite intervals, $u*f$ also lies in $\C$, and hence, so does $\varphi_*=f+u*f$. Moreover, for every $f^1,f^2\in\C$ and corresponding solutions $\varphi_*^1,\varphi^2_*$, we have
\[
    \|\varphi_*^1-\varphi_*^2\|_T\leq  (1+U(T))\;\|f^1-f^2\|_T ,\quad\quad \forall T\geq0,
\]
and the continuity claim follows.
\end{proof}

\section{Properties of the Auxiliary Mapping $\Gamma$ }\label{sec_proofGamma}

\begin{proof}[Proof of Lemma \ref{Kfubini}]
We first prove property \ref{Kfubini_range}. Fix $t\geq0.$ By Assumption \ref{hh2_AS}.\ref{h0_AS}, the mapping $r\mapsto\overline G(r)\kappa(t)$ is continuously differentiable with derivative $-g(r)\kappa(t)$. Also, by Assumption \ref{hh2_AS}.\ref{h2_AS}, $g$ is continuously differentiable with derivative $g'$, and since $s\mapsto\kappa(s)$ is continuous, the mapping $r\mapsto\int_0^t\kappa(s)g(t+r-s)ds$ is continuously differentiable with derivative $\int_0^t\kappa(s)g^\prime(t+r-s)ds$. Therefore, $\Gamma_t\kappa\in\mathbb{C}^1\hc.$

Furthermore, from \eqref{def_Gamma}, we have
\begin{align}\label{limit_constKtemp1}
	|(\Gamma_t\kappa)(r)| \leq |\kappa(t)| \overline G(r) + \|\kappa\|_t \int_r^{t+r}{g(s)ds} \leq  2\|\kappa\|_t \overline{G}(r).
\end{align}
Since the right-hand side of \eqref{limit_constKtemp1} is a uniformly bounded and integrable function of $r\in\ho$, it follows that  for each $t \geq 0$, $\Gamma_t\kappa \in \Ltwo\ho$. Furthermore, using Assumption \ref{hh2_AS}.\ref{h2_AS} and \eqref{Gkdensity}, we have
\begin{align}\label{limit_constKtemp2}
    |(\Gamma_t\kappa)^\prime(r)| &\leq |\kappa(t)| g(r) + \|\kappa\|_t \int_r^{t+r}{|g^\prime(s)|ds} \notag\\
     &\leq  \|\kappa\|_t\left(H\overline G(r)+ H_2\int_r^\infty \overline{G}(u)du\right).
\end{align}
Again, $\overline G$ and $\int_\cdot^\infty \overline G(u)du$ are bounded and integrable by Assumption \ref{finite2_AS}, and therefore, $(\Gamma_t\kappa)^\prime$ also lies in $\Ltwo\ho$.  Thus, $\Gamma_t \kappa \in \Hone \ho$.
This completes the proof of part \ref{Kfubini_range}.

For part  \ref{Kfubini_continuous}, let $\kappa$ and $\tilde \kappa$ be  functions in $\C$. By linearity of the mapping $\Gamma_t$ and the bounds \eqref{limit_constKtemp1} and \eqref{limit_constKtemp2}, we have
\begin{equation}\label{limit_constFtemp2}
    \|\Gamma_t\kappa-\Gamma_t\tilde{\kappa} \|_{\Ltwo} \leq 2\|\overline G\|_{\Ltwo} \|\kappa-\tilde\kappa\|_t,
\end{equation}
and
\begin{equation}\label{limit_constFtemp3}
    \|(\Gamma_t\kappa)^\prime-(\Gamma_t\tilde \kappa)^\prime\|_{\Ltwo} \leq C_1\|\kappa-\tilde\kappa\|_t,
\end{equation}
with $C_1\doteq H \|\overline G\|_{\Ltwo}+ H_2\|\int_\cdot^\infty \overline{G}(u)du\|_{\Ltwo}$, which is finite by Assumption \ref{hh2_AS}.\ref{h2_AS} and Assumption \ref{finite2_AS}. The assertion in part \ref{Kfubini_continuous}. then follows from \eqref{limit_constFtemp2} and \eqref{limit_constFtemp3}.

For part \ref{Kcontinuity}, fix $T\geq0$. For every $0\leq s<t\leq T$, by definition \eqref{def_Gamma} of $\{\Gamma_t;t\geq0\}$, Minkowski's integral inequality,  Assumption \ref{hh2_AS}.\ref{h_AS} and Fubini's theorem, we have
\begin{align}
    \|\Gamma_t\kappa-\Gamma_s\kappa\|_{\Ltwo} & \leq  \|\overline G\|_{\Ltwo} |\kappa(t)-\kappa(s)| + \left\|\int_s^t\kappa(u)g(\cdot+t-u)du\right\|_{\Ltwo}\notag\\
    &+ \left\|\int_0^s\kappa(u)|g(\cdot+t-u)-g(\cdot+s-u)|du\right\|_{\Ltwo}\notag\\
    &\leq  \|\overline G\|_{\Ltwo} |\kappa(t)-\kappa(s)|+ \|\kappa\|_T H \left\|\int_0^{t-s}\overline G(\cdot+u)du\right\|_{\Ltwo}\notag\\
    &\qquad + \|\kappa\|_T \left\| \int_0^{s}|g(\cdot+u)-g(\cdot+t-s+u)|du\right\|_{\Ltwo} \notag\\
    &\leq  \|\overline G\|_{\Ltwo} |\kappa(t)-\kappa(s)|+ \|\kappa\|_T H \|\overline G\|_{\Ltwo} |t-s|\label{kconttemp1}\\
    &\qquad +  \|\kappa\|_T \int_0^T \|g(t-s+u+\cdot)-g(u+\cdot)\|_{\Ltwo}du.\notag
\end{align}
The first two terms in \eqref{kconttemp1} converge to zero as $|t-s|\to 0$ by the continuity of $\kappa$. Also, since Assumption \ref{hh2_AS} implies that $g$ is square integrable, for every $u\in [0,T]$, the term $\|g(t-s+u+\cdot)-g(u+\cdot)\|_{\Ltwo}$ is bounded by $2\|g\|_{\Ltwo}$
and hence, the third term converges to zero as $t \to s$ by an application of the bounded convergence theorem
and continuity of the translation map in the $\Ltwo$ norm.

Similarly, for every $0\leq s<t\leq T$, by definition \eqref{Gkdensity} of $(\Gamma \kappa)^\prime$ and Assumption \ref{hh2_AS}.\ref{h2_AS},
\begin{align}\label{kconttemp3}
    \|(\Gamma_t\kappa)^\prime-(\Gamma_s\kappa)^\prime\|_{\Ltwo} &\leq \| g\|_{\Ltwo} |\kappa(t)-\kappa(s)|+ \|\kappa\|_T H_2 \|\overline G\|_{\Ltwo} |t-s|\\
    &\qquad +  \|\kappa\|_T \int_0^T \|g^\prime(\cdot + u) - g^\prime(\cdot+ t-s+u)\|_{\Ltwo}du\notag.
\end{align}
Again, the first two terms on the right-hand side above converge to zero as $|t-s|\to 0$ by the continuity of $\kappa$, and the third term converges to zero as $|t-s|\to 0$ by continuity of the translation map in the $\Ltwo$ norm, boundedness of $\|g^\prime\|_{\Ltwo}$ (see Assumption \ref{hh2_AS}.\ref{h2_AS} and Remark \ref{integrable}) and the bounded convergence theorem. The $\Hone\ho$-continuity of $t\mapsto\Gamma_t\kappa$ follows from \eqref{kconttemp1} and \eqref{kconttemp3}.
\end{proof}

\section{Properties of $\mathbb{H}^1(0,\infty)$}\label{sec_proofSpace}

\begin{proof}[Proof of Lemma \ref{propertiesofH1}.\ref{prop_translation}]
The first claim follows from the following elementary inequality: for every $f_1,f_2\in\Hone\ho,$
\begin{align*}
    \|f_1(t+\cdot)-f_2(t+\cdot)\|_{\Hone}^2& =\int_t^\infty (f_1(u)-f_2(u))^2 du+\int_t^\infty (f'_1(u)-f'_2(u))^2 du \\
                                            & \leq \|f_1-f_2\|_{\Hone}.
\end{align*}
For the second claim, fix $f\in\Ltwo\ho$ and $\epsilon>0$. Since $\mathbb{C}_c\ho$ is dense in $\Ltwo\ho$, there exists a function $\tilde f$ that  is uniformly continuous on $\ho$ such that $\|f-\tilde f\|_{\Ltwo}\leq \epsilon$. Therefore, for every $t,t_0\in\ho$,
\begin{align*}
    \|f(t+\cdot)-f(t_0+\cdot)\|_{\Ltwo} &\leq \|f(t+\cdot)-\tilde f(t+\cdot)\|_{\Ltwo} + \|\tilde f(t+\cdot)-\tilde f(t_0+\cdot)\|_{\Ltwo} \\
    &\hspace{4.2cm}+\|\tilde f (t_0+\cdot)-f(t_0+\cdot)\|_{\Ltwo} \\
    &\leq 2\epsilon+ \|\tilde f(t+\cdot)-\tilde f(t_0+\cdot)\|_{\Ltwo}.
\end{align*}
Taking the limit as $t\to t_0$ on both sides of the last inequality, by the uniform continuity of $\tilde f$ and the dominated convergence theorem, we have
\[
    \lim_{t\to t_0} \|f(t+\cdot)-f(t_0+\cdot)\|_{\Ltwo} \leq 2\epsilon + \lim_{t\to t_0} \|\tilde f(t+\cdot)-\tilde f(t_0+\cdot)\|_{\Ltwo} =2\epsilon.
\]
Since $\epsilon>0$ is arbitrary, this shows that the translation map $t \mapsto f(t+ \cdot)$ is continuous in $\Ltwo\ho$. If $f \in \Hone \hc$, then $f^\prime \in \Ltwo\ho$ and so the above argument also shows that the  map  $t \mapsto f^\prime (t+ \cdot)$ is continuous in $\Ltwo\ho$, which proves the continuity of  $t \mapsto f(t+\cdot)$ in $\Hone \ho$. Finally, by definition,
\[
\|f(t+\cdot)\|_{\Ltwo}^2 = \int_0^\infty f^2(t+x)dx =\int_t^\infty f^2(x)dx.
\]
Since $f\in\Ltwo\ho,$ the right-hand side above converges to zero as $t\to\infty.$ Similarly, since $f'\in\Ltwo,$ $\lim_{t\to\infty}\|f'(t+\cdot)\|_{\Ltwo}=0,$ and \eqref{translation_eq} follows.
\end{proof}

\section{Solution to the Transport Equation} \label{ap-c}

We now provide a full justification of  Lemma \ref{lem_PDE}. Define $\xi(t,r)=\Gamma_tF(r)$, for $t, r \geq 0$. First, we show that $\xi$ is indeed a solution to \eqref{pde_eq}.  Since $F \in \mathbb{C} \hc$, by Lemma \ref{Kfubini}, $t\mapsto\xi(t, \cdot) = \Gamma_tF\in\mathbb{C}(\hc;\Hone\ho)$ and  for every $s\geq0$, $\Gamma_sF$ has weak derivative
$(\Gamma_sF)^\prime (r)=-g(r)F(s)-\int_0^sF(u) g^\prime(s+r-u)du,\quad\quad r\in\ho$.

Because $F$, $g$ and $g'$ are continuous (see Assumption \ref{hh2_AS}), the mapping $(s,r) \mapsto (\Gamma_tF)^\prime(r)$ is continuous, and hence, locally integrable. Moreover, for $t, r \geq 0$,
\begin{align*}
     & \int_0^t\partial_r \xi(s,r) ds + \overline G(r) F(t) \\
& \quad  = \int_0^t (\Gamma_sF)^\prime (r)ds+\overline G(r)F(t) \\
    & \quad = -g(r)\int_0^tF(s)ds -\int_0^t\int_0^s F(u) g^\prime(r+s-u) du\;ds + \overline G(r)F(t) \\
    & \quad =-\int_0^tF(u)g(r+t-u)du + \overline G(r)F(t),
\end{align*}
which is equal to $\Gamma_tF(r)$.
Here, the application of Fubini's theorem in the second equality above is justified because $g^\prime$ and $F$ are continuous and hence locally integrable. This shows that $\xi$ satisfies \eqref{pde_eq}.

Next, let $\tilde \xi$ be any function that satisfies properties \ref{lemPDE_cont} and \ref{lemPDE_loc} of Lemma \ref{Kfubini} and equation  \eqref{pde_eq}.  Then $\xi^\circ \doteq \xi-\tilde\xi$ also satisfies properties \ref{lemPDE_cont} and \ref{lemPDE_loc}, and
\begin{equation}\label{temp_pde}
    \xi^\circ(t,r) = \int_0^t\partial_r\xi^\circ(s,r)ds,\quad\quad \text{a.e. }r\in\ho, \quad
\mbox{ for } t\geq0.
\end{equation}
Fix $\delta>0$ and $T>0$, and for  $\epsilon\in(0,\delta)$, let $\rho_\epsilon$ be a regularizing kernel:
\[
  \rho_\epsilon=\frac{1}{\epsilon}\rho\left(\frac{\cdot}{\epsilon}\right),
\]
for a positive function $\rho\in\mathbb{C}^\infty_c(\R)$ with $\int_{\R}\rho(x)dx=1$ and $\text{supp}(\rho)\subseteq[-1,1]$. For $t\geq0,$ define $\xi^\circ_\epsilon(t,\cdot)\doteq\xi^\circ(t,\cdot)*\rho_\epsilon,$ and note that for every $\epsilon\in(0,\delta)$, $\xi_\epsilon^\circ(t,\cdot)$ is continuously differentiable on $(\epsilon,\infty)$ (in particular,  $\xi_\epsilon^\circ(t,\cdot)  \in\mathbb{C}^1[\delta,\infty)$), with $\partial_r\xi^\circ_\epsilon(t, \cdot) =(\partial_r\xi^\circ)(t, \cdot)*\rho_\epsilon$. Hence, $\partial_r[\xi_\epsilon^\circ(t,r)]^2 =2\partial_r\xi_\epsilon^\circ(t,r)\xi_\epsilon^\circ(t,r)$, and since $\xi^\circ\in\Ltwo\ho,$
\[
    \lim_{r\to\infty}\xi^\circ_\epsilon(t,r)=\lim_{r\to\infty}\int_{r-\epsilon}^\infty\xi^\circ(t,x)\rho_\epsilon(r-x)dx \leq \lim_{r\to\infty} \|\xi^\circ(t,r-\epsilon+\cdot)\|_{\Ltwo}\|\rho_\epsilon\|_{\Ltwo}=0.
\]
In turn, this yields
\begin{equation}\label{temp_pde1}
    (\xi^\circ_\epsilon)^2(t,r)= -2\int_r^\infty \partial_r\xi^\circ_\epsilon(t,x)\xi_\epsilon^\circ(t,x)dx,\quad\quad t,r\geq0.
\end{equation}

Next, convolving both sides of \eqref{temp_pde} with $\rho_\epsilon$ and using Fubini's theorem (which is justified since $\rho_\epsilon$ has compact support), for every $\epsilon\in(0,\delta)$ we have
\begin{equation}\label{temp_pde2}
    \xi^\circ_\epsilon(t,r)=\left(\int_0^t\partial_r\xi^\circ(s,\cdot)ds\right)*\rho_\epsilon(r)=
                          \int_0^t\left(\partial_r\xi^\circ(s,\cdot)*\rho_\epsilon\right)(r)ds=
                          \int_0^t\partial_r\xi^\circ_\epsilon(s,r)ds.
\end{equation}
Also, for every fixed $r\geq\delta$ and every $t,s\geq0$, we have
\begin{align*}
    |\partial_r\xi^\circ_\epsilon(t,r)-\partial_r\xi^\circ_\epsilon(s,r)| &\leq \int_0^\infty \left|\partial_r\xi^\circ(t,x)-\partial_r\xi^\circ(s,x) \right|\rho_\epsilon(r-x)dx \\
&\leq \|\rho_\epsilon\|_{\Ltwo} \|\partial_r\xi^\circ(t,\cdot)-\partial_r\xi^\circ(s,\cdot)\|_{\Ltwo}.
\end{align*}
Since $\xi^\circ\in\mathbb{C}([0,\infty),\Hone\ho)$ by the assumptions of the lemma, the right-hand side of the above display converges to zero as $s\to t$, and hence, the mapping $s\mapsto\partial_r\xi^\circ_\epsilon(s,r)$ is continuous for every $r\geq\delta.$ Therefore, by equation \eqref{temp_pde2} and the bounded convergence theorem, $\xi^\circ_\epsilon(\cdot,r)\in\mathbb{C}^1[0,\infty)$ for every $r\geq0$, and the equation \eqref{temp_pde2} can then be written as the classical (homogeneous) transport equation
\begin{equation}\label{temp_pde_clasic}
    \partial_t\xi^\circ_\epsilon(t,r)=\partial_r\xi^\circ_\epsilon(t,r),\quad\quad t,r\geq0,
\end{equation}
with initial condition $\xi^\circ_\epsilon(0,r)\equiv 0$. In particular,
\begin{equation}\label{temp_pde_sq}
    (\xi^\circ_\epsilon)^2(t,r)=2\int_0^t\partial_t\xi^\circ_\epsilon(s,r)\xi^\circ_\epsilon(s,r)ds,\quad\quad t,r\geq0.
\end{equation}

Finally, applying equations \eqref{temp_pde_sq}, \eqref{temp_pde_clasic} and \eqref{temp_pde1} in, respectively, the second, third and fourth equalities below, we have
\begin{align}\label{temp_pde_final}
  \|\xi^\circ_\epsilon(t,\cdot)\|_{\Ltwo(\delta,\infty)}^2 & = \int_\delta^\infty(\xi^\circ_\epsilon)^2(t,r)dr \notag\\
        & =2\int_\delta^\infty\int_0^t \partial_t\xi^\circ_\epsilon(s,r)\xi^\circ_\epsilon(s,r)dsdr\notag\\
        & =2\int_0^t\int_\delta^\infty \partial_r\xi^\circ_\epsilon(s,r)\xi^\circ_\epsilon(s,r)drds\notag\\
        & = -\int_0^t (\xi^\circ_\epsilon)^2(s,\delta)ds \leq 0,
\end{align}
where Fubini's theorem can be applied in the third inequality above because
\[
    \int_0^t\int_\delta^\infty \partial_r\xi^\circ_\epsilon(s,r)\xi^\circ_\epsilon(s,r)drds \leq
    \int_0^t\|\partial_r\xi^\circ_\epsilon(s,\cdot)\|_{\Ltwo}\|\xi^\circ_\epsilon(s,\cdot)\|_{\Ltwo}ds<\infty.
\]
which follows from the  assumption that $\xi^\circ\in\mathbb{C}(\hc,\Hone\ho)$. The inequality \eqref{temp_pde_final} implies $\xi^\circ_\epsilon\equiv0$ on $[0,\infty)\times[\delta,\infty)$ for every $\epsilon<\delta.$ Since for every $t\geq0$, $\xi^\circ_\epsilon(t,\cdot)\to\xi^\circ(t,\cdot)$ in $\Ltwo_{\text{loc}}\ho$ (see, e.g., \cite[Appendix C.4, Theorem 6.(iv)]{evans}), we conclude that $\xi(t,\cdot) \equiv 0$ for every $t\geq0$, as desired.

\section{A Continuous Version of the Asymptotic Coupling Theorem}\label{APXasymp}

In this section, for completeness, we prove the continuous version of Corollary 2.2 of \cite{HaiMatSch11}, as stated in
Proposition \ref{abs_thm}.  The proof is a straightforward adaptation  of the proof of \cite[Theorem 1.1 and Corollary 2.2]{HaiMatSch11} to the continuous time setting (see also \cite[Theorem 2]{EMatSin01} and \cite[Lemma 8.5]{Mat03}, where a continuous version is used).  In what follows, recall that for any semi-group of measurable operators $\phi^s: (\cxr, \cbr) \mapsto (\cxr, \cbr)$, $s\geq0,$ on the probability space $(\cxr,\cbr)$, a probability measure $\mu$ on $(\cxr,\cbr)$ is said to be an invariant measure of $\{\phi^s\}$ if $\phi^s_{\#}\mu=\mu$  for all $s\geq 0$. Also, a set $A\in\cbr$ is said to be an invariant set of $\{\phi^s\}$ if $(\phi^s)^{-1}(A)=A$ for all $s\geq0$. Finally, an invariant measure $\mu$ of $\{\phi^s\}$ is said to be ergodic if $\mu(A)\in\{0,1\}$ for every invariant set $A$ of $\{\phi^s\}$.

\begin{proof}[Proof of Proposition \ref{abs_thm}]
Define the family
$\{\Theta_s;s\geq0\}$ of shift operators $\Theta_s$ as $\Theta_sx(\cdot)\doteq x(s+\cdot), x\in\cxr$, and note that it forms a
semigroup of measurable operators from $(\cxr,\cbr)$ to itself.  Let $\mu_1$ and $\mu_2$ be two invariant distributions of $\{\mathcal{P}_t\}$. Given the ergodic decomposition of invariant measures, we can assume without loss of generality
that $\mu_1$ and $\mu_2$ are both ergodic invariant distributions  of $\{\mathcal{P}_t\}$. Hence, defining $m_i\doteq P^{\mu_i}$, $i=1,2,$ to be the distribution of Markov processes with transition semigroup $\{\mathcal{P}_t\}$ and initial condition $\mu_i$, $m_1$ and $m_2$ are ergodic invariant distributions for the semigroup $\{\Theta_s\}$.

First, we  extend the definition of the measurable map $\cpl$ in the statement of the proposition from $A \times A$ to the whole space $\cx\times\cx$ by setting $\cpl_{y,\tilde y}=P^y\times P^{\tilde y}$ for $(y,\tilde y)\not\in A\times A.$   Next, let
$\bcpl \in \mathbb{M}_1(\cxr \times \cxr)$ be the measure given by
\[
    \bcpl(B)\doteq\int_{\cx\times\cx}\cpl_{y,\tilde y}( B)\mu_1(dy)\mu_2(d\tilde y),\quad\quad\quad B\in \cbr\otimes\cbr.
\]
Note that by construction, $ \bcpl \in \tilde{\mathcal{C}} (m_1,m_2)$. Also, since by conditions (I) and (III) of the proposition, $\bcpl_{y,\tilde y}({\mathcal{D}})>0$ for all $(y,\tilde y)\in A\times A$ and $\mu_1(A),\mu_2(A)>0$, it follows that $\bcpl({\mathcal{D}})>0$.

Now, fix any bounded, Lipschitz function $\phi$ on $\cx$, and define $\tilde \phi:\cxr\mapsto\R$ by $\tilde
\phi(x)\doteq\phi(x(0)),x\in\cxr$.  By (the continuous-time version of) Birkhoff's ergodic theorem (see, e.g., \cite[Theorem 1 of Section 1.2]{Sin15}),
for $i=1,2,$ there exist sets $B_i^\phi\in\cbr$ with $m_i(B_i^\phi)=1$ such that for every $x_i \in B_i^\phi,$
\begin{align}\label{tempac}
  \lim_{t\to\infty}\frac{1}{t}\int_0^t\phi(x_i(s))ds& =\lim_{t\to\infty}\frac{1}{t}\int_0^t\tilde\phi(\Theta_sx_i)ds \notag\\
   & =\int_{\cxr}\tilde\phi(x_i)m_i(dx) \notag\\
   & = \int_{\cx}\phi(z)\mu_i(dz).
\end{align}
Now, since $\bcpl\in\tilde{\mathcal{C}}(m_1,m_2)$, for $i = 1, 2,$ the marginal $\Pi^{(i)}_{\#}\bcpl$ is absolutely continuous with respect to $m_i$, which in turn implies  $\Pi^{(i)}_{\#}\bcpl(B_i^{\phi})=1$ because $m_i (B_i^\phi)  = 1$.
Thus, we have  $\bcpl(B_1^\phi\times B_2^\phi)=1$. Also, since
$\bcpl(\mathcal{D})>0$, defining $\bar{\mathcal{D}} \doteq \mathcal{D}\cap(B_1^\phi\times B_2^\phi),$ we have
$\bcpl({\bar{\mathcal{D}}})>0$, and in particular, $\bar{\mathcal{D}}$ is not empty. Take any $(x_1,x_2)\in\bar{\mathcal{D}}.$
Then $(x_1,x_2)\in\mathcal{D}$ and by \eqref{tempac},  we conclude that
\begin{align*}
  \left|\int_{\cx}\phi(z)\mu_1(dz)-\int_{\cx}\phi(z)\mu_2(dz)\right|
  & =  \lim_{t\to\infty}\frac{1}{t}\left|\int_0^t\left(\phi(x_1(s))-\phi(x_2(s))\right)ds \right| \\
  & \leq \lim_{t\to\infty}\frac{C_\phi}{t}\int_0^t d((x_1(s)),x_2(s))ds\\
  & =0,
\end{align*}
where $C_\phi$ is the Lipschitz constant of $\phi$ and the last equality follows from the definition
\eqref{curlyD} of ${\mathcal D}$.  Therefore, $\int_{\cx}\phi(z)\mu_1(dz)=\int_{\cx}\phi(z)\mu_2(dz)$ for every
bounded Lipschitz function $\phi,$ and hence, $\mu_1=\mu_2.$
\end{proof}

\section{Verification of Assumptions for Certain Families of Distributions}
\label{apver}

In this section, we show  that a large class of distributions of interest satisfy our assumptions.

\begin{lemma}\label{lem_verify}
  Assumption \ref{hh2_AS} and Assumption \ref{finite2_AS} are satisfied when $G$ belongs to one of the following families of distributions:
\begin{enumerate}
    \item Generalized Pareto distributions with location parameter $\mu=0$ (a.k.a.\ Lomax distribution), and
 shape parameter $\alpha>2$.
\label{verify_pareto}
    \item  \label{verify_lognormal}
The log-normal distribution with location parameter $\mu \in (-\infty, \infty)$ and scale parameter
$\sigma > 0$.
    \item The Gamma distribution with shape parameter $\alpha\geq2$.
\label{verify_gamma}
    \item Phase-type distributions.\label{verify_phasetype}
  \end{enumerate}
\end{lemma}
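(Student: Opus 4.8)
The plan is to verify, one family at a time, the four conditions that together make up Assumptions \ref{hh2_AS} and \ref{finite2_AS}: that $G$ is $C^1$ with a $C^1$ density $g$; that the hazard rate $h=g/\overline{G}$ is uniformly bounded on $\hc$; that $h_2=g'/\overline{G}$ is uniformly bounded on $\hc$; and that $\overline{G}(x)=\mathcal{O}(x^{-2-\epsilon})$ for some $\epsilon>0$, which also subsumes the finite–mean requirement in Assumption \ref{hh2_AS}.\ref{h0_AS}. In all four families the explicit form of $g$ makes it obviously $C^1$ on $\hc$ and shows that $\overline{G}>0$ everywhere, so $h$ and $h_2$ are automatically \emph{continuous} on all of $\hc$; I would then invoke the elementary fact that a continuous function on $\hc$ having a finite limit as $x\to\infty$ (or that is otherwise bounded there) is bounded on $\hc$. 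Thus the actual work reduces to controlling $h$ and $h_2$ near $x=0$ and as $x\to\infty$, and to reading off the tail decay of $\overline{G}$.

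The Lomax and phase-type cases are immediate. For the Lomax distribution $\overline{G}(x)=(1+x/\lambda)^{-\alpha}$, and direct differentiation gives $h(x)=\alpha/(\lambda+x)$ and $h_2(x)=-\alpha(\alpha+1)/(\lambda+x)^2$, so $\sup h=\alpha/\lambda<\infty$ and $\sup|h_2|=\alpha(\alpha+1)/\lambda^2<\infty$; the tail is polynomial of order $\alpha>2$, so Assumption \ref{finite2_AS} holds with, say, $\epsilon=(\alpha-2)/2$, and the mean is finite because $\alpha>1$. For a phase-type law with no atom at the origin I would write $g(x)=\ba\,e^{Tx}\mathbf{t}$ and $\overline{G}(x)=\ba\,e^{Tx}\mathbf{1}$, where $T$ is the sub-generator, $\mathbf{t}=-T\mathbf{1}\geq\mathbf{0}$, and $\ba$ is a probability vector on the transient states; since $e^{Tx}$ has nonnegative entries and $\overline{G}>0$ on $\hc$ (phase-type laws have support $\hc$), the vector $\mathbf{p}(x)\doteq\ba\,e^{Tx}/\overline{G}(x)$ is a probability vector, whence $h(x)=\mathbf{p}(x)\mathbf{t}\le\max_i t_i$ and $|h_2(x)|=|\mathbf{p}(x)T\mathbf{t}|\le\max_i|(T\mathbf{t})_i|$ uniformly in $x$; finally, stability of $T$ gives $\overline{G}(x)\le Ce^{-\delta x}$, so Assumption \ref{finite2_AS} holds for every $\epsilon>0$.

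For the Gamma distribution with shape $\alpha\ge2$ and rate $\lambda$, $g(x)=c\,x^{\alpha-1}e^{-\lambda x}$ and $g'(x)=c\,x^{\alpha-2}(\alpha-1-\lambda x)e^{-\lambda x}$, and the hypothesis $\alpha\ge2$ is exactly what makes $g'$ continuous up to $x=0$, so $g\in\Cone\hc$. I would then use the identity $h_2(x)=h(x)\big((\alpha-1)/x-\lambda\big)$ together with $h(x)\sim c\,x^{\alpha-1}$ as $x\downarrow0$ and the classical fact that for $\alpha\ge1$ the Gamma hazard rate is increasing with $h(x)\uparrow\lambda$; this shows $h$ and $h_2$ have finite limits at both $0$ and $\infty$ ($h(0)=0$, $h(\infty)=\lambda$, $h_2(\infty)=-\lambda^2$), hence are bounded, and the exponential tail again yields Assumption \ref{finite2_AS}. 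The log-normal case is the one I expect to be the main obstacle. Writing $\overline{G}(x)=\overline{\Phi}(u)$ and $g(x)=(\sigma x)^{-1}\phi(u)$ with $u=(\ln x-\mu)/\sigma$ and $\phi,\overline{\Phi}$ the standard normal density and tail, one computes $h(x)=(\sigma x)^{-1}\varrho(u)$, where $\varrho=\phi/\overline{\Phi}$ is the standard normal hazard rate, and $h_2(x)=-x^{-1}h(x)\big(1+(\ln x-\mu)/\sigma^2\big)$. The plan is to control $\varrho$ by the Mills-ratio bounds $u\le\varrho(u)\le u+1/u$ for $u>0$ and $\varrho(u)\le\sqrt{2/\pi}\,e^{-u^2/2}$ for $u\le0$: as $x\to\infty$ (so $u\to\infty$) these give $h(x)=\mathcal{O}\big((\ln x)/x\big)\to0$ and $h_2(x)=\mathcal{O}\big((\ln x)^2/x^2\big)\to0$; as $x\downarrow0$ (so $u\to-\infty$) the Gaussian factor $e^{-u^2/2}$ overwhelms the $1/x$, forcing $h(x)\to0$ and $h_2(x)\to0$ faster than any power of $x$; and on any compact subinterval of $\ho$ both are continuous, hence bounded. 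Since every moment of the log-normal is finite, $\overline{G}(x)=\mathcal{O}(x^{-2-\epsilon})$ for every $\epsilon>0$.

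I would close with a remark that these thresholds are sharp: for $1\le\alpha<2$ the Gamma density fails to be $C^1$ at $0$, so $h_2$ is unbounded near the origin, and for $\alpha\le2$ the Lomax distribution has infinite second moment, so in either case one of the assumptions fails. The only genuinely delicate estimates in the whole argument are the two Gaussian-tail asymptotics — at $0$ and at $\infty$ — needed for the log-normal; everything else is elementary algebra combined with standard monotonicity properties of the hazard rates of the Gamma and phase-type families.
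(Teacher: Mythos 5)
Your proposal is correct and follows essentially the same route as the paper's verification: a family-by-family explicit computation of $g$, $g'$, $h$ and $h_2$, boundedness deduced from continuity together with control of the limits at $0$ and at $\infty$ (Mills-ratio/Gaussian-tail bounds for the log-normal, a probability-vector bound giving $h\le\max_j\mu_j$ and $|h_2|\le\max_j|\nu_j|$ for phase-type), and the tail condition read off from polynomial decay with $\alpha>2$ (Lomax) or finiteness of all moments (the other families). The only cosmetic deviations are that you invoke monotonicity of the Gamma hazard rate where the paper uses L'H\^opital, and you bound the log-normal hazard near $0$ directly by a Gaussian-tail estimate where the paper argues via $g,g'\to0$ and monotonicity of $\overline G$; neither changes the substance of the argument.
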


\begin{proof}
{\em Family \ref{verify_pareto}.} The Lomax distribution (equivalently, the generalized Pareto distribution with location parameter $\mu=0$) with  scale parameter $\lambda>0$ and shape parameter $\alpha>0$ has the following complementary c.d.f.:
\[
    \overline G(x) = \left(1+\frac{x}{\lambda}\right)^{-\alpha},\quad\quad x\geq0.
\]
Elementary calculations show that for $\alpha > 1$, the distribution has a finite mean, which is equal to $\lambda/(\alpha-1)$. In particular, the distribution has mean $1$ when $\lambda = \alpha - 1$. The probability density function (p.d.f.) $g$ of $G$ clearly exists, is continuously differentiable and satisfies
\[
    g(x)=\frac{\alpha}{\lambda}\left(1+\frac{x}{\lambda}\right)^{-(\alpha+1)},\quad \mbox{ and } \quad
    g^\prime(x) = -\frac{\alpha(\alpha+1)}{\lambda^2}\left(1+\frac{x}{\lambda}\right)^{-(\alpha+2)},
\]
for $x \in (0,\infty)$. Thus, the hazard rate function $h$ is equal to
\[
    h(x)=\frac{g(x)}{\overline G(x)}=\frac{\alpha}{\lambda}\left(1+\frac{x}{\lambda}\right)^{-1},\quad\quad x\geq0,
\]
which is uniformly bounded by $\alpha/\lambda,$ and
\[
    h_2(x)=\frac{g'(x)}{\overline G(x)}=-\frac{\alpha(\alpha+1)}{\lambda^2}\left(1+\frac{x}{\lambda}\right)^{-2},\quad\quad x\geq0,
\]
which shows that $|h_2|$ is uniformly bounded by $\alpha(\alpha+1)/\lambda^2$. Therefore, Assumption \ref{hh2_AS} is satisfied. Moreover, as $x\to\infty,$
\[
    \overline G(x) = x^{-\alpha}\left(\frac{1}{x}+\frac{1}{\lambda}\right)^{-\alpha}=\mathcal{O}(x^{-\alpha}),
\]
and hence, Assumption \ref{finite2_AS} holds when $\alpha>2$.

{\em Family \ref{verify_lognormal}.} The complementary c.d.f.\ of the log-normal distribution with location parameter $\mu\in R$ and shape parameter $\sigma>0$ has the form
\[
    \overline G_{\mu, \sigma}(x)=\frac{1}{2}-\frac{1}{2} \text{ erf} \left(\frac{\log{x}-\mu}{\sqrt{2}\sigma}\right),\quad\quad x\geq 0,
\]
where $\text{erf }(y)=\frac{2}{\sqrt{\pi}}\int_0^ye^{-t^2}dt$ is the error function.  Simple calculations show that the mean is given by $e^{\mu+\sigma^2/2}$, which is equal to $1$ when $\mu=-\sigma^2/2$.   Fix  $\sigma > 0$.  For any $\mu \in \mathbb{R}$, $\overline G_{\mu,\sigma} (x)= \overline G_{0,\sigma}(cx)$, with  $c \doteq e^{-\mu}$,  for all $x\geq0$.
Therefore,  it suffices to verify  Assumption \ref{hh2_AS}.\ref{h_AS}, Assumption \ref{hh2_AS}.\ref{h2_AS} and Assumption \ref{finite2_AS} for $\overline G\doteq \overline G_{0,\sigma}$. The p.d.f.\ $g$ of $G = G_{0,\sigma}$ exists, is continuous, and is given explicitly by
\[
    g(x)=\frac{1}{x\sqrt{2\pi}\sigma}e^{-\frac{(\log x)^2}{2\sigma^2}}= \frac{1}{x\sigma}\phi\left(\frac{\log x}{\sigma}\right),\quad\quad x>0,
\]
where $\phi$ is the p.d.f.\  of the standard Gaussian distribution. The p.d.f.\ $g$ itself is continuously differentiable with derivative
\[
    g^\prime(x)=-\frac{\log x+\sigma^2}{x^2\sigma^3}\phi\left(\frac{\log x}{\sigma}\right), \quad\quad x>0.
\]

The complementary c.d.f.\ $\overline G$ can be written as
\[
    \overline G(x) = Q\left(\frac{\log x}{\sigma}\right),\quad\quad x\geq0,
\]
where $Q$ is the function $Q(z)=1/2+1/2\text{ erf}(z/\sqrt{2})$, which satisfies the bounds \cite[equation (8)]{BorSun79}
\begin{equation}\label{temp_Qfunction}
    Q(z) \geq \frac{z}{1+z^2}\phi(z),\quad\quad z\geq 0.
\end{equation}
For $x \geq 0$,  set $z_x\doteq\log(x)/{\sigma}$. Then, using the bound \eqref{temp_Qfunction}, for $x \geq  e^{\sigma}$ (and hence $z_x\geq1$) we have
\[
    h(x) =\frac{\phi(z_x)}{x\sigma Q(z_x)} \leq  \frac{(1+z_x^2)}{\sigma z_xe^{\sigma z_x}}  \leq \frac{(1+z_x^2)}{\sigma} e^{-\sigma z_x}.
\]
Moreover, for $x\in[e^\sigma,\infty)$, since $\log x<x$,
\[
    \frac{g'(x)}{g(x)}=\frac{\log x+\sigma^2}{x\sigma^2}\leq \frac{1}{\sigma^2}+e^{-\sigma},
\]
and hence, $h_2=(g'/g)h$ is also bounded on $[e^\sigma,\infty)$. Moreover,
\[
    \lim_{x\to 0}g(x)=\lim_{z\to -\infty}\frac{1}{\sqrt{2\pi}\sigma}e^{-\sigma z-\frac{z^2}{2}} =0,
\]
and
\[
    \lim_{x\to 0}g^\prime(x)=\lim_{z\to-\infty}-\frac{z + \sigma}{\sqrt{2\pi}\sigma^2}   e^{-2 \sigma z-\frac{z^2}{2}}=0.
\]
Since $g$ and $g'$ are continuous and $\overline G$ is decreasing, it follows from the last two displays that $h$ and $h_2$ are also bounded on $(0,e^\sigma)$. Therefore, Assumptions \ref{hh2_AS} holds.

Moreover, it is straightforward to see that for a random variable $X$ with log-normal distribution,
\[
    \Ept{X^n}=e^{n\mu+\frac{n^2\sigma^2}{2}}<\infty.
\]
In other words,  all moments of the log-normal distribution exist, and in particular, Assumption \ref{finite2_AS} holds.

{\em Family \ref{verify_gamma}.}  The complementary c.d.f.\ of a Gamma distribution with  shape parameter $\alpha>0$ and rate parameter $\beta>0$ is given by
    \[\overline G(x)=\frac{1}{\Gamma(\alpha)}\Gamma(\alpha,\beta x),\quad\quad\quad x>0,\]
where $\Gamma(\cdot,\cdot)$ is the upper incomplete Gamma function. The mean is $\alpha/\beta$, which is equal to one when $\alpha=\beta$. The p.d.f.\  $g$ is equal to
\[
    g(x)=\frac{\beta^\alpha}{\Gamma(\alpha)}x^{\alpha-1}e^{-\beta x},\quad\quad x>0,
\]
which is itself continuously differentiable on $\ho$, with derivative
\[
    g^\prime(x)=\frac{\beta^\alpha}{\Gamma(\alpha)}(\alpha-1-\beta x)x^{\alpha-2}e^{-\beta x},\quad\quad x>0.
\]
When $\alpha \geq 2$,
\[
    \lim_{x\to 0}h(x)=\lim_{x\to 0}g(x)=\frac{\beta^\alpha}{\Gamma(\alpha)}\lim_{x\to 0} x^{\alpha-1} = 0.
\]
Also, by L'H\^{o}pital's rule,
    \begin{align*}
        \lim_{x\to\infty} h(x) =   \lim_{x\to\infty}\frac{\beta^\alpha x^{\alpha-1}e^{-\beta x}}{\int_{\beta x}^\infty t^{\alpha-1}e^{-t}dt}=   \lim_{x\to\infty}  \beta^\alpha\frac{(\alpha-1)x^{\alpha-2}e^{-\beta x} -\beta x^{\alpha-1}e^{-\beta x} }{-\beta^\alpha x^{\alpha-1}e^{-\beta x}} = \beta.
    \end{align*}
Moreover, again when $\alpha\geq2$,
\[
    \lim_{x\to 0}h_2(x)=\lim_{x\to 0}g^\prime(x)=\frac{\beta^\alpha}{\Gamma(\alpha)}(\alpha-1)\lim_{x\to0}x^{\alpha-2}<\infty
\]
and by L'H\^{o}pital's rule,
\begin{align*}
    \lim_{x\to\infty} h_2(x) &=   \lim_{x\to\infty} \frac{\beta^\alpha (\alpha-1-\beta x)x^{\alpha-2}e^{-\beta x}}{\int_{\beta x}^\infty t^{\alpha-1}e^{-t}dt} \\
    &=   \lim_{x\to\infty} \frac{\beta^\alpha \left( \beta^2x^{\alpha-1}-2\beta(\alpha-1)x^{\alpha-2} + (\alpha-1)(\alpha-2)x^{\alpha-3}\right) e^{-\beta x} }{-\beta^\alpha x^{\alpha-1}e^{-\beta x}}\\
     &=  - \beta^2.
    \end{align*}
Since $h$ and $h_2$ are continuous on $\ho$, the last four displays show that Assumption \ref{hh2_AS} holds for $\alpha\geq 2$.

Moreover, the Gamma distribution has finite exponential moments in a neighborhood of the origin, and in particular, Assumption \ref{finite2_AS} holds.

{\em Family  \ref{verify_phasetype}.}   A phase-type distribution with size $m$, an $m\times m$-subgenerator matrix $\bs$ (which has eigenvalues with negative real part) and probability row vector  $\ba$, the complementary c.d.f.\ function has the representation
\[
    \overline G(x)= \sum_{j=1}^mp_j(x)=\ba e^{x\bs}\f1,\quad\quad x\geq 0,
\]
where $\f1$ is an $m\times 1$ column vector of ones and
\[
    \bp(x)= [p_1(x),...,p_m(x)] \doteq \ba e^{x\bs},\quad\quad\quad x\geq 0
.\]
Note that the vector $\ba$ can be chosen such that the mean $-\ba \bs^{-1}\f1$ is set to one. Defining $\bmu=[\mu_1,...,\mu_m]\doteq-\bs\f1,$ the probability density function $g$ can be written as
\[
    g(x) = -\ba e^{x\bs}\bs\f1 = \ba e^{x\bs}\bmu =\sum_{j=1}^mp_j(x)\mu_j,
\]
and is continuous on $\ho$. Therefore, Assumption \ref{hh2_AS}.\ref{h0_AS} holds. The p.d.f.\ $g$ is continuously differentiable with derivative
\[
    g^\prime(x) = -\ba e^{x\bs}\bs^2\f1 = \ba e^{x\bs} \bnu,\quad\quad x>0,
\]
where $\bnu=[\nu_1,...,\nu_m]^T \doteq -\bs^2\f1.$ The hazard rate function $h$ satisfies
\[
    h(x)= \frac{\sum_{j=1}^mp_j(x)\mu_j}{\sum_{j=1}^mp_j(x)}\leq  \max_{j=1,...,m}\mu_j<\infty,\quad\quad x\geq0.
\]
Hence, $h$ is uniformly bounded on $\hc$. Moreover,
\[
    |h_2(x)|= \frac{\left|\sum_{j=1}^mp_j(x)\nu_j\right|}{\sum_{j=1}^mp_j(x)}\leq  \max_{j=1,...,m}|\nu_j|<\infty,\quad\quad x>0.
\]
Therefore,  Assumptions \ref{hh2_AS}.\ref{h_AS} and \ref{hh2_AS}.\ref{h2_AS} hold.

Moreover, for a random variable $X$ with a phase-type distribution,
\[
    \Ept{X^{n}}=(-1)^{n}n!\ba{S}^{-n}\f1<\infty, \quad n \in \mathbb{N}.
\]
Therefore, all moments are finite and in particular, Assumption \ref{finite2_AS} holds.

\end{proof}

\bibliographystyle{plain}
\bibliography{reference}

\end{document}